\newtheorem{thm}{Theorem}[section]
\newtheorem{cor}[thm]{Corollary}
\newtheorem{lem}[thm]{Lemma}
\newtheorem{prop}[thm]{Proposition}
\newtheorem{claim}[thm]{Claim}
\newtheorem{fact}[thm]{Fact}
\newtheorem{problem}[thm]{Problem}
\newtheorem{conjecture}[thm]{Conjecture}
\newtheorem{defn}[thm]{Definition}
\newtheorem*{Theorem}{Theorem $($\cite[$\mathrm{Theorem \ 1'}$]{DKT16}$)$}
\theoremstyle{remark}
\newtheorem{rem}[thm]{Remark}
\newtheorem{examp}[thm]{Example}
\newcommand{\rr}{\mathbb{R}}
\newcommand{\nn}{\mathbb{N}}
\newcommand{\ee}{\varepsilon}
\newcommand{\meg}{\geqslant}
\newcommand{\mik}{\leqslant}
\newcommand{\ave}{\mathbb{E}}
\newcommand{\prob}{\mathbb{P}}
\newcommand{\bbx}{\boldsymbol{X}}
\begin{document}

\title[Concentration estimates for high-dimensional random arrays]{Concentration
estimates for functions of finite high-dimensional random arrays}

\author{Pandelis Dodos, Konstantinos Tyros and Petros Valettas}

\address{Department of Mathematics, University of Athens, Panepistimiopolis 157 84, Athens, Greece}
\email{pdodos@math.uoa.gr}

\address{Department of Mathematics, University of Athens, Panepistimiopolis 157 84, Athens, Greece}
\email{ktyros@math.uoa.gr}

\address{Mathematics Department, University of Missouri, Columbia, MO, 65211}
\email{valettasp@missouri.edu}

\thanks{2010 \textit{Mathematics Subject Classification}: 05D10, 05D40, 60E15, 60G09, 60G42.}
\thanks{\textit{Key words}: concentration inequalities, exchangeable random arrays, spreadable random arrays,
martingale difference sequences, quasirandomness, density polynomial Hales--Jewett conjecture.}
\thanks{P.V. is supported by Simons Foundation grant 638224.}


\begin{abstract}
Let $\boldsymbol{X}$ be a $d$-dimensional random array on $[n]$ whose entries take values in
a finite set $\mathcal{X}$, that is, $\boldsymbol{X}=\langle X_s:s\in \binom{[n]}{d}\rangle$
is an $\mathcal{X}$-valued stochastic process indexed by the set $\binom{[n]}{d}$ of all
$d$-element subsets of $[n]\coloneqq \{1,\dots,n\}$. We give easily checked conditions
on $\boldsymbol{X}$ that ensure, for instance, that for \emph{every} function
$f\colon \mathcal{X}^{\binom{[n]}{d}}\to\mathbb{R}$ that satisfies
$\ave[f(\boldsymbol{X})]=0$ and $\|f(\boldsymbol{X})\|_{L_p}=1$ for some~$p>1$,
the random variable $f(\boldsymbol{X})$ becomes concentrated after conditioning it on a
large subarray of $\boldsymbol{X}$. These conditions cover several classes of
random arrays with not necessarily independent entries. Applications are given in combinatorics,
and examples are also presented that show~the optimality of various aspects of the results.
\end{abstract}

\maketitle

\tableofcontents


\section{Introduction} \label{sec1}

\numberwithin{equation}{section}

\subsection{Motivation} \label{subsec1.1}
The concentration of measure refers to the powerful phenomenon asserting that a function
that depends smoothly on its variables is essentially constant, as long as the number of the variables is large enough.
There are various ways to quantify this ``smooth dependence" (\textit{e.g.}, Lipschitz conditions, bounds for the $L_2$
norm of the gradient, etc.). Detailed expositions can be found in \cite{Le01, BLM13}.

It is easy to see that this phenomenon is no longer valid if we drop the smoothness assumption.
Nevertheless, one can still obtain some form of concentration under a much milder integrability
condition.
\begin{Theorem}
For every $p>1$ and every $0<\ee\mik 1$, there exists a constant $c>0$ with the following property.
If $n\meg 2/c$ is an integer, $\bbx=(X_1,\dots,X_n)$ is a random vector with independent entries that take values
in a measurable space $\mathcal{X}$, and $f\colon \mathcal{X}^n\to\rr$ is a measurable function with
$\ave[f(\bbx)]=0$ and $\|f(\bbx)\|_{L_p}=1$, then there exists an interval $I$ of\, $[n]$ with $|I|\meg cn$
such that for every nonempty $J\subseteq I$ we have
\begin{equation} \label{e1.1}
\prob\big( \big|\ave[f(\bbx)\,|\, \mathcal{F}_J]\big|\mik\ee\big) \meg 1-\ee,
\end{equation}
where $\ave[f(\bbx)\, |\, \mathcal{F}_J]$ stands for the conditional expectation of $f(\bbx)$ with respect
to the $\sigma$-algebra $\mathcal{F}_J\coloneqq \sigma(\{X_i:i\in J\})$.
\end{Theorem}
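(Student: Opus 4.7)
The plan is to reduce the pointwise concentration estimate (\ref{e1.1}) to an $L_p$-bound on the conditional expectations $\ave[f(\bbx)\,|\,\mathcal{F}_J]$. Indeed, by Markov's inequality, if $\|\ave[f(\bbx)\,|\,\mathcal{F}_J]\|_{L_p}\mik \ee^{(p+1)/p}$ then (\ref{e1.1}) holds for that $J$. Two elementary monotonicities reduce the task further. First, if $J\subseteq I$, Jensen's inequality for conditional expectations gives $\|\ave[f(\bbx)\,|\,\mathcal{F}_J]\|_{L_p}\mik \|\ave[f(\bbx)\,|\,\mathcal{F}_I]\|_{L_p}$, so it is enough to control this quantity for intervals $I$. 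Second, consider the Doob martingale $M_k\coloneqq \ave[f(\bbx)\,|\,\mathcal{F}_{[k]}]$ for $k=0,1,\dots,n$. For an interval $I=\{a+1,\dots,b\}$, the independence of the coordinates of $\bbx$ gives $\ave[M_a\,|\,\mathcal{F}_I]=\ave[M_a]=0$, while the tower property gives $\ave[M_b\,|\,\mathcal{F}_I]=\ave[f(\bbx)\,|\,\mathcal{F}_I]$. Consequently
\[
\|\ave[f(\bbx)\,|\,\mathcal{F}_I]\|_{L_p} = \|\ave[M_b-M_a\,|\,\mathcal{F}_I]\|_{L_p}\mik \|M_b-M_a\|_{L_p}.
\]

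Thus it suffices to find an interval $I=\{a+1,\dots,b\}\subseteq[n]$ of length at least $cn$ on which the martingale increment $M_b-M_a$ is small in $L_p$. The natural tools are Doob's $L_p$ maximal inequality together with Burkholder's square-function inequality applied to $(M_k)_{k=0}^n$: with $D_k\coloneqq M_k-M_{k-1}$, one obtains $\|(\sum_{k=1}^n D_k^2)^{1/2}\|_{L_p}\mik C_p\|f(\bbx)\|_{L_p}=C_p$ for a constant $C_p$ depending only on $p$. I would then partition $[n]$ into $\lfloor 1/c\rfloor$ consecutive blocks each of length roughly $cn$, and combine a Chebyshev-type truncation of the total quadratic variation with a pigeonhole across these blocks to locate a single block $I$ on which the local quadratic variation $\sum_{k\in I}D_k^2$ is small on a large-probability event; applying Burkholder in the reverse direction to the sub-martingale $(M_k-M_a)_{a\mik k\mik b}$ translates this into the desired $L_p$ bound on $M_b-M_a$.

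In my view, the crux is the pigeonhole step in the regime $1<p<2$: since the martingale differences are not orthogonal in $L_p$, one cannot simply sum $\|D_k\|_{L_p}^2$ across blocks as in the $L_2$ case. Instead one must pigeonhole the \emph{random} quadratic variation $\sum_k D_k^2$ on a high-probability event extracted from the Burkholder bound above, which produces a delicate interplay between the probability of the ``good'' event and the smallness of the $L_p$ estimate; this is where one pays the dependence of $c$ on both $p$ and $\ee$. Once this averaging step is carried out, the remaining reductions (Jensen, independence, and a final Markov inequality) combine to yield the desired constant $c=c(p,\ee)$, and the hypothesis $n\meg 2/c$ is precisely what guarantees that the partition into blocks of length $cn$ is non-degenerate.
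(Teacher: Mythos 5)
Your opening reductions (Markov, contraction of conditional expectation for $J\subseteq I$, and the independence/tower identity $\ave[f(\bbx)\mid\mathcal{F}_I]=\ave[M_b-M_a\mid\mathcal{F}_I]$) are correct and put the problem in exactly the right form: over $\approx 1/c$ consecutive blocks, find one whose Doob-martingale block increment is small in $L_p$. The pigeonhole step is where your argument breaks, and you flag the difficulty without resolving it. You propose to control the pathwise quadratic variation $\sum_k D_k^2$ via Burkholder, truncate it to a high-probability event, locate a block with small local QV on that event, and then apply Burkholder ``in reverse'' to deduce that $\|M_b-M_a\|_{L_p}$ is small. That last step does not go through: Burkholder--Davis--Gundy compares $L_p$ norms over the whole probability space and cannot be applied conditionally on a sub-event. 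A bound on the local QV holding only with probability $1-\delta$ says nothing about the $L_p$ mass of $M_b-M_a$ on the exceptional event, and with the sole a priori information $\|f(\bbx)\|_{L_p}=1$ --- no $L_\infty$ bound, no higher moment --- the exceptional contribution cannot be absorbed without an extra device (a stopping time, a preliminary truncation of $f$, interpolation with a higher moment), which you do not supply.

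The paper's pigeonhole is \emph{deterministic} and never looks at the pathwise quadratic variation. The key tool is the Ricard--Xu uniform convexity / martingale cotype-$2$ inequality (Proposition~\ref{p2.4}): for $1<p\mik 2$ and any martingale difference sequence $(d_i)_{i=1}^m$,
\[
\Big(\sum_{i=1}^m\|d_i\|_{L_p}^2\Big)^{1/2}\mik(p-1)^{-1/2}\,\Big\|\sum_{i=1}^m d_i\Big\|_{L_p}.
\]
Applied to the Doob martingale along the block filtration $\mathcal{A}_i=\sigma(X_j:j\in J_1\cup\cdots\cup J_i)$, this immediately gives $\min_i\|d_i\|_{L_p}\mik 1/\sqrt{m(p-1)}$; and in the independent case $\ave[d_{i_0}\mid\mathcal{F}_{J_{i_0}}]=\ave[f(\bbx)\mid\mathcal{F}_{J_{i_0}}]$ exactly, since the earlier blocks integrate out to $\ave[f(\bbx)]=0$. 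No truncation, no exceptional events. If you want to avoid citing Ricard--Xu, you can salvage your square-function idea by replacing the pathwise pigeonhole with this deterministic one: for $p\mik2$ the reverse Minkowski inequality in $L_{p/2}$ gives $\sum_i\|d_i\|_{L_p}^2=\sum_i\|d_i^2\|_{L_{p/2}}\mik\|\sum_i d_i^2\|_{L_{p/2}}=\|(\sum_i d_i^2)^{1/2}\|_{L_p}^2$, and then Burkholder yields the display above with a non-optimal constant.
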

(Here, and in what follows, $[n]$ denotes the discrete interval $\{1,\dots,n\}$.) Roughly speaking, this result
asserts that if a function of several variables is sufficiently integrable, then, by integrating out some coordinates,
it becomes essentially constant. It was motivated by---and it has found several applications in---problems
in combinatorics (see \cite{DK16}).

\subsubsection{\!} \label{subsec1.1.1}

The goal of this paper is twofold: to develop workable tools in order to extend the conditional concentration
estimate~\eqref{e1.1} to functions of random vectors $\bbx$ with not necessarily independent entries,
and to present related applications. Of course, to this end some structural property of $\bbx$ is necessary.
We focus on high-dimensional random arrays whose distribution is invariant under certain symmetries.
Besides their intrinsic analytic and probabilistic interest, our choice to study functions of random arrays
is connected to the \textit{density polynomial Hales--Jewett conjecture}, an important combinatorial
conjecture of Bergelson \cite{Ber96}---see Subsection \ref{subsec1.5}.

\subsection{Random arrays} \label{subsec1.2}

At this point it is useful to recall the definition of a random array.
\begin{defn}[Random arrays, and their subarrays/sub-$\sigma$-algebras] \label{d1.1} Let $d$ be a positive integer,
and let $I$ be a set with $|I|\meg d$. A \emph{$d$-dimensional random array on~$I$} is a stochastic process
$\bbx=\langle X_s:s\in \binom{I}{d}\rangle$ indexed by the set $\binom{I}{d}$ of all $d$-element subsets of\, $I$.
If $J$ is a subset of\, $I$ with $|J|\meg d$, then the \emph{subarray of $\bbx$ determined by $J$} is the
$d$-dimensional random array $\bbx_J\coloneqq \langle X_s:s\in \binom{J}{d}\rangle$; moreover, by $\mathcal{F}_J$
we shall denote the $\sigma$-algebra $\sigma(\langle X_s:s \in \binom{J}{d}\rangle)$ generated by $\bbx_J$.
\end{defn}
Of course, one-dimensional random arrays are just random vectors. On the other hand, two-dimensional
random arrays are essentially the same as random symmetric matrices, and their subarrays correspond
to principal submatrices; more generally, higher-dimensional random arrays correspond to random
symmetric tensors. We employ the terminology of random arrays, however, since we are not
using linear-algebraic tools.

\subsubsection{Notions of symmetry} \label{subsubsec1.2.1}

The study of random arrays with a symmetric distribution is a classical topic
that goes back to the work of de~Finetti; see \cite{Au08,Au13,Kal05} for an exposition
of this theory and its applications. Arguably, the most well-known notion of symmetry
is exchangeability: a $d$-dimensional random array $\bbx$ on a (possibly infinite) set
$I$ is called \emph{exchangeable} if for every finite permutation $\pi$ of $I$,
the random arrays $\bbx$ and $\bbx_\pi\coloneqq\langle X_{\pi(s)}:s\in \binom{I}{d}\rangle$
have the same distribution. Another well-known notion of symmetry, which is weaker than
exchangeability, is spreadability:  a $d$-dimensional random array $\bbx$ on a (possibly infinite)
set~$I$ is called \emph{spreadable}\footnote{We point out that this is not standard terminology.
In particular, in \cite{FT85} spreadable random arrays are referred to as \textit{deletion invariant},
while in \cite{Kal05} they are called \textit{contractable}.} if for every pair $J,K$ of finite subsets
of $I$ with $|J|=|K|\meg d$, the subarrays\footnote{If the entries of $\bbx$ take values in a measurable
space $\mathcal{X}$, then, here, we identify $\mathcal{X}^{\binom{J}{d}}$ and $\mathcal{X}^{\binom{K}{d}}$
using the increasing enumerations of $J$ and $K$ respectively.} $\bbx_J$ and $\bbx_K$ have the same distribution.
 Infinite, spreadable, two-dimensional random arrays have been studied by Fremlin and Talagrand
\cite{FT85}, and---in greater generality---by Kallenberg \cite{Kal92}.

Beyond these notions, in this paper we will also consider the following approximate
form of spreadability, which naturally arises in combinatorial applications.
\begin{defn}[Approximate spreadability] \label{d1.2}
Let $\bbx$ be a $d$-dimensional random array on a $($possibly infinite$)$ set $I$, and let $\eta \meg 0$. We say that
$\bbx$ is \emph{$\eta$-spreadable} $($or, simply, \emph{approximately spreadable} if\, $\eta$ is clear from the context$)$,
provided that for every pair $J,K$ of finite subsets of $I$ with $|J|=|K|\meg d$ we have
\begin{equation} \label{e1.2}
\rho_{\mathrm{TV}}(P_J,P_K)\mik \eta,
\end{equation}
where $P_J$ and $P_K$ denote the laws of the random subarrays $\bbx_J$ and $\bbx_K$ respectively, and
$\rho_{\mathrm{TV}}$ stands for the total variation distance.
\end{defn}
We recall that the total variation distance between two probability measures $P$ and $Q$ on a measurable space
$(\Omega,\mathcal{F})$ is the quantity $\rho_{\mathrm{TV}}(P,Q)\coloneqq \sup\big\{|P(A)-Q(A)|:A\in\mathcal{F}\big\}$.
We also note that if $\Omega$ is discrete, then the total variation distance is related to the
$L_1$ norm via the identity $\rho_{\mathrm{TV}}(P,Q)=\frac {1}{2}\|P-Q\|_{L_1}=
\frac{1}{2}\sum_{\omega \in \Omega }|P(\{\omega \})-Q(\{\omega \})|$.

The following proposition justifies Definition \ref{d1.2} and shows that approximately spreadable
random arrays are the building blocks of arbitrary finite-valued, high-dimensional random arrays.
The proof follows by a standard application of Ramsey's theorem \cite{Ra30} taking into account
the fact that the space of all probability measures on a finite set equipped with the
total variation distance is compact (see, also, Fact \ref{f8.3}).
\begin{prop} \label{p1.3}
For every triple $m,n,d$ of positive integers with $n\meg d$, and every $\eta>0$, there exists an integer $N\meg n$ with
the following property. If\, $\mathcal{X}$ is a set with $|\mathcal{X}|=m$ and $\bbx$ is an $\mathcal{X}$-valued,
$d$-dimensional random array on a set $I$ with $|I|\meg N$, then there exists a subset $J$ of $I$ with $|J|=n$ such
that the random array $\bbx_J$ is $\eta$-spreadable.
\end{prop}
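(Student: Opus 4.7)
The plan is to combine three ingredients: (i) the fact that each of the relevant spaces of probability measures is a compact finite-dimensional simplex in total variation, (ii) a canonical identification of $k$-subsets of $I$ via a fixed linear order, and (iii) iterated application of Ramsey's theorem for hypergraphs.

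More precisely, I would first fix a linear order on $I$ and, for each $k\in\{d,d+1,\dots,n\}$, let $M_k$ denote the simplex of all probability measures on the finite set $\mathcal{X}^{\binom{[k]}{d}}$, equipped with the total variation distance. For any $k$-subset $S\subseteq I$, the order-preserving bijection $S\to [k]$ induces a bijection $\binom{S}{d}\to\binom{[k]}{d}$ and hence identifies the law $P_S$ of $\bbx_S$ with a point of $M_k$. Since $M_k$ is compact in $\rho_{\mathrm{TV}}$, I would cover it with finitely many, say $r_k$, sets of TV-diameter at most $\eta$, and label each $k$-subset $S\subseteq I$ by which piece contains (the image of) $P_S$. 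This produces a coloring $c_k\colon \binom{I}{k}\to [r_k]$ with the property that any two $k$-subsets receiving the same color have laws at TV-distance at most $\eta$.

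Next I would iterate the finite Ramsey theorem. Write $R_k(r,t)$ for the least integer such that every $r$-coloring of the $k$-subsets of a set of cardinality $R_k(r,t)$ admits a monochromatic subset of cardinality $t$. Set $a_n\coloneqq n$ and recursively $a_{k-1}\coloneqq R_k(r_k,a_k)$ for $k=n,n-1,\dots,d$, and define $N\coloneqq a_{d-1}$. Given any $I$ with $|I|\meg N$, a straightforward downward induction, applying Ramsey's theorem to $c_k$ restricted to $I_{k-1}$ at stage $k$, produces a nested sequence $I\supseteq I_d\supseteq I_{d+1}\supseteq\cdots\supseteq I_n$ in which each $I_k$ is $c_k$-monochromatic and satisfies $|I_k|\meg a_k$. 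Choosing any $J\subseteq I_n$ with $|J|=n$ then finishes the proof: for any pair $J',K'\subseteq J$ with $|J'|=|K'|=k\meg d$, both belong to $\binom{I_k}{k}$ and therefore share the same $c_k$-color, so $\rho_{\mathrm{TV}}(P_{J'},P_{K'})\mik \eta$, which is exactly the $\eta$-spreadability of $\bbx_J$.

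The argument is essentially bookkeeping; no substantive obstacle is anticipated. The only point requiring a brief sanity check is that the identification $P_S\in M_k$ is genuinely consistent across nested subsets, which is automatic once one fixes a single linear order on $I$ and uses order-induced bijections throughout. The resulting bound on $N$ is of tower type (it is a depth-$(n-d+1)$ iterate of hypergraph Ramsey numbers), but this is harmless since the statement is purely qualitative.
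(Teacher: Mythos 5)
Your argument is correct and is exactly the ``standard application of Ramsey's theorem plus compactness of the measure simplex'' that the paper alludes to (the paper does not spell out a proof beyond that hint). The one small note: the ``sanity check'' you flag about consistency of the identifications $P_S\in M_k$ across nested subsets is a non-issue, since the spreadability condition only ever compares laws of subarrays indexed by subsets of the \emph{same} cardinality, and each such comparison uses only the two order-preserving bijections onto $[k]$.
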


\subsection{The concentration estimate} \label{subsubsec1.3}

We are ready to state one of the main extensions of \eqref{e1.1} obtained in this paper;
the question whether \eqref{e1.1} could hold for random vectors with not independent entries,
was asked by an anonymous reviewer of \cite{DKT16} as well as by several colleagues in
personal communication. In this introduction we restrict our discussion to boolean two-dimensional
random arrays, mainly because this case is easier to grasp, but at the same time it is quite
representative of the higher dimensional case. The general version is presented in Theorem~\ref{t5.1}
in Section~\ref{sec5}; further extensions/refinements are given in Section \ref{sec6}.
\begin{thm} \label{t1.4}
Let $1<p\mik 2$, let $0<\ee\mik 1$, let $k\meg 2$ be an integer, and set
\begin{align}
\label{e1.3} C=C(p,\ee,k) & \coloneqq \exp\bigg( \frac{3200}{\ee^8 (p-1)^2}\cdot k^2 \bigg).
\end{align}
Also let $n\meg C$ be an integer, let $\bbx=\langle X_s:s\in \binom{[n]}{2}\rangle$ be a $\{0,1\}\text{-valued}$,
$(1/C)\text{-spreadable}$, two-dimensional random array on $[n]$, and assume that
\begin{equation} \label{e1.4}
\Big| \ave[X_{\{1,3\}} X_{\{1,4\}} X_{\{2,3\}} X_{\{2,4\}} ] -
\ave[X_{\{1,3\}}]\,\ave[X_{\{1,4\}}]\,\ave[X_{\{2,3\}}]\, \ave[X_{\{2,4\}}]\Big| \mik \frac{1}{C}.
\end{equation}
Then for every function $f\colon \{0,1\}^{\binom{[n]}{2}}\to\rr$ with $\ave[f(\bbx)]=0$ and $\|f(\bbx)\|_{L_p}=1$
there exists an interval $I$ of\, $[n]$ with $|I|=k$ such that for every $J\subseteq I$ with $|J|\meg 2$ we have
\begin{equation} \label{e1.5}
\prob\big( \big|\ave[f(\bbx)\,|\, \mathcal{F}_J]\big|\mik\ee\big) \meg 1-\ee.
\end{equation}
\end{thm}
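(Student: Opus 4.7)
The strategy is to mirror the proof of the independent case from \cite{DKT16} using the $4$-cycle quasirandomness \eqref{e1.4} together with $(1/C)$-spreadability as a substitute for independence. Approximate spreadability means that for any interval $I \subseteq [n]$ of length $k$, the law of the subarray $\bbx_I$ is within $1/C$ total variation of the law of $\bbx_{[k]}$; up to absorbing $O(1/C)$ errors, we may therefore fix a canonical interval and work with it uniformly. The slack between $1/C$ and $\ee$, guaranteed by the exponential form of \eqref{e1.3}, will be what absorbs all the accumulated model errors.

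The central step is a martingale decomposition along the vertices of the candidate interval. Fix an interval $I = \{a_1 < \cdots < a_k\}$ and consider the filtration $\mathcal{G}_j \coloneqq \mathcal{F}_{\{a_1,\ldots,a_j\}}$ for $2 \mik j \mik k$. Write
\[
f(\bbx) - \ave[f(\bbx) \mid \mathcal{G}_2] \;=\; \sum_{j=3}^{k} d_j, \qquad d_j \coloneqq \ave[f(\bbx) \mid \mathcal{G}_j] - \ave[f(\bbx) \mid \mathcal{G}_{j-1}].
\]
In the independent setting, the $d_j$ form a martingale difference sequence satisfying a Pisier-type inequality of the form $\bigl(\sum_{j=3}^{k} \|d_j\|_{L_p}^2\bigr)^{1/2} \mik c_p\, \|f\|_{L_p}$. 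My goal is to reprove this inequality here with an extra additive error of order $O(\sqrt{1/C})$; the cross terms generated when squaring a $d_j$ involve second moments of the form $\ave[X_s X_{s'}]$ for edges $s, s'$ sharing a vertex in $\{a_1, \ldots, a_{j-1}\}$, and \eqref{e1.4} together with $(1/C)$-spreadability bounds exactly these quantities by the corresponding product of first moments, up to $1/C$. Summing the inequality in $j$ and applying Markov shows that most of the $d_j$ are small in probability, yielding $\prob\bigl(|\ave[f(\bbx) \mid \mathcal{G}_j]| \mik \ee\bigr) \meg 1 - \ee$ for a positive fraction of indices~$j$.

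To promote this to the full conclusion---uniformly across every subset $J \subseteq I$ with $|J| \meg 2$, rather than only prefixes of a single enumeration---I would apply the above martingale estimate not to a single interval but average it over all intervals of length $k$ inside $[n]$. This is where the hypothesis $n \meg C$ and the quadratic $k^2$ factor in \eqref{e1.3} enter: there is enough room to pigeonhole a single interval $I$ on which the estimate is uniform across every choice of prefix, and spreadability (which makes the sub-$\sigma$-algebra $\mathcal{F}_J$ for any $J \subseteq I$ of given cardinality distributionally equivalent to the prefix $\sigma$-algebra for some reordering of $I$) then upgrades the prefix statement to arbitrary nonempty $J \subseteq I$ with $|J| \meg 2$.

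The hardest step will be the Pisier-type $L_p$ inequality for the martingale differences under the mere second-moment hypothesis \eqref{e1.4}. This condition delivers an $L_2$ decoupling almost for free, but lifting it to $L_p$ for $1 < p \mik 2$ is delicate because classical hypercontractivity (tensorized log-Sobolev for product measures) is not directly available for genuinely dependent entries. My intended workaround is a comparison argument: restricted to each $\mathcal{G}_j$, \eqref{e1.4} plus spreadability should bound the total variation distance between the law of $\bbx$ and an independent model with matching marginals by a quantity of order $O(\sqrt{1/C})$, after which the $\ee^{-8}$ slack in the exponent of \eqref{e1.3} is more than enough to absorb the resulting error in the $L_p$-norm of the $\{0,1\}$-bounded function $f(\bbx)$.
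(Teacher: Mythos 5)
Your proposal runs into two independent, fatal obstacles. The first is the choice of filtration: you use the nested prefix chain $\mathcal{G}_2 \subseteq \cdots \subseteq \mathcal{G}_k$ inside a single interval $I$ of length $k$, and then claim that smallness of the increments $d_j = \ave[f(\bbx)\mid\mathcal{G}_j]-\ave[f(\bbx)\mid\mathcal{G}_{j-1}]$ yields smallness of $\ave[f(\bbx)\mid\mathcal{G}_j]$ with high probability. This inference does not hold: from $\ave[f(\bbx)\mid\mathcal{G}_j]=\ave[f(\bbx)\mid\mathcal{G}_{j-1}]+d_j$ a small increment tells you nothing about the size of $\ave[f(\bbx)\mid\mathcal{G}_j]$, because $\ave[f(\bbx)\mid\mathcal{G}_{j-1}]$, and ultimately $\ave[f(\bbx)\mid\mathcal{G}_2]$, may be as large as $f(\bbx)$ itself. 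Moreover, with only $k-2$ increments you can never drive the $L_p$ bound below a fixed constant for fixed $k$. The paper's filtration is structurally different: it takes a longer ambient interval $I$ of size $\ell = \lceil 4k/(\ee^4(p-1))\rceil$, partitions it into $m=\lfloor\ell/k\rfloor$ \emph{pairwise disjoint} blocks $J_1,\dots,J_m$ each of size $k$, and sets $\mathcal{A}_i=\sigma(\mathcal{F}_{J_1},\dots,\mathcal{F}_{J_i})$. This accomplishes two things your chain cannot: first, $m$ can be made large enough so that the Ricard--Xu inequality (which needs no assumption on $\bbx$ beyond $f(\bbx)\in L_p$) forces some $\|d_{i_0}\|_{L_p}\leq 1/\sqrt{m(p-1)}$ to be small; and second, and crucially, $\mathcal{F}_{J_{i_0}}$ and $\mathcal{A}_{i_0-1}$ live on \emph{disjoint} vertex sets, so approximate dissociativity makes them $\beta$-mixing, and Proposition~\ref{p2.7} then shows $\ave\big[\ave[f(\bbx)\mid\mathcal{A}_{i_0-1}]\mid\mathcal{F}_{J_{i_0}}\big]\approx\ave[f(\bbx)]$. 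Combined with the small increment this gives $\ave[f(\bbx)\mid\mathcal{F}_{J_{i_0}}]\approx\ave[f(\bbx)]=0$. In your nested chain $\mathcal{G}_{j-1}\subseteq\mathcal{G}_j$, so no such decoupling between consecutive stages is available.

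The second obstacle is your fallback ``comparison argument'': you hope that \eqref{e1.4} plus $(1/C)$-spreadability bounds the total variation distance between the law of $\bbx$ and an independent model with matching marginals. This is false: box independence controls only certain joint moments, not the full law. Proposition~\ref{p8.4} gives a whole family of counterexamples---take a nontrivial mixture of quasirandom graphs with a common edge density; the resulting spreadable array satisfies the box independence condition but is manifestly not TV-close to any single product measure (the latent mixture index correlates every entry with every other). In the paper, the role of hypothesis \eqref{e1.4} is entirely different: via Theorem~\ref{t3.2} (``propagation of randomness'') and Corollary~\ref{c3.3} it is converted into $(\beta,\ell)$-dissociativity, which is a decoupling statement about $\sigma$-algebras on disjoint blocks, much weaker than TV-closeness to independence. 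Finally, your device for passing from prefixes to all $J\subseteq I$ with $|J|\geq 2$ (reorder $I$ and invoke spreadability) does not work, because $f$ depends on the entire array $\bbx$, not just on $\bbx_I$, so spreadability of $\bbx_I$ says nothing about $\ave[f(\bbx)\mid\mathcal{F}_J]$. The correct and much simpler argument, used in the paper, is that $\mathcal{F}_J\subseteq\mathcal{F}_I$ and conditional expectation is a contraction on $L_r$, so an $L_r$ bound on $\ave[f(\bbx)\mid\mathcal{F}_I]$ is automatically inherited by $\ave[f(\bbx)\mid\mathcal{F}_J]$ for every $J\subseteq I$.
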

Recall that $\mathcal{F}_J$ denotes the $\sigma$-algebra generated by $\bbx_J$ (see Definition \ref{d1.1}).
Thus, Theorem \ref{t1.4} asserts that the random variable $f(\bbx)$ becomes concentrated after
conditioning it on a \emph{subarray} of $\bbx$. Also observe that \eqref{e1.4} together with
the $(1/C)$-spreadability of $\bbx$ imply that for every $i,j,k,\ell\in [n]$ with $i<j<k<\ell$ we have
\begin{equation} \label{e1.6}
\Big| \ave[X_{\{i,k\}} X_{\{i,\ell\}} X_{\{j,k\}} X_{\{j,\ell\}} ] -
\ave[X_{\{i,k\}}]\,\ave[X_{\{i,\ell\}}]\,\ave[X_{\{j,k\}}]\, \ave[X_{\{j,\ell\}}]\Big| \mik \frac{6}{C}
\end{equation}
(see Figure \ref{figure1}). As we shall shortly see, as the parameter $C$ gets bigger, the estimate \eqref{e1.6}
forces the random variables $X_{\{i,k\}}, X_{\{i,\ell\}}, X_{\{j,k\}}, X_{\{j,\ell\}}$ to behave close to independently.
(It also implies that the correlation matrix of $\bbx$ is close to the identity.) Therefore, we may view \eqref{e1.6}
as an \textit{$($approximate$)$ box independence} condition for $\bbx$. We present various examples of spreadable
random arrays that satisfy the box independence condition~in~Section~\ref{sec7}.

\begin{figure}[htb]
\centering \includegraphics[width=.40\textwidth]{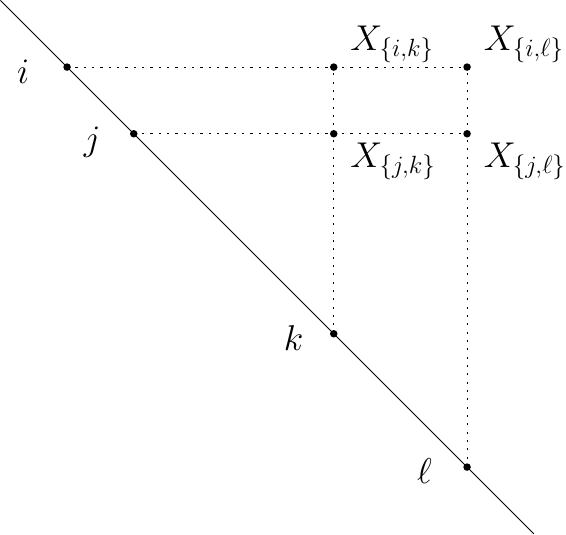}
\caption{The box independence condition.}
\label{figure1}
\end{figure}

Finally we point out that \eqref{e1.6} is essentially an optimal condition
in the sense that for every integer $n\meg 4$ there exist
\begin{enumerate}
\item[---] a boolean, exchangeable, two-dimensional random array $\bbx$ on $[n]$, and
\item[---] a multilinear polynomial $f\colon \rr^{\binom{[n]}{2}}\to \rr$ of degree $4$ with
$\ave[f(\bbx)]=0$ and $\|f(\bbx)\|_{L_\infty}\mik 1$,
\end{enumerate}
such that the correlation matrix of $\bbx$ is the identity, and for which \eqref{e1.6} and \eqref{e1.5} do~not hold
(see Proposition \ref{pa.1}; the case ``$d\meg 3$" is treated in Proposition~\ref{pa.2}).

\subsection{Basic steps of the proof} \label{subsec1.4}

The first step of the proof of Theorem \ref{t1.4}---which can be loosely described as its analytical
part---is to show that the conditional concentration of $f(\bbx)$ is equivalent to an approximate form of
the dissociativity of $\bbx$; this is the content of Theorem \ref{t2.2} in Section \ref{sec2}.
The proof of this step is based on estimates for martingale difference sequences in $L_p$~spaces,
and it applies to random arrays with arbitrary distributions (in particular, not necessarily
approximately spreadable). The main advantage of this reduction is that it enables us to forget
about the function $f$ and focus exclusively on the random~array~$\bbx$.

The second---and more substantial---step is the verification of the approximate dissociativity of $\bbx$.
This is a consequence of the following theorem, which is one of the main results of this paper. (As before,
at this point we restrict our discussion to boolean two-dimensional random arrays; the general version
is given in Theorem \ref{t3.2}.)
\begin{thm}[Propagation of randomness] \label{t1.5}
Let $n\meg 8$ be an integer and $0<\eta,\vartheta \mik 1$.
Also let $\bbx=\langle X_s:s\in \binom{[n]}{2}\rangle$ be a $\{0,1\}\text{-valued}$,
$\eta\text{-spreadable}$, \text{two-dimensional} random array on~$[n]$ such that
for every $i,j,k,\ell\in [n]$ with $i<j<k<\ell$ we have
\begin{equation} \label{e1.7}
\ave[X_{\{i,k\}} X_{\{i,\ell\}} X_{\{j,k\}} X_{\{j,\ell\}}] \mik
\ave[X_{\{i,k\}}]\,\ave[X_{\{i,\ell\}}]\,\ave[X_{\{j,k\}}]\, \ave[X_{\{j,\ell\}}] + \vartheta.
\end{equation}
Then for every nonempty $\mathcal{F}\subseteq \binom{[n]}{2}$ such that $\bigcup\mathcal{F}$
has cardinality at most~$n/2$, we have
\begin{equation} \label{e1.8}
\bigg| \ave\Big[ \prod_{s\in\mathcal{F}}X_s \Big] - \prod_{s\in\mathcal{F}} \ave[X_s] \bigg|
\mik 144\, |\mathcal{F}|\, \big( n^{-1/16} + \eta^{1/16} + \vartheta^{1/16} \big).
\end{equation}
\end{thm}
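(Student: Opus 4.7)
The plan is to combine $\eta$-spreadability with the box hypothesis \eqref{e1.7} via a telescoping hybrid argument whose per-edge cost is controlled by iterated Cauchy--Schwarz terminating in the box condition. Setting $q := \ave[X_{\{1,2\}}]$, spreadability forces every marginal $\ave[X_s]$ to differ from $q$ by at most $\eta$, and more generally makes the expectation of any fixed edge-pattern invariant under vertex relabellings up to an additive error of $\eta$ per relabelling. Replacing all marginals by $q$ thus reduces the goal to bounding $|\ave[\prod_{s\in\mathcal{F}}X_s] - q^{|\mathcal{F}|}|$, at the cost of an additional $O(|\mathcal{F}|\eta)$ error that is absorbed into the final bound.

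Enumerating $\mathcal{F}=\{s_1,\dots,s_m\}$ and setting $\Phi_i := \prod_{j<i}X_{s_j}$, the telescoping identity
\[
\ave\Big[\prod_{i=1}^m X_{s_i}\Big] - q^m \;=\; \sum_{i=1}^m q^{m-i}\bigl( \ave[X_{s_i}\,\Phi_i] - q\,\ave[\Phi_i]\bigr)
\]
reduces the task to a single-edge covariance estimate: for each $s_0 = \{a,b\}\in\mathcal{F}$ and each $\{0,1\}$-valued product $\phi$ of $X_s$'s with vertex support $V' \subseteq [n]\setminus\{a,b\}$, show that $|\ave[X_{s_0}\,\phi] - q\,\ave[\phi]|$ is $O(n^{-1/16}+\eta^{1/16}+\vartheta^{1/16})$. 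The hypothesis $|\cup\mathcal{F}|\le n/2$ is essential: it leaves at least $n/2$ ``fresh'' vertices outside $V'\cup\{a,b\}$, providing the room required for the doublings carried out in the next step.

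The core of the proof is this single-edge estimate, via a Gowers-style iterated Cauchy--Schwarz. Since $\{a,b\}\cap V'=\emptyset$, spreadability lets me replace $a$ by a fresh $a'\in[n]\setminus(V'\cup\{a,b\})$ at cost $O(\eta)$, and one Cauchy--Schwarz (using $\phi^2 = \phi$) gives
\[
|\ave[(X_{\{a,b\}}-q)\,\phi]|^2 \mik \ave\bigl[(X_{\{a,b\}}-q)(X_{\{a',b\}}-q)\,\phi\bigr] + O(\eta).
\]
A second doubling, now with a fresh copy $b'$ of $b$, produces the same estimate with $|\cdot|^4$ on the left and the four-edge expression $(X_{\{a,b\}}-q)(X_{\{a',b\}}-q)(X_{\{a,b'\}}-q)(X_{\{a',b'\}}-q)$ on the right. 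Two further Cauchy--Schwarz iterations---doubling over two of the vertices appearing in $\phi$, using spreadability to identify the resulting edge patterns with averages of boxes on fresh vertex tuples---absorb $\phi$ into the expectation. Expanding the four-factor $(X-q)$ product and invoking \eqref{e1.7} (together with spreadability to bound the lower-order moments that appear in the expansion) shows the final expression is at most $\vartheta + O(\eta + n^{-1})$; taking $16$-th roots yields the exponent $1/16$, and the numerical constants from the four Cauchy--Schwarz iterations combine into the claimed factor $400$.

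The main obstacle is the last two Cauchy--Schwarz iterations: $\phi$ contains many edges and each doubling duplicates some of them, potentially introducing new graph patterns that are not immediately covered by \eqref{e1.7}. The key point is to use spreadability to approximate these patterns by the canonical four-edge box on fresh vertices, so that only $O(\eta)$ additive slack accumulates per iteration, rather than a multiplicative blow-up with $|\mathcal{F}|$ or $|V'|$. Summing the resulting per-edge estimate across the $m=|\mathcal{F}|$ telescope steps produces the claimed global bound.
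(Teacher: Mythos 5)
Your plan has a genuine gap at the heart of the per-edge estimate: the Gowers-style vertex-doubling Cauchy--Schwarz is not available for a general $\eta$-spreadable array. The inequality $|\ave[(X_{\{a,b\}}-q)\phi]|^2\mik\ave[(X_{\{a,b\}}-q)(X_{\{a',b\}}-q)\phi]+O(\eta)$ that you display does not follow from Cauchy--Schwarz on the underlying probability space --- honest Cauchy--Schwarz with $\phi^2=\phi$ gives $|\ave[(X_{\{a,b\}}-q)\phi]|^2\mik\ave[\phi]\cdot\ave[(X_{\{a,b\}}-q)^2\phi]$, whose right-hand side has no smallness and no fresh vertex. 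Doubling a vertex in the Gowers manner requires a representation $X_s=g(\xi_{i_1},\xi_{i_2})$ with an underlying product measure over the vertex set, as for arrays built from graphs as in \eqref{e8.1}; an $\eta$-spreadable array need not admit such a representation (spreadability is a symmetry hypothesis, not conditional independence of the entries given a vertex-indexed latent). What one can do is average over \emph{many} fresh vertices $a_1,\dots,a_m$ and bound the variance of $\frac{1}{m}\sum_j(X_{\{a_j,b\}}-q)$, but this inevitably produces a diagonal contribution of order $1/m$ that you have dropped; that term is exactly where the $n^{-1/16}$ factor comes from, and it cannot be folded into the $O(\eta)$ slack.

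The later ``absorb $\phi$'' iterations compound the difficulty: doubling vertices inside $\phi$ duplicates an arbitrary collection of edges, producing patterns that need not look like a four-edge box, and neither \eqref{e1.7} nor spreadability gives any control over them. The paper's proof avoids all of this by a quite different mechanism. It proceeds by induction on the dimension $d$, using a ``doubling'' (Lemma \ref{l4.9}) that forms the $(d-1)$-dimensional, $(\mathcal{X}\times\mathcal{X})$-valued array $\widetilde{X}'_t=(X_{t\cup\{n-1\}},X_{t\cup\{n\}})$ --- a doubling of the \emph{dimension}, not of a vertex --- whose box independence is verified from \eqref{e1.7} together with Lemma \ref{l4.5}; a ``gluing'' step (Proposition \ref{p4.11}) which is an averaging/variance argument (Lemmas \ref{l4.1}--\ref{l4.3}) over many fresh vertices; and a ``slicing'' decomposition (Definition \ref{d4.13}) of $\mathcal{F}$ which replaces your telescope. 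Your iterated-Cauchy--Schwarz approach would succeed for arrays of the type \eqref{e8.1} (this is essentially Proposition \ref{p8.2} plus the Gowers--Cauchy--Schwarz inequality), but the theorem claims the result for arbitrary $\eta$-spreadable arrays, and there the very first doubling step already fails.
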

Theorem~\ref{t1.5} shows that the box independence condition\footnote{Note that in Theorem \ref{t1.5}
we only need the one-sided version \eqref{e1.7} of \eqref{e1.6}. Of course, in retrospect,
Theorem~\ref{t1.5} yields that \eqref{e1.7} is actually equivalent to \eqref{e1.6} albeit with
a slightly different constant.} propagates and forces all, not too large, subarrays of $\bbx$
to behave close to independently. Its proof is based on combinatorial and probabilistic ideas, and it is
analogous\footnote{In fact, this is more than an analogy; indeed, it is easy to see that Theorem \ref{t1.5}
yields the aforementioned property of quasirandom graphs.} to the phenomenon---discovered
in the theory of quasirandom graphs \cite{CGW88,CGW89}---that a graph $G$ that contains (roughly)
the expected number of $4\text{-cycles}$ must also contain the expected number of any other,
not too large, graph~$H$. We comment further on the relation between the box independence condition
and quasirandomness of graphs and hypergraphs in Subsection~\ref{subsec7.1}.

\subsection{Connection with combinatorics} \label{subsec1.5}

We proceed to discuss a representative combinatorial application of our main results.

\subsubsection{Families of graphs} \label{subsubsec1.5.1}

We start by observing that for every integer $n\meg 2$ we may identify a graph $G$ on~$[n]$
with an element of $\{0,1\}^{\binom{[n]}{2}}$ via its indicator function~$\mathbf{1}_G$. (More generally,
for every nonempty finite index set $I$ we identify subsets of $I$ with elements of~$\{0,1\}^I$.)
Thus, we view the set $\{0,1\}^{\binom{[n]}{2}}$ as the \emph{space of all graphs on~$n$ vertices}
and we denote by $\mu$ the uniform probability measure on $\{0,1\}^{\binom{[n]}{2}}$. Our application
is related to the following conjecture of Gowers \cite[Conjecture 4]{Go09}.
\begin{conjecture} \label{con1.6}
Let $0<\delta \mik 1$ and assume that $n$ is sufficiently large in terms of $\delta$.
Then for every family of graphs $\mathcal{A}\subseteq \{0,1\}^{\binom{[n]}{2}}$ with
$\mu(\mathcal{A})\meg \delta$ there exist $G,H\in\mathcal{A}$ with $H\subseteq G$ such that
the difference $G\setminus H$ is a clique, that is, $G\setminus H=\binom{X}{2}$ for some
$X\subseteq [n]$ with $|X|\meg 2$.
\end{conjecture}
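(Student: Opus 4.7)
My plan is to apply Theorem~\ref{t1.4} to the normalized indicator of $\mathcal{A}$ and to parlay the resulting conditional concentration into the existence of the desired pair $(H,G)$. Put $\delta\coloneqq\mu(\mathcal{A})$ and
\[
f\coloneqq \frac{\mathbf{1}_{\mathcal{A}}-\delta}{\sqrt{\delta(1-\delta)}},
\]
so that for $\bbx\sim\mu$ uniform on $\{0,1\}^{{[n]\choose 2}}$ we have $\ave[f(\bbx)]=0$ and $\|f(\bbx)\|_{L_2}=1$. The array $\bbx$ is exchangeable, hence $\eta$-spreadable for every $\eta>0$, and its entries are i.i.d.\ $\mathrm{Bernoulli}(1/2)$, so the box independence estimate \eqref{e1.4} holds trivially. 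Fixing an integer $k\meg 2$, choosing $\ee>0$ with $\ee<2^{-{k\choose 2}}$, and taking $n\meg C(2,\ee,k)$, Theorem~\ref{t1.4} yields an interval $I\subseteq[n]$ with $|I|=k$ such that $\prob(|\ave[f(\bbx)\,|\,\mathcal{F}_I]|\mik\ee)\meg 1-\ee$.

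The calibration $\ee<2^{-{k\choose 2}}$ is the decisive point: the $\sigma$-algebra $\mathcal{F}_I$ has exactly $2^{{k\choose 2}}$ atoms, each of $\mu$-mass $2^{-{k\choose 2}}$, so the ``bad'' event has $\mu$-mass $<2^{-{k\choose 2}}$ and therefore contains \emph{no} atom at all. Consequently $|\ave[f(\bbx)\,|\,\mathcal{F}_I]|\mik\ee$ holds on every atom of $\mathcal{F}_I$, and specializing to the empty configuration and to the full-clique configuration gives
\[
\mu\big(\mathcal{A}\,\big|\,\bbx_s=0\ \forall s\in{\textstyle{I\choose 2}}\big)\meg \delta-\ee,\qquad
\mu\big(\mathcal{A}\,\big|\,\bbx_s=1\ \forall s\in{\textstyle{I\choose 2}}\big)\meg \delta-\ee.
\]
Writing $Y$ for the restriction of $\bbx$ to ${[n]\choose 2}\setminus{I\choose 2}$ and setting $F_0(Y)\coloneqq \mathbf{1}_{\mathcal{A}}(Y)$ and $F_1(Y)\coloneqq \mathbf{1}_{\mathcal{A}}(Y\cup{I\choose 2})$, the desired pair with $X=I$ corresponds exactly to an outcome $Y$ with $F_0(Y)=F_1(Y)=1$, i.e.\ to $\ave[F_0 F_1]>0$, with the marginals satisfying $\ave[F_0],\ave[F_1]\meg \delta-\ee$.

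When $\delta>1/2$ this is immediate: the pointwise inequality $F_0 F_1\meg F_0+F_1-1$ for $\{0,1\}$-valued functions gives $\ave[F_0 F_1]\meg 2\delta-1-2\ee>0$. The main obstacle is the small-density regime $\delta\mik 1/2$, where marginal concentration alone cannot force a joint solution. My plan here is a density-increment argument: if $\ave[F_0 F_1]=0$ then $\{F_0=1\}$ and $\{F_1=1\}$ are disjoint slices in the outer configuration space with combined mass $\meg 2(\delta-\ee)$, and a pigeonhole over short patterns of outer edges yields a partial graph on which the conditional density of $\mathcal{A}$ exceeds $\delta$ by a fixed gap. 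Since fixing edges in the uniform model preserves exchangeability and the box independence condition, Theorem~\ref{t1.4} applies anew to the restricted array, and one iterates. The delicate part, and what I expect to be the principal difficulty, is the bookkeeping: each round consumes vertices and edges for the accumulated restriction, and one has to verify that sufficiently many ``free'' vertices remain to host an interval $I$ of size $\meg 2$ throughout the roughly $O(\log(1/\delta))$ iterations required to push the density past $1/2$.
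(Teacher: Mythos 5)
The statement you are trying to prove is Conjecture~\ref{con1.6}, not a theorem of the paper: it is Gowers' conjecture, a critical special case of the density polynomial Hales--Jewett conjecture, and the paper explicitly records that there is ``nearly no information'' about it in the literature. What the paper actually establishes is the much weaker Theorem~\ref{t1.8}, which requires the \emph{additional} hypothesis that $\mathcal{A}$ is $\theta$-quasirandom. Your opening step is sound and, in spirit, close to what the paper does with Theorem~\ref{t2.3}: since $\bbx$ is i.i.d.\ $\mathrm{Bernoulli}(1/2)$, Theorem~\ref{t1.4} applies, and the calibration $\ee<2^{-\binom{k}{2}}$ forces $|\ave[f(\bbx)\mid\mathcal{F}_I]|\le\ee$ on \emph{every} atom of $\mathcal{F}_I$, hence $\mu(\mathcal{A}_x)\ge\delta-\ee$ for every inner pattern $x$. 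But this only gives marginal control, and, as you note, it says nothing about the joint event when $\delta\le 1/2$.

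The density-increment sketch that is supposed to handle $\delta\le 1/2$ is the genuine gap, and it is not a bookkeeping issue. If $\ave[F_0F_1]=0$, the disjoint union $\{F_0=1\}\cup\{F_1=1\}$ has mass $\ge 2(\delta-\ee)$, but it lives in the outer space as a union of two \emph{different} slices of $\mathcal{A}$, and a configuration found inside that union does not translate back to a clique-difference pair inside $\mathcal{A}$: if $Y_1\in\{F_0=1\}$, $Y_2\in\{F_1=1\}$, $Y_1\subseteq Y_2$, and $Y_2\setminus Y_1=\binom{X'}{2}$, the corresponding members of $\mathcal{A}$ are $Y_1$ and $Y_2\cup\binom{I}{2}$, whose difference is $\binom{X'}{2}\cup\binom{I}{2}$ --- a disjoint union of two cliques, not a clique. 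There is also a concrete structural obstruction you must confront: take $\mathcal{A}$ to be the family of graphs with an even number of edges. Then $\mu(\mathcal{A})=1/2$, and for any interval $I$ with $\binom{|I|}{2}$ odd one has $\mathcal{A}_{x_0}$ and $\mathcal{A}_{x_1}$ literally complementary, so $\ave[F_0F_1]=0$ exactly; yet there is no density increment to be had, since $\mu(\mathcal{A}_x)=1/2$ for every inner pattern $x$ and every conditioning on outer edges. (The conjecture still holds for this $\mathcal{A}$, but only via a clique of size $|X|\ge 4$; the mechanism is arithmetic, not density-incremental.) In short, ``pigeonhole over short patterns of outer edges yields a fixed density gap'' is not available here, and inventing a density-increment scheme that closes this gap would amount to resolving the open conjecture itself.
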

Conjecture \ref{con1.6} is a special, but critical, case of the density polynomial
Hales--Jewett conjecture \cite{Ber96}; for a detailed discussion of its significance
we refer to \cite{Go09} where Conjecture \ref{con1.6} was proposed as a polymath project.

Despite the fact that there is considerable interest, there is nearly no information
on Conjecture \ref{con1.6} in the literature (see, however, the online discussion in \cite{Go09}).
This is partly due to the fact that, while the understanding of quasi\-random graphs
is very satisfactory, it is unclear what a quasirandom \emph{family of graphs} actually is.
Our results are pointing precisely in this direction\footnote{Here, it is important to note
that this is a rather basic step of the analysis of Conjecture \ref{con1.6}; indeed,
the combinatorial core of almost every problem in density Ramsey theory is to isolate its
quasirandom and structure components---see, \textit{e.g.}, \cite{Tao08} for an exposition
of this general philosophy.}.

\subsubsection{Quasirandom families of graphs} \label{subsubsubsec1.5.2}

In order to motivate the reader, let us say that a family of graphs $\mathcal{A}\subseteq \{0,1\}^{\binom{[n]}{2}}$
is \emph{isomorphic invariant}\footnote{Isomorphic invariant families of graphs are also referred
to as \emph{graph properties}. It may be argued that Conjecture \ref{con1.6} is more natural for
isomorphic invariant families of graphs, but we do not impose such a restriction in our results.}
if for every permutation $\pi$~of~$[n]$ and every $G\subseteq \binom{[n]}{2}$ we have
\begin{equation} \label{e1.9}
G\in\mathcal{A} \ \ \text{ if and only if } \ \ G_\pi\coloneqq\{\pi(e):e\in G\}\in \mathcal{A};
\end{equation}
that is, $G$ belongs to $\mathcal{A}$ only if every isomorphic copy of $G$ belongs to $\mathcal{A}$.
As we shall see in Proposition \ref{p8.2}, if $\mathcal{A}\subseteq \{0,1\}^{\binom{[n]}{2}}$
is an arbitrary isomorphic invariant family of graphs, then denoting by $\gamma(\mathcal{A})$
the unique nonnegative real such that
\[ \gamma(\mathcal{A})=\mathbf{P}\big( W: W\cup\{i,k\}, W\cup\{i,\ell\}, W\cup\{j,k\}, W\cup\{j,\ell\}\in
\mathcal{A}\big) \]
for every $U=\{i<j<k<\ell\}\in \binom{[n]}{4}$, where $\mathbf{P}$ is
the uniform probability measure on $\{0,1\}^{\binom{[n]}{2}\setminus \binom{U}{2}}$, we have
\[ \gamma(\mathcal{A})\meg \mu(\mathcal{A})^4-o_{n\to\infty}(1). \]
On the other hand, notice that if $\mathcal{A}\subseteq \{0,1\}^{\binom{[n]}{2}}$
is selected uniformly at random, then clearly $\gamma(\mathcal{A})=\mu(\mathcal{A})^4+o_{n\to\infty}(1)$.

Keeping these observations in mind, we view as quasirandom those families of graphs~$\mathcal{A}$
whose parameter $\gamma(\mathcal{A})$ is not significantly larger from the corresponding parameter
of a random family of graphs with the same density. This is, essentially, the content of the following
definition.
\begin{defn}[Quasirandom families of graphs] \label{d1.7}
Let $n\meg 2$ be an integer, let $\theta>0$, and let $\mathcal{A}\subseteq \{0,1\}^{\binom{[n]}{2}}$
be a $($not necessarily isomorphic invariant$)$ family of graphs. We say that $\mathcal{A}$ is
\emph{$\theta$-quasirandom} if, denoting by\, $\mathcal{U}$ the set of all $U=\{i<j<k<\ell\}\in\binom{[n]}{4}$ such that
\begin{equation} \label{e1.10}
\mathbf{P}\big( W: W\cup\{i,k\}, W\cup\{i,\ell\},W\cup\{j,k\}, W\cup\{j,\ell\}\in\mathcal{A}\big)\mik
\mu(\mathcal{A})^4+ \theta,
\end{equation}
\begin{figure}[htb]
\centering \includegraphics[width=.40\textwidth]{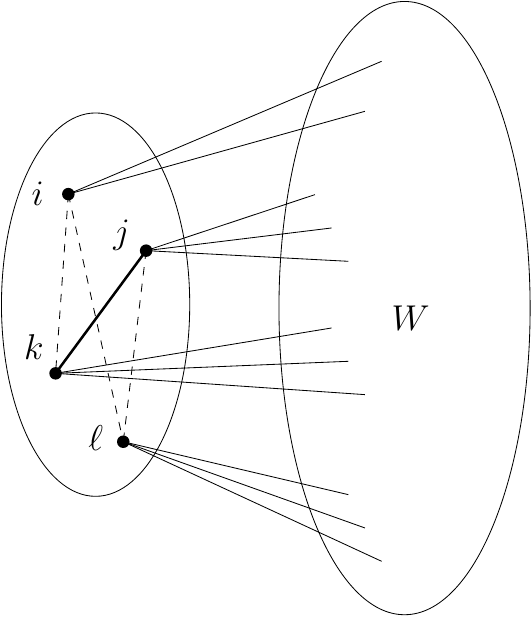}
\caption{Quasirandom families of graphs.}
\label{figure2}
\end{figure}
we have $|\mathcal{U}|\meg (1-\theta) \binom{n}{4}$, where $\mathbf{P}$ is the uniform probability measure
on $\{0,1\}^{\binom{[n]}{2}\setminus \binom{U}{2}}$. Namely, the family $\mathcal{A}$ is $\theta$-quasirandom
if for at least $(1-\theta)$-fraction of increasing quadruples $i<j<k<\ell$ of elements of $[n]$, at most
$\big(\mu(\mathcal{A})^4+ \theta\big)$-fraction of all subgraphs of $\binom{[n]}{2}\setminus \binom{\{i,j,k,\ell\}}{2}$
are such that adding exactly one of the edges $\{i,k\}, \{i,\ell\},\{j,k\},\{j,\ell\}$ yields a graph in $\mathcal{A}$;
see Figure \emph{\ref{figure2}}. $($In particular, if $\mathcal{A}$ is isomorphic invariant, then $\mathcal{A}$
is $\theta$-quasirandom provided that $\gamma(\mathcal{A})\mik \mu(\mathcal{A})^4+\theta$.$)$
\end{defn}
The reader might have already observed the similarity between Definition~\ref{d1.7}
and the classical $4$-cycle condition of quasirandomness of graphs \cite{CGW88,CGW89}.

\subsubsection{\!} \label{subsubsec1.5.3}

The following theorem---which relies on both~conditional concentration and Theorem \ref{t1.5}, and whose
proof is given in Section \ref{sec8}---shows that Definition~\ref{d1.7} is indeed a sensible notion.
\begin{thm} \label{t1.8}
For every $0<\delta\mik 1$ and every integer $k\meg 2$ there exist $\theta>0$ and an integer
$q_0\meg k$ with the following property. Let $n\meg q_0$ be an integer, and let
$\mathcal{A}\subseteq \{0,1\}^{\binom{[n]}{2}}$ be a $\theta$-quasirandom family of graphs
with $\mu(\mathcal{A})\meg \delta$. Then, there exist $K\subseteq [n]$ with $|K|=k$
and $W\subseteq \binom{[n]}{2} \setminus \binom{K}{2}$ such that
\begin{equation} \label{e1.11}
\{W\} \cup \bigg\{ W\cup e: e\in \binom{K}{2}\bigg\}\subseteq\mathcal{A}.
\end{equation}
Thus, there exist $G,H\in\mathcal{A}$ with $H\subseteq G$ such that $G\setminus H$ is a clique.
\end{thm}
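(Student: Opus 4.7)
The plan is to reformulate the conclusion as a lower bound of the form $\ave[\prod_{Y\in\mathcal{Y}}\mathbf{1}_\mathcal{A}(V\cup Y)]>0$, where $\mathcal{Y}=\{\emptyset\}\cup\{\{e\}:e\in{K\choose 2}\}$ and $V$ is a suitable random ``outside'' graph, and then to establish this positivity by combining conditional concentration (Theorem~\ref{t1.4}), which normalises the individual densities $\ave[\mathbf{1}_\mathcal{A}(V\cup Y)]\approx\mu(\mathcal{A})$, with propagation of randomness (Theorem~\ref{t1.5}), which upgrades the $4$-star quasirandomness hypothesis to approximate independence across the family $\mathcal{Y}$.

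First I would apply Theorem~\ref{t1.4} with $\bbx$ the uniform random graph on $\{0,1\}^{{[n]\choose 2}}$ (which is trivially exchangeable and box-independent) and $f:=(\mathbf{1}_\mathcal{A}-\mu(\mathcal{A}))/\|\mathbf{1}_\mathcal{A}-\mu(\mathcal{A})\|_{L_2}$. Choosing $\ee=\ee(\delta,k)$ small, I obtain an interval $I\subseteq[n]$ of size depending on $k$ and $\delta$ such that for every $J\subseteq I$ with $|J|\meg 2$ the conditional density $\prob(\bbx\in\mathcal{A}\mid\mathcal{F}_J)$ lies within $\ee'$ of $\mu(\mathcal{A})$ with probability $\meg 1-\ee'$. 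In particular, this gives uniform control over the one-edge-conditional densities $\prob(\bbx\in\mathcal{A}\mid X_e=1)\approx\mu(\mathcal{A})$ for all $e\in{I\choose 2}$, and more generally shows that fixing any moderate-sized subset of inside edges to prescribed values (in particular the values defining our target star) barely shifts the outside density.

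Next, on a large subset $J\subseteq I$, I define the two-dimensional $\{0,1\}$-valued random array $\boldsymbol{Z}$ by $Z_e:=\mathbf{1}_\mathcal{A}(\boldsymbol{V}\cup\{e\})$, where $\boldsymbol{V}$ is uniform on $\{0,1\}^{{[n]\choose 2}\setminus{J\choose 2}}$. For a $4$-set $U\subseteq J$ with cross edges $e_1,e_2,e_3,e_4$, the product expectation $\ave[Z_{e_1}Z_{e_2}Z_{e_3}Z_{e_4}]$ is the probability that $\boldsymbol{V}\cup\{e_i\}\in\mathcal{A}$ for $i=1,2,3,4$; up to an error controlled by the concentration step (which transfers between the uniform distribution on $\{0,1\}^{{[n]\choose 2}\setminus{U\choose 2}}$ used in Definition~\ref{d1.7} and the uniform distribution on $\{0,1\}^{{[n]\choose 2}\setminus{J\choose 2}}$ used in $\boldsymbol{Z}$), the quasirandomness hypothesis supplies the one-sided box-independence condition~\eqref{e1.7}. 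I then invoke Proposition~\ref{p1.3} to pass further to a subset on which $\boldsymbol{Z}$ is $\eta$-spreadable, and apply Theorem~\ref{t1.5} with $\mathcal{F}={K\choose 2}$ for $K\subseteq J$ with $|K|=k$ to conclude
\[
\Big|\ave\Big[\prod_{e\in{K\choose 2}}Z_e\Big]-\prod_{e\in{K\choose 2}}\ave[Z_e]\Big|\mik 400{k\choose 2}\bigl(|J|^{-1/16}+\eta^{1/16}+\vartheta^{1/16}\bigr).
\]
Since $\ave[Z_e]\meg\mu(\mathcal{A})-\ee'\meg\delta/2$ by concentration, taking $|J|$, $\eta^{-1}$, and $\vartheta^{-1}$ large in terms of $\delta$ and $k$ forces $\ave\bigl[\prod_{e\in{K\choose 2}}Z_e\bigr]$ to be bounded below by a positive constant. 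A minor adjustment---including the extra factor $\mathbf{1}_\mathcal{A}(\boldsymbol{V})$ either by extending the product to an additional ``ghost'' coordinate or by conditioning on the event $\{\boldsymbol{V}\in\mathcal{A}\}$, which has probability $\approx\mu(\mathcal{A})$ by concentration---then yields $\ave[\mathbf{1}_\mathcal{A}(\boldsymbol{V})\prod_{e\in{K\choose 2}}\mathbf{1}_\mathcal{A}(\boldsymbol{V}\cup\{e\})]>0$. Thus there exists a $W:=\boldsymbol{V}$ with the desired star structure, and the ``in particular'' statement follows on setting $H:=W$ and $G:=W\cup\{e\}$ for any $e\in{K\choose 2}$, so that $G\setminus H$ is a clique on two vertices.

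The main obstacle is the transfer between the quasirandomness hypothesis---formulated in terms of the uniform distribution on $\{0,1\}^{{[n]\choose 2}\setminus{U\choose 2}}$ with $|U|=4$---and the box-independence condition required by Theorem~\ref{t1.5} on the auxiliary array $\boldsymbol{Z}$, which naturally uses uniform distributions on $\{0,1\}^{{[n]\choose 2}\setminus{J\choose 2}}$ with $|J|\gg 4$, together with simultaneously securing approximate spreadability of $\boldsymbol{Z}$ via Proposition~\ref{p1.3}. Bridging this gap requires careful averaging based on the one-edge concentration from Theorem~\ref{t1.4}, and the resulting cascade of parameter dependencies $\theta\rightsquigarrow\vartheta\rightsquigarrow\ee'\rightsquigarrow\ee$ must be unwound so that the quantitative estimate of Theorem~\ref{t1.5} still delivers a useful positive lower bound for the star count.
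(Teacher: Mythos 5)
Your high-level strategy is the right one, and it matches the paper's: extract from $\mathcal{A}$ a boolean, two-dimensional, approximately spreadable random array satisfying the box independence condition, then invoke Theorem~\ref{t1.5} (and a final averaging step) to produce the star. But as written the proposal has a genuine gap concerning the \emph{order of quantifiers} between the concentration/Ramsey steps and the use of $\theta$-quasirandomness, and this gap is not merely an averaging nuisance of the ``transfer between base sets'' kind that you flag at the end.

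You first apply Theorem~\ref{t1.4} (or its dissociated version) to obtain an interval $I$ of constant size $k'(\delta,k)$, then pass to a further subset $J\subseteq I$ via Proposition~\ref{p1.3}, and only \emph{then} need the quasirandomness estimate \eqref{e1.10} at $4$-sets $U\subseteq J$ to feed the box independence of $\boldsymbol{Z}$ into Theorem~\ref{t1.5}. The problem is that Definition~\ref{d1.7} only guarantees \eqref{e1.10} for at least $(1-\theta){n\choose 4}$ of the $U$'s: the exceptional set may have size up to $\theta{n\choose 4}$, which for fixed $\theta>0$ and large $n$ vastly exceeds the ${|J|\choose 4}$ possible $4$-subsets of the particular constant-size $J$ you have committed to. Since both Theorem~\ref{t1.4} and Proposition~\ref{p1.3} only assert \emph{existence} of $I$ and of the spreadable subset, you have no freedom to dodge the exceptional set, and there is no reason the $U$'s you need should be among the good ones. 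The paper's proof sidesteps this by inverting the order: it first runs the Ramsey machinery (Lemma~\ref{l9.4}) to attach to \emph{every} $Q\in{[n]\choose q_0}$ an admissible set $P_Q$ and a distinguished $4$-set $U_Q\subseteq P_Q$, shows via the counting estimate \eqref{e9.31} that the image $\{U_Q\}$ has size at least ${n\choose 4}{q_0\choose 4}^{-1}$, and then chooses $\theta\mik\frac{1}{2}{q_0\choose 4}^{-1}$ so that at least one $U_{Q_0}$ lands in the good set. Only after this $U_{Q_0}$ is fixed does it run the martingale/concentration argument \eqref{e9.35}--\eqref{e9.37} to find an intermediate block $M\supseteq U_{Q_0}$ across which the densities transfer, and within $M$ selects the spreadable $\ell$-set $L$ using the relative position $F_{Q_0}$ recorded in the admissibility step. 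Thus the Ramsey and counting steps necessarily precede both the quasirandomness application and the concentration step, and this ordering is what your proposal would need to adopt.

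A smaller, secondary gap is the way you propose to incorporate the factor $\mathbf{1}_{\mathcal{A}}(\boldsymbol{V})$, i.e.\ the requirement $W\in\mathcal{A}$. Conditioning on $\{\boldsymbol{V}\in\mathcal{A}\}$ does not obviously preserve either the $\eta$-spreadability of $\boldsymbol{Z}$ or the box independence, and a ``ghost coordinate'' does not fit the index set ${J\choose 2}$ that Theorem~\ref{t1.5} requires. The paper avoids this entirely by a second-moment argument: partition $L$ into disjoint $k$-sets $K_1,\dots,K_{\lfloor\ell/k\rfloor}$, set $\Gamma_j\coloneqq\bigcap_{e\in{K_j\choose 2}}\mathcal{A}_{x(e,M)}$, apply Theorem~\ref{t1.5} to control $\nu(\Gamma_j)$ and $\nu(\Gamma_j\cap\Gamma_l)$, show that $X\coloneqq\frac{1}{\lfloor\ell/k\rfloor}\sum_j\mathbf{1}_{\Gamma_j}$ is concentrated around $\mu(\mathcal{A})^{k\choose2}$, and then lower bound $\ave[X\mathbf{1}_{\mathcal{A}_{x_0}}]$ via Cauchy--Schwarz and the density of $\mathcal{A}_{x_0}$ obtained from admissibility. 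This yields a $j_0$ with $\mathcal{A}_{x_0}\cap\Gamma_{j_0}\neq\emptyset$ and hence the desired $W$. You would need something of this kind to close the final step.

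Once these two issues are addressed the skeleton of your argument aligns with the paper's; the analytical ingredients (concentration on a subarray of constant size, Ramsey spreadability, the propagation estimate from Theorem~\ref{t1.5}, and a terminal second-moment count) are exactly those used in Section~\ref{sec9}.
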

Theorem \ref{t1.8} asserts that every non-negligible quasirandom family $\mathcal{A}$
of sufficiently large graphs contains a graph $W$ for which there is a large set $K$ such that
the induced subgraph $W[K]$ of $W$ on $K$ is empty, while at the same time, adding any single edge
from $\binom{K}{2}$ to $W$ does not leave the family $\mathcal{A}$. Note, in particular, that Theorem~\ref{t1.8}
yields an affirmative answer to Conjecture \ref{con1.6} for quasirandom families of graphs in a strong sense:
we can select the graphs $G$ and $H$ so that the difference $G\setminus H$ is a single edge.
Finally, we point out that the proof of Theorem \ref{t1.8} is effective;
see Remark~\ref{r8.6} for its quantitative aspects.

\subsection{Related work} \label{subsec1.6}

Although Theorem \ref{t1.4} (as well as its higher dimensional extension, Theorem \ref{t5.1})
is somewhat distinct from the traditional setting of concentration of smooth functions,
it is related with several results that we are about to discuss.

Arguably, the one-dimensional case---that is, the case of random vectors---is the most heavily investigated.
It is impossible to give here a comprehensive review; we only mention that concentration estimates
for functions of finite exchangeable random vectors have been obtained in \cite{Bob04,Ch06}.

The two-dimensional case is also heavily investigated, in particular, in the literature around various
random matrix models. However, closer to the spirit of this paper is the work of Latala \cite{La06}
and the subsequent papers \cite{AdWo15,GSS19,V19}, which obtain exponential concentration inequalities for
smooth functions (\textit{e.g.}, polynomials) of high-dimensional random arrays whose entries are of the form
\begin{equation} \label{e1.12}
X_s=\prod_{i\in s} \xi_i,
\end{equation}
where $(\xi_1,\dots,\xi_n)$ is a random vector with independent entries and a well-behaved distribution.
Note that all these arrays are dissociated\footnote{See Subsection \ref{subsec2.1} below for the
definition of dissociativity.}, and are additionally exchangeable if the random variables
$\xi_1,\dots,\xi_n$ are identically distributed.

That said, the study of concentration inequalities for functions of more general finite high-dimensional random arrays
is nearly not developed at all, mainly because the structure of finite high-dimensional\footnote{The understanding
is better in the one-dimensional case---see \cite{DF80}.} random arrays is quite complicated (see, also,
\cite[page~16]{Au13} for a discussion on this issue). We make a step in this direction in the companion paper \cite{DTV21}.

\subsection{Organization of the paper} \label{subsec1.7}

We close this section by giving an outline of the contents of this paper.
It is divided into two parts, Part \ref{part1} and Part \ref{part2}, which are largely independent
of each other and can be read separately.

Part \ref{part1} consists of Sections \ref{sec2} up to \ref{sec6}. The main result in Section \ref{sec2}
is Theorem \ref{t2.2}, which reduces conditional concentration to approximate dissociativity.
The next two sections, Sections \ref{sec3} and \ref{sec4}, are devoted to the proof of
Theorem \ref{t1.5} and its higher-dimensional extension, Theorem \ref{t3.2}.
In Section \ref{sec3} we introduce related definitions and we also present some consequences.
The proof of Theorem \ref{t3.2} is given in Section \ref{sec4}; this is the most technically demanding
part of the paper. In Section \ref{sec5} we complete the proofs of Theorem \ref{t1.4} and its higher-dimensional
extension, Theorem \ref{t5.1}. Lastly, in Section~\ref{sec6} we present extensions/refinements
of Theorems \ref{t1.4} and \ref{t5.1} for dissociated random arrays (Theorem~\ref{t6.1}),
for vector-valued functions of random arrays (Theorem~\ref{t6.3}) and a simultaneous
conditional concentration result (Theorem \ref{t6.4}).

Part \ref{part2} consists of Sections \ref{sec7} and \ref{sec8} and it is entirely devoted
to the connection of our results with combinatorics. In Section~\ref{sec7} we give
examples of combinatorial structures for which our conditional concentration results
are applicable, and in Section \ref{sec8} we give the proof of Theorem~\ref{t1.8}.

Finally, in Appendix \ref{Appendix-A} we present examples that show the optimality of the box independence condition.

\subsection*{Acknowledgments}

The authors would like to thank the anonymous referee for numerous comments, remarks and suggestions
that helped us improve the exposition.

The research was supported by the Hellenic Foundation for Research and Innovation
(H.F.R.I.) under the “2nd Call for H.F.R.I. Research Projects to support
Faculty Members \& Researchers” (Project Number: HFRI-FM20-02717).


\part{Proofs of the main results} \label{part1}


\section{From dissociativity to concentration} \label{sec2}

\numberwithin{equation}{section}

\subsection{Main result} \label{subsec2.1}

Let $d$ be a positive integer, and recall that a $d$-dimensional random array $\bbx$ on a (possibly infinite)
subset $I$ of $\nn$ is called \textit{dissociated}\footnote{Notice that this form of dissociativity
(as well as the corresponding approximate version in Definition~\ref{d2.1}) is weaker than the standard
one in the absence of exchangeability, since we do not require independence of $\mathcal{F}_J$ and $\mathcal{F}_K$
for all pairs of disjoint sets $J$ and $K$.} if for every $J,K\subseteq I$ with $|J|,|K|\meg d$
and $\max(J)<\min(K)$, the $\sigma$-algebras $\mathcal{F}_J$ and $\mathcal{F}_K$ are independent,
that is, for every $A\in \mathcal{F}_J$ and $B\in\mathcal{F}_K$ we have $\prob(A\cap B)=\prob(A)\,\prob(B)$.
Dissociativity is a classical concept in probability (see \cite{MS75});
we will need the following approximate version of this notion.
\begin{defn}[Approximate dissociativity] \label{d2.1}
Let $n,\ell,d$ be positive integers such that $n\meg \ell\meg 2d$, and let $0\mik \beta\mik 1$. We say that
a $d$-dimensional random array $\bbx$ on~$[n]$ is \emph{$(\beta,\ell)$-dissociated} provided that for every
$J,K\subseteq [n]$ with $|J|,|K|\meg d$, $|J|+|K|\mik \ell$ and $\max(J)<\min(K)$, and every pair of events
$A\in \mathcal{F}_J$ and $B\in \mathcal{F}_K$ we have
\begin{equation} \label{e2.1}
\big| \prob(A\cap B) -\prob(A)\,\prob(B) \big|\mik \beta.
\end{equation}
\end{defn}
The following theorem---which is the main result in this section---provides the link between
conditional concentration and approximate dissociativity.
\begin{thm} \label{t2.2}
Let $d$ be a positive integer, let $1<p\mik 2$, let $0<\ee\mik 1$, let $k\meg d$ be an integer, and set
\begin{align}
\label{e2.2} \beta=\beta(p,\ee) & \coloneqq \Big(\frac{\ee}{10}\Big)^{\frac{10}{p-1}}, \\
\label{e2.3} \ell=\ell(p,\ee,k) & \coloneqq \bigg\lceil \frac{4}{\ee^4(p-1)}\, k \bigg\rceil.
\end{align}
Also let $n\meg \ell$ be an integer, and let $\bbx$ be a $(\beta,\ell)$-dissociated,
$d$-dimensional random array on $[n]$ whose entries take values in a measurable space $\mathcal{X}$.
Then for every measurable function $f\colon \mathcal{X}^{\binom{[n]}{d}}\to\rr$ with $\ave[f(\bbx)]=0$
and $\|f(\bbx)\|_{L_p}=1$ there exists an interval $I$~of~$[n]$ with $|I|=k$ such that
for every $J\subseteq I$ with $|J|\meg d$ we have
\begin{equation} \label{e2.4}
\prob\big( \big|\ave[f(\bbx)\,|\, \mathcal{F}_J]\big|\mik\ee\big) \meg 1-\ee.
\end{equation}
\end{thm}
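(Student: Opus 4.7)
My plan is to apply a Doob-martingale argument along a chain of consecutive intervals inside $[n]$ and then use the $(\beta,\ell)$-dissociativity to turn a small martingale increment into a small conditional expectation on a whole $k$-interval. Fix $m:=\lfloor \ell/k\rfloor$ and partition the prefix $[1,mk]$ of $[n]$ into consecutive intervals $I_1,\dots,I_m$ of length $k$ each. With $J_i:=I_1\cup\dots\cup I_i$, the filtration $(\mathcal{F}_{J_i})_{i=0}^m$ is increasing, and the Doob martingale $f_i:=\ave[f(\bbx)\,|\,\mathcal{F}_{J_i}]$ satisfies $f_0=0$ and $\|f_m\|_{L_p}\mik 1$. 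Invoking the fact that $L_p$ has martingale type $p$ for $1<p\mik 2$ with a constant of order $1/(p-1)$, one gets
$$\sum_{i=1}^m \|f_i-f_{i-1}\|_{L_p}^p \;\mik\; \frac{C}{p-1}\,\|f_m\|_{L_p}^p \;\mik\; \frac{C}{p-1},$$
and pigeonhole with $m\asymp 1/[\ee^4(p-1)]$ (from \eqref{e2.3}) produces an index $i^*$ with $\|f_{i^*}-f_{i^*-1}\|_{L_p}\mik C'\ee^{4/p}$.

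Set $I:=I_{i^*}$ and decompose, via the tower property,
$$\ave[f(\bbx)\,|\,\mathcal{F}_I] \;=\; \ave[f_{i^*-1}\,|\,\mathcal{F}_I]\;+\;\ave[f_{i^*}-f_{i^*-1}\,|\,\mathcal{F}_I].$$
The second term has $L_p$-norm at most $\|f_{i^*}-f_{i^*-1}\|_{L_p}$ by Jensen, so it is already controlled. The first term is the heart of the matter, and the main obstacle of the proof. Since $f_{i^*-1}$ is $\mathcal{F}_{J_{i^*-1}}$-measurable, $\ave[f_{i^*-1}]=0$, $\max J_{i^*-1}<\min I$, and $|J_{i^*-1}|+|I|=i^*k\mik \ell$, the $(\beta,\ell)$-dissociativity applies to the pair $(\mathcal{F}_{J_{i^*-1}},\mathcal{F}_I)$. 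The task is then to convert this event-level approximate independence into an $L_p$-bound on $\ave[f_{i^*-1}\,|\,\mathcal{F}_I]$ for the merely $L_p$ function $f_{i^*-1}$. The plan is to truncate $f_{i^*-1}$ at a carefully chosen level $T$, to approximate the bounded part by simple functions and sum the $\beta$-errors coming from \eqref{e2.1} over their level sets, and to control the $L_p$-tail crudely by $\|f_{i^*-1}\|_{L_p}\mik 1$; the choice $\beta=(\ee/10)^{10/(p-1)}$ in \eqref{e2.2} is tailored precisely so that balancing these errors gives $\|\ave[f_{i^*-1}\,|\,\mathcal{F}_I]\|_{L_p}\mik C''\ee^{1+1/p}$.

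Combining the two terms yields $\|\ave[f(\bbx)\,|\,\mathcal{F}_I]\|_{L_p}\mik \ee^{1+1/p}$ (up to an absolute constant). For any $J\subseteq I$ with $|J|\meg d$, the inclusion $\mathcal{F}_J\subseteq \mathcal{F}_I$ together with the tower property and Jensen's inequality give $\|\ave[f(\bbx)\,|\,\mathcal{F}_J]\|_{L_p}\mik \|\ave[f(\bbx)\,|\,\mathcal{F}_I]\|_{L_p}$, and a final application of Markov's inequality upgrades this to $\prob(|\ave[f(\bbx)\,|\,\mathcal{F}_J]|>\ee)\mik \ee^{p+1}/\ee^p=\ee$, which is exactly \eqref{e2.4}.
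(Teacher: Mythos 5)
Your architecture — a Doob martingale along consecutive $k$-blocks, locating a small increment via a martingale inequality, then controlling $\ave[f_{i^*-1}\,|\,\mathcal{F}_I]$ via mixing and truncation — is exactly the paper's, and the decomposition via the nested sets $J_i=I_1\cup\dots\cup I_i$ is a legitimate variant of the paper's filtration $\mathcal{A}_i=\sigma(\mathcal{F}_{J_1},\dots,\mathcal{F}_{J_i})$. But there is one minor slip and one fatal gap.

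The minor slip: ``martingale type $p$'' should be martingale \emph{cotype}, and $L_p$ for $1<p\mik2$ has cotype $2$, not $p$; no Banach space has cotype strictly smaller than $2$, so your claimed inequality $\sum_i\|f_i-f_{i-1}\|_{L_p}^p\mik C(p-1)^{-1}\|f_m\|_{L_p}^p$ is false (take $m$ comparable Rademacher-type increments: the left side grows like $m$ while the right grows like $m^{p/2}$). The correct Ricard--Xu bound (Proposition~\ref{p2.4}) is $\big(\sum_i\|d_i\|_{L_p}^2\big)^{1/2}\mik(p-1)^{-1/2}\|\sum_id_i\|_{L_p}$, and pigeonhole then gives $\|d_{i^*}\|_{L_p}\mik 1/\sqrt{m(p-1)}\approx\ee^2$, not $\ee^{4/p}$; this is fixable and does not affect the final outcome. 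The fatal gap is the claim that the truncation/mixing step produces an $L_p$ bound $\|\ave[f_{i^*-1}\,|\,\mathcal{F}_I]\|_{L_p}\mik C''\ee^{1+1/p}$. Truncating at level $t$ forces you to control the tail $\ave\big[|f_{i^*-1}|^p\mathbf{1}_{[|f_{i^*-1}|>t]}\big]$, and the only available information $\|f_{i^*-1}\|_{L_p}\mik1$ gives no quantitative decay in $t$: a spike of height $M$ on a set of measure $M^{-p}$ has $L_p$-norm $1$, yet its tail equals $1$ for every $t<M$. No choice of $\beta$, however small, can balance the bounded-part error $(4\beta)^{1/p}t$ against this. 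The repair is to measure the output in $L_r$ for some $r<p$, where the tail does decay quantitatively, namely $\|Z-Z_t\|_{L_r}^r\mik\|Z\|_{L_p}^p\,t^{-(p-r)}$ by H\"older and Markov; this is precisely why the paper's key mixing estimate (Proposition~\ref{p2.7}) is stated only for $r<p$ and why the proof of Theorem~\ref{t2.2} applies Markov's inequality with the $L_r$ norm for $r=(p+1)/2$, not $L_p$. Your concluding step $\prob\big(|\ave[f(\bbx)\,|\,\mathcal{F}_J]|>\ee\big)\mik\ee^{p+1}/\ee^p$ must be replaced by Markov in $L_r$.
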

We note that for spreadable random arrays there is a converse of Theorem \ref{t2.2}, namely, approximate dissociativity
is in fact necessary in order to have conditional concentration; see Proposition \ref{p2.8}
in Subsection \ref{subsec2.6}.

\subsection{Moment bound} \label{subsec2.2}

The following moment estimate is the main step of the proof of Theorem~\ref{t2.2}.
\begin{thm} \label{t2.3}
Let $d,\ell, n$ be positive integers with $n\meg \ell\meg 2d$, let $0\mik\beta \mik 1$, and
let $\bbx$ be a $d$-dimensional random array on $[n]$ that is $(\beta,\ell)$-dissociated and whose entries
take values in a measurable space $\mathcal{X}$. Then, for every $1<p\mik 2$, every measurable function
$f\colon \mathcal{X}^{\binom{[n]}{d}} \to \mathbb R$ with $f({\bbx})\in L_p$, every integer $k$ with
$d\mik k\mik \lfloor \ell/2 \rfloor$, and every $I\in \binom{[n]}{\ell}$, there exists $J\in \binom{I}{k}$
with the following property. For any $1\mik r < p$, we have
\begin{equation} \label{e2.5}
\big\| \ave[ f({\bbx})\,|\, \mathcal{F}_J]-\ave[f({\bbx})]\big\|_{L_r} \mik
\bigg(\!(p-1)^{-1/2}\, \sqrt{\frac{2k}{\ell}} + 10 \beta^{\frac{1}{r} - \frac{1}{p}}\!\bigg) \,
\big\| f({\bbx}) - \ave[f({\bbx})] \big\|_{L_p},
\end{equation}
where $\mathcal{F}_J$ denotes the $\sigma$-algebra generated by the subarray $\bbx_J$
$($see Definition \emph{\ref{d1.1}}$)$. Moreover, if\, $I$ is an interval of $[n]$,
then $J$ may be chosen to be an interval.
\end{thm}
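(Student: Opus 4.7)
The plan is to combine three ingredients: a block-wise martingale decomposition of $\ave[f(\bbx)\,|\,\mathcal{F}_I]$, Pisier's $L_p$-inequality (martingale cotype $2$ of $L_p$ for $1<p\leq 2$, with constant $(p-1)^{-1/2}$), and a truncation/interpolation argument that converts the event-wise dissociation estimate $\beta$ into an $L_r$-bound.

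After centering $f$ so that $\ave[f(\bbx)]=0$, I would enumerate $I$ in its induced order and partition it into $m:=\lfloor\ell/k\rfloor\geq \ell/(2k)$ consecutive blocks $B_1,\ldots,B_m$ of size $k$ (using $k\leq\lfloor\ell/2\rfloor$, and discarding the at most $k-1$ leftover indices at the right end). Each $B_j$ is an interval of $[n]$ whenever $I$ is. Set $\mathcal{H}_j:=\mathcal{F}_{B_1\cup\cdots\cup B_j}$ (with $\mathcal{H}_0$ trivial) and $E_j:=\ave[f(\bbx)\,|\,\mathcal{H}_j]-\ave[f(\bbx)\,|\,\mathcal{H}_{j-1}]$. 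Pisier's inequality applied to the $L_p$-martingale $(\ave[f(\bbx)\,|\,\mathcal{H}_j])_{j=0}^m$ yields $\sum_{j=1}^m\|E_j\|_{L_p}^2\leq(p-1)^{-1}\|f(\bbx)\|_{L_p}^2$, and a pigeonhole produces some $j^*$ with $\|E_{j^*}\|_{L_p}^2\leq\frac{2k}{\ell(p-1)}\|f(\bbx)\|_{L_p}^2$. Take $J:=B_{j^*}$. Because $\mathcal{F}_J\subseteq\mathcal{H}_{j^*}$, the tower law gives the key identity
\[
\ave[f(\bbx)\,|\,\mathcal{F}_J] \;=\; \ave[E_{j^*}\,|\,\mathcal{F}_J] \;+\; \ave\bigl[U\,\bigm|\,\mathcal{F}_J\bigr], \qquad U:=\ave[f(\bbx)\,|\,\mathcal{H}_{j^*-1}].
\]
Jensen together with the monotonicity of $L_r$-norms on a probability space (valid since $r<p$) controls the first summand by $\|\ave[E_{j^*}\,|\,\mathcal{F}_J]\|_{L_r}\leq\|E_{j^*}\|_{L_p}\leq(p-1)^{-1/2}\sqrt{2k/\ell}\,\|f(\bbx)\|_{L_p}$, which is the main term.

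For the second summand I would use $(\beta,\ell)$-dissociation: $\mathcal{H}_{j^*-1}$ and $\mathcal{F}_J$ are indexed by disjoint sets that are separated ($\max B_{j^*-1}<\min B_{j^*}$) with combined ambient size $j^*k\leq\ell$, so $|\prob(A\cap B)-\prob(A)\prob(B)|\leq\beta$ for all $A\in\mathcal{H}_{j^*-1}$, $B\in\mathcal{F}_J$. I would truncate $U$ at a threshold $T$ and combine the event-wise $\beta$-estimate applied to the bounded part (producing a term of order $T\beta$) with Chebyshev for the tail (producing a term of order $T^{1-p}\|U\|_{L_p}^p$); optimizing at $T\sim\|U\|_{L_p}\beta^{-1/p}$ yields $\|\ave[U\,|\,\mathcal{F}_J]-\ave[U]\|_{L_1}\lesssim\beta^{1-1/p}\|U\|_{L_p}$. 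Riesz--Thorin interpolation between this $L_1$-bound and the trivial $L_p$-bound $2\|U\|_{L_p}$, at parameter $\theta=(1-1/r)/(1-1/p)$, lifts this to $\|\ave[U\,|\,\mathcal{F}_J]-\ave[U]\|_{L_r}\leq 10\,\beta^{1/r-1/p}\|U\|_{L_p}\leq 10\,\beta^{1/r-1/p}\|f(\bbx)\|_{L_p}$, which is the error term. Summing the two bounds closes the argument.

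The principal technical hurdle I expect is this last interpolation step: the H\"older-type exponent $1/r-1/p$ is exactly the Riesz--Thorin parameter between the dissociation-driven $L_1$-bound (which scales like $\beta^{1-1/p}$) and the $L_p$-bound, so both the truncation threshold and the interpolation exponent must be balanced carefully to match the numerical constant $10$ and the precise exponent appearing in the theorem. A subsidiary, purely bookkeeping check is that the case $j^*=1$---when $\mathcal{H}_{j^*-1}$ is trivial and the second summand vanishes---recovers the bound with no dissociation error, while $j^*\geq 2$ falls squarely inside the range where $(\beta,\ell)$-dissociation applies.
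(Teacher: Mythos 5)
Your proposal is correct and follows essentially the same route as the paper's proof in Subsection~\ref{subsec2.5}: partition $I$ into $m=\lfloor\ell/k\rfloor$ consecutive $k$-blocks, apply the Ricard--Xu martingale cotype inequality (Proposition~\ref{p2.4}) together with a pigeonhole to find a block $J$ whose martingale increment is small in $L_p$, and then control the conditioned past $\ave[U\mid\mathcal{F}_J]$ by truncation and interpolation driven by the $\beta$-dissociation estimate. The only cosmetic differences are that you take the finer filtration $\mathcal{H}_j=\mathcal{F}_{B_1\cup\cdots\cup B_j}$ where the paper uses $\mathcal{A}_j=\sigma(\mathcal{F}_{J_1},\ldots,\mathcal{F}_{J_j})$ (both lie inside $\mathcal{F}_{B_1\cup\cdots\cup B_j}$ so either is compatible with the $(\beta,\ell)$-dissociativity hypothesis), and that you derive the $\beta^{1/r-1/p}$ factor by first optimizing a truncated $L_1$ mixing bound and then passing to $L_r$ via log-convexity of $L_p$-norms for a fixed function (Lyapunov's inequality, which is what your ``Riesz--Thorin'' step amounts to here), whereas the paper's Lemma~\ref{l2.5} and Proposition~\ref{p2.7} prove the mixing estimate directly in $L_r$ before truncating---a logically equivalent rearrangement of the same ingredients.
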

Theorem \ref{t2.3} easily yields Theorem \ref{t2.2}. We present the details below.
\begin{proof}[Proof of Theorem \emph{\ref{t2.2}} assuming Theorem \emph{\ref{t2.3}}.]
Set $r\coloneqq (p+1)/2$ and notice that with this choice we have $1< r<p \mik 2$. Since $\ave[f(\bbx)]=0$
and $\|f(\bbx)\|_{L_p} =1$, by Theorem \ref{t2.3} applied for the interval $I_1\coloneqq [\ell]$,
there exists an interval $I_2$ of $[\ell]$ with $|I_2|=k$ such that
\begin{equation} \label{e2.6}
\big\|\ave[f(\bbx)\,|\, \mathcal{F}_{I_2}]\big\|_{L_r} \mik
(p-1)^{-1/2}\, \sqrt{\frac{2k}{\ell } } + 10 \beta^{\frac{1}{r} - \frac{1}{p}}.
\end{equation}
We claim that the interval $I_2$ is as desired. Indeed, fix a subset $J$ of $I_2$ with $|J|\meg d$,
and observe that $\mathcal{F}_J\subseteq \mathcal{F}_{I_2}$. Therefore, by \eqref{e2.6}
and the fact that the conditional expectation is a linear contraction on $L_r$, we obtain that
\[ \big\|\ave[f(\bbx)\,|\, \mathcal{F}_J]\big\|_{L_r} \mik
(p-1)^{-1/2}\, \sqrt{\frac{2k}{\ell } } + 10 \beta^{\frac{1}{r} - \frac{1}{p}}. \]
By Markov's inequality, this estimate yields that
\begin{equation} \label{e2.7}
\prob\big( \big|\ave[f(\bbx)\,|\, \mathcal{F}_J]\big|\meg\ee\big)  \mik
(1/\ee)^r \cdot \bigg(\!(p-1)^{-1/2}\, \sqrt{\frac{2k}{\ell } } + 10 \beta^{\frac{1}{r} - \frac{1}{p}}\!\bigg)^r.
\end{equation}
By \eqref{e2.7}, the choice of $r$ and the choice of $\beta$ and $\ell$ in \eqref{e2.2} and \eqref{e2.3} respectively,
we conclude that
\begin{equation} \label{e2.8}
\prob\big( \big|\ave[f(\bbx)\,|\, \mathcal{F}_J]\big|\meg\ee\big)\mik\ee,
\end{equation}
which clearly implies \eqref{e2.4}. The proof of Theorem \ref{t2.3} is completed.
\end{proof}
The rest of this section is devoted to the proof of Theorem \ref{t2.3}, which is based on inequalities for
martingales in $L_p$ spaces. Martingales are, of course, standard tools in the proofs of concentration estimates.
Typically, one decomposes a given random variable $X$ into martingale increments, and then controls an appropriate
norm of~$X$ by controlling the norm of the increments. In the proof of Theorem~\ref{t2.3} we also decompose a given
random variable into martingale increments but, in contrast, we seek to find one of the increments that has controlled norm.
This method, known as the \textit{energy increment strategy}, was introduced in the present probabilistic setting by
Tao \cite{Tao06} for ``$p=2$"\!, and then extended in the full range of admissible $p$'s in \cite{DKT16}.
Having said that, we also note that the main novelty of the present paper lies in the selection of the filtration.

We now briefly describe the contents of the rest of this section. In Subsection \ref{subsec2.3}
we present the analytical estimate that is used\footnote{Square-function estimates could also be used,
but they do not yield optimal dependence with respect to the integrability parameter $p$.} in the proof
of Theorem \ref{t2.3}. In Subsection~\ref{subsec2.4} we prove an orthogonality result for pairs of
$\sigma$-algebras that satisfy the estimate~\eqref{e2.1}. The proof of Theorem \ref{t2.3} is completed in
Subsection \ref{subsec2.5}. Finally, in Subsection \ref{subsec2.6} we show that, for spreadable random arrays,
the assumption of approximate dissociativity in Theorem~\ref{t2.2} is necessary.

\subsection{Martingale difference sequences} \label{subsec2.3}

It is an elementary, though important, fact that martingale difference sequences
are orthogonal in $L_2$. We will need the following extension of this fact.
\begin{prop} \label{p2.4}
Let $1<p \mik 2$. Then for every martingale difference sequence $(d_i)_{i=1}^m$ in $L_p$ we have
\begin{equation} \label{e2.9}
\bigg( \sum_{i=1}^m \|d_i\|^2_{L_p} \bigg)^{1/2} \mik \big(p-1\big)^{-1/2} \, \bigg\| \sum_{i=1}^m d_i\bigg\|_{L_p}.
\end{equation}
In particular,
\begin{equation} \label{e2.10}
\min_{1\mik i\mik m} \|d_i\|_{L_p} \mik \frac{1}{\sqrt{m(p-1)} }\, \bigg\| \sum_{i=1}^m d_i \bigg\|_{L_p}.
\end{equation}
\end{prop}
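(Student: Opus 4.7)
The estimate \eqref{e2.10} is an immediate consequence of \eqref{e2.9}: the minimum of $m$ non-negative numbers is at most their quadratic mean, so $\min_{i\mik m}\|d_i\|_{L_p} \mik m^{-1/2}\big(\sum_{i=1}^m\|d_i\|_{L_p}^2\big)^{1/2}$, and combining this with \eqref{e2.9} delivers \eqref{e2.10}. Hence the plan is to focus on \eqref{e2.9}.

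My plan for \eqref{e2.9} is a telescoping argument that reduces the whole statement to the following two-term orthogonality-type lemma, which is a form of $2$-uniform convexity of $L_p$ in the martingale setting: for $1<p\mik 2$, any sub-$\sigma$-algebra $\mathcal{G}$, any $\mathcal{G}$-measurable $X\in L_p$, and any $Y\in L_p$ with $\ave[Y\mid \mathcal{G}]=0$, one has
\begin{equation*}
\|X+Y\|_{L_p}^2 \;\meg\; \|X\|_{L_p}^2 + (p-1)\,\|Y\|_{L_p}^2.
\end{equation*}
Granted this, let $(\mathcal{F}_k)_{k\meg 0}$ denote the filtration associated with the martingale difference sequence and write $S_k \coloneqq \sum_{i=1}^k d_i$, so that $S_{k-1}$ is $\mathcal{F}_{k-1}$-measurable and $\ave[d_k\mid\mathcal{F}_{k-1}]=0$. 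Applying the lemma with $\mathcal{G}=\mathcal{F}_{k-1}$, $X=S_{k-1}$, $Y=d_k$ gives $\|S_k\|_{L_p}^2 \meg \|S_{k-1}\|_{L_p}^2 + (p-1)\|d_k\|_{L_p}^2$, and a trivial induction starting from $S_0=0$ then yields $\|S_m\|_{L_p}^2 \meg (p-1)\sum_{i=1}^m\|d_i\|_{L_p}^2$, which after taking square roots is exactly \eqref{e2.9}.

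The main obstacle is the two-term lemma itself. The classical route is to start from the second-order Taylor expansion (with integral remainder) of the scalar function $t\mapsto |a+tb|^p$,
\begin{equation*}
|a+b|^p \;=\; |a|^p + p\,b\,\mathrm{sgn}(a)\,|a|^{p-1} + p(p-1)\,b^2\int_0^1(1-s)\,|a+sb|^{p-2}\,ds,
\end{equation*}
which is legitimate since for $1<p\mik 2$ the singularity $|a+sb|^{p-2}$ is integrable. One substitutes $a=X(\omega)$, $b=Y(\omega)$ and integrates over $\omega$: the linear term vanishes because $p\,\mathrm{sgn}(X)\,|X|^{p-1}$ is $\mathcal{G}$-measurable and $\ave[Y\mid\mathcal{G}]=0$, while a H\"older inequality applied to the quadratic remainder (with conjugate exponents $2/p$ and $2/(2-p)$) bounds it below by $\|Y\|_{L_p}^2$ at the cost of a factor of $\|X+Y\|_{L_p}^{2-p}$, after which rearrangement produces the squared form stated above. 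The crux is tracking the sharp constant $(p-1)$ through the H\"older step, since any slack there would replace $(p-1)^{-1/2}$ in \eqref{e2.9}--\eqref{e2.10} by a strictly larger factor and degrade the downstream quantitative estimates in Theorem \ref{t2.3}.
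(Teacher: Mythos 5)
Your reduction of \eqref{e2.10} from \eqref{e2.9} and the telescoping reduction of \eqref{e2.9} to the two-term inequality $\|X+Y\|_{L_p}^2 \meg \|X\|_{L_p}^2 + (p-1)\|Y\|_{L_p}^2$ (for $\mathcal{G}$-measurable $X\in L_p$ and $Y\in L_p$ with $\ave[Y\mid\mathcal{G}]=0$) are both correct; this is also exactly the route the paper follows, except that it cites rather than proves this sharp estimate, attributing it to Ricard and Xu \cite{RX16} via the uniform convexity inequality of \cite[Lemma~4.32]{Pi11}.

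The gap is in the final step: the Taylor-plus-H\"older sketch as written does not deliver the constant $p-1$. Integrating the Taylor identity gives $\|X+Y\|_{L_p}^p-\|X\|_{L_p}^p = p(p-1)\int_0^1(1-s)\,\ave\big[Y^2|X+sY|^{p-2}\big]\,ds$, and H\"older with exponents $2/p$, $2/(2-p)$ yields $\ave\big[Y^2|X+sY|^{p-2}\big]\meg\|Y\|_{L_p}^2\,\|X+sY\|_{L_p}^{p-2}$ --- note the intermediate $\|X+sY\|_{L_p}$, not $\|X+Y\|_{L_p}$ as you wrote. Crudely replacing $\|X+sY\|_{L_p}$ by $\|X+Y\|_{L_p}$ (legitimate here since $\|X\|_{L_p}=\|\ave[X+Y\mid\mathcal{G}]\|_{L_p}\mik\|X+Y\|_{L_p}$), integrating $(1-s)$, and then multiplying through by $\|X+Y\|_{L_p}^{2-p}$ and using $\|X\|_{L_p}^p\|X+Y\|_{L_p}^{2-p}\meg\|X\|_{L_p}^2$ only gives $\|X+Y\|_{L_p}^2\meg\|X\|_{L_p}^2+\tfrac{p(p-1)}{2}\|Y\|_{L_p}^2$; the coefficient $\tfrac{p(p-1)}{2}$ is strictly below $p-1$ for $p<2$, which is precisely the slack you correctly flag as unacceptable. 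One clean fix is to Taylor-expand $\phi(t)\coloneqq\|X+tY\|_{L_p}^2$ rather than the $p$th power: writing $g(t)\coloneqq\|X+tY\|_{L_p}^p$ one has $\phi''=\tfrac{2}{p}\big[(\tfrac{2}{p}-1)g^{2/p-2}(g')^2+g^{2/p-1}g''\big]$, and after discarding the first term (non-negative because $2/p\meg 1$), the power-rule factor $g^{2/p-1}(t)=\|X+tY\|_{L_p}^{2-p}$ cancels exactly the $\|X+tY\|_{L_p}^{-(2-p)}$ produced by applying the same H\"older step to $g''(t)=p(p-1)\,\ave[Y^2|X+tY|^{p-2}]$, giving $\phi''(t)\meg 2(p-1)\|Y\|_{L_p}^2$ uniformly in $t$; since the conditioning makes $\phi'(0)=0$, Taylor at $t=1$ then yields the two-term inequality with the sharp constant.
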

We note that the constant $(p-1)^{-1/2}$ in \eqref{e2.9} is optimal; this sharp estimate was proved
by Ricard and Xu \cite{RX16} who deduced it from a uniform convexity inequality
for $L_p$ spaces---see \cite[Lemma 4.32]{Pi11}, and also \cite[Appendix~A]{DKK16} for an exposition.

\subsection{Mixing and orthogonality} \label{subsec2.4}

In what follows, it is convenient to introduce the following terminology.
Let $(\Omega, \Sigma , \prob)$ be a probability space, and let $0\mik\beta\mik 1$; given two sub-$\sigma$-algebras
 $\mathcal{A}, \mathcal{B}$ of $\Sigma$, we say that $\mathcal A$ and ${\mathcal B}$ are \textit{$\beta$-mixing}
provided that for every $A\in\mathcal{A}$ and every $B\in \mathcal{B}$ we have
\begin{equation} \label{e2.11}
\big| \prob(A\cap B) - \prob(A)\,\prob(B) \big| \mik \beta.
\end{equation}
Notice that in the extreme case ``$\beta=0$" the estimate \eqref{e2.11} is equivalent to saying that the $\sigma$-algebras
$\mathcal{A}$ and $\mathcal{B}$ are independent, which in turn implies for every random variable $X$ with $\ave[X]=0$
we have $\ave\big[ \ave[X\, |\, \mathcal{A}]\, |\, \mathcal{B}]=0$. The main result in this
subsection (Proposition \ref{p2.7} below) is an approximate version of this fact.

We start with the following lemma.
\begin{lem} \label{l2.5}
Let $(\Omega, \Sigma, \prob)$ be a probability space, let $0\mik\beta\mik 1$, and let $\mathcal{A}, \mathcal{B}$ be
two sub-$\sigma$-algebras of\, $\Sigma$ that are $\beta$-mixing. Then for every real-valued, bounded, random variable $X$
and every $1\mik p\mik \infty$ we have
\begin{equation} \label{e2.12}
\big\| \ave\big[ \ave[X\, |\, \mathcal{A}]\, |\, \mathcal{B}\big] - \ave[X]\big\|_{L_p} \mik
(4 \beta)^{1/p}\, \|X -\ave[X]\|_{L_\infty}.
\end{equation}
\end{lem}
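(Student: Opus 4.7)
My plan is to reduce to the centered case $\ave[X]=0$ (by subtracting a constant, which does not affect either side of \eqref{e2.12}), handle the trivial endpoint $p=\infty$ separately, and then prove the result first for $p=1$ and interpolate. The trivial endpoint is immediate: two applications of the contraction $\|\ave[\cdot\,|\,\mathcal{G}]\|_{L_\infty}\mik \|\cdot\|_{L_\infty}$ give the bound $\|X\|_{L_\infty}$, and $(4\beta)^{1/\infty}=1$ so nothing is lost.

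The heart of the argument is the $L_1$ bound. Set $Z\coloneqq \ave[X\,|\,\mathcal{A}]$ and $W\coloneqq \ave[Z\,|\,\mathcal{B}]$; note $Z$ is $\mathcal{A}$-measurable, $\ave[Z]=0$, and $\|Z\|_{L_\infty}\mik \|X\|_{L_\infty}\eqqcolon M$. Writing $B_+\coloneqq \{W>0\}$ and $B_-\coloneqq \{W<0\}$, which are in $\mathcal{B}$, the definition of conditional expectation gives
\[
\|W\|_{L_1} = \ave[W\mathbf{1}_{B_+}] - \ave[W\mathbf{1}_{B_-}] = \ave[Z\mathbf{1}_{B_+}] - \ave[Z\mathbf{1}_{B_-}].
\]
So it suffices to show that $|\ave[Z\mathbf{1}_B]|\mik 2\beta M$ for every $B\in\mathcal{B}$.

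Decompose $Z=Z_+-Z_-$ into its positive and negative parts (both $\mathcal{A}$-measurable) and use the layer-cake identity $Z_{\pm}=\int_0^M \mathbf{1}_{\{Z_{\pm}>t\}}\,dt$, noting that each level set $\{Z_{\pm}>t\}$ lies in $\mathcal{A}$. Then
\[
\ave[Z_{\pm}\mathbf{1}_B] - \ave[Z_{\pm}]\,\prob(B) = \int_0^M \Bigl( \prob(\{Z_{\pm}>t\}\cap B) - \prob(\{Z_{\pm}>t\})\,\prob(B)\Bigr)dt,
\]
and the $\beta$-mixing assumption bounds the integrand pointwise by $\beta$, yielding $|\ave[Z_\pm \mathbf{1}_B]-\ave[Z_\pm]\prob(B)|\mik \beta M$. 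Since $\ave[Z]=0$ gives $\ave[Z_+]\prob(B)=\ave[Z_-]\prob(B)$, these cross terms cancel when we subtract, leaving $|\ave[Z\mathbf{1}_B]|\mik 2\beta M$. Summing the contributions from $B_+$ and $B_-$ gives $\|W\|_{L_1}\mik 4\beta M$, which is \eqref{e2.12} for $p=1$.

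For $1<p<\infty$ I would finish by log-convexity: since $\|W\|_{L_\infty}\mik M$ and $\|W\|_{L_1}\mik 4\beta M$, the elementary interpolation $\|W\|_{L_p}^p\mik \|W\|_{L_\infty}^{p-1}\|W\|_{L_1}$ gives $\|W\|_{L_p}\mik (4\beta)^{1/p} M$, completing the proof. The only mildly subtle step is the layer-cake/positive-part reduction that transfers the event-level $\beta$-mixing hypothesis into an estimate for the random variable $Z$; everything else is bookkeeping.
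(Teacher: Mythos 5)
Your proof is correct and reaches the paper's bound with the same constant. The overall architecture is the same as the paper's: reduce to the centered case, prove the $L_1$ estimate, and pass to $L_p$ by interpolating between $L_1$ and the trivial $L_\infty$ contraction. The divergence is in how the $L_1$ estimate $|\ave[Z\mathbf{1}_B]|\le 2\beta\|X\|_{L_\infty}$ is established, where $Z=\ave[X\,|\,\mathcal{A}]$ and $B\in\mathcal{B}$. The paper approximates $Z$ by a simple function $\sum_i a_i\mathbf{1}_{A_i}$ supported on an $\mathcal{A}$-measurable partition, uses $\sum_i a_i\prob(A_i)=0$ to subtract the product term, and then invokes its Fact 2.6 (the elementary ``sum of signs'' bound $\sum_i|x_i|\le 2\max_{I}|\sum_{i\in I}x_i|$) together with the fact that $x_i=\prob(A_i\cap B)-\prob(A_i)\prob(B)$ sums over any $I$ to a single mixing estimate for the union $A_I\in\mathcal{A}$. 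You instead split $Z=Z_+-Z_-$, write each $Z_\pm$ via the layer-cake integral $\int_0^M\mathbf{1}_{\{Z_\pm>t\}}\,dt$ (whose level sets are in $\mathcal{A}$ by $\mathcal{A}$-measurability of $Z$), apply the mixing bound under the integral, and cancel the cross terms via $\ave[Z_+]=\ave[Z_-]$. Your route avoids the simple-function approximation and the discrete sign lemma, replacing them with the layer-cake decomposition; the paper's route has the advantage that Fact 2.6 is a reusable ingredient (it also yields the $B_+/B_-$ reduction that you carry out by hand). Both arguments are short, and both give $\|W\|_{L_1}\le 4\beta\|X\|_{L_\infty}$ and hence the stated $(4\beta)^{1/p}$ bound.
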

For the proof of Lemma \ref{l2.5} we need the following simple fact.
\begin{fact} \label{f2.6}
Let $(X,\Sigma,\mu)$ be a measure space, and let $f\colon X\to\rr$ be an integrable function. Then we have
\begin{equation} \label{e2.13}
\|f\|_{L_1(\mu)}  \mik 2 \sup_{A\in \Sigma} \bigg| \int_A f \, d\mu \bigg|.
\end{equation}
In particular, if $x_1, \dots, x_m\in \rr$, then
\begin{equation} \label{e2.14}
\sum_{i=1}^m |x_i| \mik 2\max_{\emptyset\neq I \subseteq [m]} \bigg| \sum_{i\in I} x_i \bigg|.
\end{equation}
\end{fact}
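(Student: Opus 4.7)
The plan is to deduce \eqref{e2.13} by decomposing $f$ into its positive and negative parts using a sign decomposition of the underlying space, and then to derive \eqref{e2.14} as an immediate specialization to the counting measure on $[m]$.

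First I would set $A^+\coloneqq \{x\in X: f(x)\meg 0\}$ and $A^-\coloneqq \{x\in X: f(x)<0\}$, both of which lie in $\Sigma$. Writing $f=f^+-f^-$ with $f^\pm\meg 0$, I observe that
\[
\int_{A^+} f\, d\mu = \|f^+\|_{L_1(\mu)} \quad \text{and} \quad \int_{A^-} f\, d\mu = -\|f^-\|_{L_1(\mu)},
\]
so that
\[
\|f\|_{L_1(\mu)} = \|f^+\|_{L_1(\mu)} + \|f^-\|_{L_1(\mu)} = \Big| \int_{A^+} f\, d\mu\Big| + \Big| \int_{A^-} f\, d\mu \Big|.
\]
Each of the two terms on the right is majorized by $\sup_{A\in\Sigma} |\int_A f\, d\mu|$, which gives \eqref{e2.13} after summing.

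For \eqref{e2.14} I would apply \eqref{e2.13} to the measure space $([m], 2^{[m]}, \mu_{\mathrm{c}})$, where $\mu_{\mathrm{c}}$ is counting measure, and to the function $f(i)\coloneqq x_i$. Here $\int_A f\, d\mu_{\mathrm{c}} = \sum_{i\in A}x_i$, so \eqref{e2.13} would read $\sum_{i=1}^m |x_i| \mik 2\max_{I\subseteq[m]} |\sum_{i\in I}x_i|$, with the maximum over \emph{all} subsets (including the empty one). The only point that requires attention is the restriction to $\emptyset\neq I$ in \eqref{e2.14}, but the empty subset contributes~$0$, so it can be dropped unless every $x_i$ equals $0$, in which case \eqref{e2.14} is trivial; otherwise the maximum is attained at some nonempty $I$ (indeed at $A^+$ or $A^-$ above), completing the proof.

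No real obstacle is expected: the argument is a textbook sign-decomposition, and the only mild subtlety is the handling of the empty index set in the discrete formulation, which is resolved by a one-line case distinction.
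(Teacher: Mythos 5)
Your proof of \eqref{e2.13} is exactly the paper's argument: split into $\{f\geq 0\}$ and $\{f<0\}$, note that $\|f\|_{L_1}$ is the sum of the absolute values of the two integrals, and bound each by the supremum. The deduction of \eqref{e2.14} via counting measure, together with the observation that the empty index set contributes $0$, is also the intended (unwritten) specialization, so the proposal is correct and matches the paper.
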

\begin{proof}
Since $[f\meg 0], [f<0]\in\Sigma$, we have
\[ \|f\|_{L_1(\mu)} = \bigg| \int_{[f\meg 0]} f\, d\mu \bigg| + \bigg| \int_{[f<0]} f \, d\mu \bigg|
\mik 2\sup_{A\in \Sigma} \bigg| \int_A f \, d\mu \bigg|.\qedhere \]
\end{proof}
We proceed to the proof of Lemma \ref{l2.5}.
\begin{proof}[Proof of Lemma \emph{\ref{l2.5}}]
We prove the $L_1$-estimate; the $L_p$-estimate for $p>1$ follows from the $L_1-L_\infty$ bound,
and the fact that the conditional expectation is a linear contraction on $L_\infty$. Without loss
of generality we may assume that $\ave[X]=0$. (If not, then we work with the random variable
$X'\coloneqq X-\ave[X]$ instead of $X$). Set $Z\coloneqq \ave[X\, |\, \mathcal{A}]$, and observe
that $\ave[Z]=\ave[X]=0$. Hence, by Fact \ref{f2.6}, it suffices to obtain an upper bound for
$\big|\ave[ Z \mathbf{1}_B]\big|$ for arbitrary $B\in \mathcal B$. To this end, note that
$\|Z\|_{L_\infty} \mik \|X\|_{L_\infty}$; therefore, by a standard approximation, we may assume
that $Z$ is of the form $\sum_{i=1}^N a_i \mathbf 1_{A_i}$, where $N$ is a positive integer,
$|a_i|\mik \|Z\|_{L_\infty}$ for every $i\in [N]$, and the family $\{A_1,\dots,A_N\}$ forms a partition
of $\Omega$ into measurable events. Let $B\in \mathcal{B}$ be arbitrary. Using the fact that
$\sum_{i=1}^N a_i\, \mathbb P(A_i) = \ave[Z]=0$ and the triangle inequality, we have
\begin{equation} \label{e2.15}
\big| \ave[ Z \mathbf{1}_B] \big| =\bigg| \sum_{i=1}^N a_i\, \prob(A_i\cap B)\bigg| \mik
\sum_{i=1}^N |a_i| \cdot |\prob(A_i\cap B) - \mathbb P(A_i)\,\mathbb P(B)|.
\end{equation}
If we set $x_i\coloneqq\prob(A_i\cap B) - \prob(A_i)\, \prob(B)$, we obtain that
\begin{equation} \label{e2.16}
\big|\ave[ Z \mathbf{1}_B] \big| \mik \sum_{i=1}^N |a_i| \cdot |x_i| \mik
2\|Z\|_{L_\infty} \, \max_{\emptyset\neq I\subseteq [N]} \bigg| \sum_{i\in I} x_i \bigg|,
\end{equation}
where we have also used the pointwise bound $|a_i| \mik \|Z\|_{L_\infty}$ and Fact \ref{f2.6}.
Finally, setting $A_I\coloneqq\bigcup_{i\in I} A_i$ for every nonempty $I\subseteq [N]$, then we have
\begin{equation} \label{e2.17}
\bigg| \sum_{i\in I} x_i \bigg| = \big| \mathbb{P}(A_I\cap B) - \prob(A_I)\,\prob(B) \big| \mik \beta
\end{equation}
since the sets $A_1,\dots,A_N$ are pairwise disjoint and $A_I\in \mathcal{A}$. We conclude that
\begin{equation} \label{e2.18}
\big| \ave\big[ \ave[Z\,|\, \mathcal{B}]\mathbf{1}_B\big] \big| =
\big| \ave[Z\mathbf{1}_B] \big| \mik 2\beta \|X\|_{L_\infty}.
\end{equation}
Since $B\in \mathcal{B}$ was arbitrary, the result follows.
\end{proof}
We are now ready to state the main result in this subsection.
\begin{prop} \label{p2.7}
Let $(\Omega, \Sigma, \prob)$ be a probability space, let $0\mik \beta\mik 1$, and let $\mathcal{A}, \mathcal{B}$ be
two sub-$\sigma$-algebras of\, $\Sigma$ that are $\beta$-mixing. Let $1\mik r < p \mik \infty$, and let $X\in L_p$.
Then,
\begin{equation} \label{e2.19}
\big\| \ave\big[ \ave[X\, |\, \mathcal{A}]\, |\, \mathcal{B}\big] -\ave[X]\big\|_{L_r} \mik
10 \beta^{\frac{1}{r}-\frac{1}{p}}\, \|X-\ave[X]\|_{L_p}.
\end{equation}
\end{prop}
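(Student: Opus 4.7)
The strategy is a truncation-interpolation argument that upgrades Lemma~\ref{l2.5} (which requires an $L_\infty$ input) to $L_p$ inputs by combining it with the $L_r$-contractivity of conditional expectation on the tail. By replacing $X$ with $X - \ave[X]$ we may assume $\ave[X] = 0$, and by homogeneity $\|X\|_{L_p} = 1$. Introducing the linear operator $S(W) \coloneqq \ave[\ave[W \mid \mathcal{A}] \mid \mathcal{B}] - \ave[W]$, the goal reduces to $\|S(X)\|_{L_r} \mik 10\,\beta^{1/r - 1/p}$.

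Pick a threshold $M > 0$ to be tuned and split $X = Y_M + Z_M$ with $Y_M \coloneqq X\,\mathbf{1}_{\{|X| \mik M\}}$, so $S(X) = S(Y_M) + S(Z_M)$ by linearity. For the bounded piece, Lemma~\ref{l2.5} applied with exponent $r$ there yields
\[
\|S(Y_M)\|_{L_r} \mik (4\beta)^{1/r}\, \|Y_M - \ave[Y_M]\|_{L_\infty} \mik 2M\,(4\beta)^{1/r},
\]
using $\|Y_M\|_{L_\infty} \mik M$. For the tail piece, both conditional expectations are $L_r$-contractions and $|\ave[Z_M]| \mik \|Z_M\|_{L_1} \mik \|Z_M\|_{L_r}$ (the underlying measure being a probability), so $\|S(Z_M)\|_{L_r} \mik 2\,\|Z_M\|_{L_r}$; an integrated Markov estimate bounds the tail moment by $\|Z_M\|_{L_r}^r \mik M^{r-p}\,\|X\|_{L_p}^p = M^{r-p}$.

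Combining via the triangle inequality gives $\|S(X)\|_{L_r} \mik 2M\,(4\beta)^{1/r} + 2\,M^{(r-p)/r}$, and the remaining step is to minimize in $M$. The minimizer $M = ((p-r)/r)^{r/p}(4\beta)^{-1/p}$ produces the bound $2\,[\gamma^{r/p} + \gamma^{-(p-r)/p}]\,(4\beta)^{1/r - 1/p}$, where $\gamma = (p-r)/r$, and a routine numerical check shows this expression is at most $10\,\beta^{1/r - 1/p}$ uniformly over $1 \mik r < p \mik \infty$. Qualitatively the argument is a Marcinkiewicz-style interpolation between the $L_\infty \to L_r$ endpoint supplied by Lemma~\ref{l2.5} and the trivial $L_r \to L_r$ contractivity of $S$; the main thing to watch is nailing down the explicit constant, the rest being bookkeeping.
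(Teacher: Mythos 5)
Your proof follows the paper's route step for step: truncate, invoke Lemma~\ref{l2.5} with exponent $r$ on the bounded part, use contractivity of conditional expectation on the tail, combine by the triangle inequality, and tune the threshold. The one wrinkle is that the ``routine numerical check'' for the constant is in fact tight---at $r=1$, $p=5$ your optimized bound equals $10\beta^{1/r-1/p}$ exactly---so a cleaner way to settle it is to observe that the minimum over $M$ is bounded above by the value at the particular choice $M=\beta^{-1/p}\|X\|_{L_p}$ (the paper's threshold), which yields $2(4^{1/r}+1)\beta^{1/r-1/p}\mik 10\beta^{1/r-1/p}$ directly once one uses $4^{1/r}\mik 4$.
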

\begin{proof}
Notice that \eqref{e2.19} is straightforward if $\beta=0$; thus, we may assume that $\beta>0$. In this case,
we will obtain the estimate by truncating $X$ and employing Lemma \ref{l2.5}. We lay out the details. As
in the proof of Lemma \ref{l2.5}, we may assume that $\ave[X]=0$. Let $t>0$ (to be chosen later) be the truncation level,
and set $X_t\coloneqq X \mathbf{1}_{[|X| \mik t]}$. Markov's inequality yields that
$\prob(|X|>t)\mik t^{-p} \| X\|_{L_p}^p$, thus applying H\"older's inequality we obtain that
\begin{equation} \label{e2.20}
\|X_t-X\|_{L_r}^r  = \ave\big[ |X|^r\, \mathbf{1}_{[|X|>t]}\big] \mik \|X\|_{L_p}^{r} \, \prob(|X|>t)^{1-\frac{r}{p}}
\mik \frac{\|X\|_{L_p}^p}{t^{p-r}}
\end{equation}
for any $1\mik r<p$. Therefore,
\begin{align} \label{e2.21}
\big\|  \ave\big[ \ave[X\, |\, \mathcal{A}]\, |\, \mathcal{B}\big] \big\|_{L_r} & \mik
  \big\|  \ave\big[ \ave[X-X_t\, |\, \mathcal{A}]\, | \, \mathcal{B}\big]\big\|_{L_r} + \\
& \;\;\;\;\; + \big\|  \ave\big[ \ave[X_t\, |\, \mathcal{A}]\, |\, \mathcal{B} \big] -\ave[X_t] \big\|_{L_r}
+ \big|\ave[X_t]\big| \nonumber \\
& \mik \|X-X_t\|_{L_r} + (4\beta)^{1/r}\cdot 2t + \|X-X_t\|_{L_1}, \nonumber
\end{align}
where we have used the contraction property of the conditional expectation, Lemma \ref{l2.5} for the random variable $X_t$,
and the fact $\ave[X]=0$, respectively. Taking into account~\eqref{e2.20}, we conclude that
\begin{equation} \label{e2.22}
\big\| \ave\big[ \ave[X\, |\, \mathcal{A}] \mid \mathcal{B} \big] \big\|_{L_r} \mik
2 \frac{\|X\|_{L_p}^{p/r}}{t^{\frac{p}{r}-1}} + 8 \beta^{1/r} t.
\end{equation}
It remains to optimize the latter with respect to $t$; the choice $t\coloneqq \beta^{-1/p} \|X\|_{L_p}$ yields the assertion.
\end{proof}

\subsection{Proof of Theorem \ref{t2.3}} \label{subsec2.5}

We start by observing that the case ``$\beta=0$" follows from the case ``$\beta>0$" by taking the limit in \eqref{e2.5}
as $\beta$ goes to zero. Thus, in what follows, we may assume that $\beta>0$.

After normalizing, we may also assume that
\begin{equation} \label{e2.23}
\big\|f({\bbx}) - \ave[f(\bbx)]\big\|_{L_p}=1.
\end{equation}
Fix an integer $k$ with $d\mik k < \lfloor \ell/2 \rfloor$ and $I\in \binom{[n]}{\ell}$,
and let $\{\iota_1<\cdots<\iota_\ell\}$ denote the increasing enumeration of $I$.
Set $m\coloneqq \lfloor \ell/k \rfloor$. Also let $K_1,\dots,K_m\in \binom{[\ell]}{k}$
be successive intervals with $\min(K_1)=1$, and set $J_i\coloneqq \{\iota_\kappa: \kappa\in K_i\}$
for every $i\in [m]$. Thus, the sets $J_1,\dots,J_m$ are successive subsets of $I$ each of cardinality $k$;
also notice that if $I$ is an interval of $[n]$, then the sets $J_1,\dots,J_m$ are intervals too.

Next, denote by $(\Omega,\Sigma,\prob)$ the underlying probability space on which the random
array $\bbx$ is defined, and for every $i\in [m]$ let $\mathcal{F}_{J_i}$ be the
$\sigma$-algebra generated by the subarray~$\bbx_{J_i}$ (see Definition \ref{d1.1}).
We define a filtration $(\mathcal{A}_i)_{i=0}^m$ by setting $\mathcal{A}_0=\{\emptyset,\Omega\}$ and
\begin{equation} \label{e2.24}
\mathcal{A}_i\coloneqq \bigvee_{l=1}^i \mathcal{F}_{J_l} \ \ \ \text{ for every $i\in [m]$;}
\end{equation}
see Figure \ref{figure3}. We will use variants of this filtration in Section \ref{sec8}.

\begin{figure}[htb]
\centering \includegraphics[width=.40\textwidth]{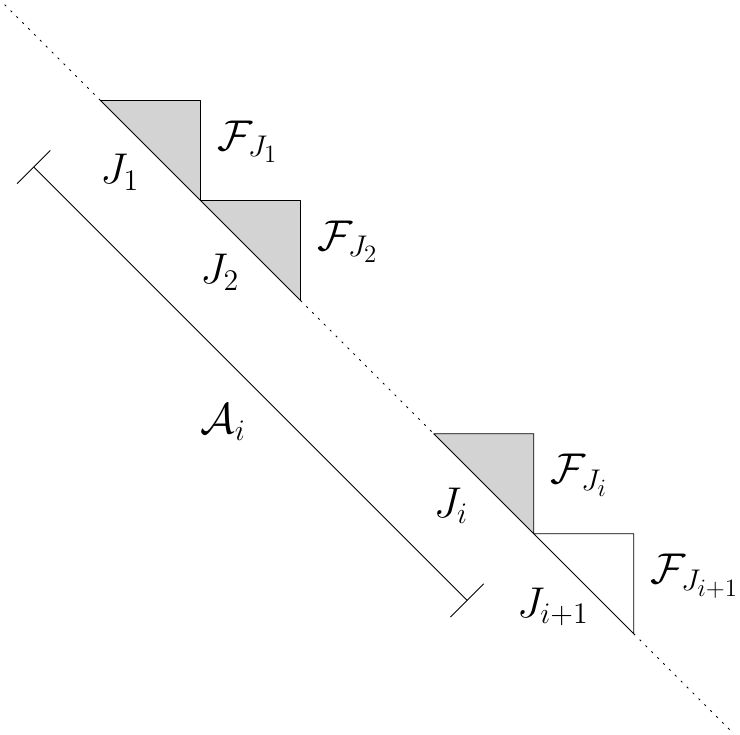}
\caption{The filtration $(\mathcal{A}_i)_{i=0}^m$.}
\label{figure3}
\end{figure}

Let $(d_i)_{i=1}^m$ denote the martingale difference sequence of the Doob martingale for $f(\bbx)$
with respect to the filtration $(\mathcal{A}_i)_{i=0}^m$, that is,
$d_i\coloneqq\ave[f(\bbx)\, |\, \mathcal{A}_i]-\ave[f(\bbx)\, |\, \mathcal{A}_{i-1}]$ for every $i\in [m]$.
Since $\ave[f(\bbx)\,|\, \mathcal{A}_m]-\ave[f(\bbx)]=\sum_{i=1}^m d_i$,
the contractive property of the conditional expectation yields that
\begin{align} \label{e2.25}
\bigg\| \sum_{i=1}^m d_i \bigg\|_{L_p} \mik \big\| f(\bbx) - \ave[f(\bbx)]\big\|_{L_p}
\stackrel{\eqref{e2.23}}{=} 1.
\end{align}
Therefore, by Proposition \ref{p2.4}, there exists an integer $i_0\in [m]$ so that
\begin{align} \label{e2.26}
\|d_{i_0}\|_{L_p} \mik \frac{1}{\sqrt{ m(p-1)} }.
\end{align}
We claim that the set $J\coloneqq J_{i_0}$ is as desired.

To this end, fix $1\mik r <p$. First observe that, conditioning further on $\mathcal{F}_{J_{i_0}}$,
\begin{equation} \label{e2.27}
\big\| \ave[ f(\bbx)\, |\, \mathcal{F}_{J_{i_0}}] - \ave\big[\ave[f(\bbx)\, |\, \mathcal{A}_{i_0-1}]\, |\,
\mathcal{F}_{J_{i_0}}\big]\big\|_{L_p} \! =  \big\| \ave[d_{i_0}\, |\, \mathcal{F}_{J_{i_0}}]\big\|_{L_p}
\mik \frac{1}{\sqrt{m(p-1)}},
\end{equation}
where we have used the fact that $\mathcal{F}_{J_{i_0}}\subseteq \mathcal{A}_{i_0}$, the contractive property
of the conditional expectation once more, and \eqref{e2.26}. By the triangle inequality and taking into
account \eqref{e2.27} and the monotonicity of the $L_p$-norms, we obtain that
\begin{align}
\label{e3.28} \big\| \ave[f(\bbx)\, |\, \mathcal{F}_{J_{i_0}}] -\ave[f(\bbx)]\big\|_{L_r} &\mik
\frac{1}{\sqrt{m(p-1)}} + \\
&  \ \ \ \ + \big\| \ave\big[ \ave[ f(\bbx)\, |\, \mathcal A_{i_0-1}]\, |\, \mathcal{F}_{J_{i_0}}\big]
-\ave[f(\bbx)]\big\|_{L_r}. \nonumber
\end{align}
Finally, by \eqref{e2.24} and our assumption that the random array $\bbx$ is $(\beta,\ell)$-dissociated,
we see that the $\sigma$-algebras $\mathcal{F}_{J_{i_0}}$ and $\mathcal{A}_{i_0-1}$ are $\beta$-mixing
in the sense of Definition \ref{d2.1}. By Proposition \ref{p2.7}, we conclude that
\begin{align} \label{e2.29}
\big\| \ave[ f(\bbx)\, |\, \mathcal{F}_{J_{i_0}}] - \ave[f(\bbx)]\big\|_{L_r} \mik
\frac{1}{ \sqrt{m(p-1)} } + 10 \beta^{\frac{1}{r} - \frac{1}{p}}
\end{align}
and the proof is completed.

\subsection{Necessity of approximate dissociativity} \label{subsec2.6}

We close this section with the following proposition, which shows that the assumption
of approximate dissociativity in Theorem~\ref{t2.2} is necessary.
\begin{prop} \label{p2.8}
Let $n,d,\ell$ be positive integers with $n\meg \ell\meg d$, let $0<\beta\mik 1$, let $\bbx$
be a spreadable, $d$-dimensional random array on $[n]$ whose entries take values in a measurable space $\mathcal{X}$,
and assume that $\bbx$ is not $(\beta,\ell)\text{-dissociated}$. Then there exists a measurable function
$f\colon \mathcal{X}^{\binom{[n]}{d}}\to \{0,1\}$ such that for every $I\in \binom{[n]}{\ell}$ we have
\begin{equation} \label{e2.30}
\prob\big( \big|\ave[f(\bbx)\, |\, \mathcal{F}_I] - \ave[f(\bbx)]\big|\meg \beta/2\big)\meg \beta/2.
\end{equation}
\end{prop}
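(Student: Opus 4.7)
The plan is to take $f$ to be the indicator of a single event built from any witness of non-dissociativity, placed (via spreadability) at the beginning of $[n]$, and then, for an arbitrary $I$, to detect the missing product structure against a test event in $\mathcal{F}_I$ that is again supplied by spreadability.

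Concretely, I would first use the assumed failure of $(\beta,\ell)$-dissociativity to pick $J_0,K_0\subseteq[n]$ with $|J_0|,|K_0|\meg d$, $r\coloneqq|J_0|$, $s\coloneqq|K_0|$, $r+s\mik\ell$, $\max(J_0)<\min(K_0)$, and events $A_0\in\mathcal{F}_{J_0}$, $B_0\in\mathcal{F}_{K_0}$ with $|\prob(A_0\cap B_0)-\prob(A_0)\prob(B_0)|>\beta$. Represent $A_0,B_0$ by Borel tests $\chi_A,\chi_B$ on the corresponding coordinate spaces. Since $|J_0\cup K_0|=r+s$ and the increasing bijection $J_0\cup K_0\to[r+s]$ sends $J_0$ onto $[r]$ and $K_0$ onto $[r+1,r+s]$, spreadability delivers $(\bbx_{[r]},\bbx_{[r+1,r+s]})\stackrel{d}{=}(\bbx_{J_0},\bbx_{K_0})$, so the ``initial'' events $A\coloneqq\{\chi_A(\bbx_{[r]})=1\}$ and $B\coloneqq\{\chi_B(\bbx_{[r+1,r+s]})=1\}$ inherit the same covariance gap. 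The same gap also forces $\prob(A),\prob(B)\in(\beta,1-\beta)$, since any such gap is bounded by $\min(\prob(A),\prob(A^c))$ and similarly for $B$.

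Set $f\coloneqq\mathbf{1}_A$ and fix any $I\in\binom{[n]}{\ell}$. The key observation is that $|I\setminus[r]|\meg \ell-r\meg s$, so I can pick $K_I\subseteq I\cap[r+1,n]$ with $|K_I|=s$ and set $B_I\coloneqq\{\chi_B(\bbx_{K_I})=1\}\in\mathcal{F}_{K_I}\subseteq\mathcal{F}_I$. A second application of spreadability to $[r]\cup K_I$, whose pair-order-type matches that of $[r]\cup[r+1,r+s]$ because $\max([r])<\min(K_I)$, gives $(\bbx_{[r]},\bbx_{K_I})\stackrel{d}{=}(\bbx_{[r]},\bbx_{[r+1,r+s]})$, and hence $|\prob(A\cap B_I)-\prob(A)\prob(B_I)|>\beta$. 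Since $B_I\in\mathcal{F}_I$, the tower property gives
\[
\big|\ave\big[(\ave[f\,|\,\mathcal{F}_I]-\ave[f])\,\mathbf{1}_{B_I}\big]\big|=\big|\ave[(f-\ave[f])\mathbf{1}_{B_I}]\big|=|\prob(A\cap B_I)-\prob(A)\prob(B_I)|>\beta,
\]
and $\mathbf{1}_{B_I}\mik 1$ upgrades this to $\ave[|\ave[f\,|\,\mathcal{F}_I]-\ave[f]|]>\beta$. Since the quantity inside the expectation is a $[0,1]$-valued random variable $Y$, the elementary bound $\ave[Y]\mik \prob(Y\meg \beta/2)+\beta/2$ forces $\prob(Y\meg \beta/2)>\beta/2$, which is exactly \eqref{e2.30}.

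The only point requiring care, and the step I expect to be the main (if modest) obstacle, is the double invocation of spreadability: once to normalise the original witnesses onto the initial block $([r],[r+1,r+s])$, and once to transfer the correlation of $A$ with $B$ to a correlation of the \emph{same} $A$ with an event $B_I\in\mathcal{F}_I$ that varies with $I$. Both transfers rely on the fact that the joint law of two disjoint subarrays with $\max(\cdot)<\min(\cdot)$ is determined, through spreadability of their union, by the cardinality of that union, which is what lets a single function $f=\mathbf{1}_A$ serve as a witness uniformly in $I$.
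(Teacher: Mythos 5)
Your proposal is correct and matches the paper's proof essentially step for step: normalize the non-dissociativity witness to the initial block via spreadability, take $f$ to be the indicator of the resulting initial event, transfer the correlation against a test event inside $\mathcal{F}_I$ by a second application of spreadability to $[r]\cup K_I$, and finish with the $L^1$-to-probability conversion for a $[0,1]$-valued variable. The only cosmetic differences are that you explicitly name the Borel tests $\chi_A,\chi_B$ and spell out the combinatorics of choosing $K_I\subseteq I\setminus[r]$, which the paper leaves terse.
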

\begin{proof}
Since the random array $\bbx$ is spreadable and not $(\beta,\ell)$-dissociated, there exist two
integers $j,k\meg d$ with $j+k\mik \ell$, and two events $A\in\mathcal{F}_{[j]}$ and $B\in\mathcal{F}_K$,
where $K\coloneqq \{j+1,\dots, k+j\}$, such that $|\prob(A\cap B)-\prob(A)\, \prob(B)|\meg \beta$.
We select a measurable subset $A'$ of $\mathcal{X}^{\binom{[j]}{d}}$ such that the events
$[\bbx_{[j]}\in A']$ and $A$ agree almost surely, and we set $\widetilde{A}\coloneqq \pi^{-1}(A')$,
where $\pi\colon\mathcal{X}^{\binom{[n]}{d}}\to \mathcal{X}^{\binom{[j]}{d}}$ denotes the natural projection.
Finally, we define $f\colon \mathcal{X}^{\binom{[n]}{d}}\to \{0,1\}$ by $f=\mathbf{1}_{\widetilde{A}}$.

We claim that $f$ is as desired. Indeed, let $I\in \binom{[n]}{\ell}$ be arbitrary.
We select $L\in \binom{I}{k}$ with $\min(L)>j$. Invoking the spreadability of $\bbx$
and the choice of $A$ and~$B$, we may also select $\Gamma\in\mathcal{F}_L$ such that
\begin{equation} \label{e2.31}
\big|\prob(A\cap \Gamma) -\prob(A)\, \prob(\Gamma)\big|\meg \beta.
\end{equation}
Observing that $\prob(A)=\ave[f(\bbx)]$ and $\prob(A\cap \Gamma)= \ave[f(\bbx)\mathbf{1}_{\Gamma}]$,
and using the fact that $\Gamma\in\mathcal{F}_L\subseteq \mathcal{F}_I$, we obtain that
\begin{align} \label{e2.32}
\beta \stackrel{\eqref{e2.31}}{\mik} \big|\ave[ \big(f(\bbx)- \ave[f(\bbx)]\big)\mathbf{1}_{\Gamma}]\big|
 = \big|\ave[ \big( \ave[f(\bbx)\, |\, \mathcal{F}_I]- \ave[f(\bbx)]\big)\mathbf{1}_{\Gamma}]\big|.
\end{align}
Since $\big|\ave[f(\bbx)\, |\, \mathcal{F}_I] - \ave[f(\bbx)]\big|\mik 1$,
\eqref{e2.32} is easily seen to imply \eqref{e2.30}.\end{proof}
\begin{rem} \label{r2.9}
Notice that if the random array $\bbx$ in Proposition \ref{p2.8} is boolean, then the function $f$ defined above
is just a polynomial of degree at most $\binom{\ell}{d}$.
\end{rem}


\section{The box independence condition propagates} \label{sec3}

\numberwithin{equation}{section}

\subsection{The main result} \label{subsec3.1}

We start by introducing some pieces of notation and some terminology.
Let $n,d$ be a positive integers with $n\meg 2d$; for every finite sequence
$\mathcal{H}=(H_1,\dots,H_d)$ of nonempty finite subsets of $[n]$
with\footnote{Note that if $d=1$, then this condition is superfluous.}
$\max(H_i)<\min(H_{i+1})$ for all $i\in[d-1]$, we set
\begin{equation} \label{e3.1}
\mathrm{Box}(\mathcal{H})\coloneqq \bigg\{ s\in \binom{[n]}{d}: |s\cap H_i| = 1 \text{ for all } i\in [d]\bigg\};
\end{equation}
namely, $\mathrm{Box}(\mathcal{H})$ is the complete $d$-uniform, $d$-partite hypergraph
whose parts are the sets $H_1,\dots,H_d$. If, in addition, we have $|H_i|=2$ for all $i\in [d]$,
then we say that the set $\mathrm{Box}(\mathcal{H})$ is a \emph{$d$-dimensional box of $[n]$}.
By $\mathrm{Box}(d)$ we shall denote the $d$-dimensional box corresponding to the sequence
$(\{1,2\},\dots,\{2d-1,2d\})$, that is,
\begin{equation} \label{e3.2}
\mathrm{Box}(d)=\bigg\{ s\in \binom{[n]}{d}: |s\cap \{2i-1,2i\}| = 1 \text{ for all } i\in [d]\bigg\}.
\end{equation}
We proceed with the following definition. Note that the ``$(\vartheta,\mathcal{S})$-box independence"
condition introduced below is the one-sided version of \eqref{e1.6}; we will work with this slightly
weaker version since it is more amenable to an inductive argument.
\begin{defn} \label{d3.1}
Let $n,d$ be positive integers with $n\meg 2d$, let $\mathcal{X}$ be a nonempty finite set,
and let $\bbx=\langle X_s : s\in \binom{[n]}{d} \rangle$ be an $\mathcal{X}$-valued,
$d$-dimensional random array on~$[n]$. Also let $\mathcal{S}$ be a nonempty subset of $\mathcal{X}$.
\begin{enumerate}
\item[(i)] \emph{(Box independence)} Let $\vartheta>0$. We say that $\bbx$ is
\emph{$(\vartheta,\mathcal{S})$-box independent} if for every $d$-dimensional
box $B$ of\, $[n]$ and every $a\in \mathcal{S}$ we have
\begin{equation} \label{e3.3}
\mathbb{P}\Big( \bigcap_{s\in B}[X_s=a] \Big) \mik \prod_{s\in B}\mathbb{P}\big([X_s=a]\big) + \vartheta.
\end{equation}
\item[(ii)] \emph{(Approximate independence)} Set $\ell\coloneqq \binom{\lfloor n/2 \rfloor}{d}$,
and let $\boldsymbol{\gamma}=(\gamma_k)_{k=1}^\ell$ be a finite sequence of positive reals.
We say that $\bbx$ is \emph{$(\boldsymbol{\gamma},\mathcal{S})$-independent} if for every nonempty
subset $\mathcal{F}$ of $\binom{[n]}{d}$ such that $\bigcup\mathcal{F}$ has cardinality at most~$n/2$,
and every collection $(a_s)_{s\in\mathcal{F}}$ of elements of $\mathcal{S}$ we have
\begin{equation} \label{e3.4}
\bigg| \mathbb{P}\Big( \bigcap_{s\in\mathcal{F}}[X_s=a_s] \Big) -
\prod_{s\in\mathcal{F}} \mathbb{P}\big([X_s=a_s]\big) \bigg| \mik \gamma_{|\mathcal{F}|}.
\end{equation}
\end{enumerate}
\end{defn}
We are ready to state the main result in this section. It is the higher-dimensional version of
Theorem \ref{t1.5}, and its proof is given in Section \ref{sec4}. (The numerical invariants
appearing below are defined in Subsection \ref{subsec4.2}, and they are estimated in Lemma \ref{l4.4}.)
\begin{thm} \label{t3.2}
Let $d,n$ be positive integers with $n\meg 4d$, let $0<\eta,\vartheta \mik 1$,
and set $\ell\coloneqq \binom{\lfloor n/2 \rfloor}{d}$. Then there exists a sequence
$\boldsymbol{\gamma}=(\gamma_k(\eta,\vartheta,d,n))_{k=1}^\ell$ of positive reals~such~that
\begin{equation} \label{e3.5}
\gamma_k(\eta,\vartheta,d,n) \mik 36\,k\, 2^d\, \big(\!\sqrt[4^d]{1/n}+\sqrt[4^d]{\eta} +
\sqrt[4^d]{\vartheta}\big)
\end{equation}
for every $k\in [\ell]$, and satisfying the following property.

Let $\mathcal{X}$ be a finite set, let $\mathcal{S}$ be a nonempty subset of $\mathcal{X}$, and let $\bbx$ be an
$\mathcal{X}$-valued, $\eta$-spreadable, $d$-dimensional random array on $[n]$. If $\bbx$ is
$(\vartheta,\mathcal{S})$-box independent, then $\bbx$ is also  $(\boldsymbol{\gamma},\mathcal{S})$-independent.
\end{thm}
Observe that the estimate \eqref{e3.5} yields that the quantity $\gamma_k(\eta,\vartheta,d,n)$
tends to zero as $n$ tends to infinity and $\eta,\vartheta$ go to zero.

\subsection{Consequences} \label{subsec3.2}

The rest of this section is devoted to the proof of two consequences
of Theorem \ref{t3.2}. The first consequence shows that the box independence condition
implies approximate dissociativity. Specifically, we have the following corollary.
\begin{cor} \label{c3.3}
Let $d,\ell,m$ be positive integers with $\ell\meg 2d$ and $m\meg 2$, and let $0<\beta\mik 1$.
Also let $n$ be a positive integer and $0<\eta,\vartheta \mik 1$ with
\begin{equation} \label{e3.6}
\max\big\{ n^{-1},\eta,\vartheta\big\} \mik \frac13\,
\bigg( \frac{\beta}{108\, \binom{\ell}{d}\, 2^d\, m^{3\binom{\ell}{d}}}\bigg)^{4^d}.
\end{equation}
Finally, let $\mathcal{X}$ be a set with $|\mathcal{X}|=m$, let $\mathcal{S}$ be a subset of $\mathcal{X}$
with $|\mathcal{S}|=|\mathcal{X}|-1$, and let $\bbx$ be an $\mathcal{X}$-valued, $\eta$-spreadable,
$d$-dimensional random array on $[n]$. If $\bbx$ is $(\vartheta,\mathcal{S})$-box independent,
then $\bbx$ is $(\beta,\ell)$-dissociated $($see Definition \emph{\ref{d2.1}}$)$.
\end{cor}
The second consequence of Theorem \ref{t3.2} shows that the box independence forces all subarrays indexed
by $d$-dimensional boxes to behave independently. More precisely, we have the following corollary.
\begin{cor} \label{c3.4}
Let $d,m$ be positive integers with $m\meg 2$, and let $0<\gamma \mik 1$. Also let $n$~be a positive integer
and $0<\eta,\vartheta \mik 1$ with
\begin{equation} \label{e3.7}
\max\big\{ n^{-1},\eta,\vartheta\big\} \mik \frac13\, \bigg( \frac{\gamma}{36\, 4^d\, m^{2^d}}\bigg)^{4^d}.
\end{equation}
Finally, let $\mathcal{X}$ be a set with $|\mathcal{X}|=m$, let $\mathcal{S}$ be a subset of $\mathcal{X}$ with $|\mathcal{S}|=|\mathcal{X}|-1$, and let $\bbx$ be an $\mathcal{X}$-valued, $\eta$-spreadable,
$d$-dimensional random array on $[n]$. If\, $\bbx$ is $(\vartheta,\mathcal{S})\text{-box}$ independent,
then for every  $d$-dimensional box\, $B$ of\, $[n]$ and every collection
$(a_s)_{s\in B}$ of elements of\, $\mathcal{X}$ we have
\begin{equation} \label{e3.8}
\bigg| \mathbb{P}\Big( \bigcap_{s\in B}[X_s = a_s] \Big) - \prod_{s\in B} \mathbb{P}\big([X_s = a_s]\big) \bigg| \mik \gamma.
\end{equation}
\end{cor}
\begin{rem}
Although Corollary \ref{c3.4} is weaker than Theorem \ref{t3.2}, a~direct proof of the estimate \eqref{e3.8}
is likely to require the whole machinery presented in Section \ref{sec4}.
\end{rem}
Corollaries \ref{c3.3} and \ref{c3.4} following from the following consequence of Theorem \ref{t3.2}.
\begin{lem} \label{l3.6}
Let $d,m,\kappa$ be positive integers with $m\meg 2$, and let $0<\gamma \mik 1$. Also let $n$~be a positive integer
and $0<\eta,\vartheta \mik 1$ with
\begin{equation} \label{e3.9}
\max\big\{ n^{-1},\eta,\vartheta\big\} \mik \frac13\, \bigg( \frac{\gamma}{36\, \kappa\, 2^d\, m^\kappa}\bigg)^{4^d}.
\end{equation}
Finally, let $\mathcal{X}$ be a set with $|\mathcal{X}|=m$, let $\mathcal{S}$ be a subset of $\mathcal{X}$ with $|\mathcal{S}|=|\mathcal{X}|-1$, and let $\bbx$ be an $\mathcal{X}$-valued, $\eta$-spreadable,
$d$-dimensional random array on $[n]$. If\, $\bbx$ is $(\vartheta,\mathcal{S})\text{-box}$ independent,
then for every nonempty subset $\mathcal{F}$ of\, $\binom{[n]}{d}$ with $|\mathcal{F}|\mik \kappa$
and every collection $(a_s)_{s\in \mathcal{F}}$ of elements of $\mathcal{X}$ we have
\begin{equation} \label{e3.10}
\bigg| \mathbb{P}\Big( \bigcap_{s\in \mathcal{F}}[X_s=a_s] \Big) -
\prod_{s\in \mathcal{F}} \mathbb{P}\big([X_s=a_s]\big) \bigg| \mik \gamma.
\end{equation}
\end{lem}
Notice that the conclusion of Lemma \ref{l3.6} is essentially $(\gamma,\mathcal{X})$-independence for the
constant function $\gamma$, except that it holds when $|\mathcal{F}|\mik \kappa$ instead of
$|\bigcup \mathcal{F}|\mik n/2$. We defer the proof of Lemma \ref{l3.6} to Subsection \ref{subsec3.3} below.
At this point, let us give the proofs of Corollaries \ref{c3.3} and \ref{c3.4}.
\begin{proof}[Proof of Corollary \emph{\ref{c3.3}}]
Set $\kappa\coloneqq \binom{\ell}{d}$ and $\gamma\coloneqq \frac{1}{3}m^{-2\kappa}\beta$.
Also let $J,K$ be subsets of~$[n]$ with $|J|,|K|\meg d$, $|J| + |K| \mik \ell$ and
$\max(J)<\min(K)$, and let $A\in\mathcal{F}_J$ and $B\in\mathcal{F}_K$.
We will show that $|\mathbb{P}(A\cap B) - \mathbb{P}(A)\, \mathbb{P}(B) | \mik \beta$.

Since $A$ belongs to the $\sigma$-algebra generated by $\mathbf{X}_J$, there exists a collection $\mathcal{A}$
of maps of the form $\mathbf{a}\colon \binom{J}{d}\to \mathcal{X}$ such that
\begin{equation} \label{e3.11}
A = \bigcup_{\mathbf{a}\in\mathcal{A}} \bigcap_{s\in \binom{J}{d}} [X_s =\mathbf{a}(s)].
\end{equation}
Similarly, there exists a collection $\mathcal{B}$ of maps of the form
$\mathbf{b}\colon \binom{K}{d}\to \mathcal{X}$ such that
\begin{equation} \label{e3.12}
B = \bigcup_{\mathbf{b}\in\mathcal{B}} \bigcap_{t\in \binom{K}{d}} [X_t = \mathbf{b}(t)].
\end{equation}
For every $\mathbf{a} \in \mathcal{A}$ we set
$A_\mathbf{a}\coloneqq \bigcap_{s\in \binom{J}{d}} [X_s = \mathbf{a}(s)]$, respectively,
for every $\mathbf{b} \in \mathcal{B}$ we set
$B_{\mathbf{b}}\coloneqq \bigcap_{t\in \binom{K}{d}} [X_t = \mathbf{b}(t)]$.
By Lemma \ref{l3.6}, for every $\mathbf{a}\in \mathcal{A}$ and every $\mathbf{b}\in \mathcal{B}$, we have
\begin{align}
\label{e3.13} & \bigg|\mathbb{P}( A_\mathbf{a}\cap B_\mathbf{b} ) -
\prod_{s\in \binom{J}{d}} \mathbb{P}\big([X_s=\mathbf{a}(s)]\big)
\prod_{t\in \binom{K}{d}} \mathbb{P}\big( [X_t=\mathbf{b}(t)]\big) \bigg| \mik \gamma, \\
\label{e3.14} & \bigg|\mathbb{P}( A_{\mathbf{a}}) - \prod_{s\in \binom{J}{d}}
\mathbb{P}\big( [X_s=\mathbf{a}(s)]\big)\bigg| \mik \gamma,  \\
\label{e3.15} & \bigg|\mathbb{P}( B_{\mathbf{b}}) - \prod_{t\in \binom{K}{d}}
\mathbb{P}\big( [X_t=\mathbf{b}(t)]\big)\bigg| \mik \gamma;
\end{align}
consequently, $|\mathbb{P}(A_\mathbf{a} \cap B_\mathbf{b}) - \mathbb{P}(A_\mathbf{a})\, \mathbb{P}(B_\mathbf{b})| \mik 3\gamma$.
On the other hand, by identities \eqref{e3.11} and \eqref{e3.12}, we see that
$A\cap B = \bigcup_{\mathbf{a}\in\mathcal{A},\mathbf{b}\in\mathcal{B}}A_\mathbf{a} \cap B_\mathbf{b}$;
moreover, the collections $\langle A_\mathbf{a}: \mathbf{a}\in \mathcal{A}\rangle$ and
$\langle B_\mathbf{b}: \mathbf{b}\in\mathcal{B}\rangle$ consist of pairwise disjoint events. Thus, we have
\begin{equation} \label{e3.16}
\mathbb{P}(A\cap B) = \!\!\!\sum_{\mathbf{a}\in \mathcal{A},\mathbf{b}\in\mathcal{B}}
\!\!\!\mathbb{P}(A_\mathbf{a}\cap B_\mathbf{b}), \ \
\mathbb{P}(A) = \sum_{\mathbf{a}\in \mathcal{A}}\mathbb{P}(A_\mathbf{a}) \ \text{ and } \
\mathbb{P}(B) = \sum_{\mathbf{b}\in \mathcal{B}}\mathbb{P}(B_\mathbf{b}).
\end{equation}
Therefore, we conclude that
\begin{align}
\label{e3.17} |\mathbb{P}(A\cap B) - \mathbb{P}(A)\,\mathbb{P}(B)|  \mik
\sum_{\mathbf{a}\in \mathcal{A},\mathbf{b}\in\mathcal{B}} & \!\!
|\mathbb{P}(A_\mathbf{a}\cap  B_\mathbf{b}) - \mathbb{P}(A_\mathbf{a})\, \mathbb{P}(B_\mathbf{b})| \\
& \mik \, 3\gamma\, |\mathcal{A}|\, |\mathcal{B|} \mik 3\gamma\, m^{2\binom{\ell}{d}} =\beta. \qedhere
 \nonumber
\end{align}
\end{proof}
\begin{proof}[Proof of Corollary \emph{\ref{c3.4}}]
It follows from Lemma \ref{l3.6} applied for ``$\kappa = 2^d$"\!.
\end{proof}

\subsection{Proof of Lemma \ref{l3.6}} \label{subsec3.3}

The result follows from Theorem \ref{t3.2} and the inclusion-exclusion formula.
We start by setting $\gamma'\coloneqq m^{-\kappa}\gamma$. By \eqref{e3.5} and \eqref{e3.9},
we see that $n\meg \max\{4d,d\kappa\}$ and $\gamma_k(\eta,\vartheta,d,n)\mik \gamma'$ for every $k\in [\kappa]$.
Therefore, by Theorem \ref{t3.2}, for every nonempty $\mathcal{F}^*\subseteq \binom{[n]}{d}$
with $|\mathcal{F}^*|\mik\kappa$ and every collection $(a_s)_{s\in\mathcal{F}^*}$ of elements~of~$\mathcal{S}$,
\begin{equation} \label{e3.18}
\bigg| \mathbb{P}\Big( \bigcap_{s\in \mathcal{F}^*}[X_s=a_s] \Big) -
\prod_{s\in \mathcal{F}^*} \mathbb{P}\big([X_s=a_s]\big) \bigg| \mik \gamma'.
\end{equation}
Let $\mathcal{F}$ be a nonempty subset of $\binom{[n]}{d}$ with $|\mathcal{F}|\mik \kappa$,
and let $(a_s)_{s\in\mathcal{F}}$ be a collection of elements of $\mathcal{X}$.
Set $\mathcal{F}'\coloneqq\{s\in\mathcal{F}: a_s\in \mathcal{S}\}$ and
$\mathcal{G}\coloneqq \mathcal{F} \setminus \mathcal{F}'$; observe that for every $t\in \mathcal{G}$ the events
$\langle [X_t = a]: a\in\mathcal{S}\rangle$ are pairwise disjoint and, moreover,
\begin{equation} \label{e3.19}
[X_t = a_t] = \bigg( \bigcup_{a\in\mathcal{S}} [X_t = a] \bigg)^{\complement}.
\end{equation}
(For any event $E$, by $E^{\complement}$ we denote its complement.) Thus, for every $t\in\mathcal{G}$
we have $\prob\big([X_t = a_t]\big) = 1- \sum_{a\in \mathcal{S}} \prob\big([X_t = a]\big)$ and, consequently,
\begin{align}
\label{e3.20} \prod_{s\in\mathcal{F}} \prob\big([X_s = a_s]\big) & =
\prod_{s\in\mathcal{F}'} \prob\big([X_s = a_s]\big)\, \prod_{t\in\mathcal{G}}
\Big(1- \sum_{a\in \mathcal{S}} \prob\big([X_t = a]\big) \Big) \\
& = \sum_{\substack{\mathcal{W}\subseteq\mathcal{G} \\ \mathbf{a}\colon \mathcal{W}\to\mathcal{S}}}
(-1)^{|\mathcal{W}|} \prod_{t\in\mathcal{W}} \prob\big( [X_t = \mathbf{a}(t)] \big)
\prod_{s\in\mathcal{F}'} \prob\big([X_s = a_s]\big) \nonumber
\end{align}		
with the convention that the product over an empty index-set is equal to $1$. Moreover,
\begin{align}
\label{e3.21} \prob\Big( \bigcap_{s\in\mathcal{F}} & [X_s = a_s] \Big) \stackrel{\eqref{e3.19}}{=}
\prob\bigg( \bigcap_{s\in\mathcal{F}'}[X_s = a_s] \cap \bigcap_{t\in\mathcal{G}}
\Big(\bigcup_{a\in\mathcal{S}} [X_t = a] \Big)^{\complement} \bigg) \\
& = \prob\Big( \bigcap_{s\in\mathcal{F}'}[X_s = a_s] \Big) - \prob\Big( \bigcap_{s\in\mathcal{F}'} [X_s = a_s]
\cap \Big( \bigcup_{t\in\mathcal{G}} \bigcup_{a\in\mathcal{S}} [X_t = a] \Big) \Big). \nonumber
\end{align}
Next observe that for every nonempty subset $\mathcal{W}$ of $\mathcal{G}$ we have
\begin{equation} \label{e3.22}
\bigcap_{t\in\mathcal{W}}\bigcup_{a\in\mathcal{S}}[X_t = a] =
\bigcup_{\mathbf{a}\colon \mathcal{W}\to \mathcal{S}} \Big( \bigcap_{t\in\mathcal{W}} [X_t = \mathbf{a}(t)]\Big)
\end{equation}
and the events $\langle \bigcap_{t\in\mathcal{W}}[X_t = \mathbf{a}(t)] : \mathbf{a}\colon \mathcal{W}\to \mathcal{S}\rangle$
are pairwise disjoint. Hence, by the inclusion-exclusion formula,
\begin{align}
\label{e3.23} \prob\Big( & \bigcap_{s\in\mathcal{F}'}[X_s = a_s] \cap
\Big( \bigcup_{t\in\mathcal{G}}  \bigcup_{a\in\mathcal{S}} [X_t = a] \Big) \Big) \\
& = \sum_{\emptyset\neq\mathcal{W}\subseteq\mathcal{G}} (-1)^{|\mathcal{W}|-1}\,
     \prob\Big( \bigcap_{t\in\mathcal{W}}\Big( \bigcap_{s\in\mathcal{F}'}[X_s = a_s]
                \cap \Big( \bigcup_{a\in\mathcal{S}} [X_t = a] \Big) \Big)\Big) \nonumber \\
& \!\!\!\!\stackrel{\eqref{e3.22}}{=} \sum_{\emptyset\neq\mathcal{W}\subseteq\mathcal{G}} (-1)^{|\mathcal{W}|-1}\,
    \prob\Big( \bigcap_{s\in\mathcal{F}'}[X_s = a_s] \cap
		\Big( \bigcup_{\mathbf{a}\colon\mathcal{W}\to \mathcal{S}} \Big(\bigcap_{t\in\mathcal{W}}
		   [X_t = \mathbf{a}(t)] \Big) \Big) \Big) \nonumber \\
& = \sum_{\emptyset\neq\mathcal{W}\subseteq\mathcal{G}} \; \sum_{\mathbf{a}\colon \mathcal{W}\to \mathcal{S}}
    (-1)^{|\mathcal{W}|-1}\, \prob\Big( \bigcap_{s\in\mathcal{F}'}[X_s = a_s] \cap
		\bigcap_{t\in\mathcal{W}} [X_t = \mathbf{a}(t)] \Big). \nonumber
\end{align}
Combining identities \eqref{e3.21} and \eqref{e3.23}, we see that
\begin{equation} \label{e3.24}
\prob\Big( \bigcap_{s\in\mathcal{F}}[X_s = a_s] \Big) =
\sum_{\substack{\mathcal{W}\subseteq\mathcal{G} \\ \mathbf{a}\colon \mathcal{W}\to \mathcal{S}}}
  (-1)^{|\mathcal{W}|}\, \prob\Big( \bigcap_{s\in\mathcal{F}'}[X_s = a_s] \cap
	\bigcap_{t\in\mathcal{W}} [X_t = \mathbf{a}(t)] \Big)
\end{equation}
with the convention that the intersection over an empty index-set is equal to the whole sample space.
Finally, by identities \eqref{e3.20} and \eqref{e3.24} and the triangle inequality, we conclude that
the quantity $\big|\prob \big(\bigcap_{s\in\mathcal{F}}[X_s = a_s]\big)- \prod_{s\in\mathcal{F}} \prob\big([X_s = a_s]\big)\big|$
is upper bounded~by
\begin{align}
\label{e3.25} \sum_{\substack{\mathcal{W}\subseteq\mathcal{G}\\ \mathbf{a}\colon \mathcal{W}\to \mathcal{S}}}
\bigg| \mathbb{P}\Big( \bigcap_{s\in\mathcal{F}'}& [X_s = a_s] \cap \bigcap_{t\in\mathcal{W}} [X_t = \mathbf{a}(t)] \Big)\; - \\
& - \prod_{s\in\mathcal{F}'} \prob\big( [X_s = a_s]\big) \prod_{t\in\mathcal{W}}
\prob\big( [X_t = \mathbf{a}(t)]\big) \bigg| \stackrel{\eqref{e3.18}}{\mik}m^\kappa \gamma' = \gamma. \nonumber
\end{align}
The proof of Lemma \ref{l3.6} is completed.


\section{Proof of Theorem \ref*{t3.2}} \label{sec4}

\numberwithin{equation}{section}

This section is devoted to the proof of Theorem \ref{t3.2}, which proceeds by induction on the dimension $d$.
In a nutshell, the argument is based on repeated averaging and an appropriate version of the weak law of large
numbers in order to gradually upgrade the box independence condition. The combinatorial heart of the matter
lies in the selection of this averaging.

\subsection{Toolbox} \label{subsec4.1}

We begin by presenting three lemmas that are needed for the proof
of Theorem \ref{t3.2}, but they are not directly related with the main argument.
\begin{lem} \label{l4.1}
Let $m$ be a positive integer, let $\delta>0$ and let $A_1,\dots,A_m$ be events in a probability space
such that for every $i,j\in [m]$ with $i\neq j$ we have
\begin{equation} \label{e4.1}
\mathbb{P}( A_i \cap A_j ) \mik \mathbb{P}(A_i)\,\mathbb{P}(A_j) + \delta.
\end{equation}
Then, setting $Z\coloneqq\frac{1}{m}\sum_{i=1}^{m}\mathbf{1}_{A_i}$, we have
\begin{equation} \label{e4.2}
\mathrm{Var}(Z)\mik \frac{1}{m}+\delta.
\end{equation}
\end{lem}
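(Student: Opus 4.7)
The plan is a direct variance computation. I would start by writing
\[
\mathrm{Var}(Z) = \mathbb{E}[Z^2] - (\mathbb{E}[Z])^2 = \frac{1}{m^2}\sum_{i,j=1}^{m}\Big(\mathbb{P}(A_i\cap A_j) - \mathbb{P}(A_i)\,\mathbb{P}(A_j)\Big),
\]
which is the standard expansion for an average of indicator functions. The point is that every summand on the right is small: the diagonal summands are bounded trivially, while the off-diagonal summands are controlled precisely by the hypothesis \eqref{e4.1}.

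Next I would split the double sum into the diagonal contribution ($i=j$) and the off-diagonal contribution ($i\neq j$). For the diagonal, $\mathbb{P}(A_i\cap A_i)-\mathbb{P}(A_i)^2 = \mathbb{P}(A_i)\bigl(1-\mathbb{P}(A_i)\bigr)\mik \tfrac{1}{4}$, so the diagonal part contributes at most $\frac{1}{m^2}\cdot m\cdot\tfrac14 = \tfrac{1}{4m}$. For the off-diagonal, the assumption \eqref{e4.1} gives each of the $m(m-1)$ terms an upper bound of $\delta$, contributing at most $\frac{m(m-1)}{m^2}\delta\mik\delta$.

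Adding the two contributions yields $\mathrm{Var}(Z)\mik \tfrac{1}{4m}+\delta$, which is stronger than the claimed $\tfrac{1}{m}+\delta$. There is no real obstacle here; the only subtlety worth noting is that \eqref{e4.1} is only required for $i\neq j$, which is why the diagonal must be treated separately via the elementary bound on $p(1-p)$ rather than by the hypothesis. The argument is essentially one line, and no further tools beyond bilinearity of the inner product $\ave[\mathbf 1_A\mathbf 1_B]=\mathbb{P}(A\cap B)$ are needed.
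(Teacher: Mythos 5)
Your proof is correct and follows exactly the same variance expansion as the paper: expand $\mathrm{Var}(Z)$ as a double sum, split into diagonal and off-diagonal terms, bound the off-diagonal terms by hypothesis \eqref{e4.1}. The only (inessential) difference is that you sharpen the diagonal bound to $\mathbb{P}(A_i)(1-\mathbb{P}(A_i))\mik\tfrac14$, where the paper settles for the trivial bound $\mik 1$, so you get $\tfrac{1}{4m}+\delta$ instead of $\tfrac1m+\delta$.
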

\begin{proof}
We have
\[ \begin{split}
\mathrm{Var}(Z) = \ave \big[(Z - \ave[Z])^2 \big] = \frac{1}{m^2} \sum_{i,j\in [m]}
\ave\big[\big(\mathbf{1}_{A_i} -\mathbb{P}(A_i)\big) \big( \mathbf{1}_{A_j} - \mathbb{P}(A_j)\big)\big] \\
= \frac{1}{m^2} \Big[ \sum_{i=1}^{m} \big( \mathbb{P}(A_i) -\mathbb{P}(A_i)^2\big)
+ \sum_{\substack{i,j \in [m] \\ i\neq j}} \big(\mathbb{P}(A_i \cap A_j) -
\mathbb{P}(A_i)\, \mathbb{P}(A_j)\big) \Big] \mik \frac{1}{m} + \delta. \qedhere
\end{split} \]
\end{proof}
\begin{lem} \label{l4.2}
Let $m$ be a positive integer, let $\eta,\delta>0$ and let $E,A_1,\dots,A_m$ be events in a probability space
such that for every $i,j\in [m]$ with $i\neq j$ we have
\begin{enumerate}
\item [(i)] $| \mathbb{P}(A_i) - \mathbb{P}(A_j) |\mik \eta$,
\item [(ii)] $| \mathbb{P}(E \cap A_i) - \mathbb{P}(E\cap A_j) |\mik \eta$, and
\item [(iii)] $\mathbb{P}(A_i \cap A_j) \mik \mathbb{P}(A_i)\, \mathbb{P}(A_j) + \delta$.
\end{enumerate}
Then for every $i\in[m]$ we have
\begin{equation} \label{e4.3}
\big| \mathbb{P}(E \cap A_i) - \mathbb{P}(E)\,\mathbb{P}(A_i) \big|\mik 2\eta + \sqrt{\frac{1}{m} + \delta}.
\end{equation}
\end{lem}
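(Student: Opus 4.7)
The plan is to use Lemma \ref{l4.1} in tandem with a standard Cauchy--Schwarz bound on the covariance of the averaging variable $Z \coloneqq \frac{1}{m}\sum_{i=1}^m \mathbf{1}_{A_i}$ with $\mathbf{1}_E$. Hypothesis (iii) is exactly what Lemma \ref{l4.1} needs, so immediately we have $\mathrm{Var}(Z) \mik \frac{1}{m}+\delta$. The idea is that, on the one hand, (i) and (ii) tell us that $\ave[Z]$ and $\ave[Z\mathbf{1}_E]$ approximate $\prob(A_i)$ and $\prob(E\cap A_i)$ for any fixed $i$ up to an error of $\eta$, while on the other hand the variance bound on $Z$ forces $Z$ to be nearly independent of $\mathbf{1}_E$ (in the $L_2$ sense).

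To make the second point precise, I would write
\begin{equation*}
\ave[Z\mathbf{1}_E]-\ave[Z]\,\prob(E) = \ave\big[(Z-\ave[Z])\,\mathbf{1}_E\big],
\end{equation*}
and apply Cauchy--Schwarz to obtain
\begin{equation*}
\big|\ave[Z\mathbf{1}_E]-\ave[Z]\,\prob(E)\big| \mik \sqrt{\mathrm{Var}(Z)}\,\sqrt{\prob(E)} \mik \sqrt{\tfrac{1}{m}+\delta}.
\end{equation*}

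Then I would combine this with the two $\eta$-error estimates furnished by (i) and (ii): writing $\ave[Z]=\frac{1}{m}\sum_j\prob(A_j)$ and $\ave[Z\mathbf{1}_E]=\frac{1}{m}\sum_j\prob(E\cap A_j)$, hypothesis (i) gives $|\ave[Z]-\prob(A_i)|\mik\eta$ and hypothesis (ii) gives $|\ave[Z\mathbf{1}_E]-\prob(E\cap A_i)|\mik\eta$. A three-term triangle inequality
\begin{equation*}
\big|\prob(E\cap A_i)-\prob(E)\,\prob(A_i)\big| \mik \big|\prob(E\cap A_i)-\ave[Z\mathbf{1}_E]\big| + \big|\ave[Z\mathbf{1}_E]-\prob(E)\,\ave[Z]\big| + \prob(E)\,\big|\ave[Z]-\prob(A_i)\big|
\end{equation*}
then yields the claimed bound $2\eta+\sqrt{\tfrac{1}{m}+\delta}$, using $\prob(E)\mik 1$ on the last term.

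There is no real obstacle here; the only thing to be careful about is that the Cauchy--Schwarz bound is made against $\sqrt{\prob(E)}\mik 1$ (rather than $\sqrt{\mathrm{Var}(\mathbf{1}_E)}$, which would cost a harmless factor $\tfrac12$ but might complicate the final constant). The statement is essentially a quantitative ``weak law'' flavored correlation inequality, tailored to feed into the inductive upgrade of the box independence condition in the proof of Theorem \ref{t3.2}.
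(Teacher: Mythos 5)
Your proof is correct and follows essentially the same route as the paper's: form the average $Z=\frac{1}{m}\sum_j\mathbf{1}_{A_j}$, apply Lemma \ref{l4.1} to bound $\mathrm{Var}(Z)$, use Cauchy--Schwarz against $\mathbf{1}_E$ for the middle term, and control the two endpoint terms by $\eta$ via hypotheses (i) and (ii) through a three-term triangle inequality. No meaningful divergence from the paper's argument.
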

\begin{proof}
Set $Z\coloneqq \frac{1}{m} \sum_{j=1}^{m}\mathbf{1}_{A_j}$. Let $i\in[m]$. Notice that, by the triangle inequality,
\begin{align} \label{e4.4}
\!\! \big| \mathbb{P}(E \cap A_i)-\mathbb{P}(E)\,\mathbb{P}(A_i) \big| &
= \big| \ave[\mathbf{1}_E \mathbf{1}_{A_i}] - \ave[\mathbf{1}_E \mathbb{P}(A_i)] \big| \\
& \mik \big| \ave[\mathbf{1}_E \mathbf{1}_{A_i}] - \ave[\mathbf{1}_E Z]\big|
+ \big| \ave[\mathbf{1}_E Z] - \ave[\mathbf{1}_E \ave[Z] ] \big| + \nonumber \\
&\;\;\;\;\;\;  + \big| \ave[\mathbf{1}_E \ave[Z] ] - \ave[\mathbf{1}_E \mathbb{P}(A_i)] \big|. \nonumber
\end{align}
Invoking the triangle inequality again, we have
\begin{align}
\label{e4.5} & \big| \ave[\mathbf{1}_E \mathbf{1}_{A_i}] - \ave[\mathbf{1}_E Z]\big|
\mik \frac{1}{m}\sum_{j=1}^{m}\big|\mathbb{P}(E \cap A_i) - \mathbb{P}(E \cap A_j)\big| \stackrel{\text{(ii)}}{\mik}\eta, \\
\label{e4.6} & \big| \ave[\mathbf{1}_E \ave[Z] ] - \ave[\mathbf{1}_E \mathbb{P}(A_i)] \big|
\mik \mathbb{P}(E)\frac{1}{m}\sum_{j=1}^{m}\big| \mathbb{P}(A_j) - \mathbb{P}(A_i) \big|
\stackrel{\text{(i)}}{\mik}\eta.
\end{align}
Finally, by the Cauchy--Schwarz inequality, hypothesis (iii) and Lemma \ref{l4.1},
\begin{equation} \label{e4.7}
\big| \ave[\mathbf{1}_E Z ] - \ave[\mathbf{1}_E \ave[Z]] \big|
\mik \sqrt{\mathbb{P}(E)} \; \| Z - \ave[Z] \|_{L_2} \mik \sqrt{\frac{1}{m}+\delta}.
\end{equation}
The estimate \eqref{e4.3} follows from \eqref{e4.4}--\eqref{e4.7}.
\end{proof}
\begin{lem} \label{l4.3}
Let $m\meg 1$ be an integer, let $\eta>0$, and let $(A_i)_{i=1}^m$ be an $\eta$-spreadable sequence\footnote{That is,
the random vector $(\mathbf{1}_{A_1},\dots,\mathbf{1}_{A_m})$ is $\eta$-spreadable according to Definition \ref{d1.2}.}
of events in a probability space. Then for every $i,j\in[m]$ with $i\neq j$,
\begin{equation} \label{e4.8}
\mathbb{P}(A_i \cap A_j) \meg \mathbb{P}(A_i)\,\mathbb{P}(A_j) - \frac{1}{m} -3\eta.
\end{equation}
\end{lem}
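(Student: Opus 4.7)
The plan is to derive the bound from the nonnegativity of the variance of the counting variable $W := \sum_{i=1}^{m} \mathbf{1}_{A_i}$, together with the near-uniformity of the one- and two-point marginals that $\eta$-spreadability provides. Write $p_i := \mathbb{P}(A_i)$ for $i \in [m]$ and $p_{i,j} := \mathbb{P}(A_i \cap A_j)$ for $i<j$. The only consequences of spreadability I will need are $|p_k - p_i| \leq \eta$ for all $i,k$ and $|p_{k,\ell} - p_{i,j}| \leq \eta$ for all $i<j$ and $k<\ell$; both follow from the fact that the total variation distance between two distributions dominates the difference of the probability of any single event.

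First I would expand
\[
0 \;\leq\; \mathrm{Var}(W) \;=\; \sum_{i=1}^{m} p_i(1-p_i) \;+\; 2 \sum_{1\leq i<j\leq m} (p_{i,j} - p_i p_j)
\]
and use the trivial bound $p_i(1-p_i) \leq 1/4$ to conclude that
\[
\sum_{1\leq i<j\leq m} (p_{i,j} - p_i p_j) \;\geq\; -\frac{m}{8}.
\]

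Next, fix the pair $i\neq j$ appearing in the statement, say $i<j$. Writing $p_k p_\ell - p_i p_j = (p_k - p_i) p_\ell + p_i(p_\ell - p_j)$ and using $p_\ell, p_i \leq 1$ together with spreadability yields $p_k p_\ell \geq p_i p_j - 2\eta$, which combined with $p_{k,\ell} \leq p_{i,j} + \eta$ gives
\[
p_{k,\ell} - p_k p_\ell \;\leq\; (p_{i,j} - p_i p_j) + 3\eta
\]
for every pair $k<\ell$ in $[m]$. Summing this over the $\binom{m}{2}$ pairs and combining with the variance inequality above,
\[
\frac{m(m-1)}{2}\bigl((p_{i,j} - p_ip_j) + 3\eta\bigr) \;\geq\; -\frac{m}{8},
\]
so $p_{i,j} \geq p_i p_j - \frac{1}{4(m-1)} - 3\eta$. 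Since the lemma is vacuous when $m=1$, I may assume $m \geq 2$, in which range $\frac{1}{4(m-1)} \leq \frac{1}{m}$, and this yields the asserted inequality. The argument presents no genuine obstacle: the only point requiring a bit of bookkeeping is ensuring that the factor $3$ in front of $\eta$ absorbs both the $\eta$ slack from $p_{k,\ell}$ and the $2\eta$ slack from the product $p_k p_\ell$.
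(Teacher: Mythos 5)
Your proof is correct and rests on the same key idea as the paper's: the nonnegativity of the variance of the sum (equivalently, average) of the indicators, combined with $\eta$-spreadability to transfer the averaged inequality to the fixed pair $(i,j)$. The bookkeeping differs only superficially — the paper compares $\prob(A_i\cap A_j)$ to $\ave[Z^2]$ and $\prob(A_i)\prob(A_j)$ to $\ave[Z]^2$ directly (where $Z=\frac1m\sum_k\mathbf 1_{A_k}$) and then invokes $\ave[Z]^2\mik\ave[Z^2]$, whereas you expand $\mathrm{Var}(W)$, bound the diagonal via $p(1-p)\mik 1/4$, and amortize over the $\binom m2$ pairs, which incidentally yields the marginally sharper constant $1/(4(m-1))$ in place of $1/m$.
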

\begin{proof}
Set $Z\coloneqq\frac{1}{m}\sum_{k=1}^{m}\mathbf{1}_{A_k}$. Fix $i,j\in [m]$ with $i\neq j$. Then,
by $\eta$-spreadability, we~have
\[ \big|\mathbb{P}(A_i \cap A_j) - \ave[Z^2]\big|  =
\bigg| \mathbb{P}(A_i \cap A_j) - \frac{1}{m^2} \sum_{k=1}^{m}\mathbb{P}(A_k)
- \frac{1}{m^2} \sum_{\substack{k,\ell\in [m]\\ k\neq\ell}} \mathbb{P}(A_k\cap A_\ell)\bigg|
\mik \frac{1}{m} + \eta. \]
Notice that $\eta$-spreadability also implies
\[ \big|\mathbb{P}(A_i)\; \mathbb{P}(A_j) - \ave[Z]^2\big|
 \mik \ave[Z] \big|\mathbb{P}(A_j) - \ave[Z]\big| + \mathbb{P}(A_j) \big|\mathbb{P}(A_i) - \ave[Z]\big| \mik2\eta. \]
Since $\ave[Z]^2 \mik \ave[Z^2]$, inequality \eqref{e4.8} follows from the previous two estimates.
\end{proof}

\subsection{Initializing various numerical parameters} \label{subsec4.2}

Our goal in this subsection is to define, by recursion on $d$, the numbers $\gamma_{k}(\eta,\vartheta,d,n)$
as well as some other numerical invariants that are needed for the proof of Theorem \ref{t3.2}.
(The reader is advised to skip this subsection at first reading.)

We start by setting
\begin{equation} \label{e4.9}
\gamma_k(\eta,\vartheta,1,n) \coloneqq \min\bigg\{1, (3k-1)\eta+ (k-1)
\sqrt{\frac{1}{\lfloor n/2 \rfloor} + \vartheta}\bigg\}
\end{equation}
for every $0<\eta\mik 1$, every $\vartheta>0$ and every pair of positive integers $k,n$ with $n\meg2$ and $k\mik n/2$.

Let $d\meg 2$ be an integer, and assume that the numbers $\gamma_{k}(\eta,\vartheta,d-1,n)$
have been defined for every choice of admissible parameters.
Fix $0<\eta\mik 1$ and $\vartheta>0$, and let $n$ be an integer with $n\meg 4d$. We set
\begin{align}
\label{e4.10} \vartheta_1(\eta,\vartheta,d,n) & \coloneqq (n-2d+2)^{-1/2} + (2^d+5)\sqrt{\eta} + \sqrt{\vartheta}, \\
\label{e4.11} \vartheta_2(\eta,\vartheta,d,n) & \coloneqq \frac{2^{d-1}}{n-d+1}+2^d3\eta+\vartheta, \\
\label{e4.12} \vartheta_3(\eta,\vartheta,d,n) & \coloneqq \frac{d-1}{(n-2d+2)^{1/2^{d-1}}} +
(2^d + 5) \eta^{1/2^{d-1}} + \vartheta^{1/2^{d-1}} + 3 \eta.
\end{align}
Next, for every positive integer $k\mik \binom{\lfloor (n-1)/2 \rfloor}{d-1}$ we set
\begin{align}
\label{e4.13} \gamma^{(1)}_k(\eta,\vartheta,d,n) & \coloneqq
\gamma_{k}\big(\eta, \vartheta_1(\eta,\vartheta,d,n), d-1, n-1\big) + (k+1)\eta, \\
\label{e4.14} \gamma^{(2)}_k(\eta,\vartheta,d,n) & \coloneqq
\gamma_k\big(\eta, \vartheta_2(\eta,\vartheta,d,n), d-1, n-2\big), \\
\label{e4.15} \gamma^{(3)}_k(\eta,\vartheta,d,n) & \coloneqq
2\gamma^{(1)}_k(\eta, \vartheta,d,n)+\gamma^{(2)}_k(\eta, \vartheta,d,n) + k \,\vartheta_3(\eta, \vartheta,d,n), \\
\label{e4.16} \gamma^{(4)}_k(\eta,\vartheta,d,n) & \coloneqq
\big( \gamma^{(3)}_k(\eta,\vartheta,d,n) + \lfloor n/2 \rfloor^{-1} +(2k+1)\eta \big)^{1/2} +2\eta.
\end{align}
Moreover, for every positive integer $u$ with $u\mik n/2 $ and every choice $k_1,\dots,k_u$ of positive integers with
$k_1,\dots,k_u \mik \binom{\lfloor (n-2)/2 \rfloor}{d-1}$, set
\begin{equation} \label{e4.17}
\gamma^{(5)}(\eta, \vartheta,d,n, (k_i)_{i=1}^{u})\! \coloneqq \! \gamma^{(1)}_{k_1}\!(\eta,\vartheta,d,n) +
\sum_{i=2}^u \!\big(\gamma^{(1)}_{k_i}\!(\eta,\vartheta,d,n) + \gamma^{(4)}_{k_i}\!(\eta,\vartheta,d,n)\big)
\end{equation}
with the convention that the sum in \eqref{e4.17} is equal to $0$ if $u=1$.
(Note that the sum above has at most $\min\{u-1,k_2+\cdots+k_u\}$ elements.)
Finally, for every positive integer $k\mik \binom{\lfloor n/2 \rfloor}{d}$ we define
\begin{equation} \label{e4.18}
\gamma_k(\eta,\vartheta,d,n) \coloneqq \min\Big\{1,
(k+1)\, \eta + \max\{ \gamma^{(5)}(\eta, \vartheta,d,n, (k_i)_{i=1}^{u})\}\Big\},
\end{equation}
where the above maximum is taken over all choices of positive integers $u,k_1,\dots,k_u$ satisfying
$u\mik n/2 - d$, $k_1,\dots,k_u \mik \binom{\lfloor (n-2)/2 \rfloor}{d-1}$ and $k_1+\dots+k_u = k$.
(Note that there are at most $k^k$ such choices.)

\subsubsection{Estimation of\, \texorpdfstring{$\boldsymbol{\gamma}=(\gamma_k(\eta,\vartheta,d,n))_{k=1}^\ell$}{TEXT}}
\label{subsubsec4.2.1}

The following lemma provides an estimate for the numbers $\gamma_k(\eta,\vartheta,d,n)$ introduced above.
\begin{lem} \label{l4.4}
For every $0<\eta\mik 1$, every $\vartheta>0$, every positive integer $d$, every integer $n\meg 4d$
and every positive integer $k\mik \binom{\lfloor n/2\rfloor}{d}$ we have
\begin{equation} \label{e4.19}
\gamma_k(\eta,\vartheta,d,n) \mik 36\,k\, 2^d\, \Big(\!\sqrt[4^d]{1/n} + \sqrt[4^d]{\eta} +
\max\big\{\vartheta,\!\sqrt[4^d]{\vartheta}\big\}\Big),
\end{equation}
where $\gamma_k(\eta,\vartheta,d,n)$ is as in Subsection \emph{\ref{subsec4.2}}.
\end{lem}
\begin{proof}
We start by observing that for every choice of positive integers $d$ and $k$,
the quantities $\gamma_k(\eta,\vartheta,d,n), \gamma_k^{(1)}(\eta,\vartheta,d,n),
\gamma_k^{(2)}(\eta,\vartheta,d,n), \gamma_k^{(3)}(\eta,\vartheta,d,n)$ and $\gamma_k^{(4)}(\eta,\vartheta,d,n)$
are all decreasing with respect to $n$, and increasing with respect to $\eta$ and $\vartheta$.

It is also convenient to introduce the following notation. For every pair of positive integers
$n,\ell$, every $0<\eta\mik 1$ and every $\vartheta>0$ we set
\begin{equation} \label{e4.20}
\Delta_\ell(\eta,\vartheta,n) \coloneqq \sqrt[\ell]{1/n} + \sqrt[\ell]{\eta} +
\max\big\{\vartheta,\sqrt[\ell]{\vartheta}\big\}.
\end{equation}
Thus, it suffices to prove that
\begin{equation} \label{e4.21}
  \gamma_k(\eta,\vartheta,d,n)\mik 36\, k \, 2^d\Delta_{4^d}(\eta,\vartheta,n)
\end{equation}
for every pair of positive integers $n,d$ with $n\meg 4d$, every $0<\eta\mik 1$, every $\vartheta>0$,
and every positive integer $k\mik \binom{\lfloor n/2 \rfloor}{d}$.

To that end we proceed by induction on $d$. The base case ``$d=1$" follows readily from~\eqref{e4.9}.
Next, let $d$ be a positive integer with $d\meg 2$ and assume that \eqref{e4.21} holds for $d-1$,
every integer $n\meg 4d-4$, every $0<\eta\mik 1$, every $\vartheta>0$, and every positive integer
$k \mik \binom{\lfloor n/2 \rfloor}{d-1}$. Fix an integer $n\meg 4d$, $0<\eta\mik 1$ and
$\vartheta>0$; by \eqref{e4.10}, \eqref{e4.11} and \eqref{e4.12},~we~have
\begin{align}
\label{e4.22} \vartheta_1(\eta,\vartheta,d,n) & \mik (2^d+5) \Delta_2(\eta,\vartheta,n), \\
\label{e4.23} \vartheta_2(\eta,\vartheta,d,n) & \mik 2^d 3   \Delta_1(\eta,\vartheta,n), \\
\label{e4.24}  \vartheta_3(\eta,\vartheta,d,n) & \mik (2^d+8) \Delta_{2^{d-1}}(\eta,\vartheta,n).
\end{align}
Set $\overline{\vartheta}\coloneqq 2^d\,3\,\Delta_2(\eta,\vartheta,n)$ and notice that
$\overline{\vartheta}\meg \max\{\vartheta_1(\eta,\vartheta,d,n), \vartheta_2(\eta,\vartheta,d,n)\}$.
Moreover, for every positive integer $k\mik \binom{\lfloor (n-1)/2 \rfloor}{d-1}$ set
\begin{equation} \label{e4.25}
\overline{\gamma}_k \coloneqq \gamma_k(\eta,\overline{\vartheta},d-1,n-1)
\end{equation}
and observe that, by our inductive assumption,
\begin{align} \label{e4.26}
\overline{\gamma}_k & \mik 36\, k\, 2^{d-1}\Delta_{4^{d-1}}(\eta,\overline{\vartheta}, n-1) \\
& \mik 36\,k\, 2^{d-1}\Big(\!\!\sqrt[4^{d-1}]{2/n} + \!\sqrt[4^{d-1}]{\eta} +
\max\{\overline{\vartheta},\sqrt[4^{d-1}]{\overline{\vartheta}}\}\Big) \nonumber \\
& \mik 36\, k\, 2^{d-1} \Big(\!\sqrt[4^{d-1}]{2/n} + \sqrt[4^{d-1}]{\eta} +
2^d3 \Delta_{2\cdot4^{d-1}}(\eta,\theta,n)\Big) \nonumber \\
& \mik 36\, k\, 2^{d-1} (2^d3+2)\Delta_{2\cdot4^{d-1}}(\eta,\theta,n)
\mik 9\cdot 7 \cdot 2^{2d} k\, \Delta_{2\cdot4^{d-1}}(\eta,\theta,n). \nonumber
\end{align}
Additionally, by \eqref{e4.13}--\eqref{e4.16} and the monotonicity properties
of $\gamma_k(\eta,\vartheta,d,n)$, for every positive integer
$k\mik \binom{\lfloor (n-1)/2 \rfloor}{d-1}$ we have
\begin{align} \label{e4.27}
\gamma^{(4)}_k(\eta,& \vartheta,d,n) \mik
\Big(3\overline{\gamma}_k + \lfloor n/2 \rfloor^{-1} +(4k+3)\eta +
k \,\vartheta_3(\eta, \vartheta,d,n)\Big)^{1/2} +2\eta \\
& \stackrel{\eqref{e4.24}}{\mik} \sqrt{3\overline{\gamma}_k} +
\bigg(\frac{4}{n} +(4k+3)\eta + k \,(2^d+8) \Delta_{2^{d-1}}(\eta,\vartheta,n)\bigg)^{1/2}
+ 2^{\frac{d}{2}}\eta \nonumber \\
& \ \ \mik  \sqrt{3\overline{\gamma}_k} + 4\,k\,2^{\frac{d}{2}}\Delta_{2^{d}}(\eta,\vartheta,n) \nonumber
\end{align}
and therefore, invoking the fact that $\overline{\gamma}_k\mik 1$, we obtain that
\begin{align} \label{e4.28}
\gamma^{(1)}_k(\eta,\vartheta,d,n)  & + \gamma^{(4)}_k(\eta,\vartheta,d,n) \mik
(1+\sqrt{3})\sqrt{\overline{\gamma}_k} + 4\,k\,2^{\frac{d}{2}}\Delta_{2^{d}}(\eta,\vartheta,n) +(k+1)\eta \\
& \ \, \mik (1+\sqrt{3}) \sqrt{\overline{\gamma}_k} + 5\,k\,2^{\frac{d}{2}}\Delta_{2^{d}}(\eta,\vartheta,n)\nonumber \\
& \stackrel{\eqref{e4.26}}{\mik} 3\sqrt{7} (1+\sqrt{3})  2^d\, k\, \Delta_{4^d}(\eta,\theta,n) +
5\,k\, 2^{\frac{d}{2}}\Delta_{2^{d}}(\eta,\vartheta,n) \nonumber \\
& \ \, \mik  27\, k\, 2^d \Delta_{4^d}(\eta,\theta,n). \nonumber
\end{align}
By \eqref{e4.17} and \eqref{e4.18} and using the linearity of the upper bound in \eqref{e4.28}
with respect to the parameter $k$, we conclude that for every positive integer
$k\mik \binom{\lfloor n/2 \rfloor}{d}$,
\begin{equation} \label{e4.29}
\gamma_k(\eta,\vartheta,d,n) \mik 29\, k\, 2^d \Delta_{4^d}(\eta,\theta,n) \mik
36 \,k \,2^d \Delta_{4^d}(\eta,\theta,n). \qedhere
\end{equation}
\end{proof}

\subsection{The inductive hypothesis} \label{subsec4.3}

For every positive integer $d$ by \hyperref[pd]{$\mathrm{P}(d)$} \label{pd} we shall denote the following statement.
\medskip

\noindent \textit{For every integer $n\meg 2d$, every $0<\eta<1$, every $\vartheta>0$,
every nonempty finite set $\mathcal{X}$ and every nonempty subset $\mathcal{S}$ of\, $\mathcal{X}$,
if $\ell\coloneqq \binom{\lfloor n/2 \rfloor}{d}$, $\boldsymbol{\gamma}=(\gamma_k(\eta,\vartheta,d,n))_{k=1}^\ell$
is as in Subsection \emph{\ref{subsec4.2}} and $\bbx$ is an $\mathcal{X}$-valued, $\eta$-spreadable,
$d$-dimensional random array on $[n]$ that is $(\vartheta,\mathcal{S})$-box independent,
then $\bbx$ is $(\boldsymbol{\gamma},\mathcal{S})$-independent. }
\medskip

\noindent By Lemma \ref{l4.4}, it is clear that Theorem \ref{t3.2} follows from the validity of
\hyperref[pd]{$\mathrm{P}(d)$} for every positive integer $d$.

\subsection{The base case ``$d=1$"} \label{subsec4.4}

The initial step of the induction follows from the following lemma.
\begin{lem} \label{l4.5}
Let $n, \eta, \vartheta, \mathcal{X}$ and $ \mathcal{S}$ be as in the statement of\, \hyperref[pd]{$\mathrm{P}(1)$},
and assume that $\bbx=(X_1,\dots,X_n)$ is an $\mathcal{X}\text{-valued}$, $\eta$-spreadable, random vector.
Assume, moreover, that $\bbx$ is $(\vartheta,\mathcal{S})$-box independent, that is,
for every $i,j\in [n]$ with $i\neq j$ and every $a\in\mathcal{S}$~we~have
\begin{equation} \label{e4.30}
\mathbb{P}\big( [X_i=a]\cap [X_j=a]\big) \mik \mathbb{P}\big([X_i=a]\big)\,\mathbb{P}\big([X_j=a]\big) + \vartheta.
\end{equation}
Then $\bbx$ is $(\boldsymbol{\gamma},\mathcal{S})$-independent, that is,
for every nonempty $\mathcal{F}\subseteq [n]$ with $|\mathcal{F}|\mik n/2$ and every collection
$(a_i)_{i\in\mathcal{F}}$ of elements of $\mathcal{S}$, we have
\begin{equation} \label{e4.31}
\bigg| \mathbb{P}\Big(\bigcap_{i\in\mathcal{F}}[X_i = a_i] \Big) -
\prod_{i\in\mathcal{F}} \mathbb{P}\big([X_i = a_i]\big) \bigg| \mik \gamma_{|\mathcal{F}|}(\eta,\vartheta,1,n),
\end{equation}
where $\boldsymbol{\gamma}\coloneqq(\gamma_k(\eta,\vartheta,1,n))_{k=1}^{\lfloor n/2 \rfloor}$
is as in \eqref{e4.9}. In particular, \hyperref[pd]{$\mathrm{P}(1)$} holds true.
\end{lem}
\begin{proof}
Observe that, by the $\eta$-spreadability of $\bbx$, it is enough to show that for every $k\in \{1,\dots,\lfloor n/2 \rfloor\}$
and every $a_1,\dots,a_k\in \mathcal{S}$ we have
\begin{equation} \label{e4.32}
\bigg| \mathbb{P}\Big(\bigcap_{i=1}^k[X_i =a_i]\Big) - \prod_{i=1}^k\mathbb{P}\big([X_i =a_i]\big) \bigg| \mik
(k-1) \bigg( 2\eta + \sqrt{\frac{1}{\lfloor n/2 \rfloor} + \vartheta} \bigg).
\end{equation}
To this end, we proceed by induction of $k$. The case ``$k=1$" is straightforward. Let $k$ be a positive integer
with $k<\lfloor n/2\rfloor$, and assume that \eqref{e4.32} has been verified up to $k$. Fix $a_1,\dots,a_{k+1}\in \mathcal{S}$.
Set $m\coloneqq \lfloor n/2\rfloor$ and $E\coloneqq \bigcap_{i=1}^k [X_i = a_i]$. Also set $A_j\coloneqq [X_{k+j} = a_{k+1}]$
for every $j\in [m]$. Using the $\eta$-spreadability of $\bbx$, for every $j,j'\in [m]$ with $j\neq j'$ we~have
\begin{enumerate}
\item[(i)] $| \mathbb{P}(A_j) - \mathbb{P}(A_{j'}) |\mik \eta$, and
\item[(ii)] $| \mathbb{P}(E \cap A_j) - \mathbb{P}(E\cap A_{j'}) |\mik \eta$.
\end{enumerate}
Moreover, since $a_{k+1}\in\mathcal{S}$, we have
\begin{enumerate}
\item[(iii)] $\mathbb{P}(A_j \cap A_{j'}) \mik \mathbb{P}(A_j)\, \mathbb{P}(A_{j'}) +\vartheta$.
\end{enumerate}
Applying Lemma \ref{l4.2} for ``$\delta = \vartheta$" and using the definition of $A_1$, we see that
\begin{equation} \label{e4.33}
\Big| \mathbb{P}\big(E \cap [X_{k+1}=a_{k+1}]\big) - \mathbb{P}(E)\, \mathbb{P}\big([X_{k+1}=a_{k+1}]\big) \Big|
\mik 2\eta + \sqrt{\frac{1}{m} + \vartheta}.
\end{equation}
On the other hand, by our inductive assumptions, we have
\begin{equation} \label{e4.34}
\bigg|\mathbb{P}(E) - \prod_{j=1}^{k}\mathbb{P}\big([X_j=a_j]\big) \bigg|\mik
(k-1) \bigg( 2\eta + \sqrt{\frac{1}{m} + \vartheta} \bigg).
\end{equation}
Combining \eqref{e4.33} and \eqref{e4.34}, we see that \eqref{e4.32} is satisfied, as desired.
\end{proof}

\subsection{The general inductive step} \label{subsec4.5}

We now enter into the main part of the proof of Theorem \ref{t3.2}. \textit{Specifically,
fix an integer $d\meg 2$. Throughout this subsection, we will assume that\,
\hyperref[pd]{$\mathrm{P}(d-1)$} has been proved}.

We also note that, in what follows, we will estimate the difference of various products in terms
of the differences of the factors, the number of factors and the $L_{\infty}$ norm of the factors.
The reader should have in mind this remark, as we will use this standard telescoping argument
without further notice.

\subsubsection{Step 1: preparatory lemmas} \label{subsubsec4.5.1}

Our goal in this step is to prove two probabilistic lemmas that will be used in the third
and the fourth step of the proof respectively. Strictly speaking, these lemmas are not part
of the proof of \hyperref[pd]{$\mathrm{P}(d)$} since in their proofs we do not use the inductive
assumptions. (In particular, this subsection can be read independently.)

The first lemma essentially shows that the reverse inequality of \eqref{e3.3} always holds true
in the presence of approximate spreadability.
\begin{lem} \label{l4.6}
Let $n$ be an integer with $n\meg 2d$, let $0<\eta<1$, let $\mathcal{X}$ be a nonempty finite set, and let
$\bbx=\langle X_s : s\in \binom{[n]}{d}\rangle$ be an $\mathcal{X}$-valued, $\eta$-spreadable,
$d$-dimensional random array on $[n]$. Then for every $t\in \binom{[n-2]}{d-1}$ and every $a\in\mathcal{X}$ we have
\begin{align}
\label{e4.35} \prob\big( [X_{t\cup\{n-1\}} = a]\big) & \, \prob\big([X_{t\cup\{n\}} = a]\big) \\
& \mik \prob\big( [X_{t\cup\{n-1\}} = a] \cap [X_{t\cup\{n\}} = a] \big) + \frac{1}{n-d+1} + 6 \eta. \nonumber
\end{align}
\end{lem}
\begin{proof}
Set $t_0\coloneqq [d-1]$, and $A_i\coloneqq [X_{t_0 \cup \{d-1+i\}} = a]$ for every $i\in [n-d+1]$.
Observe that the sequence $(A_1\dots,A_{n-d+1})$ is $\eta$-spreadable\footnote{Recall that this means
that the random vector $(\mathbf{1}_{A_1},\dots, \mathbf{1}_{A_{n-d+1}})$ is $\eta$-spreadable.}.
By Lemma~\ref{l4.3}, we obtain that
\begin{equation} \label{e4.36}
\prob(A_1)\,\prob(A_2) \mik \prob(A_1\cap A_2) +\frac{1}{n-d+1} +3\eta.
\end{equation}
By \eqref{e4.36} and the $\eta$-spreadability of $\bbx$, the estimate \eqref{e4.35} follows.
\end{proof}
The second lemma shows that the box independence condition \eqref{e3.3} is inherited by the two-dimensional faces
of $d$-dimensional boxes.
\begin{lem} \label{l4.7}
Let $n$ be an integer with $n\meg 2d$, let $0<\eta<1$, let $\vartheta>0$, let $\mathcal{X}$ be a nonempty finite set,
let $\mathcal{S}$ be a nonempty subset of $\mathcal{X}$, and let $\bbx=\langle X_s : s\in \binom{[n]}{d}\rangle$
be an $\mathcal{X}$-valued, $\eta$-spreadable, $d$-dimensional random array on $[n]$ that is
$(\vartheta,\mathcal{S})\text{-box}$ independent. Then for every $t\in \binom{[n-2]}{d-1}$ and
every $a \in \mathcal{S}$ we have
\begin{align}
\label{e4.37} \prob\big( [X_{t \cup \{n-1\}} = a] & \cap [X_{t \cup \{n\}} = a] \big)  \\
& \mik \prob\big( [X_{t \cup \{n-1\}} = a] \big) \, \prob \big( [X_{t \cup \{n\}} = a] \big)
+ \vartheta_3(\eta,\vartheta,d,n), \nonumber
\end{align}
where $\vartheta_3(\eta,\vartheta,d,n)=\frac{d-1}{(n-2d+2)^{1/2^{d-1}}} +
(2^d + 5) \eta^{1/2^{d-1}} + \vartheta^{1/2^{d-1}} + 3 \eta$ is as in \eqref{e4.12}.
\end{lem}
\begin{proof}
Fix $a \in \mathcal{S}$. We define, recursively, a finite sequence $(\vartheta_i)_{i=0}^{d-1}$
by setting $\vartheta_0 = \vartheta$ and
\begin{equation} \label{e4.38}
\vartheta_{r+1} = \bigg( \frac{1}{n-2d+r+2} + (2^{d-r} + 5)\eta + \vartheta_r \bigg)^{1/2}.
\end{equation}
By induction on $r\in\{0,\dots,d-1\}$, we will show that
\begin{equation} \label{e4.39}
\mathbb{P}\Big( \bigcap_{v\in B_r} [X_{t_r\cup v} = a] \Big) \mik \prod_{v\in B_r}
 \mathbb{P}\big(  [X_{t_r\cup v} = a] \big) +\vartheta_r,
\end{equation}
where $t_{r}\coloneqq [r]$ (and, by convention, $[0] = \emptyset$) and
$B_r\coloneqq \mathrm{Box}\big((H_{r+1},\dots,H_d)\big)$ is the $(d-r)$-dimensional box---see \eqref{e3.1}---determined
by the sequence $(H_{r+1},\dots,H_d)$ with $H_i\coloneqq \{n-2d+2i-1, n-2d+2i\}$ for each $i\in\{r+1,\dots,d\}$.
The case ``$r=0$" follows from the fact that the random array $\bbx$ is $(\vartheta,\mathcal{S})$-box independent.
Next, let $r \in \{0,\dots,d-2\}$ and assume that \eqref{e4.39} has been proved up to $r$. For every $j\in [n-2d+r+2]$ set
\begin{equation} \label{e4.40}
A_j\coloneqq \bigcap_{v \in B_{r+1}} [X_{t_r \cup \{r+j\} \cup v} = a].
\end{equation}
Since $\bbx$ is $\eta$-spreadable, the sequence $(A_1,\dots, A_{n-2d+r+2})$ is $\eta$-spreadable.
Using this observation and the inductive assumptions, we see that
\begin{align}
\label{e4.41} \prob(A_1 \cap A_2)
& \mik \prob( A_{n-2d+r+1} \cap A_{n-2d+r+2} ) + \eta\\
& = \prob\Big( \bigcap_{v\in B_r} [X_{t_r\cup v} = a]  \Big) +\eta
\mik \prod_{v\in B_r} \prob\big(  [X_{t_r\cup v} = a] \big)  + \eta + \vartheta_r. \nonumber
\end{align}
On the other hand, since $\bbx$ is $\eta$-spreadable, we have
\begin{equation} \label{e4.42}
\prod_{v\in B_r} \prob\big( [X_{t_r \cup v} = a] \big) \mik
\bigg( \prod_{v\in B_{r+1}} \prob\big( [X_{t_{r+1} \cup v} = a] \big) \bigg)^2 + 2^{d-r}\eta.
\end{equation}
Moreover, by Lemma \ref{l4.3} applied to the $\eta$-spreadable sequence $(A_j)_{j=1}^{n-2d+r+2}$,
\begin{equation} \label{e4.43}
\prob(A_1 \cap A_2) \meg \prob(A_1) \, \prob(A_2) - \frac{1}{n-2d+r+2} - 3\eta.
\end{equation}
By \eqref{e4.41}--\eqref{e4.43} and using the $\eta$-spreadability of the sequence
$(A_j)_{j=1}^{n-2d+r+2}$ once again, we obtain that
\begin{align}
\label{e4.44} \prob\Big( \bigcap_{v \in B_{r+1}} [X_{t_{r+1} \cup v} = a] \Big)^2 & =
\prob(A_1)^2 \mik \prob(A_1) \, \prob(A_2) + \eta \\
& \mik \bigg( \prod_{v\in B_{r+1}} \prob\big( [X_{t_{r+1} \cup v} = a] \big) \bigg)^2 + \vartheta_{r+1}^2. \nonumber
\end{align}
Taking square-roots, this estimate completes the inductive proof of \eqref{e4.39}.

Now notice that
\begin{align}
\label{e4.45}
\vartheta_{d-1} & \mik \vartheta^{\frac{1}{2^{d-1}}} + \sum_{j=1}^{d-1} \Big((n-2d+2)^{-1/2^j}
 + ((2^d + 5)\eta)^{1/2^j}\Big)  \\
& \mik \frac{d-1}{(n-2d+2)^{1/2^{d-1}}}  + (d-1) (2^d + 5)^{1/2} \eta^{1/2^{d-1}} +
\vartheta^{1/2^{d-1}} \nonumber \\
& \mik \frac{d-1}{(n-2d+2)^{1/2^{d-1}}} + (2^d + 5) \eta^{1/2^{d-1}} + \vartheta^{1/2^{d-1}}. \nonumber
\end{align}
Setting $s_1\coloneqq [d-1] \cup \{n-1\}$ and $s_2\coloneqq [d-1] \cup \{n\}$,
by \eqref{e4.39} and \eqref{e4.45}, we have
\begin{align}
\label{e4.46} \prob\big( [X_{s_1} = a] \cap [X_{s_2} = a] \big) & \mik
  \prob\big( [X_{s_1} = a] \big) \, \prob\big( [X_{s_2} = a] \big) + \\
&  \, + \frac{d-1}{(n-2d+2)^{1/2^{d-1}}}  + (2^d + 5) \eta^{1/2^{d-1}}\!\! + \vartheta^{1/2^{d-1}}. \nonumber
\end{align}
Taking into account the $\eta$-spreadability of $\bbx$ and the definition of $\vartheta_3(\eta,\vartheta,d,n)$,
the estimate \eqref{e4.37} follows from \eqref{e4.46}.
\end{proof}

\subsubsection{Step 2: rewriting the inductive assumptions} \label{subsubsec4.5.2}

We proceed with the following lemma, which will enable us to use \hyperref[pd]{$\mathrm{P}(d-1)$}
in a more convenient form.
\begin{lem} \label{l4.8}
Let $n, \eta,\vartheta, \mathcal{X}, \mathcal{S}$ be as in \hyperref[pd]{$\mathrm{P}(d)$},
and let $\bbx=\langle X_s : s\in \binom{[n]}{d} \rangle$ be an $\mathcal{X}\text{-valued}$, $\eta$-spreadable,
$d$-dimensional random array on~$[n]$ that is $(\vartheta,\mathcal{S})$-box independent.
We define $\widetilde{\bbx}=\langle\widetilde{X}_t : t\in \binom{[n-1]}{d-1} \rangle$ by setting
\begin{equation} \label{e4.47}
\widetilde{X}_t\coloneqq X_{t\cup\{n\}}.
\end{equation}
Then the random array $\widetilde{\bbx}$ is $\mathcal{X}$-valued, $\eta$-spreadable
and $( \vartheta_1(\eta,\vartheta,d,n),\mathcal{S}) $-box independent, where
$\vartheta_1(\eta,\vartheta,d,n)=(n-2d+2)^{-1/2} + (2^d+5)\sqrt{\eta} + \sqrt{\vartheta}$ is as \eqref{e4.10}.
\end{lem}
\begin{proof}
Since $\bbx$ is $\mathcal{X}$-valued and $\eta$-spreadable, by \eqref{e4.47},
we see that these properties are inherited to $\widetilde{\bbx}$.
Thus, we only need to check that $\widetilde{\bbx}$  is $(\vartheta_1(\eta,\vartheta,d,n),\mathcal{S})$-box independent.

To this end, fix $a\in \mathcal{S}$ and a finite sequence $\mathcal{H}=(H_1,\dots,H_{d-1})$
of $2\text{-element}$ subsets of $[n-1]$ with $\max(H_i)<\min(H_{i+1})$ for all $i\in[d-2]$;
let $B\coloneqq \mathrm{Box}(\mathcal{H})$ denote the $(d-1)$-dimensional box determined
by the sequence $\mathcal{H}$. Moreover, set
\[ B_0\coloneqq \mathrm{Box}\big((\{1,2\},\dots,\{2d-3,2d-2\})\big) \]
and $A_r\coloneqq \bigcap_{t\in B_0}[X_{t\cup\{2d-2+r\}}=a]$ for every $r\in[n-2d+2]$.
Notice that the sequence $(A_1,\dots, A_{n-2d+2})$ is $\eta$-spreadable. Therefore, by Lemma \ref{l4.3},
\begin{equation} \label{e4.48}
\mathbb{P}(A_1)^2 \mik \mathbb{P}(A_1) \, \mathbb{P}(A_2) + \eta
\mik \mathbb{P}(A_1\cap A_2) + \frac{1}{n-2d+2} + 4\eta.
\end{equation}
Next, set $B'\coloneqq \mathrm{Box}\big((\{1,2\},\dots,\{2d-3,2d-2\},\{2d-1,2d\})\big)$,
and observe that $B'$ is a $d\text{-dimensional}$ box and $A_1\cap A_2 =\bigcap_{s \in B'}[X_s = a]$.
Since $\bbx$ is $(\vartheta,\mathcal{S})$-box independent and $a\in\mathcal{S}$, we see that
\begin{align}
\label{e4.49} \mathbb{P}(A_1 \cap A_2) & \mik \prod_{s\in B'} \prob\big([X_s =a]\big) +\vartheta \\
& = \Big( \prod_{t\in B_0} \prob\big([X_{t \cup \{2d-1\}} =a]\big) \Big)
\Big( \prod_{t\in B_0} \prob\big([X_{t \cup \{2d\}} = a]\big) \Big) +\vartheta \nonumber \\
& \mik \bigg( \prod_{t\in B_0} \prob\big([X_{t \cup \{2d-1\}} = a ]\big) \bigg) ^2 + 2^{d-1}\eta +\vartheta, \nonumber
\end{align}
where the last inequality follows from the $\eta$-spreadability of $\bbx$.
By \eqref{e4.48}, \eqref{e4.49} and the definition of $A_1$, we obtain
\begin{align}
\label{e4.50} \prob\Big( \bigcap_{t\in B_0} [X_{t\cup\{2d-1\}} = a] \Big)
& \mik \prod_{t\in B_0} \prob\big([X_{t \cup \{2d-1\}} = a]\big) + \\
& \;\;\;\;\;\;\;\;\;\;\; + \bigg(\frac{1}{n-2d+2} + (2^{d-1}+4)\eta+\vartheta\bigg)^{1/2} \nonumber \\
& \mik \prod_{t\in B_0} \prob\big([X_{t \cup \{2d-1\}} = a]\big) + \nonumber \\
& \;\;\;\;\;\;\;\;\;\;\;  +(n-2d+2)^{-1/2} + (2^{d-1}+4)\sqrt{\eta}+ \sqrt{\vartheta}. \nonumber
\end{align}
On the other hand, using the $\eta$-spreadability of $\bbx$, we have
\begin{align}
\label{e4.51} & \bigg| \prob\Big( \bigcap_{t\in B_0} [X_{t\cup\{2d-1\}} = a]\Big)-
\prob\Big( \bigcap_{t\in B} [\widetilde{X}_t = a] \Big) \bigg| \mik \eta \mik \sqrt{\eta}, \\
\label{e4.52} & \bigg| \prod_{t\in B_0} \prob\big([X_{t \cup \{2d-1\}} = a]\big) -
\prod_{t\in B} \prob\big([\widetilde{X}_t = a]\big) \bigg| \mik 2^{d-1}\eta \mik 2^{d-1} \sqrt{\eta}.
\end{align}
Combining \eqref{e4.50}--\eqref{e4.52} and invoking the definition
of $\vartheta_1(\eta,\vartheta,d,n)$ in \eqref{e4.10}, we conclude that
\begin{equation} \label{e4.53}
\mathbb{P}\Big( \bigcap_{t\in B} [\tilde{X}_t = \alpha] \Big) \mik \prod_{t\in B} \prob\big([\tilde{X}_t = a]\big)
 + \vartheta_1(\eta,\vartheta,d,n).
\end{equation}
Since $a$ and $B$ were arbitrary, the result follows.
\end{proof}
By Lemma \ref{l4.8} and \hyperref[pd]{$\mathrm{P}(d-1)$}, we have the following corollary.
\begin{cor} \label{c4.9}
Let $n, \eta, \vartheta, \mathcal{X}, \mathcal{S},\bbx$ be as in Lemma \emph{\ref{e4.7}}.
Then for every nonempty subset $\mathcal{G}$ of\, $\binom{[n-1]}{d-1}$
with $|\bigcup \mathcal{G}| \mik (n-1)/2$, every collection $(a_t)_{t\in\mathcal{G}}$ of elements of~$\mathcal{S}$,
and every $r\in [n]$ with $r>\max\big(\bigcup\mathcal{G}\big)$ we have
\begin{equation} \label{e4.54}
\bigg| \mathbb{P}\Big( \bigcap_{t\in\mathcal{G}}[X_{t\cup\{r\}} = a_t]\Big)-
\prod_{t\in\mathcal{G}} \prob\big( [X_{t\cup\{r\}} = a_t]\big) \bigg| \mik
\gamma^{(1)}_{|\mathcal{G}|}(\eta, \vartheta, d, n),
\end{equation}
where $\gamma^{(1)}_{|\mathcal{G}|}(\eta, \vartheta, d, n)=
\gamma_{|\mathcal{G}|}\big(\eta, \vartheta_1(\eta,\vartheta,d,n), d-1, n-1\big) +
(|\mathcal{G}|+1)\eta$ is as in \eqref{e4.13}.
\end{cor}

\subsubsection{Step 3: doubling} \label{subsubsec4.5.3}

The following lemma complements Lemma \ref{l4.8}. It is also based on the inductive hypothesis
\hyperref[pd]{$\mathrm{P}(d-1)$}, but it will enable to use it in a rather different form.
\begin{lem}[Doubling] \label{l4.10}
Let $n, \eta,\vartheta, \mathcal{X}, \mathcal{S}$ be as in the statement of\, \hyperref[pd]{$\mathrm{P}(d)$},
and assume that $\bbx=\langle X_s : s\in \binom{[n]}{d} \rangle$ is an $\mathcal{X}$-valued, $\eta$-spreadable,
$d$-dimensional random array on~$[n]$ that is $(\vartheta,\mathcal{S})$-box independent.
We define a $(d-1)$-dimensional random array
$\widetilde{\bbx}'=\langle \widetilde{X}'_t : t\in \binom{[n-2]}{d-1} \rangle$ by setting
\begin{equation} \label{e4.55}
\widetilde{X}'_t \coloneqq (X_{t \cup \{n-1\}}, X_{t \cup \{n\}} ).
\end{equation}
Then $\widetilde{\bbx}'$ is $(\mathcal{X}\times\mathcal{X})$-valued, $\eta$-spreadable and
$(\vartheta_2(\eta,\vartheta,d,n), \{(a,a): a\in\mathcal{S}\})$-box independent,
where $\vartheta_2(\eta,\vartheta,d,n)=\frac{2^{d-1}}{n-d+1}+2^d3\eta+\vartheta$ is as in \eqref{e4.11}.
\end{lem}
\begin{proof}
It is clear that $\widetilde{\bbx}'$ is $(\mathcal{X}\times\mathcal{X})$-valued and $\eta$-spreadable. So, we only need
to show that $\widetilde{\bbx}'$ is  $(\vartheta_2(\eta,\vartheta,d,n), \{(a,a): a\in\mathcal{S}\})$-box independent.

Let $H_1,\dots,H_{d-1}$ be $2$-element subsets of $[n-2]$ with $\max(H_i)<\min(H_{i+1})$ for all $i\in[d-2]$.
Also let $a\in\mathcal{S}$. Set $\widetilde{B}\coloneqq \mathrm{Box}\big((H_1,\dots,H_{d-1})\big)$;
also set $H_d\coloneqq\{n-1,n\}$ and $B\coloneqq \mathrm{Box}\big((H_1,\dots,H_{d-1},H_d)\big)$. Since $\bbx$
is $(\vartheta,\mathcal{S})$-box independent, we see that
\begin{align}
\label{e4.56} \prob\Big( \bigcap_{t\in \widetilde{B}} [\widetilde{X}'_t = (a,a)] \Big)
 & = \prob\Big( \bigcap_{t\in \widetilde{B}} \big( [X_{t\cup\{n-1\}}=a] \cap [X_{t\cup\{n\}}=a] \big) \Big) \\
 & = \prob\Big( \bigcap_{s\in B} [X_s=a]\Big) \mik \prod_{s\in B} \prob\big( [X_s=a] \big) +\vartheta. \nonumber
\end{align}
By Lemma \ref{l4.6}, we have
\begin{align}
\label{e4.57} \prod_{s\in B} \prob\big( [X_s&=a]\big) = \prod_{t\in \widetilde{B}}
\prob\big( [X_{t \cup \{n-1\}}=a]\big) \, \prob\big( [X_{t \cup \{n\}}=a]\big) \\
& \mik \prod_{t\in \widetilde{B}} \prob\big( [X_{t \cup \{n-1\}}=a] \cap [X_{t \cup \{n\}}=a]\big)
    +\frac{2^{d-1}}{n-d+1}+2^{d-1}6\eta \nonumber \\
& = \prod_{t\in \widetilde{B}} \prob\big( [\widetilde{X}'_t = (a,a)]\big) +\frac{2^{d-1}}{n-d+1}+2^{d}3\eta. \nonumber
\end{align}
By \eqref{e4.56} and \eqref{e4.57} and the definition of $\vartheta_2(\eta,\vartheta,d,n)$, the result follows.
\end{proof}
The following corollary---which is an immediate consequence of Lemma \ref{l4.10} and the inductive
assumption \hyperref[pd]{$\mathrm{P}(d-1)$}---is the analogue of Corollary \ref{c4.9}.
\begin{cor} \label{c4.11}
Let $n, \eta, \vartheta, \mathcal{X}, \mathcal{S},\bbx, \widetilde{\bbx}'$ be as in Lemma \emph{\ref{e4.9}}.
Then the random array $\widetilde{\bbx}'$ is
$\big((\gamma^{(2)}_k\!(\eta, \vartheta, d, n))_{k=1}^\ell, \{(a,a): a\in\mathcal{S}\}\big)$-independent,
where $\ell= \binom{\lfloor (n-2)/2\rfloor}{d-1}$ and
$\gamma^{(2)}_k(\eta, \vartheta, d, n)=\gamma_k\big(\eta, \vartheta_2(\eta,\vartheta,d,n), d-1, n-2\big)$
is as in \eqref{e4.14} for each $k\in [\ell]$.
\end{cor}

\subsubsection{Step 4: gluing} \label{subsubsec4.5.4}

This is the main step of the proof. Specifically, our goal is to prove the following proposition.
\begin{prop}[Gluing] \label{p4.12}
Let $n\meg 2d+2$ be an integer, let $\eta,\vartheta,\mathcal{X},\mathcal{S}$ be as in the statement
of\, \hyperref[pd]{$\mathrm{P}(d)$}, and assume that $\bbx=\langle X_s : s\in \binom{[n]}{d}\rangle$
is an $\mathcal{X}$-valued, $\eta$-spreadable, $d$-dimensional random array on $[n]$ that is
$(\vartheta,\mathcal{S})$-box independent. Finally, let $r$ be an integer with $d< r\mik n/2$,
let $\mathcal{G}$ be a nonempty subset of $\binom{[r-1]}{d-1}$, let $(a_t)_{t\in\mathcal{G}}$
be a collection of elements of\, $\mathcal{S}$, let $\mathcal{F}$ be a nonempty subset of $\binom{[r-1]}{d}$,
and let $(b_s)_{s\in\mathcal{F}}$ be a collection of elements of\, $\mathcal{S}$. Then we have
\begin{align} \label{e4.58}
\bigg| \prob\Big( \bigcap_{s\in\mathcal{F}}& [X_s = b_s] \cap \bigcap_{t\in\mathcal{G}} [X_{t \cup \{r\}}= a_t] \Big) -  \\
 & - \prob\Big( \bigcap_{s\in\mathcal{F}} [X_s = b_s]\Big)\, \prob\Big( \bigcap_{t\in\mathcal{G}}
[X_{t \cup \{r\}} = a_t] \Big) \bigg| \mik \gamma^{(4)}_{|\mathcal{G}|}(\eta,\vartheta,d,n), \nonumber
\end{align}
where $\gamma_{|\mathcal{G}|}^{(4)}(\eta, \vartheta, d,n)=
\big( \gamma^{(3)}_{|\mathcal{G}|}(\eta,\vartheta,d,n) +
\lfloor n/2 \rfloor^{-1} +(2|\mathcal{G}|+1)\eta \big)^{1/2} +2\eta$ is as in \eqref{e4.16}.
\end{prop}
Proposition \ref{p4.12} follows by carefully selecting a sequence of events,
and then applying the averaging argument presented in Lemma \ref{l4.2}.
In order to do so, we need to control the variances of the corresponding averages.
This is the content of the following lemma.
\begin{lem}[Variance estimate] \label{l4.13}
Let $n,\eta,\vartheta,\mathcal{X},\mathcal{S}$ be as in the statement of\, \hyperref[pd]{$\mathrm{P}(d)$},
and assume that $\bbx=\langle X_s : s\in \binom{[n]}{d}\rangle$ is an $\mathcal{X}$-valued,
$\eta$-spreadable, $d$-dimensional random array on $[n]$ that is $(\vartheta,\mathcal{S})$-box independent.
Then for every nonempty subset $\mathcal{G}$ of\, $\binom{[n-2]}{d-1}$ with $|\bigcup \mathcal{G}|\mik (n-2)/2$,
and every collection $(a_t)_{t\in\mathcal{G}}$ of elements of\, $\mathcal{S}$ we have
\begin{align}
\label{e4.59} \prob\Big( \bigcap_{t\in\mathcal{G}}  [& X_{t \cup \{n-1\}}=a_t] \cap \bigcap_{t\in\mathcal{G}}
[X_{t \cup \{n\}}=a_t] \Big) \\
\mik & \, \prob\Big( \bigcap_{t\in\mathcal{G}} [X_{t \cup \{n-1\}}=a_t]\Big)\,
\prob\Big( \bigcap_{t\in\mathcal{G}} [X_{t \cup \{n\}}=a_t] \Big) +
\gamma^{(3)}_{|\mathcal{G}|}(\eta, \vartheta,d,n), \nonumber
\end{align}
where $\gamma^{(3)}_{|\mathcal{G}|}(\eta, \vartheta, d,n)=
2\gamma^{(1)}_{|\mathcal{G}|}(\eta, \vartheta,d,n)+ \gamma^{(2)}_{|\mathcal{G}|}(\eta, \vartheta,d,n) +
|\mathcal{G}| \,\vartheta_3(\eta, \vartheta,d,n)$ is as in \eqref{e4.15}.
\end{lem}
\begin{proof}
Let $\mathcal{G}$ be a subset of $\binom{[n-2]}{d-1}$ with $|\bigcup \mathcal{G}|\mik (n-2)/2$,
and let $(a_t)_{t\in\mathcal{G}}$ be a collection of elements of $\mathcal{S}$. By Corollary \ref{c4.11}, we have
\begin{align}
\label{e4.60} \prob\Big( \bigcap_{t\in\mathcal{G}} [X_{t \cup \{n-1\}} & =a_t] \cap
\bigcap_{t\in\mathcal{G}} [X_{t \cup \{n\}}=a_t] \Big)  \\
 \mik & \prod_{t\in\mathcal{G}} \prob\big(  [X_{t \cup \{n-1\}}=a_t] \cap [X_{t \cup \{n\}}=a_t] \big)
   + \gamma^{(2)}_{|\mathcal{G}|}(\eta, \vartheta, d, n). \nonumber
\end{align}
Moreover, by Lemma \ref{l4.7},
\begin{align}
\label{e4.61} \prod_{t\in\mathcal{G}} & \, \prob\big( [X_{t \cup \{n-1\}}=a_t] \cap [X_{t \cup \{n\}}=a_t] \big) \\
& \mik \prod_{t\in\mathcal{G}} \prob\big( [X_{t \cup \{n-1\}}=a_t] \big)\, \prob\big( [X_{t \cup \{n\}}=a_t] \big)
  +\, |\mathcal{G}| \, \vartheta_3(\eta,\vartheta,d,n). \nonumber
\end{align}
Finally, by Corollary \ref{c4.9}, we see that
\begin{align}
\label{e4.62} \prod_{t\in\mathcal{G}} \prob\big( [X_{t \cup \{n-1\}}=a_t] \big) & \mik
\prob\Big( \bigcap_{t\in\mathcal{G}} [X_{t \cup \{n-1\}}=a_t] \Big) +
\gamma^{(1)}_{|\mathcal{G}|}(\eta, \vartheta, d, n), \\
\label{e4.63} \prod_{t\in\mathcal{G}} \prob \big( [X_{t \cup \{n\}}=a_t] \big) & \mik
\prob\Big( \bigcap_{t\in\mathcal{G}} [X_{t \cup \{n\}}=a_t] \Big) + \gamma^{(1)}_{|\mathcal{G}|}(\eta, \vartheta, d, n).
\end{align}
The estimate \eqref{e4.59} follows by combining \eqref{e4.60}--\eqref{e4.63} and invoking the definition
of the constant $\gamma^{(3)}_{|\mathcal{G}|}(\eta, \vartheta, d,n)$ in \eqref{e4.15}.
\end{proof}
We are now ready to give the proof of Proposition \ref{p4.12}.
\begin{proof}[Proof of Proposition \emph{\ref{p4.12}}]
Set $E\coloneqq\bigcap_{s\in\mathcal{F}}[X_s = b_s]$ and $A_i\coloneqq \bigcap_{t\in\mathcal{G}} [X_{t \cup \{r-1+i\}}]$
for every $i\in\{1,\dots,\lfloor n/2 \rfloor\}$. Since $\bbx$ is $\eta$-spreadable,
for every $i,j\in \{1,\dots,\lfloor n/2 \rfloor\}$ with $i\neq j$ we~have
\begin{enumerate}
\item[(i)] $|\mathbb{P}(A_i) - \mathbb{P}(A_j)| \mik \eta$, and
\item[(ii)] $|\mathbb{P}(E\cap A_i) - \mathbb{P}(E\cap A_j)| \mik \eta$.
\end{enumerate}
Moreover, applying Lemma \ref{l4.13} and using the $\eta$-spreadability of $\bbx$ again,
for every $i,j\in \{1,\dots,\lfloor n/2 \rfloor\}$ with $i\neq j$ we have
\begin{enumerate}
\item[(iii)] $\mathbb{P}(A_i \cap A_j) \mik \mathbb{P}(A_i) \, \mathbb{P}(A_j)
 +\gamma^{(3)}_{|\mathcal{G}|}(\eta,\vartheta,d,n) + (2|\mathcal{G}|+1)\eta$.
\end{enumerate}
By Lemma \ref{l4.2} applied for ``$\delta = \gamma^{(3)}_{|\mathcal{G}|}(\eta,\vartheta,d,n) + (2|\mathcal{G}|+1)\eta$"
and taking into account the definition of the constant $\gamma^{(4)}_{|\mathcal{G}|}(\eta, \vartheta, d,n)$,
we conclude that \eqref{e4.58} is satisfied.
\end{proof}

\subsubsection{Step 5: completion of the proof} \label{subsubsec4.5.5}

This is the last step of the proof. Recall that we need to prove that the statement
\hyperref[pd]{$\mathrm{P}(d)$} holds true, or equivalently, that the estimate~\eqref{e3.4} is~satisfied
for the sequence $\boldsymbol{\gamma}=(\gamma_k(\eta,\vartheta,d,n))_{k=1}^\ell$ defined in
Subsection \ref{subsec4.2}. As expected, the verification of this estimate will be reduced
to Proposition \ref{p4.12}. To this end, we will decompose an arbitrary nonempty subset
$\mathcal{F}$ of $\binom{[n]}{d}$ into several components that are easier to handle.
The details of this decomposition are presented in the following definition.
\begin{defn}[Slicing profile] \label{d4.14}
Let $n,d$ be positive integers with $n\meg d$ and let $\mathcal{F}$ be a nonempty subset
of\, $\binom{[n]}{d}$. Note that there exist, unique,
\begin{enumerate}
\item[$\bullet$] $u\in [n]$,
\item[$\bullet$] $r_1,\dots,r_u\in [n]$ with $d\mik r_1<\dots< r_u$, and
\item[$\bullet$] for every $i\in [u]$ a nonempty subset\, $\mathcal{G}_i$ of\, $\binom{[r_i-1]}{d-1}$,
\end{enumerate}
such that
\begin{equation} \label{e4.64}
\mathcal{F} = \big\{t\cup\{r_i\} : i\in [u], t\in \mathcal{G}_i\big\};
\end{equation}
indeed, set $M\coloneqq \{\max(s):s\in \mathcal{F}\}$ and $u\coloneqq |M|$, let $\{r_1<\dots<r_u\}$
denote the increasing enumeration of $M$, and set
$\mathcal{G}_i\coloneqq \big\{t\in\binom{[r_i - 1]}{d-1}: t\cup\{r_i\}\in\mathcal{F}\big\}$ for every $i\in [u]$.
We refer to the triple $(u, (r_i)_{i=1}^u, (\mathcal{G}_i)_{i=1}^u)$ as the \emph{slicing} of $\mathcal{F}$,
and to the sequence $(|\mathcal{G}_i|)_{i=1}^u$ as the \emph{slicing profile} of $\mathcal{F}$.
Finally, we denote by $\mathrm{SP}(n)$ the set of all nonempty finite sequences $(k_i)_{i=1}^u$
that are the slicing profile of some nonempty subset $\mathcal{F}$ of\, $\binom{[n]}{d}$; notice that
\[ \mathrm{SP}(n)= \Bigg\{ (k_i)_{i=1}^u: u\in [n-d+1], \text{ and }
k_i\in \bigg[ \binom{n-1-u+i}{d-1}\bigg] \text{ for every } i\in [u]\Bigg\} .\]
\end{defn}
\begin{figure}[htb]
\centering \includegraphics[width=.45\textwidth]{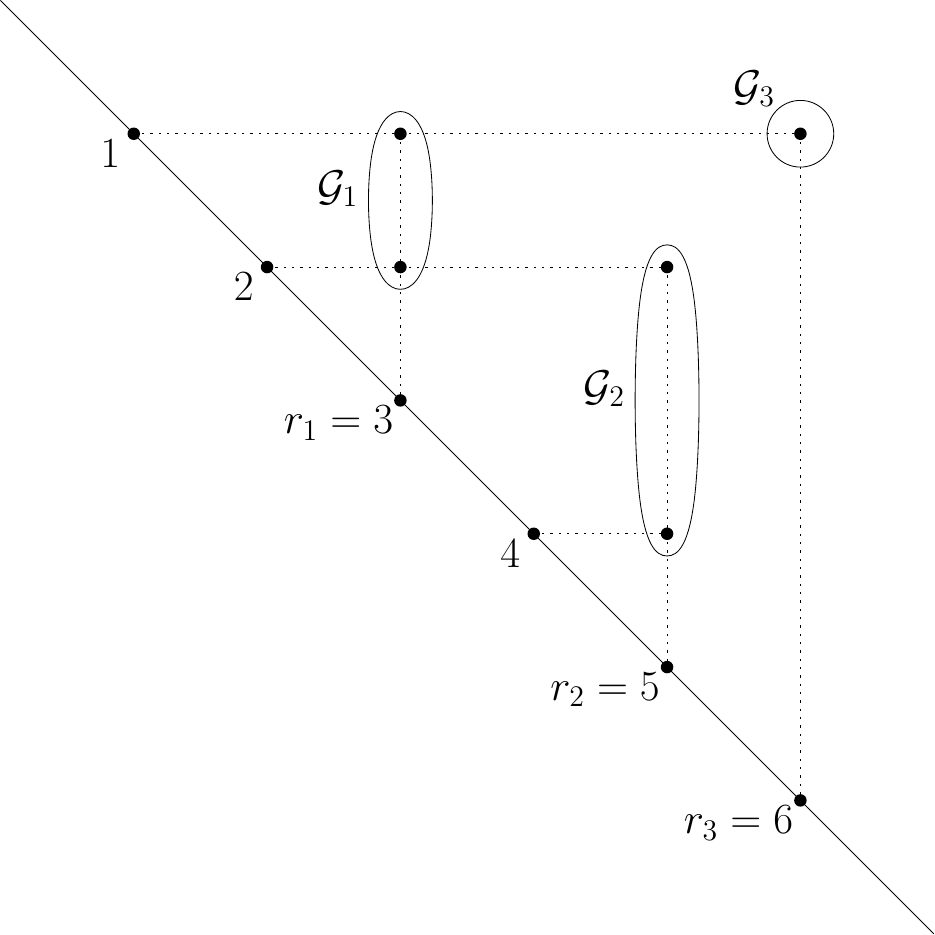}
\caption{The slicing profile of $\mathcal{F}$.}
\label{figure4}
\end{figure}
\begin{examp} \label{ex4.15}
Let $d=2$, $n=6$, and let $\mathcal{F}$ be the subset of $\binom{[6]}{2}$ defined by
\[ \mathcal{F}\coloneqq \big\{ \{1,3\}, \{1,6\}, \{2,3\}, \{2,5\}, \{4,5\}\big\}. \]
Then the slicing of $\mathcal{F}$ is the triple $\big(3,(r_1,r_2,r_3),(\mathcal{G}_1,\mathcal{G}_2,\mathcal{G}_3)\big)$,
where $r_1=3$, $r_2=5$, $r_3=6$, $\mathcal{G}_1=\{1,2\}$, $\mathcal{G}_2=\{2,4\}$ and $\mathcal{G}_3=\{1\}$; in particular,
the slicing profile of $\mathcal{F}$ is the sequence $(2,2,1)$ (see Figure \ref{figure4}).
\end{examp}
We have the following lemma.
\begin{lem} \label{l4.16}
Let $n,\eta,\vartheta,\mathcal{X},\mathcal{S}$ be as in \hyperref[pd]{$\mathrm{P}(d)$}.
Let $\bbx=\langle X_s : s\in \binom{[n]}{d} \rangle$ be an $\mathcal{X}$-valued, $\eta$-spreadable, $d$-dimensional
random array on~$[n]$ that is $(\vartheta,\mathcal{S})$-box independent. Also let $u\mik (n/2) -d+1$ be a positive
integer, and let $(k_i)_{i=1}^u\in \mathrm{SP}(\lfloor n/2 \rfloor)$. If $\mathcal{F}$ is a nonempty
subset of $\binom{[\lfloor n/2 \rfloor]}{d}$ with slicing profile $(k_i)_{i=1}^u$, then for every collection
$(a_s)_{s\in\mathcal{F}}$ of elements of $\mathcal{S}$, we have
\begin{equation} \label{e4.65}
\bigg| \prob\Big( \bigcap_{s\in\mathcal{F}} [X_s = a_s] \Big)-
\prod_{s\in\mathcal{F}} \prob\big( [X_s = a_s] \big) \bigg| \mik \gamma^{(5)}(\eta, \vartheta,d,n, (k_i)_{i=1}^{u}),
\end{equation}
where $\gamma^{(5)}(\eta, \vartheta,d,n, (k_i)_{i=1}^{u})=
\gamma^{(1)}_{k_1}\!(\eta,\vartheta,d,n) +
\sum_{i=2}^u \!\big(\gamma^{(1)}_{k_i}\!(\eta,\vartheta,d,n) + \gamma^{(4)}_{k_i}\!(\eta,\vartheta,d,n)\big)$ is as in \eqref{e4.17}.
\end{lem}
\begin{proof}
We proceed by induction on $u$. The case ``$u=1$" follows from Corollary \ref{c4.9}. Let $u\mik (n/2)-d$ be a positive integer,
and assume that \eqref{e4.65} has been proved up to $u$. Let $(k_i)_{i=1}^{u+1} \in \mathrm{SP}(\lfloor n/2 \rfloor)$,
let $\mathcal{F}$ be a subset of $\binom{[\lfloor n/2 \rfloor]}{d}$ with slicing profile $(k_i)_{i=1}^{u+1}$,
and let $(a_s)_{s\in\mathcal{F}}$ be a collection of elements of $\mathcal{S}$.

First observe that $n\meg 2d+2$ since there exists a nonempty subset of $\binom{[\lfloor n/2 \rfloor]}{d}$
with slicing profile of length at least $2$; in particular, in what follows, Proposition~\ref{p4.12} can be applied.
Let $(u, (r_i)_{i=1}^{u+1},(\mathcal{G}_i)_{i=1}^{u+1})$ denote the slicing of $\mathcal{F}$, and decompose $\mathcal{F}$
as $\mathcal{F}_1\cup\mathcal{F}_2$, where
\begin{equation} \label{e4.66}
\mathcal{F}_1\coloneqq \big\{ t\cup\{r_i\}: t\in \mathcal{G}_i, i\in[u]\big\} \ \text{ and } \
\mathcal{F}_2\coloneqq \big\{ t\cup\{r_{u+1}\} : t\in\mathcal{G}_{u+1}\big\}.
\end{equation}
Notice that $d\mik r_1< r_{u+1} \mik n/2$, $\mathcal{G}_{u+1} \subseteq \binom{[r_{u+1}-1]}{d-1}$ and
$|\mathcal{G}_{u+1}| = k_{u+1}$. By Proposition \ref{p4.12} applied for ``$r=r_{u+1}$",
``$\mathcal{G}=\mathcal{G}_{u+1}$", ``$(a_t)_{t\in\mathcal{G}}= (a_{t\cup\{r_{u+1}\}})_{t\in\mathcal{G}_{u+1}}$",
``$\mathcal{F}=\mathcal{F}_1$ and ``$(b_s)_{s\in\mathcal{F}} = (a_s)_{s\in\mathcal{F}_1}$", we have
\begin{align} \label{e4.67}
\bigg| \prob\Big( \bigcap_{s\in\mathcal{F}_1}& [X_s = a_s] \cap
  \bigcap_{s\in \mathcal{F}_2} [X_s =a_s] \Big) - \\
& - \prob\Big( \bigcap_{s\in\mathcal{F}_1} [X_s = a_s]\Big) \,
  \prob\Big( \bigcap_{s\in\mathcal{F}_2} [X_s =a_s] \Big) \bigg| \mik
	\gamma^{(4)}_{k_{u+1}}(\eta,\vartheta,d,n). \nonumber
\end{align}
On the other hand, by our inductive assumptions, we obtain that
\begin{equation} \label{e4.68}
\bigg| \prob\Big( \bigcap_{s\in\mathcal{F}_1} [X_s = a_s] \Big) -
\prod_{s\in\mathcal{F}_1} \prob\big( [X_s = a_s] \big) \bigg| \mik
\gamma^{(5)}(\eta, \vartheta,d,n, (k_i)_{i=1}^{u}).
\end{equation}
Moreover, since $|\mathcal{G}_{u+1}| = k_{u+1}$, by Corollary \ref{c4.9},
\begin{equation} \label{e4.69}
\bigg| \prob\Big( \bigcap_{s\in\mathcal{F}_2} [X_s = a_s] \Big) -
\prod_{s\in\mathcal{F}_2} \prob\big( [X_s = a_s] \big) \bigg| \mik
\gamma^{(1)}_{k_{u+1}}(\eta,\vartheta,d,n).
\end{equation}
The inductive step is completed by combining \eqref{e4.68} and \eqref{e4.69} and using
the definition of the constant $\gamma^{(5)}(\eta, \vartheta,d,n, (k_i)_{i=1}^{u+1})$ in \eqref{e4.17}.
\end{proof}
It is clear that Lemma \ref{l4.16} implies that \hyperref[pd]{$\mathrm{P}(d)$} holds true.
This completes the proof of the general inductive step, and so the entire proof of Theorem \ref{t3.2} is completed.


\section{Proof of Theorem \ref*{t1.4} and its higher-dimensional version} \label{sec5}

\numberwithin{equation}{section}

The following theorem is the higher-dimensional version of Theorem \ref{t1.4}.
(Also note that the case ``$d=1$" corresponds to random vectors.)
\begin{thm} \label{t5.1}
Let $d,m$ be two positive integers with $m\meg 2$, let $1< p\mik 2$, let $0<\ee\mik 1$,
let $k\meg d$ be an integer, and set
\begin{align}
\label{e5.1} C=C(d,m,p,\ee,k) & \coloneqq \exp\bigg(16\cdot \frac{20^d}{d!} \cdot
\frac{\ln m}{\ee^{4d}(p-1)^{d}}\cdot k^d \bigg).
\end{align}
Also let $n\meg C$ be an integer, let $\mathcal{X}$ be a set with $|\mathcal{X}|=m$, and
let $\bbx=\langle X_s:s\in \binom{[n]}{d}\rangle$ be an $\mathcal{X}$-valued, $(1/C)$-spreadable,
$d$-dimensional random array on~$[n]$. Assume that there exists $\mathcal{S}\subseteq \mathcal{X}$
with $|\mathcal{S}|=|\mathcal{X}|-1$ such that for every $a\in \mathcal{S}$ we have
\begin{equation} \label{e5.2}
\bigg| \prob\Big( \bigcap_{s\in\mathrm{Box}(d)}[X_s=a]\Big) - \prod_{s\in\mathrm{Box}(d)}\!
\prob\big([X_s=a]\big) \bigg| \mik \frac{1}{C},
\end{equation}
where $\mathrm{Box}(d)$ denotes the $d$-dimensional box defined in \eqref{e3.2}. Then for every function
$f\colon \mathcal{X}^{\binom{[n]}{d}}\to\rr$ with  $\ave[f(\bbx)]=0$ and $\|f(\bbx)\|_{L_p}=1$
there exists an interval $I$ of\, $[n]$ with $|I|=k$ such that for every $J\subseteq I$ with $|J|\meg d$ we have
\begin{equation} \label{e5.3}
\prob\big( \big|\ave[f(\bbx)\,|\, \mathcal{F}_J]\big|\mik\ee\big) \meg 1-\ee.
\end{equation}
\end{thm}
\begin{proof}
Fix $\ee$ and $k$, let $\beta=\beta(p,\ee)=\big(\frac{\ee}{10}\big)^{\frac{10}{p-1}}$ and
$\ell=\ell(p,\ee,k) =\big\lceil \frac{4}{\ee^4(p-1)}\, k \big\rceil$ be as in~\eqref{e2.2}
and \eqref{e2.3} respectively, and set
\begin{equation} \label{e5.4}
C_1=C_1(d,m,p,\ee,k)\coloneqq 3\, \bigg( \frac{108\, \binom{\ell}{d}\, 2^d\, m^{3\binom{\ell}{d}}}{\beta}\bigg)^{4^d}.
\end{equation}
\begin{claim} \label{c5.2}
We have $C_1\, (2+2^d)\mik C$.
\end{claim}
\begin{proof}[Proof of Claim \emph{\ref{c5.2}}]
By the choice of $C_1$, we have
\begin{equation} \label{e5.5}
\ln\big((2+2^d)\, C_1)= 4^d \bigg(\!\! \ln\Big(\!\!\sqrt[4^d]{3(2+2^d)}\Big) + \ln 108+
\ln \binom{\ell}{d}\!+ d\ln 2 + 3\,\binom{\ell}{d}\ln m + \ln\frac{1}{\beta}\bigg).
\end{equation}
Using the fact that $\binom{\ell}{d}\meg \ell\meg 4k\meg 4d$ and \eqref{e2.2}, we see that
\begin{align}
\label{e5.6} \ln\Big(\!\!\sqrt[4^d]{3(2+2^d)}\Big) & + \ln 108+ d\ln 2 \mik 3\, \binom{\ell}{d}\ln m, \\
\label{e5.7} & \ln\frac{1}{\beta} \mik 9\, \binom{\ell}{d}\ln m.
\end{align}
Finally, observe that
\begin{equation} \label{e5.8}
\binom{\ell}{d} \mik \frac{\ell^d}{d!} \stackrel{\eqref{e2.3}}{\mik}
\frac{1}{d!} \, \Big( \frac{5 k}{\ee^4(p-1)}\Big)^d.
\end{equation}
The claim follows by \eqref{e5.1} and \eqref{e5.4}--\eqref{e5.8}.
\end{proof}
By Claim \ref{c5.2}, the random array $\bbx$ is $\big(\frac{1}{C_1}\big)$-box independent
in the sense of part (i) of Definition \ref{d3.1}, and consequently, by Corollary \ref{c3.3},
$\bbx$ is also $(\beta,\ell)$-dissociated. The proof is completed by applying Theorem \ref{t2.2}.
\end{proof}


\section{Extensions/Refinements} \label{sec6}

\numberwithin{equation}{section}

\subsection{Dissociated random arrays} \label{subsec6.1}

The following theorem is a version of Theorem~\ref{t5.1} for the case of dissociated random arrays.
\begin{thm} \label{t6.1}
Let $1<p\mik 2$, let\, $0<\ee\mik 1$, and set
\begin{align}
\label{e6.1} c=c(\ee,p)\coloneqq \frac14\,\ee^{\frac{2(p+1)}{p}} (p-1).
\end{align}
Also let $n,d$ be positive integers with $n\meg 2d/c$, and let $\bbx$ be a dissociated,
$d\text{-dimensional}$ random array on $[n]$ whose entries take values in a measurable space~$\mathcal{X}$.
Then for every measurable function $f\colon \mathcal{X}^{\binom{[n]}{d}}\to\rr$ with $\ave[f(\bbx)]=0$
and $\|f(\bbx)\|_{L_p}=1$ there exists an interval $I$ of\, $[n]$ with $|I|\meg cn$ such that
for every $J\subseteq I$ with $|J|\meg d$ we have
\begin{equation} \label{e6.2}
\prob\big( \big|\ave[f(\bbx)\,|\, \mathcal{F}_J]\big|\mik\ee\big) \meg 1-\ee.
\end{equation}
\end{thm}
\begin{proof}
Set $k\coloneqq \lceil cn\rceil$ and $\ell\coloneqq n$, and note that $d\mik k \mik \lfloor \ell/2\rfloor$.
Using the continuity of the $L_p$-norms and the fact that the random array $\bbx$ is $(\beta,\ell)$-dissociated
for every $0<\beta\mik 1$, by Theorem \ref{t2.3} and taking the limit in \eqref{e2.5} first as $\beta$ goes
to zero and then as $r\to p^-$, there exists $I\in \binom{[n]}{k}$ such that
\begin{equation} \label{e6.3}
\big\|\ave[f(\bbx)\,|\,\mathcal{F}_I]\big\|_{L_p} \mik \frac{1}{\sqrt{p-1}} \, \sqrt{\frac{2k}{\ell}}
\mik \ee^{\frac{p+1}{p}}.
\end{equation}
By the contractive property of conditional expectation, this in turn implies that
for every $J\subseteq I$ with $|J|\meg d$ we have
\begin{equation} \label{e6.4}
\big\|\ave[f(\bbx)\,|\,\mathcal{F}_J]\big\|_{L_p} \mik \ee^{\frac{p+1}{p}}.
\end{equation}
The result follows from \eqref{e6.4} and Markov's inequality.
\end{proof}
Note that Theorem \ref{t6.1} improves upon Theorem \ref{t5.1} in two ways.
Firstly, observe that in Theorem \ref{t6.1} no restriction is imposed on
the distributions of the entries of $\bbx$. Secondly, note that the random variable $f(\bbx)$
becomes concentrated by conditioning it on a subarray whose size is proportional to $n$.

An important---especially, from the point of view of applications---class of random arrays for which Theorem \ref{t6.1}
is applicable consists of those random arrays whose entries are of the form \eqref{e1.12}, where $(\xi_1,\dots,\xi_n)$
is a random vector with independent (but not necessarily identically distributed) entries.
\begin{rem} \label{r6.2}
Observe that the lower bound on the cardinality of the set $I$ obtained by Theorem \ref{t6.1} depends polynomially
on the parameter $\ee$ and, in particular, it becomes smaller as $\ee$ gets smaller. We note that this sort
of dependence is actually necessary. This can be seen by considering (appropriately normalized) linear
functions of i.i.d. Bernoulli random variables and invoking the Berry--Esseen theorem.
\end{rem}

\subsection{Vector-valued functions of random arrays} \label{subsec6.2}

Recall that a Banach space $E$ is called \textit{uniformly convex} if for every $\ee>0$ there exists
$\delta>0$ such that for every $x,y\in E$ with $\|x\|_E=\|y\|_E=1$ and $\|x-y\|_E\meg \ee$ we have that
$\|(x+y)/2\|_E\mik 1-\delta$. It is a classical fact (see \cite{Ja72,GG71}) that for every uniformly
convex Banach space $E$ and every $p>1$ there exist an exponent $q\meg 2$ and a constant
$C>0$ such that for every $E$-valued martingale difference sequence $(d_i)_{i=1}^m$ we have
\begin{equation} \label{e6.5}
\Big( \sum_{i=1}^m \|d_i\|_{L_p(E)}^q \Big)^{1/q} \mik C\, \bigg\| \sum_{i=1}^m d_i \bigg\|_{L_p(E)}
\end{equation}
(see, also, \cite{Pi11,Pi16} for a proof and a detailed presentation of related material). Using \eqref{e6.5}
instead of Proposition \ref{p2.4} and arguing precisely as in Section~\ref{sec2}, we obtain the following
vector-valued version of Theorem \ref{t5.1}.
\begin{thm} \label{t6.3}
For every uniformly convex Banach space $E$, every pair $d,m$ of positive integers with $m\meg 2$, every $p>1$,
every $0<\ee\mik 1$ and every integer $k\meg d$, there exists a constant $C>0$ with the following property.

Let $n\meg C$ be an integer, let $\mathcal{X}$ be a set with $|\mathcal{X}|=m$ and
let $\bbx=\langle X_s:s\in \binom{[n]}{d}\rangle$ be an $\mathcal{X}$-valued, $(1/C)$-spreadable,
$d$-dimensional random array on~$[n]$. Assume that there exists $\mathcal{S}\subseteq \mathcal{X}$
with $|\mathcal{S}|=|\mathcal{X}|-1$ such that for every $a\in \mathcal{S}$ we have
\begin{equation} \label{e6.6}
\bigg| \prob\Big( \bigcap_{s\in\mathrm{Box}(d)}[X_s=a]\Big) -
\prod_{s\in\mathrm{Box}(d)} \prob\big([X_s=a]\big) \bigg| \mik \frac{1}{C},
\end{equation}
where $\mathrm{Box}(d)$ denotes the $d$-dimensional box defined in \eqref{e3.2}. Then for every function
$f\colon \mathcal{X}^{\binom{[n]}{d}}\to E$ with  $\ave[f(\bbx)]=0$ and $\|f(\bbx)\|_{L_p(E)}=1$
there exists an interval $I$ of\, $[n]$ with $|I|=k$ such that for every $J\subseteq I$ with $|J|\meg d$ we have
\begin{equation} \label{e6.7}
\prob\big( \big\|\ave[f(\bbx)\,|\, \mathcal{F}_J]\big\|_E\mik\ee\big) \meg 1-\ee.
\end{equation}
\end{thm}

\subsection{Simultaneous conditional concentration} \label{subsec6.3}

Our last result in this section can be loosely described as ``simultaneous conditional concentration";
it asserts that we can achieve concentration by conditioning on the same subarray for almost all
members of a given family of approximate spreadable random arrays with the box independence condition.
\begin{thm} \label{t6.4}
Let $d,m,p,\ee,k$ be as in Theorem \emph{\ref{t5.1}}, set\, $C'\coloneqq C(d,m,p,\ee^2/2,k)$,
where $C(d,m,p,\ee^2/2,k)$ is as in \eqref{e5.1}, and let $n\meg C'$ be an integer.
Also let $(\mathcal{V},\lambda)$ be a finite probability space, and for every
$v\in \mathcal{V}$ let $\bbx_v=\langle X^v_s:s\in \binom{[n]}{d}\rangle$
be an $(1/C')$-spreadable, $d$-dimensional random array on $[n]$ that takes values
in a set $\mathcal{X}_v$ with $|\mathcal{X}_v|=m$; assume that there exists
$\mathcal{S}_v\subseteq \mathcal{X}_v$ with $|\mathcal{S}_v|=|\mathcal{X}_v|-1$
such that for every $a\in \mathcal{S}_v$ we have
\begin{equation} \label{e6.8}
\bigg| \prob\Big( \bigcap_{s\in\mathrm{Box}(d)}[X^v_s=a]\Big) - \prod_{s\in\mathrm{Box}(d)}\!
\prob\big([X^v_s=a]\big) \bigg| \mik \frac{1}{C'},
\end{equation}
where $\mathrm{Box}(d)$ is as in \eqref{e3.2}. Finally, for every $v\in\mathcal{V}$ let
$f_v\colon \mathcal{X}_v^{\binom{[n]}{d}}\to\rr$ be a function such that $\ave[f_v(\bbx_v)]=0$
and $\|f_v(\bbx_v)\|_{L_p}=1$. Then there exist $G\subseteq \mathcal{V}$ with $\lambda(G)\meg 1-\ee$
and an interval $I$ of\, $[n]$ with $|I|=k$ such that for every $v\in G$ and every $J\subseteq I$
with $|J|\meg d$ we have
\begin{equation} \label{e6.9}
\prob\big( \big|\ave[f_v(\bbx_v)\,|\, \mathcal{F}_J]\big|\mik\ee\big) \meg 1-\ee.
\end{equation}
\end{thm}
\begin{proof}
Let $\beta'\coloneqq\beta(p,\ee^2/2)=\big(\frac{\ee^2}{20}\big)^{\frac{10}{p-1}}$ and
$\ell'\coloneqq\ell(p,\ee^2/2,k) =\big\lceil \frac{64}{\ee^8(p-1)}\, k \big\rceil$
be as in \eqref{e2.2} and \eqref{e2.3} respectively,~and
\begin{equation} \label{e6.10}
C_1'\coloneqq C_1(d,m,p,\ee^2/2,k) \stackrel{\eqref{e5.4}}{=}
3\, \bigg( \frac{108\, \binom{\ell'}{d}\, 2^d\, m^{3\binom{\ell'}{d}}}{\beta'}\bigg)^{4^d}.
\end{equation}
By Claim \ref{c5.2}, we have $C_1'\,(2+2^d)\mik C'$ and consequently, by \eqref{e6.8} and Corollary \ref{c3.3},
the random array $\bbx_v$ is $(\beta',\ell')$-dissociated for every $v\in\mathcal{V}$.

Next, set $m'\coloneqq \lfloor\ell'/k\rfloor$ and observe that,
by the choices of $\ell',C'$ and \eqref{e2.3} and \eqref{e5.1},
\begin{equation} \label{e6.11}
m'\meg \frac{\ell'}{2k} \meg \frac{2^5}{\ee^{8}(p-1)} \ \ \ \text{ and } \ \ \
m'k\mik \ell'\mik C'\mik n.
\end{equation}
Let $v\in \mathcal{V}$ be arbitrary. For every $i\in [m']$ set $J_i\coloneqq \big\{ k(i-1)+j:j\in [k]\big\}$
and let $\mathcal{F}^v_{J_i}$ be the $\sigma$-algebra generated by the subarray of $\bbx_v$
determined by $J_i$ (see Definition \ref{d1.1}). As in~\eqref{e2.24}, we define a filtration
$(\mathcal{A}^v_i)_{i=0}^{m'}$ by setting $\mathcal{A}_0^v\coloneqq \{\emptyset,\Omega\}$ and
\begin{equation} \label{e6.12}
\mathcal{A}_i\coloneqq \bigvee_{l=1}^i \mathcal{F}_{J_l} \ \ \ \text{ for every $i\in [m']$.}
\end{equation}
Finally, let $(d^v_i)_{i=1}^{m'}$ denote the martingale difference sequence
of the Doob martingale for $f_v(\bbx_v)$ with respect to the filtration $(\mathcal{A}^v_i)_{i=0}^{m'}$.
By Proposition \ref{p2.4} and our assumptions,
\begin{equation} \label{e6.13}
\sum_{i=1}^{m'} \Big( \underset{v\sim\lambda}{\ave} \, \|d^v_i\|_{L_p}^2\Big) =
\underset{v\sim\lambda}{\ave}\, \bigg( \sum_{i=1}^{m'} \|d^v_i\|_{L_p}^2\bigg)
\stackrel{\eqref{e2.9}}{\mik} \frac{1}{p-1},
\end{equation}
and so, there exists $i_0\in [m']$ such that
\begin{equation} \label{e6.14}
\underset{v\sim\lambda}{\ave} \, \|d^v_{i_0}\|_{L_p} \mik \frac{1}{\sqrt{m'(p-1)}}.
\end{equation}
Set $J\coloneqq J_{i_0}$ and $r\coloneqq \frac{p+1}{2}$.
By Markov's inequality, there exists $G\subseteq \mathcal{V}$~with
\begin{equation} \label{e6.15}
\lambda(G)\meg 1- \big(m'(p-1)\big)^{-1/4} \stackrel{\eqref{e6.11}}{\meg} 1-\ee
\end{equation}
such that for every $v\in G$ we have
\begin{equation} \label{e6.16}
\big\| \ave[d_{i_0}^v\, |\, \mathcal{F}_J]\big\|_{L_r} \mik
\|d^v_{i_0}\|_{L_p} \mik \frac{1}{\sqrt[4]{m'(p-1)}} \stackrel{\eqref{e6.11}}{\mik} \frac{\ee^2}{2}
\mik \frac12\, \ee^{\frac{r+1}{r}},
\end{equation}
where we have used  the monotonicity of the $L_p$-norms, the contractive property of conditional expectation,
and the fact that $1<r< p\mik 2$. Next observe that since $\mathcal{F}_J\subseteq \mathcal{A}_{i_0}$ we have
$\ave[d^v_{i_0}\,|\,\mathcal{F}_J] =\ave[f(\bbx_v)\,|\,\mathcal{F}_J]-
\ave\big[ \ave[f(\bbx_v)\,|\,\mathcal{A}_{i_0-1}]\, |\, \mathcal{F}_J\big]$.
Moreover, the fact that the random array $\bbx_v$ is $(\beta',\ell')$-dissociated implies that
the $\sigma$-algebras $\mathcal{A}_{i_0-1}$ and $\mathcal{F}_J$ are
$\beta'$-mixing in the sense of \eqref{e2.11}. Thus, by Proposition \ref{p2.7}, \eqref{e6.16},
the triangle inequality and the choice of $\beta'$, we obtain that for every $v\in G$,
\begin{align} \label{e6.17}
\big\| \ave[ f(\bbx_v)\, |\, \mathcal{F}_J]\big\|_{L_r} \mik
\frac12\, \ee^{\frac{r+1}{r}} + 10 \big(\beta'\big)^{\frac{1}{r} - \frac{1}{p}} \mik
\ee^{\frac{r+1}{r}}.
\end{align}
The proof is completed by \eqref{e6.17} and Markov's inequality.
\end{proof}
\begin{rem} \label{r6.5}
We note that there is also an extension of Theorem \ref{t6.1} in the spirit of Theorem~\ref{t6.4}.
More precisely, if we assume in Theorem \ref{t6.4} that for every $v\in\mathcal{V}$ the random array
$\bbx_v$ is dissociated (not necessarily finite-valued), then the interval $I$ can be selected
so as~$|I|\meg c'n$, where $c'\coloneqq \frac{1}{4}\,\ee^{\frac{2(2p+1)}{p}} (p-1)$.
\end{rem}


\part{Connection with combinatorics} \label{part2}


\section{Random arrays arising from combinatorial structures} \label{sec7}

\numberwithin{equation}{section}

In this section we present examples of boolean, spreadable, high-dimensional random arrays
that arise from combinatorial structures and they satisfy the box independence condition and/or
are approximately dissociated.

\subsection{From graphs and hypergraphs to spreadable random arrays} \label{subsec7.1}

Let $d\meg 2$ be an integer, and let $V$ be a finite set with $|V|\meg d$. With every subset $A$ of $V^d$
we associate a boolean, spreadable, $d$-dimensional random array
$\boldsymbol{X}_A=\langle X^A_s:s\in \binom{\nn}{d}\rangle$
on $\nn$ defined by setting for every $s=\{i_1<\cdots<i_d\}\in \binom{\nn}{d}$,
\begin{equation} \label{e7.1}
X^A_s\coloneqq \mathbf{1}_{A}(\xi_{i_1},\dots,\xi_{i_d}),
\end{equation}
where $(\xi_i)$ is a sequence of i.i.d. random variables uniformly distributed on $V$. (Notice that
if $A$ is a nonempty proper subset of $V^d$, then the entries of $\boldsymbol{X}_A$ are not independent.)

A special case of this construction, which is relevant in the ensuing discussion, is obtained by
considering a $d$-uniform hypergraph on $V$. Specifically, given a $d$-uniform hypergraph $G$~on~$V$,
we identify $G$ with a subset $\mathcal{G}$ of $V^d$ via the rule
\begin{equation} \label{e7.2}
(v_1,\dots,v_d)\in\mathcal{G} \Leftrightarrow \{v_1,\dots,v_d\}\in G,
\end{equation}
and we define $\boldsymbol{X}_G=\langle X^G_s:s\in \binom{\nn}{d}\rangle$ to be the random array
in \eqref{e7.1} that corresponds to the set $\mathcal{G}$. Note that this definition is canonical,
in the sense that various combinatorial parameters of $G$ can be expressed as functions
of the finite subarrays of $\boldsymbol{X}_G$. For instance, let $n\meg d$ be an integer,
and let $F$ be a $d$-uniform hypergraph on $[n]$; then, denoting by $t(F,G)$ the homomorphism
density of $F$ in $G$ (see \cite[Chapter~5]{Lov12}), we have
\begin{equation} \label{e7.3}
t(F,G)= \ave[f_F(\boldsymbol{X}_{G,n})],
\end{equation}
where $f_F\colon \rr^{\binom{[n]}{d}}\to\rr$ is defined by setting
for every $\boldsymbol{x}=(x_t)_{t\in \binom{[n]}{d}}\in \rr^{\binom{[n]}{d}}$
\begin{equation} \label{e7.4}
f_F(\boldsymbol{x})\coloneqq \prod_{s\in F} x_s
\end{equation}
and $\boldsymbol{X}_{G,n}$ denotes the subarray of $\boldsymbol{X}_{G}$ determined by $[n]$
(see Definition \ref{d1.1}). Of course, similar identities are valid for weighted uniform hypergraphs.

As we shall see shortly in Proposition \ref{p7.2} below, in this framework
the box independence condition of the random array $\boldsymbol{X}_G$ is in fact equivalent to a well-known
combinatorial property of $G$, namely its \textit{quasirandomness}. That said, we point out that
the connection between quasirandomness and random arrays with a symmetric distribution has been
observed in much greater generality in the general theory of limits of combinatorial structures;
see, \textit{e.g.}, \cite{Au08,CR20,DJ08,ES12,J11,Lov12,Ra07,To17}.

\subsubsection{Quasirandom graphs and hypergraphs} \label{subsubsec7.1.1}

Quasirandom objects are deterministic discrete structures that behave like random ones for most practical
purposes. The phenomenon was first discovered in the context of graphs by Chung, Graham and Wilson \cite{CGW88,CGW89}
who build upon previous work of Thomason \cite{Tho87}. In the~last~twenty years the theory was also extended
to hypergraphs, and it has found numerous significant applications in number theory and theoretical computer
science (see, \textit{e.g.}, \cite{Ro15}).

\subsubsection*{\emph{7.1.1.1.}} \label{subsubsubsec7.1.1.1}

Much of the modern theory of quasirandomness is developed using the box norms introduced by Gowers \cite{Go07}.
Specifically, let $d\meg 2$ be an integer, let $(\Omega,\Sigma,\mu)$ be a probability space,
and let $\Omega^d$ be equipped with the product measure. For every integrable random variable
$f\colon \Omega^d\to \rr$ we define its \textit{box norm} $\|f\|_\square$ by the rule
\begin{equation} \label{e7.5}
\|f\|_\square \coloneqq \bigg(\int \prod_{\boldsymbol{\epsilon}\in \{0,1\}^d}
f(\boldsymbol{\omega}_{\boldsymbol{\epsilon}})\, d\boldsymbol{\mu}(\boldsymbol{\omega})\bigg)^{1/2^d},
\end{equation}
where $\boldsymbol{\mu}$ denotes the product measure on $\Omega^{2d}$ and, for every
$\boldsymbol{\omega}=(\omega^0_1,\omega^1_1,\dots,\omega^0_d,\omega^1_d)\in \Omega^{2d}$
and every $\boldsymbol{\epsilon}=(\epsilon_1,\dots,\epsilon_d)\in \{0,1\}^d$ we have
$\boldsymbol{\omega}_{\boldsymbol{\epsilon}}\coloneqq (\omega_1^{\epsilon_1},\dots,\omega_d^{\epsilon_d})\in \Omega^d$;
by convention, we set $\|f\|_\square\coloneqq +\infty$ if the integral in \eqref{e7.5} does not exist.
The quantity $\|\cdot\|_\square$ is a norm on the vector space $\{f\in L_1: \|f\|_\square <+\infty\}$,
and it~satisfies the following inequality, known as the \textit{Gowers--Cauchy--Schwarz inequality}:
for every collection $\langle f_{\boldsymbol{\epsilon}}: \epsilon\in \{0,1\}^d\rangle$ of integrable
random variables on $\Omega^d$ we have
\begin{equation} \label{e7.6}
\bigg| \int \prod_{\boldsymbol{\epsilon}\in \{0,1\}^d}
f_{\boldsymbol{\epsilon}}(\boldsymbol{\omega}_{\boldsymbol{\epsilon}})
\, d\boldsymbol{\mu}(\boldsymbol{\omega})\bigg| \mik
\prod_{\boldsymbol{\epsilon}\in \{0,1\}^d} \|f_{\boldsymbol{\epsilon}}\|_\square.
\end{equation}
For proofs of these basic facts, as well as for a more complete presentation of related material, we refer to \cite[Appendix~B]{GT10}.

\subsubsection*{\emph{7.1.1.2.}} \label{subsubsubsec7.1.1.2}

The link between the box norms and quasirandomness is given in the following definition.
\begin{defn}[Box uniformity] \label{d7.1}
Let $d\meg 2$, let $V$ be a finite set with $|V|\meg d$, and let~$\varrho>0$. We say that a $d$-uniform
hypergraph $G$ on $V$ is \emph{$\varrho$-box uniform} $($or, simply, \emph{box uniform} if $\varrho$ is clear from the
context$)$ provided that
\begin{equation} \label{e7.7}
\big\| \mathbf{1}_{\mathcal{G}}- \ave[\mathbf{1}_{\mathcal{G}}]\big\|_{\square}\mik \varrho,
\end{equation}
where $\mathcal{G}$ is as in \eqref{e7.2}. $($Here, we view $V$ as a discrete probability space equipped
with the uniform probability measure.$)$
\end{defn}
Of course, Definition \ref{d7.1} is interesting when the parameter $\varrho$ is much smaller
than~$\ave[\mathbf{1}_{\mathcal{G}}]$. We also note that although box uniformity is defined analytically,
it has a number of equivalent combinatorial formulations. For instance, it is easy to see that a graph $G$
is box uniform if and only if it has roughly the expected number of $4$-cycles; see, \textit{e.g.},
\cite{ACHPS18,CGW88,CGW89,CG90,KRS02,LM15,Ro15,To17} for more information on quasirandomness properties
of graphs and hypergraphs and their relation with analytical properties of box norms.

\subsubsection{The box independence condition via quasirandomness} \label{subsubsec7.1.2}

We have the following pro\-position (see part (i) of Definition \ref{d3.1} for the definition of box independence).
\begin{prop} \label{p7.2}
Let $d\meg 2$ be an integer, and let $V$ be a finite set with $|V|\meg d$. Also let $G$ be a $d$-uniform hypergraph
on $V$, let $\bbx_G=\langle X^G_s:s\in \binom{\nn}{d}\rangle$ be the random array associated with~$G$ via \eqref{e7.1},
and for every integer $n\meg d$ let $\bbx_{G,n}$ denote the subarray of $\bbx_G$ determined by~$[n]$. Finally, let
$\varrho,\vartheta>0$. Then the following hold.
\begin{enumerate}
\item[(i)] If $G$ is $\varrho$-box uniform, then for every integer $n\meg d$ the random array
$\bbx_{G,n}$ is $(2^d\varrho,\{1\})$-box independent.
\item[(ii)] Conversely, if $\bbx_{G,n}$ is $(\vartheta,\{1\})$-box independent for some $($equivalently, every$)$
integer $n\meg d$, then $G$ is $(12\,\vartheta^{1/8^d})$-box uniform.
\end{enumerate}
\end{prop}
\begin{proof}
We start with the following observation, which follows readily from \eqref{e7.1}.
Let $\mathrm{Box}(d)$ be the $d$-dimensional box defined in \eqref{e3.2},
and let $F$ be a nonempty subset of $\mathrm{Box}(d)$. Then there exists a
subset\footnote{Note that this subset is essentially unique.} $H$ of $\{0,1\}^d$ with $|F|=|H|$ and such that
\begin{equation} \label{e7.8}
\ave\Big[ \prod_{s\in F} X^G_s\Big] = \int \prod_{\boldsymbol{\epsilon}\in H}
\mathbf{1}_{\mathcal{G}}(\boldsymbol{\omega}_{\boldsymbol{\epsilon}})\, d\boldsymbol{\mu}(\boldsymbol{\omega}).
\end{equation}
(Here, by $\boldsymbol{\mu}$ we denote the uniform probability measure on $V^{2d}$,
and we follow the conventions described right after \eqref{e7.5}.)

We proceed to the proof of part (i). Notice that $\|\mathbf{1}_{\mathcal{G}}\|_{\square}\mik
\|\mathbf{1}_{\mathcal{G}}\|_{L_\infty}\mik 1$ and, moreover, $\ave[X^G_s]=\ave[\mathbf{1}_{\mathcal{G}}]$
for every $s\in \binom{\nn}{d}$. Taking into account these observations and using our assumption,
identity \eqref{e7.8}, a telescopic argument and the Gowers--Cauchy--Schwarz inequality \eqref{e7.6}, we obtain that
\begin{equation} \label{e7.9}
\bigg| \ave\Big[ \prod_{s\in \mathrm{Box}(d)} X^G_s\Big] - \prod_{s\in \mathrm{Box}(d)} \ave[X^G_s] \bigg|
\mik 2^d \varrho.
\end{equation}
Since the random array $\bbx_G$ is spreadable, by Definition \ref{d3.1} and \eqref{e7.9},
we see that $\bbx_G$ is $(2^d \varrho,\{1\})$-box independent.

For the proof of part (ii) we will need the following fact.
\begin{fact} \label{f7.3}
Let the notation and assumptions be as in part \emph{(ii)} of Proposition \emph{\ref{p7.2}}.
Then for every nonempty subset $F$ of\, $\mathrm{Box}(d)$ we have
\begin{equation} \label{e7.10}
\bigg| \ave\Big[ \prod_{s\in F} X^G_s\Big] - \prod_{s\in F} \ave[X^G_s] \bigg| \mik
36\cdot 2^{2d}\cdot \vartheta^{1/4^d}.
\end{equation}
\end{fact}
\begin{proof}[Proof of Fact \emph{\ref{f7.3}}]
If $\vartheta >1$, then \eqref{e7.10} is straightforward; thus, we may assume that $0<\vartheta\mik 1$.
Let $n\meg 4d$ be arbitrary. Notice that the random array $\bbx_{G,n}$ is $\eta\text{-spreadable}$ for every
$0<\eta \mik 1$ and $(\vartheta,\{1\})$-box independent. Therefore, the result follows by applying Theorem \ref{t3.2}
and taking the limit in the left-hand-side of \eqref{e3.5} as $\eta$ goes to zero and $n$ tends to infinity.
\end{proof}
Using Fact \ref{f7.3}, we shall estimate the quantity
\begin{equation} \label{e7.11}
\big\| \mathbf{1}_{\mathcal{G}}- \ave[\mathbf{1}_{\mathcal{G}}]\big\|_{\square}^{2^d} \stackrel{\eqref{e7.5}}{=}
\sum_{H\subseteq \{0,1\}^d}  (-1)^{2^d-|H|} \ave[\mathbf{1}_{\mathcal{G}}]^{2^d-|H|} \int \prod_{\boldsymbol{\epsilon}\in H}
\mathbf{1}_{\mathcal{G}}(\boldsymbol{\omega}_{\boldsymbol{\epsilon}})\, d\boldsymbol{\mu}(\boldsymbol{\omega}).
\end{equation}
(Here, as in the proof of Lemma \ref{l3.6}, we use the convention that the product over an empty index-set
is equal to 1.) Fix a nonempty subset $H$ of $\{0,1\}^d$, and let $F$ be the subset of $\mathrm{Box}(d)$
with $|F|=|H|$ and such that \eqref{e7.8} is satisfied; since $\ave[X^G_s]=\ave[\mathbf{1}_{\mathcal{G}}]$
for every $s\in \binom{\nn}{d}$, by Fact \ref{f7.3}, we have
\begin{equation} \label{e7.12}
\bigg| \int \prod_{\boldsymbol{\epsilon}\in H} \mathbf{1}_{\mathcal{G}}(\boldsymbol{\omega}_{\boldsymbol{\epsilon}})\,
d\boldsymbol{\mu}(\boldsymbol{\omega}) - \ave[\mathbf{1}_{\mathcal{G}}]^{|H|} \bigg| \mik 36\cdot
2^{2d} \cdot \vartheta^{1/4^d}.
\end{equation}
By \eqref{e7.11}, \eqref{e7.12} and the fact that the resulting sum vanishes,
we conclude that $G$ is $(12\,\vartheta^{1/8^d})$-box uniform, as desired.
\end{proof}

\subsection{Mixtures} \label{subsec7.2}

An important property of the class of boolean, spreadable random arrays~is that it is closed under mixtures.
More precisely, let $n,d,J$ be positive integers with $n\meg d\meg 2$ and let
$\bbx_1=\langle X^1_s:s\in \binom{[n]}{d}\rangle, \dots, \bbx_J=\langle X^J_s:s\in \binom{[n]}{d}\rangle$
be boolean, spreadable, $d$-dimensional random arrays on $[n]$. Then, for any choice $\lambda_1,\dots,\lambda_J$
of convex coefficients, there exists a boolean, spreadable, $d$-dimensional random array
$\bbx=\langle X_s:s\in \binom{[n]}{d}\rangle$ on $[n]$ that satisfies
\begin{equation} \label{e7.13}
\ave\Big[ \prod_{s\in \mathcal{F}} X_s \Big] =
\sum_{j=1}^J \lambda_j \ave\Big[ \prod_{s\in \mathcal{F}} X^j_s \Big]
\end{equation}
for every nonempty finite subset $\mathcal{F}$ of $\binom{[n]}{d}$.

It turns out that boolean, spreadable random arrays that satisfy the box independence condition
are also closed under mixtures under suitable conditions. In particular, we have the following proposition
(its proof follows from a direct computation).
\begin{prop} \label{p7.4}
Let $n,d,J$ be positive integers with $n\meg d\meg 2$, and let $\delta,\vartheta>0$. For every $j\in [J]$ let
$\bbx_j=\langle X^j_s:s\in \binom{[n]}{d}\rangle$ be a boolean, spreadable, $d$-dimensional random array on $[n]$
that is $(\vartheta,\{1\})$-box independent and satisfies $|\ave[X^j_s]-\delta|\mik \vartheta$ for all
$s\in \binom{[n]}{d}$. If $\bbx$ is any mixture of $\bbx_1,\dots,\bbx_J$, then $\bbx$ is
$(2^{d+2}\vartheta,\{1\})$-box independent.
\end{prop}
Observe that, by Propositions \ref{p7.2} and \ref{p7.4}, if $G_1,\dots,G_J$ are quasirandom, $d\text{-uniform}$
hypergraphs with the same edge density, then any mixture of the finite subarrays of $\bbx_{G_1},\dots,\bbx_{G_J}$
satisfies the box independent condition. We note that this fact essentially characterizes the box independence condition.
Specifically, it follows from \cite[Propositions 8.3 and~3.1]{DTV21} that for every boolean, spreadable, $d$-dimensional
random array $\bbx$ that satisfies the box independence condition, there exist quasirandom, $d\text{-uniform}$ hypergraphs
$G_1,\dots,G_J$ with the same edge density, such that the law of $\bbx$ is close, in the total variation distance,
to the law of a mixture of the finite subarrays of $\bbx_{G_1},\dots,\bbx_{G_J}$.

\subsection{Further combinatorial structures} \label{subsec7.3}

Let $n,k,d$ be positive integers with $n\meg d\meg 2$ and $k\mik \binom{n}{d}$, and let
$\boldsymbol{\Xi}=\langle \xi_e: e\in \binom{[n]}{d}\rangle$ be a $d$-dimensional random array with
boolean entries that are uniformly distributed on the set of all $\boldsymbol{x}\in \{0,1\}^{\binom{[n]}{d}}$
that have exactly $k$ ones. (In particular, $\boldsymbol{\Xi}$ is exchangeable.)
The random array $\boldsymbol{\Xi}$ generates the classical fixed size Erd\H{o}s--R\'{e}nyi random
graph/hypergraph, and it is clear that it satisfies the box independence condition. By taking
products\footnote{These products have a natural combinatorial interpretation; \textit{e.g.},
they can be used to count subgraphs of random graphs.} of the entries of $\boldsymbol{\Xi}$
as in~\eqref{e1.12}, one also obtains exchangeable random arrays that are approximately dissociated.

Spreadable random arrays---and, in particular, spreadable random arrays that satisfy the box independence condition---are
also closely related to a class of stochastic processes introduced by Furstenberg and Katznelson \cite{FK91} in their
proof of the density Hales--Jewett theorem (see also \cite{Au11,DT21}). Unfortunately, this relation is not so
transparent as in case of graphs and hypergraphs, and we shall refrain from discussing it further since it requires
several probabilistic and Ramsey-theoretic tools in order to be properly exposed.


\section{Quasirandom families of graphs: proof of Theorem \ref*{t1.8}} \label{sec8}

\numberwithin{equation}{section}

We start with some preparatory material that will be used throughout this section.
If $K\subseteq I$ are two nonempty finite sets, then for every $z\in\{0,1\}^{I}$
by $z\upharpoonright K\in \{0,1\}^{K}$ we shall denote the restriction of $z$ on $K$.
Moreover, for every subset $A$ of $\{0,1\}^I$ and every $x\in \{0,1\}^K$ by
$A_x\coloneqq \big\{y\in \{0,1\}^{I\setminus K}: x\cup y\in A\big\}$
we shall denote the section of $A$~at~$x$. We will need the following lemma.
\begin{lem} \label{l8.1}
Let $0<\ee\mik 1$, and let $\ell,m$ be positive integers such that
\begin{equation} \label{e8.1}
\ell\meg \frac{2^{m+1}}{\ee^2}.
\end{equation}
Let $I$ be a nonempty finite set, and let $D_1,\dots,D_\ell\subseteq I$
be pairwise disjoint nonempty sets each with at most $m$ elements. Then for every $A\subseteq \{0,1\}^I$
there exists $i_0\in [\ell]$ such that
\begin{equation} \label{e8.2}
|\mu_1(A_x)-\mu(A)|\mik \ee
\end{equation}
for every $x\in \{0,1\}^{D_{i_0}}$, where $\mu_1$ denotes the uniform probability measure on
$\{0,1\}^{I\setminus D_{i_0}}$ and $\mu$ denotes the uniform probability measure on $\{0,1\}^{I}$.
\end{lem}
Lemma \ref{l8.1} is a typical combinatorial application of conditional concentration,
and it follows from \cite[Theorem 1$'$]{DKT16}. That said,
for the convenience of the reader we shall briefly recall the argument that also gives
slightly better estimates for this special case.
\begin{proof}[Proof of Lemma \emph{\ref{l8.1}}]
We view $\{0,1\}^I$ as a discrete probability space equipped with $\mu$. For every $i\in [\ell]$
let $\mathcal{D}_i$ denote the $\sigma$-algebra on $\{0,1\}^I$ generated by the partition
\begin{equation} \label{e8.3}
\Big\{ \big\{z\in \{0,1\}^I: z\upharpoonright D_i=x\big\}: x\in \{0,1\}^{D_i}\Big\},
\end{equation}
and let $(\mathcal{F}_i)_{i=0}^\ell$ denote the filtration on $\{0,1\}^I$ generated by the finite sequence
$(\mathcal{D}_i)_{i=1}^\ell$; that is, $\mathcal{F}_0\coloneqq \big\{\emptyset,\{0,1\}^I\big\}$
is the trivial $\sigma$-algebra,~and
\begin{equation} \label{e8.4}
\mathcal{F}_i\coloneqq \bigvee_{j=1}^i \mathcal{D}_{j} \ \ \ \text{ for every $i\in [\ell]$.}
\end{equation}
Let $(d_i)_{i=1}^{\ell}$ be the martingale difference sequence of the Doob martingale
for~$\mathbf{1}_A$ with respect to the filtration $(\mathcal{F}_i)_{i=0}^{\ell}$.
Since $\|d_1\|_{L_2}^2+ \cdots + \|d_\ell\|_{L_2}^2= \|d_1+\cdots+d_\ell\|_{L_2}^2 \mik 1$,
there exists $i_0\in [\ell]$ such that $\|d_{i_0}\|_{L_2}^2\mik 1/\ell$ that further implies,
by the contractive property of conditional expectation, that
$\big\|\ave[d_{i_0}\,|\,\mathcal{D}_{i_0}]\big\|_{L_2}^2\mik 1/\ell$. After observing that
$\ave[d_{i_0}\,|\,\mathcal{D}_{i_0}]=\ave[\mathbf{1}_A\,|\,\mathcal{D}_{i_0}] -\mu(A)$,
by Chebyshev's inequality, we obtain that
\begin{equation} \label{e8.5}
\mu_2\Big(x\in \{0,1\}^{D_{i_0}}: |\mu_1(A_x)-\mu(A)|\meg \ee\Big) \mik \frac{1}{\ell\cdot\ee^2}
\stackrel{\eqref{e8.1}}{\mik} \frac{1}{2^{m+1}}<\frac{1}{2^m},
\end{equation}
where $\mu_2$ denotes the uniform probability measure on $\{0,1\}^{D_{i_0}}$. Since
$\big|\{0,1\}^{D_{i_0}}\big|\mik 2^m$, this yields that $|\mu_1(A_x)-\mu(A)|\mik\ee$ for every
$x\in \{0,1\}^{D_{i_0}}$.
\end{proof}

\subsection{Isomorphic invariant families of graphs} \label{subsec8.1}

Let $n\meg 2$ be an integer, and recall that by $\mu$ we denote the uniform probability measure
on $\{0,1\}^{\binom{[n]}{2}}$. Also recall that a family of graphs
$\mathcal{A}\subseteq \{0,1\}^{\binom{[n]}{2}}$ is called \emph{isomorphic invariant} if for every
$G\subseteq \binom{[n]}{2}$ we have that $G$ belongs to $\mathcal{A}$ only if every isomorphic
copy of $G$ belongs to $\mathcal{A}$; see also \eqref{e1.9}. This subsection is devoted to
the proof of the following proposition.
\begin{prop} \label{p8.2}
Let $n\meg2^{25}$ be an integer, let $\mathcal{A}\subseteq\{0,1\}^{\binom{[n]}{2}}$ be
isomorphic~invariant, and let $\gamma(\mathcal{A})$ denote the unique nonnegative real such that
\begin{equation} \label{e8.6}
\gamma(\mathcal{A})=\mathbf{P}\big( W: W\cup\{i,k\}, W\cup\{i,\ell\},W\cup\{j,k\},W\cup\{j,\ell\}\in\mathcal{A}\big)
\end{equation}
for every $U=\{i<j<k<\ell\}\in \binom{[n]}{4}$, where $\mathbf{P}$ denotes the uniform probability measure
on $\{0,1\}^{\binom{[n]}{2}\setminus \binom{U}{2}}$. Then we have
\begin{equation} \label{e8.7}
\gamma(\mathcal{A})\meg \mu(\mathcal{A})^4 - \frac{18}{\sqrt{\log_2 n}}.
\end{equation}
\end{prop}
\begin{proof}
We start by setting
\begin{equation} \label{e8.8}
k\coloneqq \big\lfloor  \sqrt{\log_2 n}\big\rfloor, \ \ \ \ \
\ee_1\coloneqq \frac{3}{4(k-2)}, \ \ \ \ \
\ee_2 \coloneqq \frac{3}{k-2}
\end{equation}
and we observe that $k\meg 5$. Also notice that we may assume that
$\mu(\mathcal{A})^4\! > 18/\sqrt{\log_2 n}$ (otherwise, the result is straightforward)
that further implies, by \eqref{e8.8}, that
\begin{equation} \label{e8.9}
\mu(\mathcal{A}) \meg \ee_1 \ \ \ \text{ and } \ \ \
\mu(\mathcal{A})^2 \meg 2\ee_1+\frac{1}{k-1}.
\end{equation}
Set $\ell_1\coloneqq \big\lceil 2^{\binom{k}{2}+1}\, \ee_1^{-2}\big\rceil$ and observe that
$\ell_1 k\mik n$; therefore, we may select $H_1,\dots,H_{\ell_1}$ pairwise disjoint subsets of $[n]$
each of size $k$. Since the family $\mathcal{A}$ is isomorphic invariant, by
Lemma \ref{l8.1} applied to the sets $D_1\coloneqq \binom{H_1}{2},\dots, D_{\ell_1}\coloneqq\binom{H_{\ell_1}}{2}$
and $\mathcal{A}$, we~see~that
\begin{equation} \label{e8.10}
\mu_1(\mathcal{A}_x)\meg \mu(\mathcal{A})-\ee_1 \stackrel{\eqref{e8.9}}{\meg} 0
\end{equation}
for every $x\in\{0,1\}^{\binom{[k]}{2}}$, where $\mu_1$ is the uniform probability measure
on $\{0,1\}^{\binom{[n]}{2}\setminus \binom{[k]}{2}}$.
Now for every $i\in [k-1]$ let $x_i\in \{0,1\}^{\binom{[k]}{2}}$ be such that
$x_i^{-1}(\{1\})=\big\{ \{1,i+1\} \big\}$; namely, $x_i$ is the graph on $[k]$ with the single
edge $\{1,i+1\}$. Setting $\delta_1\coloneqq \mu_1(\mathcal{A}_{x_1})$ and
$\delta_2 \coloneqq \mu_1(\mathcal{A}_{x_1}\cap\mathcal{A}_{x_2})$ and using again the fact that $\mathcal{A}$
is isomorphic invariant, for every $i,j\in [k-1]$ with $i\neq j$ we have
\begin{equation} \label{e8.11}
\mu_1(\mathcal{A}_{x_i})=\delta_1 \ \ \text{ and } \ \
\mu_1(\mathcal{A}_{x_i}\cap \mathcal{A}_{x_j})=\delta_2.
\end{equation}
Therefore, by the Cauchy--Schwarz inequality,
\begin{align}
\label{e8.12} \mu(\mathcal{A})^2-2\ee_1 & \mik  \big(\mu(\mathcal{A})-\ee_1\big)^2
\stackrel{\eqref{e8.10}}{\mik} \ave\bigg[\frac{1}{k-1}\sum_{i=1}^{k-1}\mathbf{1}_{\mathcal{A}_{x_i}}\bigg]^2 \\
& \mik \frac{1}{(k-1)^2}\ave\bigg[\Big(\sum_{i=1}^{k-1}\mathbf{1}_{\mathcal{A}_{x_i}}\Big)^2\bigg]
=\frac{\delta_1}{k-1} +\frac{k-2}{k-1}\delta_2 \mik \frac{1}{k-1} + \delta_2. \nonumber
\end{align}
Next, for every $i\in [k-2]$ let $x_{1,i},x_{2,i}\in \{0,1\}^{\binom{[k]}{2}}$ be defined by
$x_{1,i}^{-1}(\{1\}) = \big\{\{ i,k-1 \}\big\}$ and $x_{2,i}^{-1}(\{1\}) = \big\{\{ i,k \}\big\}$.
Using once again the isomorphic invariance of $\mathcal{A}$ and setting
$\delta_4 \coloneqq \mu_1(\mathcal{A}_{x_{1,1}}\cap\mathcal{A}_{x_{2,1}}\cap\mathcal{A}_{x_{1,2}}
\cap\mathcal{A}_{x_{2,2}})$, we see that
\begin{equation} \label{e8.13}
\mu_1(\mathcal{A}_{x_{1,i}}\cap \mathcal{A}_{x_{2,i}})=\delta_2 \ \ \text{ and } \ \
\mu_1(\mathcal{A}_{x_{1,i}}\cap \mathcal{A}_{x_{2,i}}\cap \mathcal{A}_{x_{1,j}}\cap \mathcal{A}_{x_{2,j}})=\delta_4
\end{equation}
for every $i,j\in [k-1]$ with $i\neq j$. Hence, by \eqref{e8.9} and the Cauchy--Schwarz inequality,
\begin{align}
\label{e8.14} \Big(\mu(\mathcal{A})^2 & -2\ee_1 -\frac{1}{k-1}\Big)^2  \stackrel{\eqref{e8.12}}{\mik}
\delta_2^2 =\ave\bigg[\frac{1}{k-2}\sum_{i=1}^{k-2} \mathbf{1}_{\mathcal{A}_{x_{1,i}}\cap \mathcal{A}_{x_{2,i}}}\bigg]^2 \\
& \ \ \mik \frac{1}{(k-2)^2}\, \ave\bigg[\Big(\sum_{i=1}^{k-2}
\mathbf{1}_{\mathcal{A}_{x_{1,i}}\cap \mathcal{A}_{x_{2,i}}}\Big)^2\bigg]
=\frac{\delta_2}{k-2}+\frac{k-3}{k-2}\delta_4\mik\frac{1}{k-2}+\delta_4, \nonumber
\end{align}
which yields that
\begin{equation} \label{e8.15}
\delta_4\meg \mu(\mathcal{A})^4-4\ee_1-\frac{3}{k-2}.
\end{equation}
We will show that the parameter $\gamma(\mathcal{A})$ is roughly equal to $\delta_4$.
Clearly, this is enough to complete the proof.

To this end, let
$z_1,z_2,z_3,z_4\in \{0,1\}^{\binom{[4]}{2}}$ be defined by setting
$z_1^{-1}(\{1\})= \big\{\{1,3\}\big\}$, $z_2^{-1}(\{1\})= \big\{\{1,4\}\big\}$,
$z_3^{-1}(\{1\})= \big\{\{2,3\}\big\}$ and~$z_4^{-1}(\{1\})= \big\{\{2,4\}\big\}$;
for every $i\in [4]$ let $\mathcal{A}_{z_i}\coloneqq
\big\{y\in\{0,1\}^{\binom{[n]}{2}\setminus \binom{[4]}{2}}: z_i \cup y\in\mathcal{A}\big\}$
denote the section of $\mathcal{A}$ at $z_i$ and set
$S\coloneqq \mathcal{A}_{z_1}\cap\mathcal{A}_{z_2}\cap\mathcal{A}_{z_3}\cap\mathcal{A}_{z_4}$.
Moreover, set $\ell_2 \coloneqq \big\lceil 2^{\binom{k}{2}-\binom{4}{2}+1}\, \ee^{-2}_2\big\rceil$
and observe that $\ell_2(k-4)\mik n$; hence, we may select $K_1,\dots,K_{\ell_2}$ pairwise disjoint
subsets of $[n]\setminus [4]$ each of size $k-4$. Using again the isomorphic invariance of $\mathcal{A}$
and applying Lemma \ref{l8.1} to the sets
$D_1\coloneqq \binom{K_1\cup [4]}{2}\setminus \binom{[4]}{2},\dots,
D_{\ell_2}\coloneqq \binom{K_{\ell_2}\cup [4]}{2}\setminus \binom{[4]}{2}$ and $S$, we obtain that
\begin{equation}\label{e8.16}
|\mu_2(S_u)-\gamma(\mathcal{A})|\mik \ee_2
\end{equation}
for every $u\in\{0,1\}^{\binom{[k]}{2}\setminus \binom{[4]}{2}}$, where
$\mu_2$ denotes the uniform probability measure on $\{0,1\}^{\binom{[n]}{2}\setminus \binom{[k]}{2}}$.
Let $u_0\in \{0,1\}^{\binom{[k]}{2}\setminus \binom{[4]}{2}}$ be
such that $u_0^{-1}(\{1\})=\emptyset$. Then, by the definitions of $S$ and $\delta_4$
and the isomorphic invariance of $\mathcal{A}$, we see that $\mu_2(S_{u_0})=\delta_4$ and consequently,
by \eqref{e8.8}, \eqref{e8.15} and \eqref{e8.16} and the fact that $k\meg 5$, we conclude that
\begin{equation}\label{e8.17}
\gamma(\mathcal{A}) \meg \mu(\mathcal{A})^4-4\ee_1-\frac{3}{k-2}-\ee_2 \meg
\mu(\mathcal{A})^4 - \frac{18}{k+1}
\stackrel{\eqref{e8.8}}{\meg}\mu(\mathcal{A})^4 - \frac{18}{\sqrt{\log_2 n}}. \qedhere
\end{equation}
\end{proof}

\subsection{Proof of Theorem \ref{t1.8}} \label{subsec8.2}

The main goal of the proof is to extract out of the quasirandom family $\mathcal{A}$
a boolean two-dimensional approximately spreadable random array $\bbx$ that satisfies the box
independence condition; once this is done, the proof will be completed by an application
of Theorem \ref{t1.5}.

\subsubsection{Preliminary tools} \label{subsubsec8.2.1}

We start with a more precise, quantitative, version of Proposition \ref{p1.3} for boolean
two-dimensional random arrays. Specifically, let $\ell,m,r\meg 2$ be integers with
$\ell\mik m$, and recall that the multicolor hypergraph Ramsey number $R_{\ell}(m,r)$
is the least integer $N\meg m$ such that for every set $X$ with $|X|\meg N$
and every coloring $c\colon \binom{X}{\ell}\to [r]$ there exists
$Y\in \binom{X}{m}$ such that $c$ is constant on $\binom{Y}{\ell}$.
It is a classical result due to Erd\H{o}s and Rado~\cite{ER52} that the numbers
$R_{\ell}(m,r)$ have (at most) a tower-type dependence with respect to the
parameters $\ell,m,r$. The following fact is the promised quantitative version
of Proposition~\ref{p1.3}.
\begin{fact} \label{f8.3}
Let $0<\eta\mik 1$, let $\ell\meg 2$ be an integer, and let $N$ be an
integer such that
\begin{equation} \label{e8.18}
N\meg R_\ell\bigg( 2\ell, \big\lceil 2^{\binom{\ell}{2}}\,\eta^{-1}\big\rceil^{2^{\binom{\ell}{2}}}\bigg).
\end{equation}
Then for every boolean two-dimensional random array $\bbx$ on $[N]$ there exists
$L\in \binom{[N]}{\ell}$ such that the random subarray $\bbx_L$ of $\bbx$ is $\eta$-spreadable
$($see Definition \emph{\ref{d1.1}}$)$.
\end{fact}
\begin{proof}
Fix $\bbx$ and, for notational convenience, set $k\coloneqq 2^{\binom{\ell}{2}}$. Observe that there
exists a partition of the positive cone of the unit ball of $(\rr^k, \|\cdot\|_{\ell_1})$ into
$\lceil k/\eta\rceil^k$ parts, each of $\|\cdot\|_{\ell_1}$-diameter at most~$\eta$.
This partition induces, naturally, a coloring $c$ of $\binom{[N]}{\ell}$ with $\lceil k/\eta\rceil^k$ colors:
color $L\in\binom{[N]}{\ell}$ according to the shell of the partition that contains that law of $\bbx_L$.
Notice, in particular, that for every $L,K\in\binom{[N]}{\ell}$ with $c(L) = c(K)$ we have
$\rho_{\mathrm{TV}}(P_L,P_K)\mik \eta$, where $P_L$ and $P_K$ denote the
laws of the subarrays $\bbx_K$ and $\bbx_L$, respectively (recall that $\rho_{\mathrm{TV}}$
stands for the total variation distance). By \eqref{e8.18}, there exists $M\in\binom{[N]}{2\ell}$
such that the coloring $c$ is constant on $\binom{M}{\ell}$. Let $L$ denote the set of
the first $\ell$ elements~of~$M$. We claim that $\bbx_L$ is $\eta$-spreadable.
Indeed, let $r\in \{2,\dots,\ell\}$ be an integer, and let $Q,R\in \binom{L}{r}$.
We select $J,K\in\binom{M}{\ell}$ such that $Q$ and $R$ are the sets of the first $r$~elements
of $J$~and~$K$, respectively. Then $\rho_{\mathrm{TV}}(P_Q,P_R)\mik\rho_{\mathrm{TV}}(P_J,P_K)\mik\eta$,
where $P_Q,P_R,P_J, P_K$ denote the laws of the subarrays $\bbx_Q,\bbx_R,\bbx_J,\bbx_K$,~respectively.
\end{proof}
We proceed by introducing some terminology and some pieces of notation.
Let $m\meg \ell$ be positive integers and let $F\in \binom{[m]}{\ell}$; given two subsets $L\subseteq M$
of $\nn$ with $|L|=\ell$ and $|M|=m$, we say that the \emph{relative position of $L$ inside $M$ is $F$}
if, denoting by $\{i_1<\cdots <i_m\}$ the increasing enumeration of $M$, we have that
$L=\{i_j:j\in F\}$.

Moreover, for every finite subset $M$ of $\nn$ with $|M|\meg 2$
every $e\in \binom{M}{2}$ we shall denote by $x(e,M)\in \{0,1\}^{\binom{M}{2}}$ the
unique element satisfying $x(e,M)^{-1}(\{1\})=\{e\}$.

It is also convenient to introduce the following definition. (Recall that for every integer $n\meg 2$
by $\mu$ we denote the uniform probability measure on $\{0,1\}^{\binom{[n]}{2}}$.)
\begin{defn}[Admissibility] \label{d8.4}
Let $0<\eta\mik 1$, let $n\meg m\meg \ell\meg 2$ be integers, let $\mathcal{A}\subseteq \{0,1\}^{\binom{[n]}{2}}$
and let $F\in \binom{[m]}{\ell}$. Given $P\subseteq [n]$ with $|P|\meg m$,
we say that $P$ is \emph{$(\mathcal{A},\eta,F)$-admissible} if for every $M\in \binom{P}{m}$,
denoting by $\nu$ the uniform probability measure on $\{0,1\}^{\binom{[n]}{2}\setminus \binom{M}{2}}$,
the following hold.
\begin{enumerate}
\item[($\mathcal{P}$1)] \label{9P1} For every $x\in \{0,1\}^{\binom{M}{2}}$
we have $|\nu(\mathcal{A}_x)-\mu(\mathcal{A})|\mik \eta$, where $\mathcal{A}_x$
is the section of $\mathcal{A}$ at $x$.
\item[($\mathcal{P}$2)] \label{9P2} If\, $L\in \binom{M}{\ell}$ is the unique subset of $M$
whose relative position inside $M$ is $F$, then the two-dimensional random array
$\langle \mathbf{1}_{\mathcal{A}_{x(e,M)}}:e\in \binom{L}{2}\rangle$ is $\eta$-spreadable.
$($Here, we view $\{0,1\}^{\binom{[n]}{2}\setminus \binom{M}{2}}$ as a discrete probability
space equipped with $\nu$ and we denote by $\mathcal{A}_{x(e,M)}$ the section of $\mathcal{A}$ at $x(e,M)$.$)$
\end{enumerate}
\end{defn}
We have the following lemma.
\begin{lem} \label{l8.5}
Let $0<\eta\mik 1$, let $\ell\meg 2$ be an integer, and set
\begin{equation} \label{e8.19}
m=m(\eta,\ell)\coloneqq
R_\ell\bigg( 2\ell, \big\lceil 2^{\binom{\ell}{2}}\, \eta^{-1}\big\rceil^{2^{\binom{\ell}{2}}}\bigg).
\end{equation}
Also let $p$ be an integer with $p\meg m$ and set
\begin{equation} \label{e8.20}
q_1=q_1(\eta,\ell,p)\coloneqq R_{\ell}\bigg(p,\binom{m}{\ell}\bigg) \ \ \ \text{ and } \ \ \
q=q(\eta,\ell,p)\coloneqq q_1\cdot\big\lceil 2^{q_1+1} \, \eta^{-2}\big\rceil.
\end{equation}
If $n\meg q$ is an integer and $\mathcal{A}\subseteq
\{0,1\}^{\binom{[n]}{2}}$ is a family of graphs,
then for every $Q\in \binom{[n]}{q}$ there exist $F\in \binom{[m]}{\ell}$
and $P\in \binom{Q}{p}$
such that $P$ is $(\mathcal{A},\eta,F)$-admissible.
\end{lem}
\begin{proof}
Let $n, \mathcal{A}$ be as in the statement of the lemma, and let $Q\in \binom{[n]}{q}$ be arbitrary.
We set $r\coloneqq \big\lceil 2^{q_1+1}\, \eta^{-2}\big\rceil$ and we select pairwise disjoint sets
$I_1,\ldots,I_r\in\binom{Q}{q_1}$. By Lemma~\ref{l8.1} applied to the sets
$D_1\coloneqq \binom{I_1}{2},\dots,D_r\coloneqq \binom{I_r}{2}$ and $\mathcal{A}$,
there exists $i\in[r]$ such that
\begin{equation} \label{e8.21}
|\mu_1(\mathcal{A}_x)-\mu(\mathcal{A})|\mik \eta
\end{equation}
for every $x\in\{0,1\}^{\binom{I_i}{2}}$, where $\mu_1$ is the uniform probability measure on
$\{0,1\}^{\binom{[n]}{2}\setminus \binom{I_i}{2}}$ and $\mathcal{A}_x$ is the section
of $\mathcal{A}$ at $x$. We set $Q_1\coloneqq I_i$. By double averaging, \eqref{e8.21}
further implies that for every $M\in \binom{Q_1}{m}$ and every $x\in \{0,1\}^{\binom{M}{2}}$
we have $|\nu(\mathcal{A}_x)-\mu(\mathcal{A})|\mik \eta$, where $\nu$ is
the uniform probability measure on $\{0,1\}^{\binom{[n]}{2}\setminus \binom{M}{2}}$.
In other words, property~(\hyperref[9P1]{$\mathcal{P}$1}) in Definition \ref{d8.4} will be
satisfied as long as the desired set $P$ is contained in $Q_1$.

For property (\hyperref[9P2]{$\mathcal{P}$2}) we argue as follows. Let $M\in \binom{Q_1}{m}$
be arbitrary; by the choice of the constant $m$ in \eqref{e8.19} and Fact \ref{f8.3} applied to the boolean,
two-dimensional random array $\langle \mathbf{1}_{\mathcal{A}_{x(e,M)}}:e\in \binom{M}{2}\rangle$,
there exists $F_M\in \binom{[m]}{\ell}$ such that if $L\in \binom{M}{\ell}$ is the unique subset of $M$
whose relative position inside $M$ is $F_M$, then the random array
$\langle \mathbf{1}_{\mathcal{A}_{x(e,M)}}:e\in \binom{L}{2}\rangle$ is
$\eta$-spreadable. By the choice of $q_1$ in \eqref{e8.20} and another application of
Ramsey's theorem, there exist $P\in \binom{Q_1}{p}$ and $F\in \binom{[m]}{\ell}$ such that $F_M=F$ for
every $M\in \binom{P}{m}$. That is, property~(\hyperref[9P2]{$\mathcal{P}$2}) is satisfied for $P$, as desired.
\end{proof}

\subsubsection{Numerical parameters} \label{subsubsec8.2.2}

Our next step is to introduce some numerical parameters.
We fix $0<\delta\mik 1$ and an integer $k\meg 2$, and
we begin by selecting $0<\eta,\theta_0\mik 1$ and an integer $\ell\meg 4k$
such that
\begin{equation} \label{e8.22}
\eta + \bigg( \lfloor\ell/k\rfloor^{-1} + 4\,\binom{k}{2}\eta + 576\,\binom{k}{2}\,
\big(\ell^{-1/16} + \eta^{1/16}+(6\eta+\theta_0)^{1/16}\big) \bigg)^{1/2} < \delta^{\binom{k}{2}+1}.
\end{equation}
Next we set
\begin{align}
\label{e8.23} & m = m(\eta,\ell)\stackrel{\eqref{e8.19}}{\coloneqq} R_\ell\bigg( 2\ell,
\big\lceil 2^{\binom{\ell}{2}}\, \eta^{-1}\big\rceil^{2^{\binom{\ell}{2}}}\bigg),\\
\label{e8.24} & J\coloneqq \big\lceil 2^{\binom{m}{2}-5} \, \eta^{-2} \big\rceil, \\
\label{e8.25} & p \coloneqq 5(m-4) J+4, \\
\label{e8.26} & q_1 \coloneqq q_1(\eta,\ell,p) \stackrel{\eqref{e8.20}}{=} R_m\bigg(p,\binom{m}{\ell} \bigg).
\end{align}
Finally, we define
\begin{equation} \label{e8.27}
q_0 \coloneqq q(\eta,\ell,p) \stackrel{\eqref{e8.20}}{=}
q_1\cdot \big\lceil 2^{q_1+1} \, \eta^{-2} \big\rceil \ \ \ \text{ and } \ \ \
\theta\coloneqq \frac12\, \min\bigg\{ \theta_0,\binom{q_0}{4}^{-1} \bigg\}.
\end{equation}

\subsubsection{Completion of the proof} \label{subsubsec8.2.3}

We are ready for the main part of the argument. As above, let $0<\delta\mik 1$ and $k\meg 2$.
Also let $n\meg q_0$ be an integer and let $\mathcal{A}\subseteq \{0,1\}^{\binom{[n]}{2}}$
be a $\theta$-quasirandom family of graphs with $\mu(\mathcal{A})\meg \delta$,
where $q_0, \theta$ are as in \eqref{e8.27}.

By Lemma \ref{l8.5}, for every $Q \in \binom{[n]}{q_0}$ we fix $P_Q\in \binom{Q}{p}$ and
$F_Q\in \binom{[m]}{\ell}$ such that $P_Q$ is $(\mathcal{A},\eta,F_Q)$-admissible in the sense
of Definition \ref{d8.4}. Moreover, denoting by $\{r_1<\dots<r_p\}$ the increasing enumeration of
$P_Q$, we set $U_Q\coloneqq \big\{r_{j(m-4)J+j}: j\in\{1,2,3,4\}\big\}\in \binom{Q}{4}$. Then observe that
\begin{equation} \label{e8.28}
\bigg|\bigg\{U_Q: Q\in \binom{[n]}{q_0}\bigg\}\bigg|\meg \binom{n}{4}\,\binom{q_0}{4}^{-1}
\stackrel{\eqref{e8.27}}{\meg} 2\theta \binom{n}{4}.
\end{equation}
In order to see that the first inequality in \eqref{e8.28} is satisfied, notice that the uniform
probability measure on $\binom{[n]}{4}$ can be obtain by first sampling a set $Q\in \binom{[n]}{q_0}$
uniformly at random, and then sampling a set $U\in \binom{Q}{4}$ also uniformly at random;
that is, for every $A\subseteq \binom{[n]}{4}$ we have
\begin{equation} \label{e8.29}
\frac{|A|}{\binom{n}{4}} = \frac{1}{\binom{n}{q_0}} \sum_{Q\in \binom{[n]}{q_0}} \,
\frac{\big| A\cap \binom{Q}{4}\big|}{\binom{q_0}{4}}.
\end{equation}
The desired estimate follows by applying \eqref{e8.29} to the set
$A\coloneqq \Big\{ U_Q:Q\in \binom{[n]}{q_0}\Big\}$ and recalling that $U_Q\in \binom{Q}{4}$
for all $Q\in \binom{[n]}{q_0}$.

By \eqref{e8.28} and the fact that the family $\mathcal{A}$ is $\theta$-quasirandom in the sense of
Definition~\ref{d1.7}, we may select $Q_0\in\binom{[n]}{q_0}$ such that, writing $U_{Q_0}=\{u_1<u_2<u_3<u_4\}$
and setting\footnote{Recall that by $x(\{u_1,u_3\},U_{Q_0})\in \{0,1\}^{\binom{Q_0}{2}}$ is such that $x(\{u_1,u_3\},U_{Q_0})^{-1}(\{1\})=\{u_1,u_3\}$, and similarly for
$x(\{u_1,u_4\},U_{Q_0})$, $x(\{u_2,u_3\},U_{Q_0})$, $x(\{u_2,u_4\},U_{Q_0})$.}
$B\coloneqq \mathcal{A}_{x(\{u_1,u_3\},U_{Q_0})} \cap \mathcal{A}_{x(\{u_1,u_4\},U_{Q_0})}
\cap \mathcal{A}_{x(\{u_2,u_3\},U_{Q_0})} \cap \mathcal{A}_{x(\{u_2,u_4\},U_{Q_0})}$, we have
\begin{equation} \label{e8.30}
\mathbf{P}(B)\mik\mu(\mathcal{A})^4+\theta,
\end{equation}
where $\mathbf{P}$ is the uniform probability measure on $\{0,1\}^{\binom{[n]}{2}\setminus \binom{U_{Q_0}}{2}}$.

Next observe that, by the choice of $U_{Q_0}$, the set $P_{Q_0}$ has $J(m-4)$ elements between any two
consecutive $u_j$'s, $J(m-4)$ elements before $u_1$ and $J(m-4)$ elements after $u_4$. Therefore,
we may select $M_1,\ldots,M_J\in\binom{P_{Q_0}}{m}$ such that
\begin{enumerate}
\item[$\bullet$] for every $i\in [J]$, denoting by $L_i$ the unique element of
$\binom{M_i}{\ell}$ whose relative position inside $M_i$ is $F_{Q_0}$, we have that $U_{Q_0}\subseteq L_i$, and
\item[$\bullet$] the sets $M_1\setminus U_{Q_0},\ldots, M_J\setminus U_{Q_0}$ are pairwise disjoint.
\end{enumerate}
By  Lemma \ref{l8.1} applied to
$D_1\coloneqq \binom{M_1}{2}\setminus\binom{U_{Q_0}}{2},\dots, D_J\coloneqq \binom{M_J}{2}\setminus\binom{U_{Q_0}}{2}, B$
and the choice of $J$ in \eqref{e8.24}, there exists $i_0\in [J]$ such that,
setting $M\coloneqq M_{i_0}$ and $L\coloneqq L_{i_0}$,~we~have
\begin{equation} \label{e8.31}
|\nu(B_z)-\mathbf{P}(B)|\mik \eta
\end{equation}
for every $z\in \{0,1\}^{\binom{M}{2}\setminus \binom{U_{Q_0}}{2}}$, where $\nu$ is the uniform probability measure
on $\{0,1\}^{\binom{[n]}{2}\setminus \binom{M}{2}}$ (as usual, $B_z$ denotes the section of $B$ at $z$).
Now let $z_0\in \{0,1\}^{\binom{M}{2}\setminus \binom{U_{Q_0}}{2}}$ be such that $z_0^{-1}(\{1\})=\emptyset$.
By \eqref{e8.30} and \eqref{e8.31}, we obtain that
\begin{equation} \label{e8.32}
\nu(B_{z_0})\mik \mu(\mathcal{A})^4 + \eta + \theta.
\end{equation}
Also notice that $B_{z_0} = \mathcal{A}_{x(\{u_1,u_3\},M)} \cap \mathcal{A}_{x(\{u_1,u_4\},M)}
\cap \mathcal{A}_{x(\{u_2,u_3\},M)} \cap \mathcal{A}_{x(\{u_2,u_4\},M)}$. On the other hand,
recall that $P_{Q_0}$ is $(\mathcal{A},\eta,F_{Q_0})$-admissible and that $L$ is the unique
subset of $M\in \binom{P_{Q_0}}{m}$ whose relative position inside $M$ is $F_{Q_0}$.
Taking into account these observations and using properties (\hyperref[9P1]{$\mathcal{P}$1})
and (\hyperref[9P2]{$\mathcal{P}$2}) in Definition \ref{d8.4}, we see that the boolean random array
$\langle\mathbf{1}_{\mathcal{A}_{x(e,M)}}:e\in \binom{L}{2}\rangle$ is $\eta$-spreadable
and it satisfies \eqref{e1.7} with $\vartheta=6\eta+\theta$. Let $\{s_1<\dots<s_m\}$ be
the increasing enumeration of $M$, and for every $j\in\big\{1,\dots,\lfloor\ell/k\rfloor\big\}$~set
\begin{equation} \label{e8.33}
K_j\coloneqq \big\{s_i:i\in[jk] \setminus[(j-1)k]\big\} \ \ \ \text{ and } \ \ \
\Gamma_j\coloneqq \bigcap_{e\in \binom{K_j}{2}}\mathcal{A}_{x(e,M)}.
\end{equation}
By Theorem \ref{t1.5}, property (\hyperref[9P1]{$\mathcal{P}$1}) in Definition \ref{d8.4}
and the previous discussion, for every $j,j'\in\big\{1,\dots,\lfloor\ell/k\rfloor\big\}$ with $j\neq j'$ we have
\begin{align}
\label{e8.34} \Big|\nu(\Gamma_j)- \mu(\mathcal{A})^{\binom{k}{2}}\Big| & \mik
\eta\, \binom{k}{2}+144\, \binom{k}{2}\, \big(\ell^{-1/16} +\eta^{1/16}+ (6\eta+\theta)^{1/16}\big), \\
\label{e8.35} \Big|\nu(\Gamma_j\cap\Gamma_{j'})- \mu(\mathcal{A})^{2 \binom{k}{2}}\Big| & \mik
2\eta\, \binom{k}{2}+ 288\, \binom{k}{2}\, \big(\ell^{-1/16} +\eta^{1/16}+ (6\eta+\theta)^{1/16}\big).
\end{align}
Introduce the random variable
\begin{equation} \label{e8.36}
X\coloneqq \frac{1}{\lfloor\ell/k\rfloor} \sum_{j=1}^{\lfloor\ell/k\rfloor}\mathbf{1}_{\Gamma_j},
\end{equation}
and observe that, by \eqref{e8.34} and \eqref{e8.35}, we have
\begin{equation} \label{e8.37}
\Big\|X - \mu(\mathcal{A})^{\binom{k}{2}}\Big\|_{L_2}\!\! \mik
\bigg( \!\lfloor\ell/k\rfloor^{-1} + 4\eta\,\binom{k}{2} + 576\,\binom{k}{2}\,
\big(\ell^{-1/16} + \eta^{1/16}+(6\eta+\theta)^{1/16}\big)\!\bigg)^{1/2}.
\end{equation}
Let $x_0\in \{0,1\}^{\binom{M}{2}}$ be the unique element satisfying $x_0^{-1}(\{1\})=\emptyset$;
since $\mu(\mathcal{A})\meg \delta$,~we~have
\begin{align}
\label{e8.38} & \ave[X \mathbf{1}_{\mathcal{A}_{x_0}}] \meg \mu(\mathcal{A})^{\binom{k}{2}} \nu(\mathcal{A}_{x_0}) -
\sqrt{\nu(\mathcal{A}_{x_0})}\, \Big\|X - \mu(\mathcal{A})^{\binom{k}{2}}\Big\|_{L_2} \\
\stackrel{(\hyperref[9P1]{\mathcal{P}1}),\eqref{e8.37}}{\meg} &
\delta^{\binom{k}{2}+1} - \eta - \bigg(\!\lfloor\ell/k\rfloor^{-1} +4\eta\,\binom{k}{2} +
576\, \binom{k}{2} \big(\ell^{-1/16} +\eta^{1/16}+(6\eta+\theta)^{1/16}\big)\!\bigg)^{1/2}. \nonumber
\end{align}
Notice that the choice of $\theta$ in \eqref{e8.27} ensures that $\theta\mik\theta_0$.
Therefore, by \eqref{e8.38} and \eqref{e8.22}, we have $\ave[X\mathbf{1}_{\mathcal{A}_{x_0}}]>0$
that, in turn, implies that there exists $j_0\in \big\{1,\dots,\lfloor\ell/k\rfloor\big\}$ such that
$\mathcal{A}_{x_0}\cap \Gamma_{j_0}\neq\emptyset$. By the choices of $\Gamma_{j_0}$ in \eqref{e8.33}
and $x_0$, it is clear that the set $K_{j_0}\in \binom{[n]}{k}$ is as desired.
The proof of Theorem \ref{t1.8} is completed.
\begin{rem}[Analysis of the bounds] \label{r8.6}
Using the Erd\H{o}s--Rado theorem \cite{ER52}, it is not hard to see that
the proof of Theorem \ref{t1.8} yields a tower-type dependence of $\theta$ and
$\ell_0$ with respect to the parameters $\delta$ and $k$. More precisely,
there exists a primitive recursive $\psi\colon \nn\times\nn\to\nn$ function
belonging to the class $\mathcal{E}^4$ of Grzegorczyk’s hierarchy\footnote{See \cite[Appendix A]{DK16}
for an introduction to Grzegorczyk’s hierarchy and a discussion of its role in analyzing bounds in Ramsey theory.}
such that $\theta^{-1},\ell_0\mik \psi\big( \lceil \delta^{-1}\rceil, k\big)$
for every $0<\delta\mik 1$ and every integer $k\meg 2$.
\end{rem}
\begin{rem}[Extensions to families of uniform hypergraphs] \label{r8.7}
Theorem~\ref{t1.8} can be extended to families of $d$-uniform hypergraphs
$\mathcal{A}\subseteq \{0,1\}^{\binom{[n]}{d}}$ for any integer $d\meg 2$; this can be done
by using Theorem \ref{t3.2} instead Theorem \ref{t1.5} and appropriately modifying the
notion of quasirandomness in Definition \ref{d1.7}. We leave the (fairly straightforward)
formulations of these extensions to the interested reader.
\end{rem}
\begin{rem} \label{r8.8}
Let $\mathcal{A}\subseteq \{0,1\}^{\binom{[n]}{2}}$ be a family of graphs on $[n]$, let
$K\subseteq [n]$ with $|K|\meg 2$, and let $\mathcal{S}\subseteq \{0,1\}^{\binom{K}{2}}$
be a family of graphs on $K$. We say that $\mathcal{A}$ \emph{smashes} $\mathcal{S}$
if there exists $W\subseteq \binom{[n]}{2}\setminus \binom{K}{2}$ such that
$W\cup H\in\mathcal{A}$ for every $H\in\mathcal{S}$. With this terminology,
Conjecture \ref{con1.6} is simply asking whether every dense family of graphs smashes some clique,
while Theorem \ref{t1.8} is equivalent to saying that if the family $\mathcal{A}$ is dense
and quasirandom (in the sense of Definition \ref{d1.7}), then it smashes all graphs
with at most one edge on some $K\in \binom{[n]}{k}$. It would be interesting to find
quasirandomness conditions that ensure that the family~$\mathcal{A}$ smashes
richer families of small graphs. In this direction, the following problem
is the most intriguing.
\begin{problem} \label{pr8.9}
Find natural quasirandomness conditions on a family of graphs~$\mathcal{A}$
that ensure that $\mathcal{A}$ smashes \emph{all} graphs on some $K\in \binom{[n]}{k}$.
\end{problem}
\end{rem}


\appendix


\section{Examples} \label{Appendix-A}

\numberwithin{equation}{section}

Our goal in this appendix is to present examples that show that the box independence condition
in Theorems \ref{t1.4} and \ref{t5.1} is essentially optimal. We focus on boolean random arrays
as this case already covers all underlying phenomena.

\subsection{Boxes and faces} \label{subsecA.1}

We start by introducing some terminology that will be used throughout this section.
Let $d\meg 2$ be an integer; we say that a subset $B$ of $\binom{\nn}{d}$ is a
\emph{$d$-dimensional box of $\nn$} if it is a $d$-dimensional box of $[n]$
for some integer $n\meg 2d$ (see Subsection \ref{subsec3.1}). Moreover, we say that a subset $F$
of $\binom{\nn}{d}$ is a \emph{$(d-1)$-face of $\nn$} if it is of the form $\mathrm{Box}(\mathcal{H})$,
where $\mathcal{H}=(H_1,\dots,H_d)$ is a finite sequence of nonempty subsets of $\nn$ of cardinality
at most $2$ with $\max(H_i)<\min(H_{i+1})$ for all $i\in [d-1]$, and such that
$\sum_{i=1}^{d}|H_i|=2d-1$. (Thus, $|H_i|=2$ for all but at one $i\in [d]$.)

\subsection{The two-dimensional case} \label{subsecA.2}

We have the following proposition.
\begin{prop} \label{pa.1}
There exists a boolean, exchangeable, two-dimensional random array
$\bbx = \langle X_s: s\in \binom{\nn}{2}\rangle$ on $\nn$ with the following properties.
\begin{enumerate}
\item[($\mathcal{P}$1)] \label{aP11} For every $s\in \binom{\nn}{2}$ we have $\ave[X_s]=\frac{1}{2}$.
\item[($\mathcal{P}$2)] \label{aP12} For every distinct $s,t\in \binom{\nn}{2}$ we have $\ave[X_s X_t]=\frac{1}{4}$.
\item[($\mathcal{P}$3)] \label{aP13} For every $2$-dimensional box $B$ of $\nn$ and every nonempty subset
$G$ of $B$ with $G\neq B$ we have $\ave\big[\prod_{s\in G} X_s\big]=(\frac{1}{2})^{|G|}$.
\item[($\mathcal{P}$4)] \label{aP14} For every $2$-dimensional box $B$ of $\nn$ we have
$\ave\big[ \prod_{s\in B} X_s \big] =\frac{3}{2}(\frac{1}{2})^4$.
\item[($\mathcal{P}$5)] \label{aP15} Let $n\meg 8$ be an integer, and let $\bbx_{n}$ denote the subarray of $\bbx$
determined by~$[n]$ $($see Definition \emph{\ref{d1.1}}$)$. Then there exists a  multilinear
polynomial $f\colon \rr^{\binom{[n]}{2}}\to \rr$ of degree $4$ with $\ave[f(\bbx_n)] = 0$ and
$\|f(\bbx_n)\|_{L_\infty}\mik 1$, such that for every subset $I$~of~$[n]$ with $|I|\meg 8$ we have
$\prob\big( \big|\ave[f(\bbx_n)\,|\, \mathcal{F}_I]\big|\meg 2^{-11}\big) \meg 2^{-11}$.
\end{enumerate}
\end{prop}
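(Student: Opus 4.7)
The plan is to construct $\bbx$ as a $50/50$ mixture of two exchangeable boolean arrays with complementary correlation structures. Let $(\xi_i)_{i\in\nn}$ be an i.i.d.\ sequence of uniform $\{-1,+1\}$-valued random variables and set
\[ X^{(2)}_s \coloneqq \frac{1+\prod_{i\in s}\xi_i}{2} \quad \big(s\in{\nn\choose 2}\big), \]
so that $\bbx^{(2)}\coloneqq \langle X^{(2)}_s\rangle$ is boolean and exchangeable. On an independent probability space let $\bbx^{(1)}=\langle X^{(1)}_s\rangle$ be a family of i.i.d.\ $\mathrm{Bernoulli}(1/2)$ random variables, let $M$ be a uniform $\{1,2\}$-valued random variable independent of everything else, and put $\bbx\coloneqq \bbx^{(M)}$. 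The resulting array is boolean and exchangeable as required by Definition \ref{d1.1}.

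Next I would verify ($\mathcal{P}$1)--($\mathcal{P}$4) by a direct algebraic computation that exploits the identity $2X^{(2)}_s-1=\prod_{i\in s}\xi_i$. For any finite $G\subseteq {\nn\choose 2}$ one has $\ave\big[\prod_{s\in G}(2X^{(2)}_s-1)\big]=\ave\big[\prod_i\xi_i^{d_G(i)}\big]$, which equals $1$ if every vertex degree $d_G(i)$ in the multigraph $G$ is even and is $0$ otherwise. Expanding $\prod_{s\in G}X_s=2^{-|G|}\sum_{G'\subseteq G}\prod_{s\in G'}(2X_s-1)$ and averaging against the mixture then yields: for a single edge or a pair of distinct edges only $G'=\emptyset$ contributes, giving ($\mathcal{P}$1)--($\mathcal{P}$2); for every nonempty proper subset $G$ of a two-dimensional box $B$ the only even-degree subset $G'\subseteq G$ is again $G'=\emptyset$ (removing any edge of $B$ forces some vertex of $G$ to have odd degree), so $\ave^{(2)}[\prod_{s\in G}X^{(2)}_s]=2^{-|G|}$ and ($\mathcal{P}$3) follows; finally, for $G=B$ both $G'=\emptyset$ and $G'=B$ contribute because every vertex of the box has degree exactly $2$, whence $\ave^{(2)}[\prod_{s\in B}X^{(2)}_s]=2/2^4=1/8$, and the mixture produces $\tfrac{1}{2}\cdot\tfrac{1}{16}+\tfrac{1}{2}\cdot\tfrac{1}{8}=\tfrac{3}{32}$, proving ($\mathcal{P}$4).

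For ($\mathcal{P}$5), fix the box $B^*\coloneqq \{\{1,3\},\{1,4\},\{2,3\},\{2,4\}\}$ and define
\[ f(\bbx_n)\coloneqq \frac{2}{3}\Big(\prod_{s\in B^*}(2X_s-1)-\tfrac{1}{2}\Big), \]
which is a translated multilinear polynomial of degree $4$. Since $\prod_{s\in B^*}(2X_s-1)\in\{-1,+1\}$ takes the value $+1$ with probability $3/4$ and $-1$ with probability $1/4$ (using $\ave[\prod_{s\in B^*}(2X_s-1)]=1/2$, computed in the previous step), $f$ takes only the values $1/3$ and $-1$, so $\ave[f(\bbx_n)]=0$ and $\|f(\bbx_n)\|_{L_\infty}\mik 1$. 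The crucial observation for non-concentration is that under $M=2$ the subarray $\bbx_I$ is determined by $(\xi_i)_{i\in I}$ up to the global sign flip $\xi\mapsto-\xi$, so it lies almost surely in a subset $\mathcal{S}_I\subseteq\{0,1\}^{{I\choose 2}}$ of cardinality at most $2^{|I|-1}$. Since each element of $\mathcal{S}_I$ has mass $2^{-|I|+1}$ under $M=2$ and $2^{-{|I|\choose 2}}$ under $M=1$, Bayes' rule yields for every $|I|\meg 8$ the dichotomy $\prob(M=2\,|\,\bbx_I=x)\meg 1-2^{-21}$ when $x\in\mathcal{S}_I$ and $\prob(M=2\,|\,\bbx_I=x)=0$ otherwise.

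To conclude ($\mathcal{P}$5), I would split into two cases. If $\{1,2,3,4\}\subseteq I$ then $f$ is $\mathcal{F}_I$-measurable, so $\ave[f(\bbx_n)\,|\,\mathcal{F}_I]=f(\bbx_n)$ and $|\ave[f(\bbx_n)\,|\,\mathcal{F}_I]|\meg 1/3$ almost surely. If $\{1,2,3,4\}\not\subseteq I$ then under $M=1$ at least one factor of $\prod_{s\in B^*}(2X_s-1)$ is independent of $\mathcal{F}_I$ and has mean zero, forcing $\ave[\prod_{s\in B^*}(2X_s-1)\,|\,\mathcal{F}_I,M=1]=0$, whereas under $M=2$ this product is identically $1$; consequently
\[ \ave[f(\bbx_n)\,|\,\mathcal{F}_I]=\frac{2}{3}\Big(\prob(M=2\,|\,\mathcal{F}_I)-\tfrac{1}{2}\Big), \]
which by the dichotomy above is either at least $(2/3)(1/2-2^{-21})>1/4$ or equals $-1/3$. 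In every case $|\ave[f(\bbx_n)\,|\,\mathcal{F}_I]|\meg 1/4$ almost surely, comfortably exceeding the required $2^{-11}$. The most delicate step in this plan is the cardinality bound $|\mathcal{S}_I|\mik 2^{|I|-1}$ combined with the Bayes computation that forces the posterior of $M$ given $\mathcal{F}_I$ to be nearly $0$-$1$; everything else is bookkeeping.
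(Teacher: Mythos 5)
Your construction is, up to a relabelling of the latent alphabet, identical to the paper's: the paper's kernel $\mathbf{1}_A$ with $A=\{(0,0),(1,1)\}\subseteq V^2$ is precisely $(1+\xi_{i_1}\xi_{i_2})/2$ after sending $\{0,1\}$ to $\{-1,+1\}$, and the paper's formula \eqref{ea.1} is literally the $\tfrac12/\tfrac12$ mixture of the i.i.d.\ Bernoulli array and the Aldous--Hoover array that you describe. Your verification of ($\mathcal{P}$1)--($\mathcal{P}$4) via the even-subgraph criterion for $\ave[\prod_{i}\xi_i^{d_G(i)}]$ is a clean repackaging of the ``direct computation'' the paper alludes to, and the numbers match. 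Where you genuinely diverge is ($\mathcal{P}$5). The paper takes $f=\prod_{s\in\mathrm{Box}(2)}x_s-\ave[\prod_{s\in\mathrm{Box}(2)}X_s]$ and tests $\ave[f(\bbx_n)\mid\mathcal{F}_I]$ against a single cylinder event $C=\bigcap_{s\in B}[X_s=1]\in\mathcal{F}_I$ for a disjoint box $B\subseteq{I\choose 2}$, obtaining the weak bound $2^{-11}$ with probability $2^{-11}$ via a one-line covariance computation. You instead take $f=\tfrac23(\prod_{s\in B^*}(2x_s-1)-\tfrac12)$, observe that under $M=2$ the leading product is identically $1$ while under $M=1$ its conditional mean given $\mathcal{F}_I$ vanishes whenever $\{1,2,3,4\}\not\subseteq I$, and reduce everything to the posterior $\prob(M=2\mid\mathcal{F}_I)$. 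The support of $\bbx_I$ under $M=2$ has size $2^{|I|-1}$ (sign-flip pairs) versus $2^{{|I|\choose 2}}$ under $M=1$, and ${m\choose 2}-m+1=\tfrac{(m-1)(m-2)}{2}\geq 21$ for $m\geq 8$, so Bayes gives the near $0$--$1$ dichotomy you state. This is correct and delivers the much stronger conclusion $|\ave[f(\bbx_n)\mid\mathcal{F}_I]|\geq\tfrac14$ almost surely, at the cost of the extra combinatorial observation about the cardinality of the $M=2$ support; the paper's route is softer and generalizes more routinely to the $d\geq 3$ case treated in Proposition \ref{pa.2}, where such an exact counting would be harder.
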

\begin{proof}
We will define the random array $\bbx$ by providing an integral representation of its
distribution. (Of course, this maneuver is expected  by the Aldous--Hoover representation
theorem \cite{Ald81,Hoo79}.) Specifically, set $V\coloneqq \{0,1\}$ and $A\coloneqq\{(0,0),(1,1)\}\subseteq V^2$;
we view $V$ as a discrete probability space equipped with the uniform probability measure.
We also set $\Omega\coloneqq\{0,1\}^{\binom{\nn}{2}}$ and we equip $\Omega$ with the product
$\sigma$-algebra, which we denote by~$\Sigma$. Let $\prob$ denote the $(1/2,1/2)$-mixture
of the uniform distribution on $\{0,1\}^{\binom{\nn}{2}}$ and the law of the random array
$\bbx_A$ associated with $A$ via \eqref{e7.1}; that is, $\prob$ is the unique probability
measure on $(\Omega,\Sigma)$ that satisfies, for every nonempty finite subset $\mathcal{F}$
of $\binom{\nn}{2}$,~that
\begin{equation} \label{ea.1}
\prob\Big(\big\{ (x_t)_{t\in \binom{\nn}{2}}\in\Omega :x_s=1 \text{ for all }s\in\mathcal{F}\big\}\Big) =
\frac{1}{2}\Big(\frac{1}{2}\Big)^{|\mathcal{F}|}+ \frac{1}{2}\int\prod_{s\in \mathcal{F}}
\mathbf{1}_A(\boldsymbol{v}_s)\, d\boldsymbol{\mu}(\boldsymbol{v}),
\end{equation}
where: (i) $\boldsymbol{\mu}$ denotes the product measure on $V^\nn$ obtained by equipping each
factor with the uniform probability measure on $V$, and (ii) for every $\boldsymbol{v}=(v_i)\in V^\nn$
and every $s=\{i_1<i_2\}\in \binom{\nn}{2}$ by $\boldsymbol{v}_s=(v_{i_1},v_{i_2})\in V^2$ we denote
the restriction of $\boldsymbol{v}$ on the coordinates determined by $s$.
Next, for every $s\in \binom{\nn}{2}$ let $X_s\colon \Omega\to \{0,1\}$ denote the projection on the
$s$-th coordinate, that is, $X_s\big((x_t)_{t\in \binom{\nn}{2}}\big)=x_s$ for every
$(x_t)_{t\in \binom{\nn}{2}}\in\Omega$.
The fact that the set $A$ is symmetric implies that the random array $\bbx=\langle X_s: s\in \binom{\nn}{2}\rangle$
is exchangeable; moreover, for every nonempty finite subset $\mathcal{F}$ of $\binom{\nn}{2}$ we have
\begin{equation} \label{ea.2}
\ave\Big[ \prod_{s\in\mathcal{F}}X_s \Big] = \frac{1}{2}\Big(\frac{1}{2}\Big)^{|\mathcal{F}|} +
 \frac{1}{2}\int\prod_{s\in\mathcal{F}}\mathbf{1}_A(\boldsymbol{v}_s)\, d\boldsymbol{\mu}(\boldsymbol{v}).
\end{equation}
Using \eqref{ea.2}, properties (\hyperref[aP11]{$\mathcal{P}$1})--(\hyperref[aP14]{$\mathcal{P}$4})
follow from a direct computation.

In order to verify property (\hyperref[aP15]{$\mathcal{P}$5}) we argue as in the proof of Proposition \ref{p2.8}.
Fix an integer $n\meg 8$. Let $\mathrm{Box}(2)$ be the $2$-dimensional box of $\nn$ defined in \eqref{e3.2}.
We define $f\colon \rr^{\binom{[n]}{2}}\to \rr$ by setting for every
$\boldsymbol{x}=(x_t)_{t\in \binom{[n]}{2}}\in \rr^{\binom{[n]}{2}}$
\begin{align} \label{ea.3}
f(\boldsymbol{x}) & \coloneqq \prod_{s\in \mathrm{Box}(2)} \!\! x_s-
\ave\Big[\prod_{s\in \mathrm{Box}(2)} \!\! X_s\Big] \\
& \! \stackrel{\eqref{e3.2}}{=} x_{\{1,3\}}x_{\{1,4\}}x_{\{2,3\}}x_{\{2,4\}}-
\ave[ X_{\{1,3\}}X_{\{1,4\}}X_{\{2,3\}}X_{\{2,4\}}]. \nonumber
\end{align}
It is clear that $f$ is a multilinear polynomial of degree $4$ that satisfies $\ave[f(\bbx_n)]=0$
and $\|f(\bbx_n)\|_{L_\infty}\mik 1$. (Recall that $\bbx_n$ denotes the subarray of $\bbx$
determined by $[n]$.) Let $I$ be an arbitrary subset of $[n]$ with $|I|\meg 8$. Since $|I|\meg 8$,
there exists a $2$-dimensional box $B$ of $\nn$ with $B \subseteq \binom{I}{2}$ and
such that $\min(s)\meg 5$ for every $s\in B$. Set $C\coloneqq \bigcap_{s\in B}[X_s=1]$
and observe that $C\in \mathcal{F}_I$. Hence, by the exchangeability of $\bbx$, we have
\begin{align} \label{ea.4}
\ave\big[ \ave[f(\bbx_n)\,|\, \mathcal{F}_I] \, \mathbf{1}_C\big] & =
\ave[f(\bbx_n)\mathbf{1}_C] \\
& = \ave\Big[ \prod_{s\in \mathrm{Box}(2)\cup B} X_s \Big] -
\ave\Big[ \prod_{s\in \mathrm{Box}(2)} X_s\Big]^2 \stackrel{\eqref{ea.2}}{=} \frac{1}{2^{10}}, \nonumber
\end{align}
which implies that $\prob\big( \big|\ave[f(\bbx_n)\,|\, \mathcal{F}_I]\big|\meg 2^{-11}\big) \meg 2^{-11}$.
The proof is completed.
\end{proof}

\subsection{The higher-dimensional case} \label{subsecA.3}

The following result is the higher-dimensional analogue of Proposition \ref{pa.1}.
\begin{prop} \label{pa.2}
Let $d\meg 3$ be an integer. Also let $\delta>0$. Then there exists a boolean, exchangeable, $d$-dimensional
random array $\bbx=\langle X_s : s\in \binom{\nn}{d}\rangle$ on $\nn$ with the following properties.
\begin{enumerate}
\item[($\mathcal{P}$1)] \label{aP21} For every $s\in \binom{\nn}{d}$ we have
$\big|\ave[X_s]-\frac{1}{2}\big|\mik\delta$.
\item[($\mathcal{P}$2)] \label{aP22} For every distinct $s, t\in \binom{\nn}{d}$
we have $\big|\ave[X_s X_t]-\frac{1}{4}\big|\mik\delta$.
\item[($\mathcal{P}$3)] \label{aP23} For every $(d-1)$-face $F$ of $\nn$ we have
$\big|\ave\big[\prod_{s\in F} X_s\big] - (\frac{1}{2})^{|F|}\big|\mik\delta$.
\item[($\mathcal{P}$4)] \label{aP24} For every $d$-dimensional box $B$ of $\nn$ we have
$\big|\ave\big[\prod_{s\in B} X_s\big] - \frac{3}{2}(\frac{1}{2})^{|B|}\big|\mik\delta$.
\item[($\mathcal{P}$5)] \label{aP25} Set $\vartheta\coloneqq 16^{-1} 2^{-2^{d+1}}$
$($note that $\vartheta$ does not dependent on $\delta$$)$. Let $n\meg 4d$ be an integer,
and let $\bbx_{n}$ denote the subarray of $\bbx$ determined by~$[n]$. Then there exists a
multilinear polynomial $f\colon \rr^{\binom{[n]}{d}}\to \rr$ of degree~$2^d$ with $\ave[f(\bbx_{n})]=0$ and
$\|f(\bbx_{n})\|_{L_\infty}\mik 1$, such that for every subset $I$ of $[n]$ with $|I|\meg 4d$ we have
$\prob\big( \big|\ave[f(\bbx_n)\,|\, \mathcal{F}_I]\big|\meg \vartheta \big) \meg \vartheta$.
\end{enumerate}
\end{prop}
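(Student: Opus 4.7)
Following the proof of Proposition \ref{pa.1}, I would define $\bbx$ via its finite-dimensional moments as a $50$--$50$ mixture of the product uniform measure on $\{0,1\}^{{\nn\choose d}}$ and a ``structured'' exchangeable distribution arising from a symmetric set $A\subseteq V^d$ over a finite alphabet $V$. More precisely, let $V$ be a finite set of cardinality depending on $d$ and $\delta$, equip $V$ with the uniform probability measure, let $\bmu$ denote the product measure on $V^{\nn}$, and fix $A\subseteq V^d$ invariant under coordinate permutations. Define $\bbx=\langle X_s:s\in{\nn\choose d}\rangle$ to be the $\{0,1\}$-valued stochastic process whose law is uniquely specified by
\[
\prob\Big(\bigcap_{s\in\mathcal{F}}[X_s=1]\Big) \,=\, \tfrac{1}{2}\big(\tfrac{1}{2}\big)^{|\mathcal{F}|} + \tfrac{1}{2}\int\prod_{s\in\mathcal{F}}\mathbf{1}_A(\boldsymbol{v}_s)\,d\bmu(\boldsymbol{v})
\]
for every nonempty finite $\mathcal{F}\subseteq{\nn\choose d}$. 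The plan is to choose $(V,A)$ so that, with additive error at most $\delta$, (a)~$\prob(\boldsymbol{v}_s\in A)\approx 1/2$, (b)~for every $(d-1)$-face $F$ of $\nn$, $\ave\big[\prod_{s\in F}\mathbf{1}_A(\boldsymbol{v}_s)\big]\approx(1/2)^{|F|}$, and (c)~for every $d$-dimensional box $B$ of $\nn$, $\ave\big[\prod_{s\in B}\mathbf{1}_A(\boldsymbol{v}_s)\big]\approx 2\,(1/2)^{|B|}$. Given (a)--(c), properties (\hyperref[aP21]{$\mathcal{P}$1})--(\hyperref[aP24]{$\mathcal{P}$4}) follow by plugging into the mixture formula, exactly as in Proposition \ref{pa.1}.

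\textbf{Main obstacle: choosing $A$.} The hard step is exhibiting a symmetric $A\subseteq V^d$ satisfying (a)--(c). For $d=2$ the choice $V=\{0,1\}$ and $A=\{(0,0),(1,1)\}$ does this exactly (with $\delta=0$), as in Proposition \ref{pa.1}. For $d\meg 3$, however, no Hamming-level-set $A\subseteq\{0,1\}^d$ of density $1/2$ factorises on $(d-1)$-faces: a direct $\mathbb{F}_2$-rank computation of the indicator system $\{\mathbf{1}_s:s\in F\}$ shows that each such $A$ produces a fixed multiplicative discrepancy on faces, so one cannot stay with a $\{0,1\}$ alphabet. I therefore expect the right construction to take $|V|$ growing with $\delta^{-1}$ and to write $A=A_{\mathrm{rand}}\sqcup A_{\mathrm{box}}$, where $A_{\mathrm{rand}}$ is a permutation-invariant dense subset of $V^d$ of size $\approx\tfrac{1}{2}|V|^d$ behaving like a random set (so that both face and box moments are within $O(|V|^{-1/2})$ of their independent values, by a standard union-bound/concentration argument), and $A_{\mathrm{box}}$ is a small permutation-invariant union of combinatorial subcubes of $V^d$ of total measure $\approx(1/2)^{|B|}$, designed to boost only the box probability by the extra factor called for in (c) while remaining negligible on every $(d-1)$-face. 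Verifying the required additive-$\delta$ bounds then reduces to a counting problem controlled by $|V|$, and one finishes by taking $|V|$ sufficiently large in terms of $d$ and $\delta$.

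\textbf{Verification of (\hyperref[aP25]{$\mathcal{P}$5}).} With $\bbx$ in hand, property (\hyperref[aP25]{$\mathcal{P}$5}) is verified exactly as in Proposition \ref{pa.1}. Fix $n\meg 4d$ and let $\bbx_n$ denote the subarray of $\bbx$ determined by $[n]$. The witness polynomial is
\[
f(\boldsymbol{x})\,\coloneqq\,\prod_{s\in\mathrm{Box}(d)}x_s \,-\, \ave\Big[\prod_{s\in\mathrm{Box}(d)}X_s\Big],
\]
a translated multilinear polynomial of degree $2^d$ satisfying $\ave[f(\bbx_n)]=0$ and $\|f(\bbx_n)\|_{L_\infty}\mik 1$. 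Given $I\subseteq[n]$ with $|I|\meg 4d$, since $|I\setminus[2d]|\meg 2d$, I pick a $d$-dimensional box $B'$ of $\nn$ whose $2d$ vertex indices lie in $I\setminus[2d]$, so that $B'$ and $\mathrm{Box}(d)$ have disjoint vertex sets. Then $C\coloneqq\bigcap_{s\in B'}[X_s=1]\in\mathcal{F}_I$, and because the vertex sets are disjoint the $\bmu$-integral in the mixture formula factorises across the two boxes; invoking (c) on each box yields
\[
\ave[f(\bbx_n)\mathbf{1}_C] = \ave\!\Big[\prod_{s\in\mathrm{Box}(d)\cup B'}\!\!X_s\Big] - \ave\!\Big[\prod_{s\in\mathrm{Box}(d)}\!\!X_s\Big]\ave\!\Big[\prod_{s\in B'}\!\!X_s\Big] \,\approx\, \tfrac{5}{2}\big(\tfrac{1}{2}\big)^{2^{d+1}} - \big(\tfrac{3}{2}\big)^{\!2}\!\big(\tfrac{1}{2}\big)^{2^{d+1}} = \tfrac{1}{4}\cdot 2^{-2^{d+1}},
\]
which is at least $2\vartheta=2\cdot 16^{-1}\,2^{-2^{d+1}}$. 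A standard Markov-type argument, as at the end of the proof of (\hyperref[aP15]{$\mathcal{P}$5}) in Proposition \ref{pa.1} (using $\|\ave[f(\bbx_n)\,|\,\mathcal{F}_I]\|_{L_\infty}\mik 1$), converts this into $\prob\big(|\ave[f(\bbx_n)\,|\,\mathcal{F}_I]|\meg\vartheta\big)\meg\vartheta$, completing (\hyperref[aP25]{$\mathcal{P}$5}).
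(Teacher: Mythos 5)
Your setup (the mixture template and the $\mathcal{P}5$ verification) matches the paper, but you correctly identify the ``main obstacle'' as the real content of the proposition and then leave it essentially unresolved. The proposed decomposition $A=A_{\mathrm{rand}}\sqcup A_{\mathrm{box}}$ with $A\subseteq V^d$ does not work as described: there is no subset of $V^d$ playing the role of $A_{\mathrm{box}}$ that boosts the $d$-dimensional box moment from $(1/2)^{2^d}$ to $2\,(1/2)^{2^d}$ while leaving the $(d-1)$-face moments and the pairwise moments within $\delta$ of their random values. Concretely, if $A_{\mathrm{box}}$ has $\bmu$-measure $\approx(1/2)^{2^d}$, then its pure contribution to the box moment is $\mik(1/2)^{2^d}$ only in the degenerate case where every box entirely lies in $A_{\mathrm{box}}$, which cannot happen for a set of that measure; and if instead one takes $A_{\mathrm{box}}=W^d$ with $\mu(W)$ chosen so that the pure box term equals $(1/2)^{2^d}$, then $\mu(A_{\mathrm{box}})=(1/2)^{2^{d-1}}$, which is a constant depending only on $d$ and is far too large to keep $|\mu(A)-1/2|\mik\delta$ or to keep the pairwise moment $\ave[X_sX_t]$ within $\delta$ of $1/4$ when $|s\cap t|=d-1$. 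There is no sufficiently small $\delta$-independent ``bump'' that simultaneously satisfies (a), (b), and the factor-of-$2$ boost in (c). You are missing the key structural idea.

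The paper's construction of $H\colon V^d\to\{0,1\}$ is genuinely different and is not of the form $\mathbf{1}_{A_{\mathrm{rand}}\sqcup A_{\mathrm{box}}}$. The paper takes a quasirandom symmetric set $A$ in the \emph{lower-dimensional} space $V^{d-1}$ (Lemma~\ref{la.3}), and defines, for $\boldsymbol{v}\in V^d$,
\[
H(\boldsymbol{v}) \;=\; \mathbf{1}\Big[\ \#\big\{i\in[d]: \boldsymbol{v}_{[d]\setminus\{i\}}\notin A\big\}\ \text{is even}\ \Big],
\]
i.e.\ $H$ is the indicator that an even number of the $d$ codimension-one projections of $\boldsymbol{v}$ lie outside $A$; this is exactly \eqref{ea.8}--\eqref{ea.11}. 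This parity structure is the engine of the box boost: the $2^d$ corners of a $d$-dimensional box share their $(d-1)$-faces pairwise, so the parity constraints on adjacent corners are linearly dependent over $\mathbb{F}_2$, and the count of compatible parity-vectors (the set $\mathcal{Q}$ in Claim~\ref{ca.7}, of size $2^{(d-2)2^{d-1}+1}$, versus $|\mathbb{A}^{\{0,1\}^d}|=2^{(d-1)2^d}$) yields precisely the factor $2$ at the box level while the $(d-1)$-face moments (Claim~\ref{ca.6}) remain unperturbed. None of this is visible in your sketch, and a ``random set plus small symmetric subcubes'' decomposition does not reproduce this linear dependence; it only perturbs all moments by amounts that cannot be made negligible. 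If you adopt the parity construction, the remainder of your argument (factorization across disjoint boxes, the $\mathcal{P}5$ computation giving $\ave[f(\bbx_n)\mathbf{1}_C]\approx \frac14\cdot 2^{-2^{d+1}}$, and the Markov-type finish) goes through exactly as in the paper.
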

\begin{rem} \label{ra.3}
We point out that property (\hyperref[aP23]{$\mathcal{P}$3}) is rather strong. Indeed, arguing
as in the proof of Theorem \ref{t3.2}, it is not hard to show that if
$\bbx=\langle X_s: s\in \binom{\nn}{d}\rangle$ is any boolean, spreadable,
$d$-dimensional random array on $\nn$ that satisfies properties (\hyperref[aP21]{$\mathcal{P}$1})
and (\hyperref[aP23]{$\mathcal{P}$3}) of Proposition \ref{pa.2}, then for every
$d$-dimensional box $B$ of $\nn$ and every nonempty subset $G$ of $B$ with $G\neq B$ we have
\begin{equation} \label{ea.5}
\bigg|\ave\Big[\prod_{s\in G} X_s\Big] - \Big(\frac{1}{2}\Big)^{|G|}\bigg| = o_{\delta\to 0;d}(1).
\end{equation}
Note that \eqref{ea.5} barely misses to imply that $\bbx$ satisfies the box independence condition.
\end{rem}
The examples provided by Proposition \ref{pa.2} can be roughly described as semi-random, in the sense
that they are part random and part deterministic. The following lemma provides us with the random component.
\begin{lem} \label{la.3}
Let $d\meg 3$ be an integer, and let $\ee>0$. Then there exist a nonempty finite set $V$ and a
symmetric\footnote{That is, for every $(v_1,\dots,v_{d-1})\in V^{d-1}$ and every permutation $\pi$ of $[d-1]$
we have that $(v_1,\dots,v_{d-1})\in A$ if and only if $(v_{\pi(1)},\dots,v_{\pi(d-1)})\in A$.}
subset $A$ of $V^{d-1}$ such that, denoting by $A^{\complement}$ the complement of~$A$, for every pair $F,G$
of disjoint $($possibly empty$)$ subsets of $\binom{[2d]}{d-1}$ we have
\begin{align} \label{ea.6}
\bigg| \int \Big(\prod_{s\in F} \mathbf{1}_{A}(\boldsymbol{v}_s)\Big) \,
\Big( \prod_{s\in G} \mathbf{1}_{A^{\complement}}(\boldsymbol{v}_s)\Big)\, d\boldsymbol{\mu}(\boldsymbol{v}) -
\Big(\frac{1}{2}\Big)^{|F|+|G|}\bigg|\mik\ee,
\end{align}
where: \emph{(i)} $\boldsymbol{\mu}$ denotes the product measure on $V^\nn$ obtained by equipping each
factor with~the~uniform probability measure on $V$, \emph{(ii)} for every $\boldsymbol{v}=(v_i)\in V^\nn$
and every $s=\{i_1<\dots<i_{d-1}\}\in \binom{\nn}{d}$ we have $\boldsymbol{v}_s=(v_{i_1},\dots,v_{i_{d-1}})\in V^{d-1}$,
and \emph{(iii)} in \eqref{ea.6} we use the convention that the product of an empty family of functions is equal
to the constant function $1$.
\end{lem}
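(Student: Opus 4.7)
The plan is to use the probabilistic method with a random symmetric set. Fix a large integer $N$ (to be chosen in terms of $d$ and $\ee$) and take $V = [N]$. For each $(d-1)$-element subset $T$ of $V$, let $\xi_T$ be an independent Bernoulli $1/2$ random variable, and define the random set $A = A(\xi) \subseteq V^{d-1}$ by declaring $(v_1,\dots,v_{d-1}) \in A$ if and only if $v_1,\dots,v_{d-1}$ are pairwise distinct and $\xi_{\{v_1,\dots,v_{d-1}\}} = 1$. Since this depends only on the underlying unordered set, $A$ is automatically symmetric. The goal is to show that for $N$ large enough, with positive probability the realization $A$ satisfies \eqref{ea.6} for every disjoint pair $(F,G)$ of subsets of ${[2d]\choose d-1}$.

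Fix such a pair $(F,G)$, and let $Z(A)$ denote the integral on the left-hand side of \eqref{ea.6}. First I would compute $\ave_\xi Z(A)$ by Fubini: for a uniform sample $\boldsymbol{v}\in V^{2d}$ write $T_s = T_s(\boldsymbol{v}) = \{v_i : i \in s\}$, and observe that if all $2d$ coordinates of $\boldsymbol{v}$ are pairwise distinct---an event of probability $1 - O_d(1/N)$---then the subsets $\{T_s : s\in F\cup G\}$ are pairwise distinct, so the variables $\xi_{T_s}$ are i.i.d.\ fair coins and the inner $\xi$-expectation equals $(1/2)^{|F|+|G|}$. On the complementary collision event the inner expectation is trivially bounded by $1$, yielding $\ave_\xi Z(A) = (1/2)^{|F|+|G|} + O_d(1/N)$.

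Next I would estimate the variance. Writing $\ave_\xi Z(A)^2$ as a double integral over independent $\boldsymbol{v},\boldsymbol{w}\in V^{2d}$, the key point is that if the combined $4d$ coordinates of $(\boldsymbol{v},\boldsymbol{w})$ are pairwise distinct then the subsets $T_s(\boldsymbol{v})$ for $s\in F\cup G$ are pairwise distinct and disjoint from the subsets $T_t(\boldsymbol{w})$ for $t\in F\cup G$; hence all $2(|F|+|G|)$ relevant $\xi$-variables are independent and the inner $\xi$-expectation factors into $(1/2)^{2(|F|+|G|)}$. Accounting as before for the $O_d(1/N)$ collision probability gives $\mathrm{Var}_\xi Z(A) = O_d(1/N)$, and Chebyshev's inequality then yields $\prob\big(|Z(A) - (1/2)^{|F|+|G|}| > \ee\big) = O_d(1/(N\ee^2))$.

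Finally, since the number of disjoint pairs $(F,G)$ with $F,G \subseteq {[2d]\choose d-1}$ is at most $3^{{2d\choose d-1}}$ (a quantity depending only on $d$), a union bound shows that for $N = N(d,\ee)$ large enough the total failure probability is strictly less than $1$, so a deterministic symmetric set $A\subseteq V^{d-1}$ with the required property exists. The one place where care is needed---though it is not a deep obstacle---is the second-moment computation: one must keep careful track of all the collision patterns in $(\boldsymbol{v},\boldsymbol{w})$ that cause coincidences of the form $T_s(\boldsymbol{v}) = T_t(\boldsymbol{w})$, or internal repetitions within a single tuple, and verify that each such pattern contributes only an $O_d(1/N)$ correction to the product $(1/2)^{2(|F|+|G|)}$.
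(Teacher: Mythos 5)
Your argument is correct and produces the same conclusion as the paper, but with a different concentration tool. The paper's proof invokes the Azuma--Hoeffding inequality (viewing $Z(A)$ as a function of the independent Bernoulli variables $\xi_T$ with bounded martingale increments), whereas you run a second-moment/Chebyshev argument. Both rest on the same random construction---choose $\xi_T$ i.i.d.\ fair coins indexed by unordered $(d-1)$-subsets of $V$, which makes symmetry automatic---and both exploit the same combinatorial fact: if the $2d$ (respectively $4d$) coordinates are pairwise distinct, the relevant index sets $T_s$ are all distinct, so the corresponding $\xi$'s are independent. Azuma--Hoeffding gives exponential tail bounds, which would allow a union bound over a much larger collection of constraints; here the number of pairs $(F,G)$ is $O_d(1)$, so your polynomial Chebyshev bound is entirely sufficient. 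Your route is arguably more elementary, at the cost of being non-robust to situations where the number of events to union-bound over grows with $N$.

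One small point you should tighten: Chebyshev bounds $|Z - \ave_\xi Z|$, not $|Z - (1/2)^{|F|+|G|}|$ directly, so you need to combine the variance bound with the bias estimate $|\ave_\xi Z - (1/2)^{|F|+|G|}| = O_d(1/N)$---splitting $\ee$ into two halves does it. You also flag the second-moment collision bookkeeping as the place where care is needed; that concern is well-founded but benign. The key observation that disposes of it is the one you already made: once the combined $4d$ coordinates of $(\boldsymbol{v},\boldsymbol{w})$ are distinct, the sets $T_s(\boldsymbol{v})$ live inside $\{v_1,\dots,v_{2d}\}$ and the sets $T_t(\boldsymbol{w})$ live inside the disjoint $\{w_1,\dots,w_{2d}\}$, so no coincidence $T_s(\boldsymbol{v}) = T_t(\boldsymbol{w})$ is possible at all. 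Every collision pattern requires at least one equality among the $4d$ i.i.d.\ uniform coordinates, and there are $O_d(1)$ such patterns each of probability $O(1/N)$, so the cumulative correction is indeed $O_d(1/N)$.
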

Lemma \ref{la.3} follows from a standard random selection and the Azuma--Hoeffding
inequality; see, \emph{e.g.}, \cite[Fact 3.3 and Lemma 3.4]{DTV21} for a proof.

We are ready to proceed to the proof of Proposition \ref{pa.2}.
\begin{proof}[Proof of Proposition \emph{\ref{pa.2}}]
Let $V$ and $A$ be the sets obtained by Lemma \ref{la.3} applied for
\begin{equation} \label{ea.7}
\ee\coloneqq  \min\Big\{ \delta\, 2^{-d2^d}, 8^{-1} 2^{-(d+2)2^d} \Big\},
\end{equation}
and observe that $V$ can be selected so that its cardinality is an even positive integer.
We also note that in the rest of the proof we follow the notational conventions in Lemma \ref{la.3}.

First, for every $i\in [d]$ we define $h^0_i,h^1_i\colon V^d\to\{0,1\}$ by setting
for every $\boldsymbol{v}\in V^d$,
\begin{align} \label{ea.8}
h^0_i(\boldsymbol{v})\coloneqq \mathbf{1}_A(\boldsymbol{v}_{[d]\setminus\{i\}}) \ \ \ \text{ and } \ \ \
h^1_i(\boldsymbol{v})\coloneqq \mathbf{1}_{A^\complement}(\boldsymbol{v}_{[d]\setminus\{i\}}).
\end{align}
Next, for every $\boldsymbol{x}\in\{0,1\}^d$ define $h_{\boldsymbol{x}}\colon V^d\to \{0,1\}$ by
\begin{align} \label{ea.9}
h_{\boldsymbol{x}} \coloneqq \prod_{i=1}^d h_i^{\boldsymbol{x}(i)}.
\end{align}
Finally, set
\begin{equation} \label{ea.10}
\mathbb{A}\coloneqq \big\{\boldsymbol{x}\in\{0,1\}^d : \boldsymbol{x}(1)+\cdots+\boldsymbol{x}(d) \text{ is even}\big\},
\end{equation}
and define $H\colon V^d\to\{0,1\}$ by
\begin{align} \label{ea.11}
H\coloneqq \sum_{\boldsymbol{x}\in\mathbb{A}} h_{\boldsymbol{x}}.
\end{align}
For instance, if $d=3$, then
\begin{align}
H(v_1,v_2,v_3) & = \mathbf{1}_{A}(v_1,v_2) \mathbf{1}_{A}(v_2,v_3) \mathbf{1}_{A}(v_1,v_3) +
\mathbf{1}_{A^{\complement}}(v_1,v_2) \mathbf{1}_{A^{\complement}}(v_2,v_3) \mathbf{1}_{A}(v_1,v_3) + \nonumber \\
& \ \ \ + \mathbf{1}_{A^{\complement}}(v_1,v_2) \mathbf{1}_{A}(v_2,v_3) \mathbf{1}_{A^{\complement}}(v_1,v_3) +
\mathbf{1}_{A}(v_1,v_2) \mathbf{1}_{A^{\complement}}(v_2,v_3) \mathbf{1}_{A^{\complement}}(v_1,v_3). \nonumber
\end{align}
Note that the function $H$ is symmetric\footnote{That is, we have $H(v_1,\dots,v_d)=H(v_{\pi(1)},\dots,v_{\pi(d)})$
for every $(v_1,\dots,v_d)\in V^d$ and every permutation $\pi$ of $[d]$.}.
In the following series of claims we isolate several properties of $H$ that will be used
in the proofs of properties (\hyperref[aP21]{$\mathcal{P}$1})--(\hyperref[aP25]{$\mathcal{P}$5}).
\begin{claim} \label{ca.5}
For every $k\in [d+1]$ set $t_k\coloneqq\{k,\dots,k+d-1\}\in \binom{\nn}{d}$. Then we have
\begin{equation} \label{ea.12}
 \bigg| \int H(\boldsymbol{v}_{t_1})\, d\boldsymbol{\mu}(\boldsymbol{v}) - \frac{1}{2} \bigg| \mik 2^{d-1} \ee.
\end{equation}
Moreover, for every $k\in \{2,\dots, d+1\}$ we have
\begin{equation} \label{ea.13}
\bigg| \int H(\boldsymbol{v}_{t_1}) H(\boldsymbol{v}_{t_k})\, d\boldsymbol{\mu}(\boldsymbol{v})
- \frac{1}{4} \bigg| \mik 2^{2d-3}\ee.
\end{equation}
\end{claim}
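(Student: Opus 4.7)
The plan is to exploit that the functions $\{h_{\boldsymbol{x}}:\boldsymbol{x}\in\{0,1\}^d\}$ form a partition of unity on $V^d$ (since $h_i^0+h_i^1\equiv 1$), so $H=\sum_{\boldsymbol{x}\in\mathbb{A}}h_{\boldsymbol{x}}$ is a $\{0,1\}$-valued indicator decomposed as a sum of $|\mathbb{A}|=2^{d-1}$ pairwise orthogonal pieces. Each $h_{\boldsymbol{x}}(\boldsymbol{v}_{t_k})$ is a product of $d$ indicators of the form $\mathbf{1}_A$ or $\mathbf{1}_{A^\complement}$ evaluated on subsets $t_k\setminus\{k+i-1\}\in{[2d]\choose d-1}$, precisely the framework in which Lemma~\ref{la.3} delivers approximate independence.

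For \eqref{ea.12}, fix $\boldsymbol{x}\in\{0,1\}^d$. The $d$ subsets $t_1\setminus\{i\}$ for $i\in[d]$ are pairwise distinct elements of ${[d]\choose d-1}\subseteq{[2d]\choose d-1}$, so Lemma~\ref{la.3} yields $|\int h_{\boldsymbol{x}}(\boldsymbol{v}_{t_1})\,d\boldsymbol{\mu}(\boldsymbol{v})-2^{-d}|\mik\ee$. Summing over the $2^{d-1}$ elements of $\mathbb{A}$ gives \eqref{ea.12} by the triangle inequality.

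For \eqref{ea.13}, I expand
\[
\int H(\boldsymbol{v}_{t_1})H(\boldsymbol{v}_{t_k})\,d\boldsymbol{\mu}(\boldsymbol{v})=\sum_{\boldsymbol{x},\boldsymbol{y}\in\mathbb{A}}\int h_{\boldsymbol{x}}(\boldsymbol{v}_{t_1})h_{\boldsymbol{y}}(\boldsymbol{v}_{t_k})\,d\boldsymbol{\mu}(\boldsymbol{v})
\]
and analyze the $2d$ subsets $\{t_1\setminus\{i\}:i\in[d]\}\cup\{t_k\setminus\{k+j-1\}:j\in[d]\}\subseteq{[2d]\choose d-1}$. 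For $k\in\{3,\ldots,d+1\}$ one has $|t_1\cap t_k|\mik d-2$, so two such subsets coincide only if both lie in $t_1\cap t_k$, which is impossible for sets of size $d-1$; all $2d$ subsets are therefore distinct, and Lemma~\ref{la.3} gives that each summand is within $\ee$ of $2^{-2d}$. For $k=2$, however, $t_1\cap t_2=\{2,\ldots,d\}$ has size exactly $d-1$ and equals both $t_1\setminus\{1\}$ and $t_2\setminus\{d+1\}$; this single coincidence forces $h_{\boldsymbol{x}}h_{\boldsymbol{y}}\equiv 0$ whenever $\boldsymbol{x}(1)\neq\boldsymbol{y}(d)$, while otherwise the product involves $2d-1$ distinct indicators and Lemma~\ref{la.3} applies. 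A parity count shows that exactly $2^{d-1}\cdot 2^{d-2}=2^{2d-3}$ pairs $(\boldsymbol{x},\boldsymbol{y})\in\mathbb{A}^2$ satisfy $\boldsymbol{x}(1)=\boldsymbol{y}(d)$; summing yields $2^{2d-3}\cdot 2^{-(2d-1)}=1/4$ with cumulative error at most $2^{2d-3}\ee$.

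The main delicate point is the case $k=2$: the parity condition defining $\mathbb{A}$ must interact with the compatibility constraint $\boldsymbol{x}(1)=\boldsymbol{y}(d)$ in precisely the right way so that the surviving $2^{2d-3}$ pairs produce the value $1/4$; this is exactly what motivates the choice of $\mathbb{A}$ (even-weight Boolean strings) in the construction of $H$. The remaining cases $k\in\{3,\ldots,d+1\}$ are combinatorially easier, and for $k=d+1$ the coordinate-disjointness even permits a direct factorization of the integral combined with \eqref{ea.12}, producing an error of order $2^{d-1}\ee$ which is comfortably absorbed into the uniform bound $2^{2d-3}\ee$ for $d\meg 2$.
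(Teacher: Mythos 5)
Your argument follows the paper's proof essentially verbatim: \eqref{ea.12} is obtained by applying Lemma~\ref{la.3} termwise over the $2^{d-1}$ elements of $\mathbb{A}$; for \eqref{ea.13} you expand as in the paper's~\eqref{ea.14}, and in the delicate case $k=2$ you identify the same compatibility set $\mathcal{W}=\{(\boldsymbol{x},\boldsymbol{y})\in\mathbb{A}^2:\boldsymbol{x}(1)=\boldsymbol{y}(d)\}$ of cardinality $2^{2d-3}$, observe the products vanish off $\mathcal{W}$, and apply Lemma~\ref{la.3} to the $2d-1$ surviving distinct factors, exactly as in the paper's~\eqref{ea.15}--\eqref{ea.17}.

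One small point that both you and the paper leave unaddressed: for $3\mik k\mik d$ all $2d$ subsets are distinct, so all $|\mathbb{A}|^2=2^{2d-2}$ summands survive and Lemma~\ref{la.3} yields an error of $2^{2d-2}\ee$, which is \emph{larger} than the stated $2^{2d-3}\ee$ (your closing remark about the error being ``comfortably absorbed into the uniform bound $2^{2d-3}\ee$'' is only justified for $k=d+1$ via the exact factorization). This is a factor-of-two slip on the constant that the paper also makes; it is harmless downstream because the choice of $\ee$ in~\eqref{ea.7} leaves exponentially more slack than a single power of~$2$, but the claim as literally stated would need either $2^{2d-2}\ee$ on the right-hand side or a word of justification for the middle range of $k$.
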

\begin{proof}[Proof of Claim \emph{\ref{ca.5}}]
First observe that \eqref{ea.12} follows from \eqref{ea.6}, the fact that
$|\mathbb{A}|= 2^{d-1}$ and the definition of $H$. Next, fix $k\in \{2,\dots,d+1\}$.
Then, for every $\boldsymbol{v}\in V^\nn$ we have
\begin{equation} \label{ea.14}
H(\boldsymbol{v}_{t_1}) H(\boldsymbol{v}_{t_k})
=\sum_{\boldsymbol{x},\boldsymbol{y}\in\mathbb{A}} \bigg(
\Big(\prod_{i=1}^d h_i^{\boldsymbol{x}(i)}(\boldsymbol{v}_{t_1})\Big) \,
\Big(\prod_{j=1}^dh_j^{\boldsymbol{y}(j)}(\boldsymbol{v}_{t_k})\Big) \bigg).
\end{equation}
Therefore, if $k>2$, then \eqref{ea.13} also follows from \eqref{ea.6} and the fact that
$|\mathbb{A}|= 2^{d-1}$. So assume that $k=2$. By \eqref{ea.14}, for every $\boldsymbol{v}\in V^\nn$ we have
\begin{equation} \label{ea.15}
H(\boldsymbol{v}_{t_1}) H(\boldsymbol{v}_{t_2}) = \!\!
\sum_{\boldsymbol{x},\boldsymbol{y}\in\mathbb{A}} \! \bigg(
\Big(\prod_{i=2}^dh_i^{\boldsymbol{x}(i)}(\boldsymbol{v}_{t_1})\Big) \,
\Big(\prod_{j=1}^{d-1}h_j^{\boldsymbol{y}(j)}(\boldsymbol{v}_{t_2})\Big) \,
h_1^{\boldsymbol{x}(1)}(\boldsymbol{v}_{t_1}) \,
h_d^{\boldsymbol{y}(d)}(\boldsymbol{v}_{t_2}) \bigg).
\end{equation}
Notice that for every $\boldsymbol{v}\in V^\nn$ we have $h^0_{\boldsymbol{v}_{t_1}} = h^0_{\boldsymbol{v}_{t_2}}$
and $h^1_{\boldsymbol{v}_{t_1}} = h^1_{\boldsymbol{v}_{t_2}}$. Thus, setting
$\mathcal{W}\coloneqq\{(\boldsymbol{x},\boldsymbol{y})\in\mathbb{A}\times\mathbb{A}:\boldsymbol{x}(1)=\boldsymbol{y}(d)\}$,
we see that $h_1^{\boldsymbol{x}(1)}(\boldsymbol{v}_{t_1})h_d^{\boldsymbol{y}(d)}(\boldsymbol{v}_{t_2})=0$
for every $(\boldsymbol{x},\boldsymbol{y})\in\mathbb{A}\times\mathbb{A}\setminus\mathcal{W}$.
Combining this information with \eqref{ea.15}, we obtain that
\begin{equation} \label{ea.16}
H(\boldsymbol{v}_{t_1}) H(\boldsymbol{v}_{t_2}) = \!\!\!
\sum_{(\boldsymbol{x},\boldsymbol{y})\in\mathcal{W}} \!\bigg(
\Big(\prod_{i=2}^dh_i^{\boldsymbol{x}(i)}(\boldsymbol{v}_{t_1})\Big) \,
\Big(\prod_{j=1}^{d-1}h_j^{\boldsymbol{y}(j)}(\boldsymbol{v}_{t_2})\Big) \,
h_1^{\boldsymbol{x}(1)}(\boldsymbol{v}_{t_1}) \,
h_d^{\boldsymbol{y}(d)}(\boldsymbol{v}_{t_2}) \bigg)
\end{equation}
for every $\boldsymbol{v}\in V^\nn$. On the other hand, by \eqref{ea.6},
for every $(\boldsymbol{x},\boldsymbol{y})\in\mathcal{W}$ we have
\begin{equation} \label{ea.17}
\bigg|\int \Big(\prod_{i=2}^d h_i^{\boldsymbol{x}(i)}(\boldsymbol{v}_{t_1})\Big)
\Big(\prod_{j=1}^{d-1}h_j^{\boldsymbol{y}(j)}(\boldsymbol{v}_{t_2})\Big)
h_1^{\boldsymbol{x}(1)}(\boldsymbol{v}_{t_1})h_d^{\boldsymbol{y}(d)}(\boldsymbol{v}_{t_2})\,
d\boldsymbol{\mu}(\boldsymbol{v}) - \Big(\frac{1}{2}\Big)^{2d-1}\bigg|\mik\ee.
\end{equation}
Since $|\mathcal{W}|=2^{2d-3}$, we conclude that \eqref{ea.13} for $k=2$ follows from
\eqref{ea.16} and \eqref{ea.17}. The proof of Claim \ref{ca.5} is completed.
\end{proof}
\begin{claim} \label{ca.6}
Set $C\coloneqq \big\{u\cup\{2d-1\}:u\in \mathrm{Box}(d-1)\big\}$, where $\mathrm{Box}(d-1)$
is as in \eqref{e3.2}, and notice that $C\subseteq \binom{\nn}{d}$. Then we have
\begin{equation} \label{ea.18}
\bigg| \int \prod_{s\in C} H(\boldsymbol{v}_s)\, d\boldsymbol{\mu}(\boldsymbol{v}) -
\Big(\frac{1}{2}\Big)^{|C|}\bigg|\mik (d+1)2^{d-2 +(d-1)2^{d-2}}\ee.
\end{equation}
\end{claim}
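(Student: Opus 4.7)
The plan is to expand the integrand of \eqref{ea.18}, recognise its cancellation structure, and then invoke Lemma \ref{la.3} termwise. Distributing the sum in \eqref{ea.11},
\[
\prod_{s \in C} H(\boldsymbol{v}_s) = \sum_{(\boldsymbol{x}_s) \in \mathbb{A}^C}\, \prod_{s \in C} \prod_{i=1}^d h_i^{\boldsymbol{x}_s(i)}(\boldsymbol{v}_s),
\]
so for any fixed assignment $(\boldsymbol{x}_s)_{s \in C}$, the integrand is a product of $d\cdot|C| = d\cdot 2^{d-1}$ indicators of the form $\mathbf{1}_A(\boldsymbol{v}_t)$ or $\mathbf{1}_{A^\complement}(\boldsymbol{v}_t)$ with $t \in {[2d-1]\choose d-1}\subseteq {[2d]\choose d-1}$.

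The combinatorial heart of the argument is the analysis of the multiset $\{s \setminus \{s_i\} : (s,i) \in C \times [d]\}$. Writing each $s \in C$ as $s = u \cup \{2d-1\}$ with $u \in \mathrm{Box}(d-1)$, one distinguishes \emph{type \emph{(I)}} subsets $s \setminus \{2d-1\} = u$ (arising from $i=d$, and bijective with $C$, hence appearing with multiplicity one) from \emph{type \emph{(II)}} subsets $(u \setminus \{u_i\}) \cup \{2d-1\}$ for $i<d$; writing the latter as $t'\cup\{2d-1\}$ with $t' \in {[2d-2]\choose d-2}$, a short case analysis on the pairs $\{2j-1,2j\}$ met by $t'$ shows that each such subset appears with multiplicity exactly two, yielding $(d-1)2^{d-2}$ distinct type (II) subsets. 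In total, $M \coloneqq 2^{d-1} + (d-1)2^{d-2} = (d+1)2^{d-2}$ pairwise distinct subsets occur.

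For an assignment $(\boldsymbol{x}_s)$, whenever the two $(s,i)$-pairs controlling some type (II) subset receive opposite labels the integrand contains $\mathbf{1}_A\cdot \mathbf{1}_{A^\complement}\equiv 0$ and vanishes; call the remaining assignments \emph{valid}. A direct bijection argument shows that every valid assignment is uniquely determined by a free choice $(z_t)_t \in \{0,1\}^{(d-1)2^{d-2}}$ of the common type (II) labels, since the parity constraint $\boldsymbol{x}_s\in\mathbb{A}$ forces the type (I) label $y_u \coloneqq \boldsymbol{x}_{u\cup\{2d-1\}}(d)$ to equal $\sum_{i=1}^{d-1} z_{(u\setminus\{u_i\})\cup\{2d-1\}}\pmod 2$ for each $u \in \mathrm{Box}(d-1)$. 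Hence there are exactly $N \coloneqq 2^{(d-1)2^{d-2}}$ valid assignments, and for each one the integrand has the form $\prod_{t\in F}\mathbf{1}_A(\boldsymbol{v}_t)\prod_{t\in G}\mathbf{1}_{A^\complement}(\boldsymbol{v}_t)$ for disjoint $F,G$ with $|F|+|G|=M$; by \eqref{ea.6} its integral lies within $\ee$ of $(1/2)^M$.

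Summing over the $N$ valid assignments gives $\int\prod_{s\in C}H(\boldsymbol{v}_s)\,d\boldsymbol{\mu} = N\cdot(1/2)^M + O(N\ee)$, and the routine identity $N\cdot(1/2)^M = 2^{(d-1)2^{d-2}-(d+1)2^{d-2}} = (1/2)^{2^{d-1}} = (1/2)^{|C|}$ recovers the main term. The error is bounded by $N\ee = 2^{(d-1)2^{d-2}}\ee$, which is comfortably within the claimed $(d+1)2^{d-2+(d-1)2^{d-2}}\ee$. The main obstacle is the combinatorial bookkeeping: correctly establishing the multiplicity-two behaviour of the type (II) subsets, and verifying the bijection between valid $(\boldsymbol{x}_s)$ and free $(z_t)$; once these are in place, the estimate reduces to a single application of \eqref{ea.6}.
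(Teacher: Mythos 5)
Your proof is correct and follows essentially the same route as the paper's: expand $\prod_{s\in C}H(\boldsymbol{v}_s)$ over assignments $(\boldsymbol{x}_s)\in\mathbb{A}^C$, observe that the product vanishes unless the two $(s,i)$-pairs indexing each type-(II) subset receive the same label (the paper's set $\mathcal{R}$), count the $2^{(d-1)2^{d-2}}$ surviving assignments via the parity constraint, and apply \eqref{ea.6} to each survivor. Your per-term error of $\ee$ is in fact tighter than the paper's stated $(d+1)2^{d-2}\ee$ from \eqref{ea.22}, so you recover \eqref{ea.18} with room to spare.
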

\begin{proof}[Proof of Claim \emph{\ref{ca.6}}]
We start by setting $j_i^0\coloneqq 2i-1$ and $j_i^1\coloneqq 2i$ for every $i\in [d-1]$.
Next, for every $\boldsymbol{\epsilon}=(\epsilon_i)_{i=1}^{d-1}\in \{0,1\}^{d-1}$ set
$s(\boldsymbol{\epsilon})\coloneqq \big\{ j_i^{\epsilon_i}: i\in [d-1]\big\}\cup\{2d-1\}$,
and notice that $C = \big\{s(\boldsymbol{\epsilon}): \boldsymbol{\epsilon}\in\{0,1\}^{d-1}\big\}$.
Moreover, by \eqref{ea.9} and \eqref{ea.11}, we have
\begin{align} \label{ea.19}
\int \prod_{s\in C} H(\boldsymbol{v}_s)\, d\boldsymbol{\mu}(\boldsymbol{v})& =
\int \prod_{\boldsymbol{\epsilon}\in\{0,1\}^{d-1}} H(\boldsymbol{v}_{s(\boldsymbol{\epsilon})})\,
d\boldsymbol{\mu}(\boldsymbol{v}) = \\
& = \sum_{(\boldsymbol{x}_{\boldsymbol{\epsilon}})_{\boldsymbol{\epsilon}\in\{0,1\}^{d-1}}\in \mathbb{A}^{\{0,1\}^{d-1}}}
\int \prod_{\boldsymbol{\epsilon}\in\{0,1\}^{d-1}}
h_{\boldsymbol{x}_{\boldsymbol{\epsilon}}}(\boldsymbol{v}_{s(\boldsymbol{\epsilon})})\,
d\boldsymbol{\mu}(\boldsymbol{v}). \nonumber
\end{align}
We define a subset $\mathcal{R}$ of $\mathbb{A}^{\{0,1\}^{d-1}}$ by the rule
\begin{figure}[htb] \label{figure5}
\centering \includegraphics[width=.5\textwidth]{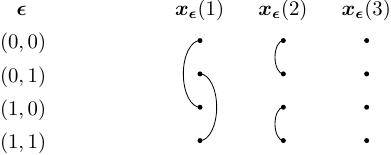}
\caption{The structure of the set $\mathcal{R}$ for $d=3$. Connected dots imply equality
of the corresponding coordinates.}
\end{figure}
\begin{align}\label{ea.20}
(\boldsymbol{x}_{\boldsymbol{\epsilon}})_{\boldsymbol{\epsilon}\in \{0,1\}^{d-1}}\in \mathcal{R} \Leftrightarrow
& \text{ for every } j\in[d-1], \text{ every } \boldsymbol{\epsilon}=(\epsilon_i)_{i=1}^{d-1}\in \{0,1\}^{d-1} \\
& \text{ and every } \boldsymbol{\epsilon}'=(\epsilon'_i)_{i=1}^{d-1}\in \{0,1\}^{d-1} \nonumber \\
& \text{ with } \epsilon_i = \epsilon'_i \nonumber \text{ for all } i\in [d-1]\setminus\{j\} \nonumber \\
& \text{ we have } \boldsymbol{x}_{\boldsymbol{\epsilon}}(j) = \boldsymbol{x}_{\boldsymbol{\epsilon}'}(j) \nonumber
\end{align}
\noindent (see Figure \hyperref[figure5]{5}). Observe that if $j\in [d-1]$ and
$\boldsymbol{\epsilon}=(\epsilon_i)_{i=1}^{d-1},
\boldsymbol{\epsilon}'=(\epsilon'_i)_{i=1}^{d-1}\in \{0,1\}^{d-1}$ with
$\epsilon_i = \epsilon'_i$ for all $i\in[d-1]\setminus\{j\}$, then
$h_j^0(\boldsymbol{v}_{s(\boldsymbol{\epsilon})}) = h_j^0(\boldsymbol{v}_{s(\boldsymbol{\epsilon}')})$
and $h_j^1(\boldsymbol{v}_{s(\boldsymbol{\epsilon})})  = h_j^1(\boldsymbol{v}_{s(\boldsymbol{\epsilon}')})$
for every $\boldsymbol{v}\in V^\nn$ which, in turn, implies that $h_j^0(\boldsymbol{v}_{s(\boldsymbol{\epsilon})})
h_j^1(\boldsymbol{v}_{s(\boldsymbol{\epsilon}')}) = 0$. 
Consequently, $\prod_{\boldsymbol{\epsilon}\in\{0,1\}^{d-1}}
h_{\boldsymbol{x}_{\boldsymbol{\epsilon}}}(\boldsymbol{v}_{s(\boldsymbol{\epsilon})})=0$
for every $(\boldsymbol{x}_{\boldsymbol{\epsilon}})_{\boldsymbol{\epsilon}\in\{0,1\}^{d-1}}
\in \mathbb{A}^{\{0,1\}^{d-1}}\setminus\mathcal{R}$ and every $\boldsymbol{v}\in V^{\nn}$.
Therefore, by \eqref{ea.19}, we obtain that
\begin{equation} \label{ea.21}
\int \prod_{s\in C} H(\boldsymbol{v}_s)\, d\boldsymbol{\mu}(\boldsymbol{v}) = \!\!\!\!
\sum_{(\boldsymbol{x}_{\boldsymbol{\epsilon}})_{\boldsymbol{\epsilon}\in\{0,1\}^{d-1}} \in \mathcal{R}}
\!\! \int \prod_{\boldsymbol{\epsilon}\in\{0,1\}^{d-1}}
h_{\boldsymbol{x}_{\boldsymbol{\epsilon}}}(\boldsymbol{v}_{s(\boldsymbol{\epsilon})})\, d\boldsymbol{\mu}(\boldsymbol{v}).
\end{equation}
On the other hand, by \eqref{ea.6}, for every
$(\boldsymbol{x}_{\boldsymbol{\epsilon}})_{\boldsymbol{\epsilon}\in\{0,1\}^{d-1}}\in\mathcal{R}$ we have
\begin{align} \label{ea.22}
& \bigg| \int \prod_{\boldsymbol{\epsilon}\in\{0,1\}^{d-1}}
h_{\boldsymbol{x}_{\boldsymbol{\epsilon}}}(\boldsymbol{v}_{s(\boldsymbol{\epsilon})})\,
d\boldsymbol{\mu}(\boldsymbol{v}) - \Big(\frac{1}{2}\Big)^{(d+1)2^{d-2}}\bigg| \\
=\, \bigg| \int & \prod_{i=1}^{d} \prod_{\boldsymbol{\epsilon}\in\{0,1\}^{d-1}}
h_i^{\boldsymbol{x}_{\boldsymbol{\epsilon}}(i)}(\boldsymbol{v}_{s(\boldsymbol{\epsilon})})\,
d\boldsymbol{\mu}(\boldsymbol{v}) - \Big(\frac{1}{2}\Big)^{(d+1)2^{d-2}}\bigg| \nonumber \\
=\, \bigg| \int & \Big(\prod_{i=1}^{d-1}\prod_{\boldsymbol{\epsilon}\in\{0,1\}^{d-1}}\!\!
h_i^{\boldsymbol{x}_{\boldsymbol{\epsilon}}(i)}(\boldsymbol{v}_{s(\boldsymbol{\epsilon})})\Big)
\Big( \prod_{\boldsymbol{\epsilon}\in\{0,1\}^{d-1}}\!\!
h_d^{\boldsymbol{x}_{\boldsymbol{\epsilon}}(d)}(\boldsymbol{v}_{s(\boldsymbol{\epsilon})})\Big)\,
d\boldsymbol{\mu}(\boldsymbol{v}) - \Big(\frac{1}{2}\Big)^{(d+1)2^{d-2}}\bigg| \nonumber\\
\, \mik  (d+& 1) 2^{d-2} \ee. \nonumber
\end{align}
The estimate \eqref{ea.18} follows from \eqref{ea.21}, \eqref{ea.22}, and the fact that
$|\mathcal{R}|=2^{(d-1)2^{d-2}}$ and $|C|=2^{d-1}$. The proof of Claim \ref{ca.6} is completed.
\end{proof}
\begin{claim} \label{ca.7}
Let\, $\mathrm{Box}(d)$ be as in \eqref{e3.2}. Then we have
\begin{equation} \label{ea.23}
\bigg| \int \prod_{s\in \mathrm{Box}(d)} \!\! H(\boldsymbol{v}_s)d\boldsymbol{\mu}(\boldsymbol{v}) -
2\Big(\frac{1}{2}\Big)^{|\mathrm{Box}(d)|}\bigg| \mik d\,2^{d+(d-2)2^{d-1}}\ee.
\end{equation}
\end{claim}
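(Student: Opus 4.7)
The plan is to mimic the proof of Claim \ref{ca.6}. Setting $j_i^0\coloneqq 2i-1$ and $j_i^1\coloneqq 2i$ for every $i\in[d]$, index $\mathrm{Box}(d)$ by $\{0,1\}^d$ via $s(\boldsymbol{\epsilon})\coloneqq\{j_i^{\epsilon_i}:i\in[d]\}$ (so $|\mathrm{Box}(d)|=2^d$), and expand \eqref{ea.11} to obtain
\[
\int\prod_{s\in\mathrm{Box}(d)}H(\boldsymbol{v}_s)\,d\boldsymbol{\mu}(\boldsymbol{v})
=\sum_{(\boldsymbol{x}_{\boldsymbol{\epsilon}})\in\mathbb{A}^{\{0,1\}^d}}\int\prod_{\boldsymbol{\epsilon}\in\{0,1\}^d}h_{\boldsymbol{x}_{\boldsymbol{\epsilon}}}(\boldsymbol{v}_{s(\boldsymbol{\epsilon})})\,d\boldsymbol{\mu}(\boldsymbol{v}).
\]
The crucial observation is that, for each $j\in[d]$, removing the $j$-th entry of $s(\boldsymbol{\epsilon})$ yields $\{j_i^{\epsilon_i}:i\neq j\}$, which does not depend on $\epsilon_j$. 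Combined with $h_j^0 h_j^1=0$, this forces the surviving terms to come from configurations in
\[
\mathcal{R}\coloneqq\big\{(\boldsymbol{x}_{\boldsymbol{\epsilon}})\in\mathbb{A}^{\{0,1\}^d}:\boldsymbol{x}_{\boldsymbol{\epsilon}}(j)\text{ depends only on }(\epsilon_i)_{i\neq j}\text{ for every }j\in[d]\big\};
\]
writing $\boldsymbol{x}_{\boldsymbol{\epsilon}}(j)=\phi_j((\epsilon_i)_{i\neq j})$, membership in $\mathbb{A}$ becomes the global parity constraint $\bigoplus_{j=1}^d\phi_j((\epsilon_i)_{i\neq j})=0$ for every $\boldsymbol{\epsilon}\in\{0,1\}^d$.

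For any such configuration, idempotence of the $h_j^x$'s together with the collapse just described rewrites the integrand as $\prod_{j=1}^d\prod_{\boldsymbol{\epsilon}'\in\{0,1\}^{d-1}}h_j^{\phi_j(\boldsymbol{\epsilon}')}(\boldsymbol{v}_{T_{j,\boldsymbol{\epsilon}'}})$, where $T_{j,\boldsymbol{\epsilon}'}\coloneqq\{j_i^{\epsilon'_i}:i\in[d]\setminus\{j\}\}\in{[2d]\choose d-1}$. The $d\cdot 2^{d-1}$ tuples $T_{j,\boldsymbol{\epsilon}'}$ are pairwise distinct: from $T$ one recovers $j$ as the unique index $i\in[d]$ with $\{2i-1,2i\}\cap T=\emptyset$ and then reads off $\boldsymbol{\epsilon}'$. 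Lemma \ref{la.3}, applied with $F\coloneqq\{T_{j,\boldsymbol{\epsilon}'}:\phi_j(\boldsymbol{\epsilon}')=0\}$ and $G\coloneqq\{T_{j,\boldsymbol{\epsilon}'}:\phi_j(\boldsymbol{\epsilon}')=1\}$, then shows that each such integral is within $\ee$ of $(1/2)^{d\cdot 2^{d-1}}$.

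The main obstacle is the precise enumeration of $\mathcal{R}$, which is where the argument genuinely departs from Claim \ref{ca.6}: the parity constraint here involves all $d$ of the $\phi_j$'s rather than fixing one coordinate in terms of the others, so it is no longer solvable coordinate-by-coordinate. The plan is to handle it via linear algebra over $\mathbb{F}_2$. The map
\[
(\phi_1,\dots,\phi_d)\in\prod_{j=1}^d\mathbb{F}_2^{\{0,1\}^{d-1}}\longmapsto \Big[\boldsymbol{\epsilon}\mapsto\bigoplus_{j=1}^d\phi_j((\epsilon_i)_{i\neq j})\Big]\in\mathbb{F}_2^{\{0,1\}^d}
\]
has image equal to the span of all multilinear monomials of degree at most $d-1$ in $\epsilon_1,\dots,\epsilon_d$ (any such monomial $\prod_{i\in S}\epsilon_i$ with $S\subsetneq[d]$ is realized by placing it in $\phi_j$ for any $j\in[d]\setminus S$). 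Hence the image has dimension $2^d-1$ and, since the domain has dimension $d\cdot 2^{d-1}$,
\[
|\mathcal{R}|=2^{d\cdot 2^{d-1}-(2^d-1)}=2^{(d-2)\cdot 2^{d-1}+1}.
\]

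Combining the preceding estimates yields
\[
\Big|\int\prod_{s\in\mathrm{Box}(d)}H(\boldsymbol{v}_s)\,d\boldsymbol{\mu}(\boldsymbol{v})-|\mathcal{R}|\cdot(1/2)^{d\cdot 2^{d-1}}\Big|\mik|\mathcal{R}|\cdot\ee,
\]
and the identity $|\mathcal{R}|\cdot(1/2)^{d\cdot 2^{d-1}}=2\cdot(1/2)^{2^d}=2(1/2)^{|\mathrm{Box}(d)|}$, together with the crude bound $|\mathcal{R}|\mik d\cdot 2^{d+(d-2)\cdot 2^{d-1}}$ (which holds since $2\mik d\cdot 2^d$), give \eqref{ea.23}. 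In contrast to Claim \ref{ca.6}, the count here produces an extra factor of $2$ in the main term; it is precisely this deviation from $(1/2)^{|\mathrm{Box}(d)|}$ that will be used in the subsequent argument to exhibit the failure of the box independence condition for $\bbx$.
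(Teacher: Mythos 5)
Your proof is correct and follows the same decomposition as the paper: restrict the expansion of $\prod_{s\in\mathrm{Box}(d)}H(\boldsymbol{v}_s)$ to the set of compatible configurations (what the paper calls $\mathcal{Q}$, you call $\mathcal{R}$), apply Lemma~\ref{la.3} once per surviving term, and multiply by the cardinality of the index set. The one genuine addition is that you actually prove the identity $|\mathcal{Q}|=2^{(d-2)2^{d-1}+1}$ via the $\mathbb{F}_2$-linear map $(\phi_1,\dots,\phi_d)\mapsto\bigoplus_j\phi_j((\epsilon_i)_{i\neq j})$, whose image you correctly identify with the span of the $2^d-1$ proper multilinear monomials, giving the kernel dimension $d2^{d-1}-(2^d-1)$; the paper states this count without justification, and your observation that the parity constraint is no longer solvable coordinate-by-coordinate (unlike in Claim~\ref{ca.6}) is precisely the point. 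A minor remark: a single application of Lemma~\ref{la.3} gives error $\ee$ per term as you note, which is sharper than the $d2^{d-1}\ee$ invoked in the paper's \eqref{ea.26}; your relaxation $2^{(d-2)2^{d-1}+1}\mik d2^{d+(d-2)2^{d-1}}$ at the end to recover the stated bound is harmless.
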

\begin{proof}[Proof of Claim \emph{\ref{ca.7}}]
As in the proof of Claim \ref{ca.6}, for every $i\in [d]$ set $j_i^0\coloneqq 2i-1$ and $j_i^1\coloneqq 2i$.
Moreover, for every $\boldsymbol{\epsilon}=(\epsilon_i)_{i=1}^d\in \{0,1\}^d$ set
$s(\boldsymbol{\epsilon})\coloneqq \big\{j_i^{\epsilon_i}: i\in [d]\big\}$, and observe that
$\mathrm{Box}(d)=\big\{s(\boldsymbol{\epsilon}): \boldsymbol{\epsilon}\in\{0,1\}^d\big\}$.
We define a subset $\mathcal{Q}$ of $\mathbb{A}^{\{0,1\}^d}$ by setting
\begin{figure}[htb] \label{figure6}
\centering \includegraphics[width=.5\textwidth]{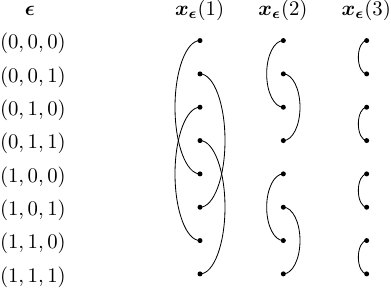}
\caption{The structure of the set $\mathcal{Q}$ for $d=3$. As in Figure \hyperref[figure5]{$5$},
connected dots imply equality of the corresponding coordinates.}
\end{figure}
\begin{align} \label{ea.24}
(\boldsymbol{x}_{\boldsymbol{\epsilon}})_{\boldsymbol{\epsilon}\in\{0,1\}^d}\in \mathcal{Q} \Leftrightarrow &
\text{ for every } j\in[d], \text{ every } \boldsymbol{\epsilon}=(\epsilon_i)_{i=1}^d\in \{0,1\}^d \\
& \text{ and every } \boldsymbol{\epsilon}'=(\epsilon'_i)_{i=1}^d\in \{0,1\}^d \nonumber \\
& \text{ with } \epsilon_i = \epsilon'_i \nonumber \text{ for all } i\in[d]\setminus\{j\} \nonumber \\
& \text{ we have } \boldsymbol{x}_{\boldsymbol{\epsilon}}(j) = \boldsymbol{x}_{\boldsymbol{\epsilon}'}(j) \nonumber
\end{align}
\noindent (see Figure \hyperref[figure6]{6}). By \eqref{ea.9}, \eqref{ea.11},
the definition of $\mathcal{Q}$ and arguing as in Claim~\ref{ca.6},
\begin{align} \label{ea.25}
\int \prod_{s\in \mathrm{Box}(d)} H(\boldsymbol{v}_s)\, d\boldsymbol{\mu}(\boldsymbol{v}) &  =
\int \prod_{\boldsymbol{\epsilon}\in\{0,1\}^d} H(\boldsymbol{v}_{s(\boldsymbol{\epsilon})})\,
d\boldsymbol{\mu}(\boldsymbol{v}) \\
& = \sum_{(\boldsymbol{x}_{\boldsymbol{\epsilon}})_{\boldsymbol{\epsilon}\in\{0,1\}^d}\in \mathbb{A}^{\{0,1\}^d}}
\int \prod_{\boldsymbol{\epsilon}\in\{0,1\}^d} h_{\boldsymbol{x}_{\boldsymbol{\epsilon}}}(\boldsymbol{v}_{s(\boldsymbol{\epsilon})})\,
d\boldsymbol{\mu}(\boldsymbol{v}) \nonumber \\
& = \sum_{(\boldsymbol{x}_{\boldsymbol{\epsilon}})_{\boldsymbol{\epsilon}\in\{0,1\}^d} \in \mathcal{Q}}
\int \prod_{\boldsymbol{\epsilon}\in\{0,1\}^d}
h_{\boldsymbol{x}_{\boldsymbol{\epsilon}}}(\boldsymbol{v}_{s(\boldsymbol{\epsilon})})\,
d\boldsymbol{\mu}(\boldsymbol{v}). \nonumber
\end{align}
By \eqref{ea.6}, for every
$(\boldsymbol{x}_{\boldsymbol{\epsilon}})_{\boldsymbol{\epsilon}\in\{0,1\}^{d-1}}\in\mathcal{Q}$ we have
\begin{equation} \label{ea.26}
\bigg| \int \prod_{\boldsymbol{\epsilon}\in\{0,1\}^d}
h_{\boldsymbol{x}_{\boldsymbol{\epsilon}}}(\boldsymbol{v}_{s(\boldsymbol{\epsilon})})\,
d\boldsymbol{\mu}(\boldsymbol{v}) - \Big(\frac{1}{2}\Big)^{d2^{d-1}}\bigg| \mik d\,2^{d-1}\ee.
\end{equation}
Finally, note that $|Q| = 2^{(d-2)2^{d-1}+1}$. Using this information, \eqref{ea.23} follows from
\eqref{ea.25}, \eqref{ea.26} and the fact that $|\mathrm{Box}(d)|=2^d$. The proof of Claim \ref{ca.7} is completed.
\end{proof}
\begin{claim} \label{ca.8}
Let $B$ be a $d$-dimensional box of\, $\nn$ with $\min(s)\meg 2d+1$ for every $s\in B$. Then we have
\begin{equation} \label{ea.27}
\bigg| \int \prod_{s\in \mathrm{Box}(d) \cup B} H(\boldsymbol{v}_s)\, d\boldsymbol{\mu}(\boldsymbol{v}) -
4 \Big(\frac{1}{2}\Big)^{2|\mathrm{Box}(d)|} \bigg| \mik d\,2^{d+1+(d-2)2^{d-1}}\ee.
\end{equation}
\end{claim}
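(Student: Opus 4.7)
The plan is to exploit the disjointness of coordinates: since $\mathrm{Box}(d) \subseteq {[2d] \choose d}$ while every $s \in B$ satisfies $\min(s) \geq 2d+1$, the factor $\prod_{s \in \mathrm{Box}(d)} H(\boldsymbol{v}_s)$ depends only on the coordinates $(v_i)_{i \in [2d]}$, whereas $\prod_{s \in B} H(\boldsymbol{v}_s)$ depends only on $(v_i)_{i \geq 2d+1}$. Because $\boldsymbol{\mu}$ is a product measure, the integral in question factorizes as
\begin{equation*}
\int \prod_{s\in \mathrm{Box}(d) \cup B} H(\boldsymbol{v}_s)\, d\boldsymbol{\mu}(\boldsymbol{v})
= \Big(\int \prod_{s\in \mathrm{Box}(d)} H(\boldsymbol{v}_s)\, d\boldsymbol{\mu}(\boldsymbol{v})\Big)\,
  \Big(\int \prod_{s\in B} H(\boldsymbol{v}_s)\, d\boldsymbol{\mu}(\boldsymbol{v})\Big).
\end{equation*}

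Next I would argue that Claim \ref{ca.7} is not special to the canonical box $\mathrm{Box}(d)$: since $H\colon V^d \to \{0,1\}$ is symmetric and $\boldsymbol{\mu}$ is a product measure, the integral $\int \prod_{s \in B'} H(\boldsymbol{v}_s)\, d\boldsymbol{\mu}(\boldsymbol{v})$ depends only on the combinatorial structure of $B'$ as a $d$-dimensional box, not on which elements of $\nn$ it uses (one can simply relabel coordinates by an injection of $\nn$ into itself to reduce the general case to the canonical one). Therefore the same estimate \eqref{ea.23} yields
\begin{equation*}
\Big| \int \prod_{s\in B} H(\boldsymbol{v}_s)\, d\boldsymbol{\mu}(\boldsymbol{v}) - 2\Big(\tfrac{1}{2}\Big)^{|\mathrm{Box}(d)|} \Big| \mik d2^{d+(d-2)2^{d-1}}\ee,
\end{equation*}
since $|B| = |\mathrm{Box}(d)| = 2^d$.

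Finally, setting $a \coloneqq \int \prod_{s\in \mathrm{Box}(d)} H(\boldsymbol{v}_s)\, d\boldsymbol{\mu}(\boldsymbol{v})$, $b \coloneqq \int \prod_{s\in B} H(\boldsymbol{v}_s)\, d\boldsymbol{\mu}(\boldsymbol{v})$, $c \coloneqq 2(1/2)^{|\mathrm{Box}(d)|}$, and $\eta \coloneqq d2^{d+(d-2)2^{d-1}}\ee$, I have $|a - c| \mik \eta$, $|b - c| \mik \eta$, and $0 \mik a,b,c \mik 1$. A standard telescoping bound gives
\begin{equation*}
|ab - c^2| \mik |a - c|\, |b| + c\, |b - c| \mik \eta + \eta = 2\eta = d2^{d+1+(d-2)2^{d-1}}\ee,
\end{equation*}
which is exactly the estimate \eqref{ea.27}, since $c^2 = 4(1/2)^{2|\mathrm{Box}(d)|}$.

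There is no substantive obstacle here; the whole proof is a routine application of the factorization over disjoint coordinate blocks together with the bound already established in Claim \ref{ca.7}. The only minor point that requires a sentence of justification is the translation invariance of the single-box integral, which follows immediately from the product structure of $\boldsymbol{\mu}$ and the symmetry of $H$.
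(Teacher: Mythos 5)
Your proof is correct and matches the approach the paper has in mind: the paper's proof of Claim~\ref{ca.8} is the single sentence ``It follows immediately by Claim~\ref{ca.7},'' and the natural way to make that immediate deduction rigorous is exactly what you do---factorize the integral over the two disjoint coordinate blocks $[2d]$ and $\{2d+1,2d+2,\dots\}$ using the product structure of $\boldsymbol{\mu}$, observe that the single-box integral is invariant under order-preserving relabeling of coordinates (so \eqref{ea.23} applies to $B$ as well), and conclude with the telescoping bound $|ab-c^2|\mik |a-c|\,|b|+|c|\,|b-c|\mik 2\eta$, which gives precisely the stated constant $d2^{d+1+(d-2)2^{d-1}}\ee$.
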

\begin{proof}[Proof of Claim \emph{\ref{ca.8}}]
It follows immediately by Claim \ref{ca.7}.
\end{proof}
After this preliminary discussion, we now enter into the main part of the proof.
Let $\bbx=\langle X_s:s\in \binom{\nn}{d}\rangle$ be a boolean, exchangeable,
$d$-dimensional random array on $\nn$ whose distribution satisfies
\begin{equation} \label{ea.28}
\ave\Big[ \prod_{s\in \mathcal{F}} X_s\Big] = \frac{1}{2} \Big(\frac{1}{2}\Big)^{|\mathcal{F}|} +
\frac{1}{2} \int\prod_{s\in \mathcal{F}} H(\boldsymbol{v}_{s})\, d\boldsymbol{\mu}(\boldsymbol{v})
\end{equation}
for every nonempty finite subset $\mathcal{F}$ of $\binom{\nn}{d}$; the existence of such a random
array follows arguing precisely as in the proof of Proposition \ref{pa.1}.

First, we will show that $\bbx$ satisfies properties (\hyperref[aP21]{$\mathcal{P}$1})
up to (\hyperref[aP24]{$\mathcal{P}$4}). For property (\hyperref[aP21]{$\mathcal{P}$1}),
let $s\in \binom{\nn}{d}$  be arbitrary and notice that, by the exchangeability of $\bbx$ and \eqref{ea.28},
\begin{equation} \label{ea.29}
\ave[X_s]  = \frac{1}{4} + \frac{1}{2}\int H(\boldsymbol{v}_{t_1})\, d\boldsymbol{\mu}(\boldsymbol{v}),
\end{equation}
where, as in Claim \ref{ca.5}, we have $t_1=\{1,\dots,d\}$. By \eqref{ea.12} and the choice of $\ee$
in \eqref{ea.7}, we obtain that $\big|\ave[X_s]-\frac{1}{2}\big|\mik 2^{d-2}\ee\mik\delta$.
For property (\hyperref[aP21]{$\mathcal{P}$2}), let $s,t\in \binom{\nn}{d}$ be distinct,
and set $k\coloneqq d-|s\cap t|+1$. Since $\bbx$ is exchangeable, by \eqref{ea.28}, we have
\begin{align} \label{ea.30}
\ave[X_s X_t] = \frac{1}{8} + \frac{1}{2} \int H(\boldsymbol{v}_{t_1})H(\boldsymbol{v}_{t_k})\,
d\boldsymbol{\mu}(\boldsymbol{v}),
\end{align}
where $t_1$ and $t_k$ are as in Claim \ref{ca.5}. By \eqref{ea.13}, \eqref{ea.30}
and invoking again \eqref{ea.7}, we see that
$\big|\ave[X_s X_t]-\frac{1}{4}\big|\mik 2^{2d-4}\ee\mik\delta$.
For property (\hyperref[aP23]{$\mathcal{P}$3}), let $F$ be a $(d-1)$-face of $\nn$.
By the exchangeability of $\bbx$, \eqref{ea.28} and the choice of the set $C$ in Claim \ref{ca.6},
\begin{align} \label{ea.31}
\ave\Big[\prod_{s\in F} X_s\Big] = \frac{1}{2}\Big(\frac{1}{2}\Big)^{|F|} + \frac{1}{2}
\int \prod_{s\in C} H(\boldsymbol{v}_s)\, d\boldsymbol{\mu}(\boldsymbol{v})
\end{align}
which implies, by \eqref{ea.18}, that
\begin{equation} \label{ea.32}
\bigg|\ave\Big[ \prod_{s\in F} X_s\Big]- \Big(\frac{1}{2}\Big)^{|F|}\bigg| \mik (d+1)2^{d-3 +(d-1)2^{d-2}}\ee
\stackrel{\eqref{ea.7}}{\mik} \delta.
\end{equation}
Lastly, for property (\hyperref[aP24]{$\mathcal{P}$4}), let $B$ be a $d$-dimensional box of $\nn$.
Using once again the exchangeability of $\bbx$ and \eqref{ea.28}, we see that
\begin{align} \label{ea.33}
\ave\Big[ \prod_{s\in B} X_s \Big] = \frac{1}{2}\Big(\frac{1}{2}\Big)^{|B|} +
\frac{1}{2} \int \prod_{s\in \mathrm{Box}(d)} H(\boldsymbol{v}_s)\, d\boldsymbol{\mu}(\boldsymbol{v}),
\end{align}
and so, by \eqref{ea.23},
\begin{equation} \label{ea.34}
\bigg|\ave\Big[\prod_{s\in B} X_s\Big]- \frac{3}{2}\Big(\frac{1}{2}\Big)^{|B|}\bigg|
\mik d\,2^{d-1+(d-2)2^{d-1}}\ee \stackrel{\eqref{ea.7}}{\mik} \delta.
\end{equation}

Thus, it remains to verify property (\hyperref[aP25]{$\mathcal{P}$5}).
As expected, we will argue as in Proposition~\ref{p2.8}. Specifically,
fix an integer $n\meg 4d$, and define $f\colon \rr^{\binom{[n]}{d}}\to \rr$ by setting
for every $\boldsymbol{x}=(x_t)_{t\in \binom{[n]}{d}}\in \rr^{\binom{[n]}{d}}$
\begin{equation} \label{ea.35}
f(\boldsymbol{x})\coloneqq \prod_{s\in \mathrm{Box}(d)}\!\! x_s- \ave\Big[ \prod_{s\in \mathrm{Box}(d)}\!\! X_s \Big].
\end{equation}
Clearly, $f$ is a multilinear polynomial of degree $2^d$, and it satisfies $\ave[f(\bbx_n)]=0$ and
$\|f(\bbx_n)\|_{L_\infty}\mik 1$. On the other hand, if $B$ is a $d$-dimensional box of $\nn$ with
$\min(s)\meg 2d+1$ for every $s\in B$, then
\begin{align} \label{ea.36}
\ave\Big[\prod_{s\in \mathrm{Box}(d) \cup B} X_s\Big] \stackrel{\eqref{ea.28}}{=}
\frac{1}{2} \Big(\frac{1}{2}\Big)^{2^{d+1}} + \frac{1}{2} \int \prod_{s\in \mathrm{Box}(d)\cup B}
H(\boldsymbol{v}_s)\,  d\boldsymbol{\mu}(\boldsymbol{v}),
\end{align}
and so, by \eqref{ea.27},
\begin{equation} \label{ea.37}
\bigg|\ave\Big[ \prod_{s\in \mathrm{Box}(d) \cup B} X_s\Big] - \frac{5}{2} \Big(\frac{1}{2}\Big)^{2^{d+1}}\bigg|
\mik d\,2^{d+(d-2)2^{d-1}}\ee.
\end{equation}
Using this estimate, property (\hyperref[aP24]{$\mathcal{P}$4}) and arguing as in the proof of Proposition \ref{pa.1},
it is easy to verify that the function $f$ satisfies property (\hyperref[aP25]{$\mathcal{P}$5}).
The proof of Proposition \ref{pa.2} is completed.
\end{proof}



\begin{thebibliography}{99}

\bibitem[AdWo15]{AdWo15} R. Adamczak and P. Wolff,
\textit{Concentration inequalities for non-Lipschitz functions with bounded derivatives of higher order},
Probab. Theory Related Fields 162 (2015), 531--586.

\bibitem[ACHPS18]{ACHPS18} E. Aigner-Horev, D. Conlon, H. H\`{a}n, Y. Person and M. Schachtt,
\textit{Quasirandomness in hypergraphs},
Electron. J. Comb. 25 (2018), Research Paper P3.34, 22 p.

\bibitem[Ald81]{Ald81} D. J. Aldous,
\textit{Representations for partially exchangeable arrays of random variables},
J. Multivariate Anal. 11 (1981), 581--597.

\bibitem[Au08]{Au08} T. Austin,
\textit{On exchangeable random variables and the statistics of large graphs and hypergraphs},
Probability Surveys 5 (2008), 80--145.

\bibitem[Au11]{Au11} T. Austin,
\textit{Deducing the density Hales--Jewett theorem from an infinitary removal lemma},
J.~Theor. Probab. 24 (2011), 615--633.

\bibitem[Au13]{Au13} T. Austin,
\textit{Exchangeable random arrays},
preprint (2013), available at \url{https://www.math.ucla.edu/~tim/ExchnotesforIISc.pdf}.

\bibitem[Ber96]{Ber96} V. Bergelson,
\textit{Ergodic Ramsey theory---an update},
in ``Ergodic Theory of $\mathbb{Z}^d$-Actions", London Mathematical Society Lecture Note Series,
Vol. 228, Cambridge University Press, 1996, 1--61.

\bibitem[Bob04]{Bob04} S. Bobkov,
\textit{Concentration of normalized sums and a central limit theorem for noncorrelated random variables},
Ann. Probab. 32 (2004), 2884--2908.

\bibitem[BLM13]{BLM13} S. Boucheron, G. Lugosi and P. Massart,
\textit{Concentration Inequalities. A Nonasymptotic Theory of Independence},
Oxford University Press, 2013.

\bibitem[Ch06]{Ch06} S. Chatterjee,
\textit{A generalization of the Lindeberg principle},
Ann. Probab. 34 (2006), 2061--2076.

\bibitem[CGW88]{CGW88} F. R. K. Chung, R. L. Graham and R. M. Wilson,
\textit{Quasi-random graphs},
Proc. Natl. Acad. Sci. USA 85 (1988), 969--970.

\bibitem[CGW89]{CGW89} F. R. K. Chung, R. L. Graham and R. M. Wilson,
\textit{Quasi-random graphs},
Combinatorica 9 (1989), 345--362.

\bibitem[CG90]{CG90} F. R. K. Chung and R. L. Graham,
\textit{Quasi-random hypergraphs},
Random Structures Algorithms 1 (1990), 105--124.

\bibitem[CR20]{CR20} L. N. Coregliano and A. A. Razborov,
\textit{Semantic limits of dense combinatorial objects},
Russian Math. Surveys 75 (2020), 627--723.

\bibitem[DF80]{DF80} P. Diaconis and D. Freedman,
\textit{Finite exchangeable sequences},
Ann. Probab. 8 (1980), 745--764.

\bibitem[DJ08]{DJ08} P. Diaconis and S. Janson,
\textit{Graph limits and exchangeable random graphs},
Rend. Mat. Appl. (7) 28 (2008), 33--61.

\bibitem[DK16]{DK16} P. Dodos and V. Kanellopoulos,
\textit{Ramsey Theory for Product Spaces},
Mathematical Surveys and Monographs, Vol. 212, American Mathematical Society, 2016.

\bibitem[DKK16]{DKK16} P. Dodos, V. Kanellopoulos and Th. Karageorgos,
\textit{Szemer\'{e}di's regularity lemma via martingales},
Electron. J. Combin. 23 (2016), Research Paper $\text{P3.11, 1--24.}$

\bibitem[DKT16]{DKT16} P. Dodos, V. Kanellopoulos and K. Tyros,
\textit{A concentration inequality for product spaces},
J. Funct. Anal. 270 (2016), 609--620.

\bibitem[DT21]{DT21} P. Dodos and K. Tyros,
\textit{A structure theorem for stochastic processes indexed by the discrete hypercube},
Forum Math. Sigma 9 (2021), Paper No. e8, 1--30.

\bibitem[DTV21]{DTV21} P. Dodos, K. Tyros and P. Valettas,
\textit{Decompositions of finite high-dimensional random arrays},
preprint (2021), available at \url{https://arxiv.org/abs/2102.11102}.

\bibitem[ES12]{ES12} G. Elek and B. Szegedy,
\textit{A measure-theoretic approach to the theory of dense hypergraphs},
Adv. Math. 231 (2012), 1731--1772.

\bibitem[ER52]{ER52} P. Erd\H{o}s and R. Rado,
\textit{Combinatorial theorems on classifications of subsets of a given set},
Proc. London Math. Soc. 2 (1952), 417--439.

\bibitem[FT85]{FT85} D. H. Fremlin and M. Talagrand,
\textit{Subgraphs of random graphs},
Trans. Amer. Math. Soc. 291 (1985), 551--582.

\bibitem[FK91]{FK91} H. Furstenberg and Y. Katznelson,
\textit{A density version of the Hales--Jewett theorem},
J. Anal. Math. 57 (1991), 64--117.

\bibitem[GSS19]{GSS19} F. G\"{o}tze, H. Sambale and A. Sinulis,
\textit{Concentration inequalities for polynomials in $\alpha$-sub-exponential random variables},
preprint (2019), available at \url{https://arxiv.org/abs/1903.05964}.

\bibitem[Go07]{Go07} W. T. Gowers,
\textit{Hypergraph regularity and the multidimensional Szemer\'{e}di theorem},
Ann. Math. 166 (2007), 897--946.

\bibitem[Go09]{Go09} W. T. Gowers,
\textit{The first unknown case of polynomial DHJ},
blog post (2009), available at
\url{https://gowers.wordpress.com/2009/11/14/the-first-unknown-case-of-polynomial-dhj/}.

\bibitem[GT10]{GT10} B. Green and T. Tao,
\textit{Linear equations in primes},
Ann. Math. 171 (2010), 1753--1850.

\bibitem[GG71]{GG71} V. I. Gurarii and N. I. Gurarii,
\textit{On bases in uniformly convex and uniformly smooth Banach spaces},
Izv. Akad. Nauk SSSR Ser. Mat. 35 (1971), 210--215.

\bibitem[Hoo79]{Hoo79} D. N. Hoover,
\textit{Relations on probability spaces and arrays of random variables},
preprint (1979), available at \url{https://www.stat.berkeley.edu/~aldous/Research/hoover.pdf}.

\bibitem[Ja72]{Ja72} R. C. James,
\textit{Super-reflexive spaces with bases},
Pac. J. Math. 41 (1972), 409--417.

\bibitem[J11]{J11} S. Janson,
\textit{Quasi-random graphs and graph limits},
European J. Combin. 32 (2011), 1054--1083.

\bibitem[Kal92]{Kal92} O. Kallenberg,
\textit{Symmetries on random arrays and set-indexed processes},
J. Theor. Probab. 5 (1992), 727--765.

\bibitem[Kal05]{Kal05} O. Kallenberg,
\textit{Probabilistic Symmetries and Invariance Principles},
Probability and its Applications (New York), Springer, 2005.

\bibitem[KRS02]{KRS02} Y. Kohayakawa, V. R\"{o}dl and J. Skokan,
\textit{Hypergraphs, quasi-randomness, and conditions for regularity},
J. Combin. Theory Ser. A 97 (2002), 307--352.

\bibitem[La06]{La06} R. Latala,
\textit{Estimates of moments and tails of Gaussian chaoses},
Ann. Probab. 34 (2006), 2315--2331.

\bibitem[LM15]{LM15} J. Lenz and D. Mubayi,
\textit{The poset of hypergraph quasirandomness},
Random Structures Algorithms 46 (2015), 762--800.

\bibitem[Le01]{Le01}  M. Ledoux,
\textit{The Concentration of Measure Phenomenon},
Mathematical Surveys and Monographs, Vol. 89, American Mathematical Society, 2001.

\bibitem[Lov12]{Lov12} L. Lov\'{a}sz,
\textit{Large Networks and Graph Limits},
Colloquium Publications, Vol. 60, American Mathematical Society, 2012.

\bibitem[MS75]{MS75} W. G. Moginley and R. Sibson,
\textit{Dissociated random variables},
Math. Proc. Cambridge Philos. Soc. 77 (1975), 185--187.

\bibitem[Pi11]{Pi11} G. Pisier,
\textit{Martingales in Banach Spaces $($in connection with Type and Cotype$)$},
preprint (2011), available at \url{http://www.math.jussieu.fr/~pisier/ihp-pisier.pdf}.

\bibitem[Pi16]{Pi16} G. Pisier,
\textit{Martingales in Banach Spaces},
Cambridge Studies in Advanced Mathematics, Vol. 155, Cambridge University Press, 2016.

\bibitem[Ra30]{Ra30} F. P. Ramsey,
\textit{On a problem of formal logic},
Proc. London Math. Soc. 30 (1930), 264--286.

\bibitem[RX16]{RX16} E. Ricard and Q. Xu,
\textit{A noncommutative martingale convexity inequality},
Ann. Probab. 44 (2016), 867--882.

\bibitem[Ra07]{Ra07} A. A. Razborov,
\textit{Flag algebras},
J. Symb. Log. 72 (2007), 1239--1282.

\bibitem[R\H{o}15]{Ro15} V. R\H{o}dl,
\textit{Quasi-randomness and the regularity method in hypergraphs},
in ``Proceedings of the International Congress of Mathematicians" Vol. I, 571--599, 2015.

\bibitem[Tao06]{Tao06} T. Tao,
\textit{Szemer\'{e}di's regularity lemma revisited},
Contrib. Discrete Math. 1 (2006), 8--27.

\bibitem[Tao08]{Tao08} T. Tao,
\textit{Structure and Randomness: Pages from Year One of a Mathematical Blog},
American Mathematical Society, 2007.

\bibitem[Tho87]{Tho87} A. Thomason,
\textit{Pseudo-random graphs},
Ann. Discrete Math. 33 (1987), 307--331.

\bibitem[To17]{To17} H. Towsner,
\textit{$\sigma$-algebras for quasirandom hypergraphs},
Random Structures Algorithms 50 (2017), 114--139.

\bibitem[V19]{V19} R. Vershynin,
\textit{Concentration inequalities for random tensors},
Bernoulli 26 (2020), 3139--3162.

\end{thebibliography}
\end{document}